\documentclass{article}

\title{A scattering theory construction of dynamical solitons in 3d}
\author{Istvan Kadar}

\usepackage[T1]{fontenc}
\usepackage[utf8]{inputenc}
\usepackage[english]{babel}
\usepackage{lmodern}
\usepackage[margin=3cm]{geometry}
\usepackage[backend=biber,style=alphabetic,sorting=nty,doi=false,isbn=false,url=false,eprint=false,minalphanames=3]{biblatex}
\AtEveryBibitem{\clearlist{language}}

\usepackage{empheq}
\usepackage{hyperref}
\usepackage{enumitem}
\usepackage{tikz-cd} 	
\usepackage{mathtools}
\usepackage{alltt}
\usepackage{amsfonts}
\usepackage{amsmath}
\usepackage{amssymb}
\usepackage{amsthm}
\usepackage{tikz-cd} 	
\usepackage[nottoc]{tocbibind}
\usepackage{graphicx}
\usepackage{caption}
\usepackage{subcaption}
\usepackage{aliascnt}
\usepackage{cases}
\usepackage{comment}	
\usepackage[capitalize,nameinlink]{cleveref}
\usepackage{tikz}
\usetikzlibrary{tikzmark}
\usepackage{slashed}

\usepackage{xargs}   
\usepackage[colorinlistoftodos,prependcaption,textsize=tiny]{todonotes}
\newcommandx{\typo}[2][1=]{\todo[linecolor=red,backgroundcolor=red!25,bordercolor=red,#1]{#2}}
\newcommandx{\change}[2][1=]{\todo[linecolor=blue,backgroundcolor=blue!25,bordercolor=blue,#1]{#2}}
\newcommandx{\answer}[1]{\todo[linecolor=pink,backgroundcolor=pink!25,bordercolor=pink]{#1}}
\newcommandx{\unsure}[2][1=]{\todo[linecolor=green,backgroundcolor=green!25,bordercolor=green,#1]{#2}}
\newcommandx{\improve}[2][1=]{\todo[linecolor=violet,backgroundcolor=violet!25,bordercolor=violet,#1]{#2}}
\newcommandx{\thiswillnotshow}[2][1=]{\todo[disable,#1]{#2}}

\crefformat{equation}{(#2#1#3)}
\numberwithin{equation}{section}

%non numbered theorems
\theoremstyle{definition}

\newtheorem*{theorem*}{Theorem}
\newtheorem*{conjecture*}{Conjecture}

%numbered theorems
\theoremstyle{plain}
\newtheorem{theorem}{Theorem}[section]
\newtheorem{corollary}{Corollary}[theorem]
\newtheorem{lemma}[theorem]{Lemma}
\newtheorem{prop}[theorem]{Proposition}
\newtheorem{conjecture}[theorem]{Conjecture}
\newtheorem{cor}{Corollary}[theorem]
\newtheorem*{claim}{Claim}

\theoremstyle{definition}
\newtheorem{definition}[theorem]{Definition}

\theoremstyle{remark}
\newtheorem{remark}[theorem]{Remark}

\crefname{lemma}{Lemma}{Lemmas}
\crefname{prop}{Proposition}{Proposition}
\crefname{conjecture}{Conjecture}{Conjecture}
\crefname{cor}{Corollary}{Corollary}
\crefname{remark}{Remark}{Remark}
\crefname{defi}{Definition}{Definition}
\crefname{equation}{}{}

% Matrix groups

\newcommand{\Tau}{\mathrm{T}}

%special equations
\newenvironment{nalign}{
	\begin{equation}
		\begin{aligned}
		}{
		\end{aligned}
	\end{equation}
	\ignorespacesafterend
}

% Matrix algebras

% Special sets

\newcommand{\N}{\mathbb{N}}

\newcommand{\R}{\mathbb{R}}

\newcommand{\T}{\mathbb{T}}

\renewcommand{\H}{\mathbb{H}}
\newcommand{\scri}{\mathcal{I}}

% Brackets
\newcommand{\abs}[1]{\left\lvert #1\right\rvert}

\newcommand{\jpns}[1]{\langle #1 \rangle}

\newcommand{\norm}[1]{\left\lVert #1\right\rVert}

\newcommand{\floor}[1]{\lfloor #1 \rfloor}
\newcommand{\ceil}[1]{\lceil #1 \rceil}

% not-math

% Probability

% Algebra

\DeclareMathOperator{\im}{im}

% Others

\renewcommand{\d}{\mathrm{d}}

\DeclareMathOperator{\Diff}{Diff}

\DeclareMathOperator{\supp}{supp}

% colored texts

\makeatletter

\makeatother
\addbibresource{allBib.bib}
\usepackage{accents}
\usepackage{cancel}
\graphicspath{{figures}}
%\excludecomment{figure}

\usepackage[font={small,it}]{caption}

\newcommand{\masterJ}{\underset{{\mbox{\tiny (k)}}}{\mathfrak{J}}}
\newcommand{\masterK}{\underset{{\mbox{\tiny (k)}}}{\mathfrak{K}}}
\newcommand{\Vlin}{\stackrel{\scalebox{.6}{\mbox{\tiny (1)}}}{V}}
\newcommand{\psia}{ \prescript{}{\tiny{a}}{\psi} }

\newcommand{\alphaa}{\prescript{}{a}{\alpha}}

\newcommand{\Tlin}{\stackrel{\scalebox{.6}{\mbox{\tiny (1)}}}{\T}}
\newcommand{\master}{\underaccent{\scalebox{.8}{\mbox{\tiny $k$}}}{\mathcal{X}}}
\newcommand{\masterNum}[1]{\underaccent{\scalebox{.8}{\mbox{\tiny $#1$}}}{\mathcal{X}}}
\newcommand{\rdom}{\mathcal{R}_{\tau_1,\tau_2}}
\newcommand{\rint}{\int_{\mathcal{R}_{\tau_1,\tau_2}}}
\newcommand{\rtStar}{\partial_r|_{t_\star}}
\newcommand{\Rtild}{\tilde{\mathcal{R}}}

\DeclareFontFamily{U}{rcjhbltx}{}
\DeclareFontShape{U}{rcjhbltx}{m}{n}{<->rcjhbltx}{}
\DeclareSymbolFont{hebrewletters}{U}{rcjhbltx}{m}{n}

% remove the definitions from amssymb

\DeclareMathSymbol{\lamed}{\mathord}{hebrewletters}{108}

\usepackage{tocloft}
\setlength{\cftbeforesecskip}{0pt}

\begin{document}
	\maketitle
	\begin{abstract}
		We study the energy critical wave equation in 3 dimensions around a single soliton. We obtain energy boundedness (modulo unstable modes) for the linearised problem. We use this to construct scattering solutions in a neighbourhood of timelike infinity ($i_+$), provided the data on null infinity ($\scri$) decay polynomially. Moreover, the solutions we construct are conormal on a blow-up of Minkowski space. The methods of proof also extend to some energy supercritical modifications of the equation.
	\end{abstract}
	\setcounter{tocdepth}{1}
	\tableofcontents
	\section{Introduction}

	It is well-known that the energy critical wave equation
	\begin{equation}\label{main equation}
		\begin{gathered}
			\Box\phi:=(-\partial_t^2+\Delta)\phi=-\phi^5,\quad
			\phi	:\R^{3+1}\to\R
		\end{gathered}
	\end{equation}
	admits a stationary solution, called the ground state soliton
	\begin{equation}\label{soliton}
		\begin{gathered}
			W(x)=\Big(1+\frac{\abs{x}^2}{3}\Big)^{-1/2}.
		\end{gathered}
	\end{equation}
	
	This solution suffers from Derrick's instability (see \cite{derrick_comments_1964},\cite{grillakis_stability_1987}), which in particular implies the existence of an exponentially growing mode for the linearised system
	\begin{equation}\label{linearised eom}
		\begin{gathered}
			(\Box+V)\psi=0, \quad V:=5W^4.
		\end{gathered}
	\end{equation}
	Nonetheless, global solutions that settle down to one or more solitons is one of the expectations from the soliton resolution conjecture. In this work, we initiate the study of such global solutions following the geometric framework as discussed in \cite{christodoulou_formation_2009,dafermos_scattering_2013}. The main result of this paper is

	\begin{theorem}(Rough form of main theorem, see \cref{main theorem})\label{main theorem sketch}
			Given scattering data $\psi^{dat}(u,\omega)$ ($u\in\R$, $\omega\in S^2$) that fall off at a polynomial rate, there exists a solution $\phi$ to (1.1) in the region $t-r\gg 1$ with the following properties: firstly, $\phi(t,x)-W(x)$ decays to $0$ at a quantitative rate as $t\to\infty$; and second, the radiation field of $\phi$ is given by $\psi^{\mathrm{dat}}$, i.e.\ $r\phi(u+r,r,\omega) \to \psi^{\mathrm{dat}}(u,\omega)$.
	\end{theorem}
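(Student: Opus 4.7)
I would write $\phi = W + \psi$, so that $\psi$ obeys
\begin{equation}
(\Box + V)\psi = \mathcal{N}(\psi), \qquad \mathcal{N}(\psi) := -\bigl((W+\psi)^5 - W^5 - 5W^4\psi\bigr),
\end{equation}
with $V = 5W^4$ and $\mathcal{N}(\psi)$ a sum of terms of the form $W^{5-k}\psi^k$ for $k = 2,3,4,5$, i.e.\ at least quadratic in $\psi$ with nonlinear coefficients carrying extra decay from $W \sim |x|^{-1}$. The goal is to produce a solution $\psi$ in $\{t-r \geq T_0\}$ (with $T_0$ large) whose radiation field at $\scri^+$ equals $\psi^{\mathrm{dat}}$ and which decays to $0$ as $t \to \infty$ along timelike directions.

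\textbf{Geometry and function spaces.} I would work on a blow-up $\tilde M$ of compactified Minkowski space at $i_+$ and at the corner $\scri^+ \cap i_+$, on which the asymptotic regions become boundary hypersurfaces with their own defining functions $\rho \sim 1/r$, $\tau \sim 1/u$, and an $i_+$-face variable. The data prescription then becomes a boundary value for $r\psi$ on the $\scri^+$ face, and the target decay becomes a vanishing condition on the $i_+$ face. Iteration norms would be weighted conormal norms, measuring $L^2$ and pointwise regularity under a Lie algebra of vector fields tangent to all boundary faces (rotations, and scalings of $r$ and $u = t-r$), with polynomial weights matching the decay of $\psi^{\mathrm{dat}}$.

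\textbf{Linear backward scattering.} Using the energy boundedness modulo unstable modes established earlier in the paper, I would build a linear backward scattering map for $(\Box + V)\psi = f$: given a source $f$ with sufficient decay at $i_+$ and data $\psi^{\mathrm{dat}}$ on $\scri^+$, produce a solution with the prescribed radiation field and quantitative decay at $i_+$, modulo a finite-dimensional contribution from the unstable modes. Concretely, I would first solve $\Box \psi_{\mathrm{free}} = 0$ with radiation field $\psi^{\mathrm{dat}}$ via Friedlander-type theory, then correct by $\chi = \psi - \psi_{\mathrm{free}}$ satisfying $(\Box + V)\chi = f - V\psi_{\mathrm{free}}$ with vanishing radiation field, with the unstable-mode components fixed by the requirement of decay at $i_+$.

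\textbf{Nonlinear iteration and the main obstacle.} I would close a contraction on a small ball in the weighted conormal Banach space above: given $\psi^{(n)}$, take $\psi^{(n+1)}$ to be the linear backward scattering solution with source $\mathcal{N}(\psi^{(n)})$ and data $\psi^{\mathrm{dat}}$. The quadratic structure of $\mathcal{N}$ combined with the $W$-decay and the smallness coming from $T_0$ large (equivalently, fast enough polynomial decay of $\psi^{\mathrm{dat}}$) would make the map contractive. The main obstacle is the unstable mode: the linear inverse exists only modulo a finite-dimensional projection, so a naive iterate acquires an unstable component that destroys the $i_+$ decay. I would handle this by a shooting/Brouwer-type argument, adjusting a finite-dimensional family of auxiliary parameters (for example the unstable-mode coefficients on a hyperboloid approaching $i_+$) at each step to kill the obstruction; the delicate point is to run this modulation uniformly along the iteration so that conormal regularity up to all boundary faces of $\tilde M$ is preserved and the fixed point produced actually lies in the space it is sought in.
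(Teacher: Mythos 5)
Your overall strategy — writing $\phi = W+\psi$, solving the linear problem $(\Box+V)\psi = f$ backwards from $\scri^+$ and $i_+$, iterating on the nonlinearity $\mathcal{N}(\psi)$, and fixing the unstable mode by a shooting/Brouwer argument — matches the paper in outline, and the geometric conormal framework you propose is indeed what the paper uses (at least for the polyhomogeneity part in Section~7). However, there is a genuine and central gap in your proposal: you never address the \emph{zero modes} of the linearised operator, which the paper singles out as the main technical content of the work.

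Concretely, the linearised operator $\Box + V$ has a kernel $\mathcal{Z} = \mathrm{span}\{\partial_i W, \Lambda W\}$ and associated growing solutions $t\partial_i W$, $t\Lambda W$. These make the tangential energy $\mathcal{E}^V$ on $\Sigma_\tau$ fail to be coercive even after the exponentially unstable direction $Y$ is projected out, so the statement ``energy boundedness modulo unstable modes'' that your linear backward scattering map relies on is simply not available until one also identifies and controls the kernel contributions. The paper's Section~5.1 is devoted to exactly this: it constructs global functionals $\Theta^{\mathrm{mom}}$, $\Theta^{\mathrm{com}}$, $\Theta^\Lambda$ (from the linearised Noether currents for translation, boost, and from the bilinear energy current paired against $t\Lambda W$) whose conservation, together with an abstract coercivity lemma, restores positivity of the energy. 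A naive localised projection --- as one uses for $Y$ --- fails here because $\Lambda W \sim r^{-1}$ decays too slowly to give a well-defined $L^2$ pairing with general $\dot H^1$ functions, which is why the global, current-theoretic treatment is necessary. Your proposed contraction/shooting scheme would not close without this ingredient: either your iteration norm is weak enough that $\mathcal{E}^V$ does not control it, or (if you include $\mathcal{E}^V$) the norm is indefinite and the contraction estimate degenerates on the kernel directions. You would need to augment the finite-dimensional modulation by the momentum, center-of-mass, and scaling contents, track their evolution using Lemma~\ref{momentum content}--\ref{scaling content}-type identities, and fold that into the coercivity, before the linear-inverse step in your fixed-point argument makes sense.

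A secondary, less serious difference: the paper does not run a contraction mapping in a single global weighted space. It proves a bootstrap on dyadic retarded-time slabs (Lemma~\ref{lemma:dyadic nonlinear iteration}), iterates along the dyadic scale (Lemma~\ref{iteration lemma}), does the unstable-mode shooting on each finite region $\mathcal{R}_{\tau_1,\tau_2}$ (Lemma~\ref{unstable lemma}), and then extracts a global solution by compactness. Your contraction approach could plausibly be made to work, but the shooting parameter depends on the truncation $\tau_2$, so you would need to argue that the fixed point and the modulation parameters converge together --- this is essentially what the compactness step buys in the paper, and it is glossed over in your last paragraph.
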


	We may phrase the theorem in a less geometric way as
	
	\begin{corollary}(See \cref{cor:intro:Cauchy})
		There exists a large class of scattering data $(u^\mu_\infty)\in\mathcal{C}^\infty$ (of arbitrary finite $\dot{H}^1$ norm) for $\mu\in\{0,1,2,3\}$, such that \cref{main equation} admits a solution for $t>T(u^\mu)$ scattering to this data and a soliton, ie.
		\begin{equation}
			\begin{gathered}
				\norm{\partial_\mu(\phi-W)-u^\mu_\infty}_{L^2(\{t=c\})}\to0,\qquad \text{as }c\to\infty.
			\end{gathered}
		\end{equation}
	\end{corollary}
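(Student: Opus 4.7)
The corollary translates \cref{main theorem sketch} from null-infinity scattering data $\psi^{dat}$ to Cauchy scattering data $(u^\mu_\infty)$ at $i_+$. Writing $v := \phi - W$, stationarity of $W$ reduces the statement to prescribing $\partial_\mu v \to u^\mu_\infty - \partial_\mu W$ in $L^2(\{t=c\})$ as $c \to \infty$. By \cref{main theorem sketch}, $v \to 0$ quantitatively, and both the residual nonlinearity $\phi^5 - W^5 - 5W^4 v$ and the potential $V = 5W^4$ are spatially localised; in the wave zone $v$ is therefore asymptotically free, which lets me invoke the standard flat-space correspondence between asymptotic Cauchy data and the radiation field on $\scri^+$.

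Concretely, the Friedlander/Radon transform for the free wave equation is an explicit bijection between these two data sets. Applying its inverse to $(u^\mu_\infty - \partial_\mu W)$ yields a polynomially decaying profile $\tilde\psi^{dat}(u,\omega)$, and adding the soliton's contribution $rW(r\omega) \to \sqrt{3}$ at $\scri^+$ produces $\psi^{dat} = \sqrt{3} + \tilde\psi^{dat}$. Feeding this into \cref{main theorem sketch} gives a solution $\phi$ in $\{t - r > T\}$. Since $\{t = c\}$ extends to spatial infinity, I extend $\phi$ past the cone $\{t = r + T\}$ by solving the characteristic initial value problem with data on that cone using local well-posedness; in $\{t - r < T\}$ one is far from the soliton and the data have small $\dot H^1$ norm controlled by the tail of $\psi^{dat}$.

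For the $L^2$ convergence on $\{t=c\}$, I split the slice as $\{r \le c - T\} \cup \{r > c - T\}$. On the outer annulus, the $L^2$ contribution is bounded by $\int_{u \le -(c-T)} |\tilde\psi^{dat}|^2 \, du \, d\omega$ and hence is polynomially small. On the inner region, differentiating the radiation-field identity $r\phi(u+r, r, \omega) \to \psi^{dat}(u,\omega)$ using the conormality statement in \cref{main theorem sketch} identifies $\partial_\mu v$ pointwise with the inverse Radon transform of $\tilde\psi^{dat}$, up to an error governed by the quantitative $t$-decay rate; the $L^2$-upgrade then follows by dominated convergence. The principal obstacle is precisely this $L^2$-upgrade, because the pointwise/conormal bounds must beat the spatial weight $r^2$ inherent in $L^2(\R^3)$ on slices of unbounded radius; this forces a minimum polynomial decay rate on $\psi^{dat}$, but the smoothness and finiteness of $\dot H^1$ norm in the corollary's hypotheses comfortably satisfy it once one tracks the Radon correspondence carefully through the conormal framework of the main theorem.
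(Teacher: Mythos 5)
Your proposal inverts the direction of the paper's argument and, in doing so, runs into several genuine problems. The paper obtains this corollary by \emph{forward} combination of \cref{main theorem} and \cref{thm:trivial_scattering}: one chooses admissible scattering data $\psi^{\mathrm{dat}}$ on $\scri$, constructs a solution near $i^+$ via \cref{main theorem}, extends it into a neighbourhood of $i_0$ via \cref{thm:trivial_scattering} (proved in \cref{sec:cauchy}), and then simply reads off the Cauchy limit $u^\mu_\infty = \lim_c \partial_\mu(\phi-W)$. The corollary only claims a ``large class'' exists, which is exactly what this forward construction delivers. You instead try to start from a prescribed $u^\mu_\infty$ and produce $\psi^{\mathrm{dat}}$ by an inverse Friedlander/Radon transform — a stronger statement than what is claimed, and one the paper does not establish.

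Three concrete gaps. (i) Your radiation-field identification $\psi^{\mathrm{dat}} = \sqrt{3} + \tilde\psi^{\mathrm{dat}}$ confuses the radiation field of $\phi$ with that of $\psi = \phi - W$. The scattering data in \cref{main theorem} is the radiation field of $\psi$, and it must satisfy the polynomial-decay hypothesis; a non-decaying constant $\sqrt{3}$ makes it inadmissible. The soliton does not enter the data at all. (ii) Your extension past $\{t - r = T\}$ ``by local well-posedness'' does not work: the region $\{t - r < T,\; t > c\}$ is unbounded in $r$, so a finite-time local existence result gives nothing. The paper's \cref{thm:trivial_scattering} (via \cref{lemma:local_cauchy,lemma:global_cauchy}) handles precisely this, exploiting the large-$r$ weights in the far exterior region to close a bootstrap there with no smallness assumption. (iii) You assert that the exterior data ``have small $\dot H^1$ norm controlled by the tail of $\psi^{\mathrm{dat}}$,'' but the corollary's whole point is that the resulting $u^\mu_\infty$ can have arbitrary finite $\dot H^1$ norm — this arbitrariness comes exactly from the remark after \cref{thm:trivial_scattering} that there is no size restriction on the data near $i_0$. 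Your smallness claim is therefore both unnecessary and at odds with the statement being proved.
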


	\paragraph{Overview of the introduction}
	The structure of the rest of the introduction is as follows. In \cref{introduction:motivation}, we give the main motivation, by outlining future research to study \cref{main equation,main theorem sketch}. In \cref{sec:intro_proof ideas}, we present the geometric framework in which our construction works and give a more precise form of the main theorem. In \cref{sec:intro_related} we discuss the scope of the techniques developed in the present paper and state an energy supercritical result to which our methods directly apply. Finally, in \cref{sec:intro_proof ideas} we present the main difficulties for proving \cref{main theorem sketch} and the ideas how to resolve them.
	\subsection{Review of previous works and motivation}\label{introduction:motivation}
	\subsubsection{General relativity}\label{gr motivation} The result of the current paper is an initial attempt by the author to understand the problem of incoming or outgoing localised objects from past or future timelike infinity ($i_\pm$) in general relativity. More precisely, let us note that the following problem is currently open:
	\begin{conjecture}\label{BH conjecture}
		There exists an asymptotically flat solution to the vacuum Einstein equations that approximates $n$ outgoing black holes in a neighbourhood of $i_+$.
	\end{conjecture}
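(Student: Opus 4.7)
The plan is to adapt the scattering-theory construction developed in the present paper for \cref{main equation} to the Einstein vacuum setting, with the role of the soliton $W$ played by an $n$-black-hole background. First I would fix an ansatz background metric $g_{bg}$ obtained by gluing $n$ subextremal Kerr metrics with distinct asymptotic four-velocities $v_1,\dots,v_n\in\R^3$, each centred on a well-separated timelike worldline $\gamma_i$. Near $i_+$ one can introduce hyperboloidal (or double-null) coordinates in which $g_{bg}$ is polynomially close to Minkowski in the exterior of all the black holes, the Kerr contributions being superposed additively along their respective geodesics up to a rapidly decaying error.

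Next, one linearises the vacuum Einstein equations around $g_{bg}$ in a generalised harmonic (or double-null) gauge, yielding the Einstein analogue of \cref{linearised eom}: a system of the form $(\Box_{g_{bg}}+\mathcal{V})h=0$ acting on a symmetric $2$-tensor $h$, where $\mathcal{V}$ encodes the curvature of $g_{bg}$. The tensorial potential $\mathcal{V}$ possesses a finite-dimensional family of stationary zero and unstable modes, one copy of the $10$-parameter Kerr family per black hole, playing the role of the unstable mode of \cref{linearised eom}. The first substantial analytic step is to establish energy boundedness modulo this $\sim10n$-dimensional moduli kernel, combining the methods of the present paper with the red-shift and integrated local energy decay estimates known for a single Kerr.

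Third, using the backward scattering construction of this paper, one constructs linear solutions on a neighbourhood of $i_+$ from polynomially decaying shear data prescribed on $\scri^+$, after projecting out the components that excite the moduli directions; the surviving data can always be realised by re-parametrising the masses, boosts and angular momenta of the $n$ Kerr factors. The nonlinear problem is then closed by an iteration scheme near $i_+$, in which the moduli adjustment is absorbed into the fixed point just as the growing-mode parameter is fixed in the scalar argument.

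The main obstacle, well beyond the scope of the present work, is proving the linear energy estimate modulo moduli in the $n$-centre geometry: the quasilinear, tensorial character of the Einstein equations, the presence of multiple trapped null geodesics (one photon sphere per black hole), and the slowly-decaying cross terms between widely-separated Kerr factors together obstruct any direct transcription of the scalar argument. The scalar theorem of this paper should nevertheless be read as a proof of concept, demonstrating that the interplay between scattering from $\scri^+$ and a finite-dimensional unstable/moduli reduction can be closed quantitatively, and providing a template for the handling of the Kerr moduli in the conjectural construction above.
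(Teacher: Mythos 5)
This statement is a \emph{conjecture} that the paper explicitly declares to be open; the paper offers no proof of it, only a motivational discussion of why the scalar result of the main theorem is a reasonable starting point. So there is nothing in the paper to compare your proposal against as a ``proof,'' and indeed your own final paragraph concedes that the key linear estimate is ``well beyond the scope of the present work.'' What you have written is a research outline, not a proof.

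Beyond that structural point, the outline omits or understates obstacles that the paper singles out as central. The most serious omission is the weak null singularity/blueshift issue: the paper argues that if one takes a superposed multi-Kerr ansatz $g_g$, the resulting source terms decay only polynomially, and combined with the blueshift instability at the event horizons this is expected (following \cite{dafermos_scattering_2013}) to produce weak null singularities on the horizons for generic scattering data. Your proposal prescribes data only on $\scri^+$ and never engages with the horizon data or the blueshift mechanism; without resolving this, the backward construction would either fail or produce a solution that is singular on the horizons, which is not what the conjecture asks for. This is precisely why the paper retreats from black holes to solitons of a semilinear scalar model: the soliton has no horizon, no blueshift, and no trapping, and the paper's entire technical apparatus (the projection operators $\Theta^{mom},\Theta^{com},\Theta^\Lambda$, the coercivity lemma, the master current) is built for a nondegenerate elliptic operator $\Delta+V$ on $\R^3$, not for a tensorial operator on a multi-black-hole background with an ergoregion. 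A secondary gap is the treatment of quasilinearity: the paper emphasises that geometric vector fields adapted to the evolving metric must themselves be controlled, a problem that has no analogue in the semilinear model and which your ``iteration scheme near $i_+$'' does not address. In short, your sketch is in the spirit of the paper's own speculative remarks, but it does not identify, let alone close, the gaps that the paper itself names as the reason the conjecture remains open.
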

	
	There are many difficulties one has to face while studying \cref{BH conjecture}. Let us discuss some of these and why \cref{main theorem sketch} is a good starting point in our opinion.

	\begin{itemize}
		\item \textbf{Weak null singularities:} One may hope to construct two-black hole solutions in the following trivial way. Write $g_g=g_{guess}=\frac{1}{2}(g_1+g_2)$, where $g_i$ represents boosted single-black hole metrics. Of course $g_{g}$ is not going to solve the Einstein vacuum equations, but we may write the true solution as $g=g_{g}+\epsilon$ where we expect $\epsilon$ to decay as time goes to infinity. Trying to solve for $\epsilon$, let's say with trivial scattering data on the horizons ($\mathcal{H}^+_1,\mathcal{H}^+_2,\scri$), we find that there are source terms from $g_{g}$ that merely decay polynomially. Combined with the blueshift mechanism around the horizon of black holes, the expectation (see \cite{dafermos_scattering_2013}) is that generic polynomially decaying data (or in this case sources) lead to a weak null singularity on the black hole event horizon(s). Even though solutions with such singularities can be constructed, following the techniques from \cite{luk_weak_2018}, they present additional difficulty in the problem.
		Furthermore, one would be more interested in finding solutions that do not contain such artificial singularities.
		Instead, one may try to prescribe data on future horizons, so as to prevent the formation of weak null singularities, but this is open even for the single black hole scenario. 
		
		One possible way to circumvent the appearance of blueshift is to replace \textit{black holes} in \cref{BH conjecture} with solitons that are formed by some matter coupled to the Einstein equations. The existence of such solutions and their linear stability for Klein-Gordon coupling was considered in \cite{lee_stability_1989}. However, the understanding of global properties of massive fields is far less advanced (see recent progress \cite{pasqualotto_asymptotics_2023,sussman_massive_2023}), so we choose to study a wave-like model instead. Moreover, massless fields are also more closely related to the vacuum equations.
		
		\item \textbf{Quasilinearity:}	The Einstein equations, under a suitable gauge choice, form a system of quasilinear wave equations. The analysis of such systems usually involves the construction of geometric vector fields adapted to the geometry, notably in the original work on the stability of Minkowski space \cite{christodoulou_global_1993}. Controlling the variation of these vector fields from the background significantly complicates and lengthens the works on quasilinear equations. Even though to resolve \cref{BH conjecture}, one needs to face these challenges, we think that given the past work on quasilinear systems (see \cite{keir_weak_2018} and references therein for a comprehensive guide), the main new ingredients to study this problem can be found in a semilinear analogue. Therefore, we will consider a semilinear wave equation that still admits solitons. This is the energy critical wave equation \cref{main equation}. For energy critical wave equation in certain dimensions these solutions were constructed in \cite{martel_construction_2016,yuan_multi-solitons_2019,yuan_construction_2022}.\footnote{ Note, that coupling this to the Einstein equations yields a matter term that does not satisfy any of the usual energy conditions.}
		
		\item \textbf{Reduction to single soliton:} As discussed above, creating multisoliton solutions may proceed by supplying an approximate solution plus scattering data for the error term. To the author's knowledge, the \textit{geometric} scattering problem around $i^+$\footnote{We use the term \textit{geometric} to refer to data prescribed at $\scri$ instead in the Lax-Philips sense, where one takes the inverse evolution with respect to the linear operator on $\{t=\text{const}\}$ slices. For more details on the relationship between these two approaches, see \cite{friedlander_radiation_1980}. } has not been studied for the energy critical wave equation, nor for any similar semilinear problems. The only nonlinear geometric scattering theory construction around solitons that the author is aware of are \cite{dafermos_scattering_2013}. Both of these work with exponentially decaying data, which is necessary to avoid blueshift instability, but greatly simplifies the work in many other regards as well. It is unclear to the author whether such exponentially decaying data results could be applied in multisoliton constructions, as the interactions fall off polynomially.
	\end{itemize}

	\subsubsection{Dispersive equations:}

	 Another motivation to study \cref{main theorem sketch} is to introduce the methods developed for the study of the Einstein equations to hyperbolic dispersive problems. For similar transfer of ideas to compressible fluid equations see the recent review \cite{disconzi_recent_2023}. Before we explain further, let us briefly recall some works about \cref{main equation} without intending to be comprehensive about this vast research program.
	
	A major objective in the study of nonlinear dispersive equations is the soliton resolution conjecture. It predicts that any global solution with norm bounded in some function space decouples as a sum of solitons, some radiation and a term that vanishes in the energy space as time goes to infinity. This definition is crucially tied to a $t,x$ foliation of $\R^{1+n}_{t,x}$, indeed this conjecture can be formulated  not just for hyperbolic problems, but for many dispersive and dissipative equations as well. We refer to \cite{duyckaerts_soliton_2022} and references therein for more discussion and current progress on this topic.
	
	There are two main approaches in the study of the soliton resolution conjecture. First, one can use the control provided by the boundedness of some norm to classify all solutions within a function space. This norm is usually given by the canonical energy, which sits at $H^1$ regularity. See \cite{duyckaerts_classification_2013,duyckaerts_soliton_2023,duyckaerts_soliton_2022} and references therein for more details. To the author's knowledge, this has been only used for energy critical and subcritical problems.

	A second approach, applied to energy supercritical problems as well, is to construct solutions of special type and understand the behaviour in a neighbourhood of these so as to sample the moduli space of solutions in small steps. One can study either global behaviour as in \cite{krieger_focusing_2007,donninger_nonscattering_2013}, or singularity formation as in \cite{krieger_renormalization_2008}. In this case, one regularly works in much stronger topology than $H^1$.
	 
	 Regarding \cref{main equation} around a single soliton, the above-discussed two approaches can be seen precisely. Perturbation around a single soliton and proof of asymptotic stability with all the dynamics traced via a bootstrap type argument is done in  \cite{krieger_focusing_2007,beceanu_center-stable_2014}, however, both of these works restrict to a symmetry class such that the soliton is not allowed to move its position. On the other hand, we have the resolution of the forward problem in \cite{krieger_center-stable_2015}, where the authors prove codimension 1 orbital stability of the soliton without any symmetry assumptions. In this work, codimension one stability is a corollary of a more general classification result.
	 
	The main reason to depart from the standard methods deployed for the study of \cref{main equation} is to get a more precise description of the solutions. For the current work, this is the conormal structure, i.e. commutation with weighted vectorfields.
	
	\subsection{Main theorem}\label{sec:intro_proof ideas}

	In this section, we will give a a precise version of the theorem, a sketch of the steps in its proof and their relation to other works in the literature that this paper builds upon.
	
	\begin{theorem}\label{main theorem}
		a)
		Let $N>6$ be an integer and $q>5$ a real number. Let $(r\psi)^{dat}\in\mathcal{C}^\infty(\scri)$ for $\scri=\R_u\times S^2$\footnote{this is a portion of infinity define by the endpoint of the curves $t-r=u$, $t\to\infty$, see \cref{eq:notation:hypersurfaces}} be a smooth function that satisfies the following bounds
		\begin{equation}
			\begin{gathered}
				\sum_{\abs{\alpha}\leq N}\int_{\scri}\d\omega \frac{\d u}{\jpns{u}}\Big(\jpns{u}^{q}\Gamma^\alpha(r\psi)^{dat}\Big)^2\leq 1,\quad \Gamma\in\{\partial_\omega,\jpns{u}\partial_u\}.
			\end{gathered}
		\end{equation}
		Then, there exists $\tau_1$ sufficiently large such that \cref{main equation} admits a solution $\phi=W+\psi$ in $t-r>\tau_1\gg 1$ such that
		\begin{equation}\label{eq:mainTheorem:trace}
			\begin{gathered}
				\lim_{\substack{r\to\infty, \\t-r=2u_\star}}r\partial_u\psi=\partial_u(r\psi)^{dat}(v_\star,\omega),\\ 
				\lim_{u_\star \to\infty}
				\lim_{\substack{r\to\infty, \\t-r=2u_\star}}\psi(u_\star,\omega)=0.
			\end{gathered}
		\end{equation}
	Moreover, $\psi$ is conormal, i.e. the following estimate holds for $\abs{\alpha}\leq N-3$
	\begin{equation}\label{eq:mainTheorem:decay}
		\begin{gathered}
			1_{r<t/2}(\Gamma^\alpha\psi),
			1_{r>t/2}(\tilde{\Gamma}^\alpha\psi)\lesssim\jpns{r}^{-1/2}\jpns{t-r}^{-q} ,\quad\Gamma\in\{r\partial_r,t\partial_t,\Omega\},\tilde{\Gamma}\in\{u\partial_u,v\partial_v,\Omega\}
		\end{gathered}
	\end{equation}
	
	b) Assume that $\psi^{dat}\in\mathcal{C}^{\infty}(\scri)$ has the following expansion
	\begin{equation}
		\begin{gathered}
			\psi^{dat}_{\mathfrak{E},M}=\psi^{dat}-\sum_{(i,j)\in\mathcal{E}, i\leq M} u^{-i}\log^j u \psi^{dat}_{i,j}(\omega),\quad \psi^{dat}_{i,j}\in\mathcal{C}^\infty(S^2)\\
			\sum_{\abs{\alpha}\leq N} \int_{\scri}d\omega \frac{d u}{\jpns{u}}\Big(\jpns{u}^{q}\Gamma^{\alpha}\psi^{dat}\Big)^2\leq 1,\quad \Gamma\in\{\partial_\omega,\jpns{u}\partial_u\},
		\end{gathered}
	\end{equation}
	for some index set $\mathcal{E}\subset\R_{\geq 3}\times\N$.  Then, $\psi$ has a polyhomogeneous expansion on an appropriate blow-up, e.g. there exists $\psi^F_{i,j}\in\mathcal{C}^\infty(\{\abs{x}\leq R\})$ for arbitrary $R$ such that
	\begin{equation}
		\begin{gathered}
			\psi=\sum_{(i,j)\in\mathcal{E}^\mathcal{F}, i\leq M} \psi^{\mathcal{F}}_{i,j}(x)t^{-i}\log^j t +\mathcal{O}(t^{-M-})
		\end{gathered}
	\end{equation}

	\end{theorem}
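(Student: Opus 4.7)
The plan is to set $\phi = W + \psi$ and, using $\Box W = -W^5$, reduce to the nonlinear equation
\begin{equation}
(\Box + V)\psi = -\bigl(10 W^3 \psi^2 + 10 W^2 \psi^3 + 5 W \psi^4 + \psi^5\bigr) =: N(\psi)
\end{equation}
on the parabolic region $\{t-r > \tau_1\}$ near $i_+$, with characteristic data $(r\psi)^{dat}$ prescribed on $\scri$. I would then construct $\psi$ by backward iteration from $\scri$: set $\psi^{(0)} := 0$ and let $\psi^{(n+1)}$ be the solution of the linear characteristic problem $(\Box + V)\psi^{(n+1)} = N(\psi^{(n)})$ with the same data on $\scri$; the desired $\psi$ arises as the limit in a weighted conormal space whose norm is modelled on the right-hand side of \cref{eq:mainTheorem:decay}. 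The smallness needed to close the scheme comes from taking $\tau_1 \gg 1$: integrating the source $N(\psi^{(n)})$ in $u$ from $\infty$ down to $\tau_1$ gains a power of $\tau_1^{-1}$ on each iterate.

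The first technical ingredient is a weighted conormal backward well-posedness statement for $(\Box + V)$, built on the energy-boundedness-modulo-unstable-modes result for the linearised problem stated earlier in the paper. Concretely, I would derive a weighted energy inequality on $\mathcal{R}_{\tau_1,\tau_2} := \{\tau_1 < t-r < \tau_2\}$ with multipliers producing the weights $\jpns{u}^{2q}$ suggested by \cref{eq:mainTheorem:decay}. To propagate regularity I would commute with $\Gamma \in \{r\partial_r, t\partial_t, \Omega\}$ in $\{r < t/2\}$ and with $\tilde{\Gamma} \in \{u\partial_u, v\partial_v, \Omega\}$ in $\{r > t/2\}$; both families close under commutation with $\Box$ up to scalar multiples, while the commutator with $V = 5W^4$ is a harmless lower-order perturbation because $V$ and each $\Gamma V$ decay like $r^{-4}$. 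Sobolev embedding on level sets of $t-r$ then turns the $N-3$ commuted $L^2$ estimates into the pointwise bound \cref{eq:mainTheorem:decay}.

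The nonlinear closure is a contraction in the weighted conormal space. The dominant term is $W^3\psi^2$: combining $W^3 \sim \jpns{r}^{-3}$ with the pointwise bound $|\psi| \lesssim \jpns{r}^{-1/2}\jpns{t-r}^{-q}$ gives a source bounded by $\jpns{r}^{-4}\jpns{t-r}^{-2q}$, which is integrable in the linear energy identity and yields smallness after backward propagation from $\scri$ to $\{t-r = \tau_1\}$. The higher-power nonlinearities $W^{3-k}\psi^{2+k}$ are easier once the $L^\infty$ bound on $\jpns{r}^{1/2}\psi$ is available, which comes from Sobolev embedding with $|\alpha| \leq N-3 \geq 4$. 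I expect the main obstacle to be the unstable modes: the linearised operator has a finite-dimensional subspace of solutions growing exponentially in $t$, and these must be handled without spoiling either the linear estimate or the nonlinear iteration. I would proceed by a Lyapunov--Schmidt-type decomposition, splitting each iterate into a ``stable'' component controlled by the good part of the linear estimate and an ``unstable'' component whose coefficient is fixed by requiring polynomial decay as $t \to \infty$; because the unstable modes die off exponentially in the backward direction, their contribution to the iteration at $\{t-r = \tau_1\}$ is negligible for $\tau_1$ large, so no codimension constraint has to be imposed on $(r\psi)^{dat}$. The traces in \cref{eq:mainTheorem:trace} are inherited from the linear step and the assumption $q > 5$.

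Part (b) then follows from part (a) by a polyhomogeneous expansion argument, proceeding inductively in $M$. Substituting the ansatz $\psi_{\mathrm{app}} = \sum_{(i,j) \in \mathcal{E}^{\mathcal{F}},\, i \leq M} \psi^{\mathcal{F}}_{i,j}(x)\, t^{-i}\log^j t$ into $(\Box + V)\psi_{\mathrm{app}} = N(\psi_{\mathrm{app}})$ and collecting powers of $t^{-i}\log^j t$ yields at each order an elliptic problem $(\Delta + V)\psi^{\mathcal{F}}_{i,j} = F_{i,j}$ on $\R^3$, with source determined by lower-order coefficients and by the expansion of $(r\psi)^{dat}$ on $\scri$ transferred to the relevant blow-up face. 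These are solvable modulo the finite-dimensional kernel of $\Delta + V$, consisting of the modulation modes (translations and the Derrick-instability dilation) encoded in $\mathcal{E}^{\mathcal{F}}$; the indicial operator produces the $\log j$ factors at precisely the resonant exponents. Applying part (a) to the remainder $\psi - \psi_{\mathrm{app}}$, whose scattering data now enjoys an additional power of $\jpns{u}^{-1}$ decay, converts the formal matching into a genuine expansion with $\mathcal{O}(t^{-M-})$ error.
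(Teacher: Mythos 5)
There is a genuine gap: your linear energy estimate would not close because you have only addressed the unstable (exponential) modes $Y_\pm = e^{\pm\lamed t}Y$ and have completely omitted the zero modes. The twisted energy $\mathcal{E}^V[\psi] = \int |\nabla\psi|^2 - V\psi^2 + (T\psi)^2$ is not coercive even after projecting out $Y_\pm$: the obstruction is $\ker(\Delta+V) = \mathrm{span}\{\partial_i W,\, \Lambda W\}$, which gives rise to the bounded solution $\partial_iW$ and, more dangerously, the linearly growing solutions $t\partial_i W$ and $t\Lambda W$ of $(\Box+V)\psi=0$. A ``weighted energy inequality with multipliers producing the weights $\jpns{u}^{2q}$'' cannot be derived until these are controlled, and because $\Lambda W \notin L^2$ the naive projection used for $Y_\pm$ does not apply. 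The paper's central contribution (Section~\ref{sec:linear theory section}) is the construction of the functionals $\Theta^{mom}$, $\Theta^{com}$, $\Theta^\Lambda$ from the linearised translation, boost, and bilinear scaling currents, together with the verification that they have nonvanishing pairing with the kernel elements (\cref{eq:com_detection}, \cref{eq:scale_detection}) so that coercivity modulo these projections holds (\cref{coercivity lemma}, \cref{prop:master_current}). Without this, your iteration cannot get past the base linear estimate. It is notable that you invoke the kernel of $\Delta+V$ in part~(b) for the elliptic solvability, yet do not recognize that the same kernel destroys coercivity in part~(a).

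Your treatment of the unstable mode is also imprecise in a way that would mislead the argument. You write that ``the unstable modes die off exponentially in the backward direction,'' but this is true only of $Y_+ = e^{\lamed t}Y$; the other mode $Y_- = e^{-\lamed t}Y$ \emph{grows} exponentially as one integrates backward from $\scri$ to $\{t-r=\tau_1\}$. Choosing $\tau_1$ large does not make this contribution negligible --- on the contrary, it would grow. The paper handles this not by a Lyapunov--Schmidt decomposition but by a shooting (topological) argument (\cref{unstable lemma}): one adds a one-parameter family $a\lamed Y\chi$ to the data on $\Sigma_{\tau_2}$ and shows via continuity and the intermediate value theorem that there is a choice of $a$ (decaying as $\tau_2^{-q}$) for which the amplitude of the backward-growing mode stays within the bootstrap. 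The absence of a codimension constraint on the scattering data is a consequence of this fine-tuning, not of exponential decay of the unstable mode.
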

	
	A couple of remarks are in order.
	\begin{remark}[Decay rates]
		The assumed decay rate ($q$) is far from optimal, even with the method of the current paper. An important missing ingredient to lower the minimum value of $q$ more is integrated local energy decay (i.e. Morawetz estimate), see \cref{introdcution:forward} for more details. Furthermore, let us remark, that the $\jpns{r}^{-1/2}$ decay can easily be improved to $\jpns{r}^{-1}$ using the $r^p$ method (see \cite{schlue_decay_2013,moschidis_rp_2016}), but as this is not required for nonlinear application, we do not prove it in \textit{a)}.
	\end{remark}

	\begin{remark}[Conormal structure]
		Following the work of \cite{dafermos_quasilinear_2022}, we believe that the conormal condition, that is regularity with respect to $\jpns{u}\partial_u$ can be relaxed to $\partial_u$ derivatives to obtain a scattering solution. However, following recent progress on the late time behaviour of waves on asymptotically flat spacetimes (see \cite{schlue_decay_2013,moschidis_rp_2016,hintz_linear_2023}), we know that such a conormal assumption is well justified.
	\end{remark}
	
	\begin{remark}[Codimension]
		The unstable mode present for the linearised problem around the soliton means that a forward stability can be at most of finite codimension. However \cref{main theorem} contains an open neighbourhood of scattering data within a certain decay class. We interpret this as follows. The restriction that we are looking for a global solution prohibits any nontrivial dynamics in the unstable mode, because otherwise we would drift away from the soliton too widely. 
	\end{remark}
	
	\begin{remark}[Uniqueness]
		The theorem above does not yield uniqueness of the solution constructed. Indeed, we do not expect uniqueness in general, as already for the trivial data $\psi^{dat}=0$ we have solutions other than $\psi=0$, see \cite{duyckaerts_dynamics_2008} Theorem 1. These are to first order given by the unstable mode $e^{-t\lamed}Y(r)$.
	\end{remark}
	
	For better comparison with scattering results appearing in the literature concerning dispersive equations, we extend the solution from a neighbourhood of $i^+$ to $i_0$
	\begin{theorem}(see \cref{sec:cauchy} for precise statement)\label{thm:trivial_scattering}
		Given scattering data $\psi^{dat}\in\mathcal{C}^\infty(\scri\cap\{u\in(u_1,u_2)\})$, there exists $d$ sufficiently large such that \cref{main equation} admits a solution $\phi=W+\psi$ in $\{v>d, u<u_2+1\}$ such that
		\begin{equation}
			\begin{gathered}
				\lim_{\substack{r\to\infty, \\t-r=2u_\star}}r\psi=\psi^{dat}(v_\star,\omega).
			\end{gathered}
		\end{equation}
	\end{theorem}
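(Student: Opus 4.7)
The key simplification compared to \cref{main theorem} is that the region $\{v>d,\,u<u_2+1\}$ lies entirely at radius $r=v-u>d-u_2-1$, which can be made arbitrarily large by enlarging $d$. In this far field the soliton potential $V=5W^4\lesssim r^{-4}$ is a small perturbation, and the full nonlinear correction $(W+\psi)^5-W^5-V\psi = 10W^3\psi^2+10W^2\psi^3+5W\psi^4+\psi^5$ is suppressed by additional powers of $r^{-1}$ whenever $\psi$ decays like $r^{-1}$. Thus the problem reduces to a perturbative backward scattering problem for the wave equation, bypassing the delicate linear theory around the soliton (in particular the unstable mode of \cref{linearised eom}) that drives the proof of \cref{main theorem}.

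My plan is to pose the problem as a characteristic initial value problem in double-null coordinates on the strip $\{v>d,\,u\in(-\infty,u_2+1)\}$: the radiation field $\psi^{\mathrm{dat}}$ on the portion of $\scri$ with $u\in(u_1,u_2)$, trivial data on the outer part of $\scri$, and trivial data on the ingoing null hypersurface $\{u=u_2+1\}$. Consistency of the trivial ingoing data follows a posteriori from finite speed of propagation, since the source of the backward problem is confined to $\{u\leq u_2\}$. The backward solver for the inhomogeneous linear equation $\Box\tilde\psi = f$ with prescribed radiation field is classical in this regime and can be carried out via an $r^p$-type hierarchy of weighted energies in the spirit of \cite{schlue_decay_2013,moschidis_rp_2016}, yielding estimates of the schematic form
\begin{equation*}
\|r\tilde\psi\|_{L^\infty}+ \text{(weighted derivative energies)}\lesssim \|r\psi^{\mathrm{dat}}\|_{H^s(\scri)} + \text{(weighted norm of $f$)}.
\end{equation*}

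The nonlinear problem is then closed by a contraction mapping $\mathcal{T}:\psi\mapsto\tilde\psi$ that sends a candidate $\psi$ to the linear backward solution with source $f=-5W^4\psi-10W^3\psi^2-10W^2\psi^3-5W\psi^4-\psi^5$ and the data above. Working in a Banach space enforcing $|\psi|\lesssim r^{-1}\|r\psi^{\mathrm{dat}}\|$ together with the analogous weighted bounds on angular and $\jpns{u}\partial_u$ derivatives, one uses $W\lesssim r^{-1}$ to estimate $|f|\lesssim\sum_{k=0}^{4}r^{-(4-k)}|\psi|^{k+1}\lesssim r^{-5}\|r\psi^{\mathrm{dat}}\|$, which is integrable enough under backward propagation that the correction it produces is bounded by $C d^{-\alpha}\|r\psi^{\mathrm{dat}}\|$ for some $\alpha>0$. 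Taking $d$ large then supplies the smallness required for contraction and for closing the nonlinear estimates. The principal obstacle is the choice of function space: one must pick a weighted norm that is simultaneously preserved by the linear backward scattering map, strong enough to control $r\psi$ pointwise (in order to estimate the algebraic nonlinearity), and compatible with propagation of $\mathcal{C}^\infty$ regularity from $\psi^{\mathrm{dat}}$. Once this is set, no analytic ingredient beyond standard semilinear characteristic scattering theory is required.
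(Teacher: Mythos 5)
Your strategy matches the paper's at the conceptual level: both exploit the fact that in $\{v>d,\,u<u_2+1\}$ the radius is uniformly bounded below by $r>d-u_2-1$, so $V\lesssim r^{-4}$ and the quintic nonlinearity supply powers of $d^{-1}$ that yield smallness independently of the size of $\psi^{\mathrm{dat}}$, bypassing the unstable-mode and zero-mode analysis entirely. The differences are in execution. The paper (Lemmas 6.1 and 6.2) runs a bootstrap on a commuted $T$-energy current $\masterJ=\sum_{|\alpha|\leq k}T\cdot(\T^0+\tilde{\T}^0)[\Gamma^\alpha\psi]$, $\Gamma\in\{T,S,\Omega\}$, using the exact commutation relations $[T,\Box]=[\Omega,\Box]=[S,r^2\Box]=0$; the nonlinear bulk error lands as $d^{-2}\sum_{i\leq5}\int (\mathfrak{C}_\tau[\masterJ])^i$, and enlarging $d$ closes the bootstrap. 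It also explicitly notes that ``no ILED estimates are necessary'' and does not invoke $r^p$ weights anywhere -- the untwisted plus twisted $T$-energy already controls $r^{-1}\psi$ in $L^2$ and (after Sobolev) $r^{-1/2}\psi$ in $L^\infty$, which suffices. You instead propose a contraction mapping on a Banach space enforcing $|\psi|\lesssim r^{-1}$ and invoke an $r^p$-type hierarchy; this would also work, but the $r^p$ machinery is heavier than needed here, and you would still have to pin down the weighted space carefully (as you acknowledge) so that the backward linear solution operator is bounded, the pointwise $r^{-1}$ bound survives, and $\mathcal{C}^\infty$ regularity propagates -- precisely the issues the paper's choice of $\masterJ$ resolves at once. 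One more structural difference: the paper splits the region into $u\in(\tau_1,\tau_2)$ (Lemma 6.1) and the remaining wedge toward $i_0$ where the $\scri$ data vanishes (Lemma 6.2, where the bulk error gains an extra $\jpns{v}^{-3/2}$ weight rendering the $u$-integral convergent). Your ``trivial data on the ingoing cone $\{u=u_2+1\}$ by finite speed of propagation'' is really the Lemma 6.2 phenomenon stated differently; it is sound, but note that the vanishing happens on the $u<u_1$ side (toward $i_0$), not on $\{u=u_2+1\}$, which is where the nontrivial characteristic data (coming from the interior construction of Theorem 1.4) is actually prescribed.
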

	\begin{remark}
		There is no restriction on the size of any norm of $\psi^{dat}$. This is a simple consequence of working in the far exterior region and shows that constructing solutions around $i_0$ is much simpler in a certain sense.
	\end{remark}

	In particular, we may combine \cref{main theorem,thm:trivial_scattering} to get
	\begin{cor}\label{cor:intro:Cauchy}
		There exists scattering data $(u^\mu_\infty)\in\mathcal{C}^\infty$ (of arbitrary finite $\dot{H}^1$ norm) for $\mu\in\{0,1,2,3\}$, such that \cref{main equation} admits a solution for $t>T(u^\mu)$ scattering to this data and a soliton, i.e.
		\begin{equation}
			\begin{gathered}
				\norm{\partial_\mu(\phi-W)-u^\mu_\infty}_{L^2(\{t=c\})}\to0,\qquad \text{as }c\to\infty.
			\end{gathered}
		\end{equation}
		
	\end{cor}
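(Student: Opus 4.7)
The plan is to combine the two preceding theorems into a single solution on a Cauchy slice $\{t=c\}$ and then identify $u^\mu_\infty$ via classical linear scattering theory.

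First I would pick a smooth radiation field $\psi^{\mathrm{dat}}$ on $\scri$ compactly supported in $\{u\in(u_1,u_2)\}$ with $u_1>\tau_1$ large, satisfying the hypotheses of \cref{main theorem}; since the linear scattering map preserves the $\dot H^1$ norm, this still leaves the freedom to prescribe arbitrary $\dot H^1$ size for the associated free wave. \cref{main theorem} produces $\phi_{\mathrm{int}}=W+\psi_{\mathrm{int}}$ on $\{t-r>\tau_1\}$, while \cref{thm:trivial_scattering} produces $\phi_{\mathrm{ext}}=W+\psi_{\mathrm{ext}}$ on $\{v>d,\,u<u_2+1\}$, both scattering to $\psi^{\mathrm{dat}}$. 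In the overlap, uniqueness for the characteristic scattering problem from $\scri$ (finite speed of propagation applied to the difference, evolving inwards from $\scri$) forces $\psi_{\mathrm{int}}=\psi_{\mathrm{ext}}$; gluing gives a single $\psi$ defined on $\{t\ge T\}$ for some $T=T(\psi^{\mathrm{dat}})$ and covering every slice $\{t=c\}$ with $c\ge T$.

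Next I would associate to $\psi^{\mathrm{dat}}$ a free wave: by the classical inverse radiation-field map for $\Box$ on Minkowski (cf.\ \cite{friedlander_radiation_1980}), there is a unique smooth $\psi_L$ on $\R^{3+1}$ with $\Box\psi_L=0$ whose radiation field equals $\psi^{\mathrm{dat}}$ and whose Cauchy data lies in $\dot H^1\times L^2$. I would set $u^\mu_\infty:=\partial_\mu\psi_L$, so that the desired scattering statement becomes $\|\partial_\mu\psi-\partial_\mu\psi_L\|_{L^2(\{t=c\})}\to 0$.

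The remaining step, which is also the main obstacle, is this $L^2$ convergence. The difference $\tilde\psi:=\psi-\psi_L$ satisfies
\begin{equation*}
\Box\tilde\psi=-V\psi-\bigl((W+\psi)^5-W^5-5W^4\psi\bigr),\qquad V=5W^4,
\end{equation*}
with vanishing radiation field at $\scri$ by construction. The right-hand side is strictly better-decaying than $\psi$ itself, using \cref{eq:mainTheorem:decay} together with $V\lesssim\jpns{r}^{-4}$ and the compact $u$-support of $\psi^{\mathrm{dat}}$. Combining this with $r^p$-weighted energy estimates for the free wave equation (\cite{schlue_decay_2013,moschidis_rp_2016}) and the vanishing radiation field of $\tilde\psi$ yields $\|\partial_\mu\tilde\psi\|_{L^2(\{t=c\})}\to 0$ as $c\to\infty$. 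The subtlety is that both $\partial_\mu\psi$ and $\partial_\mu\psi_L$ lie merely in $L^2$ on each Cauchy slice --- indeed $\|\partial_\mu\psi_L\|_{L^2(\{t=c\})}$ is conserved by energy conservation of the free wave --- so the required gain must be extracted entirely from the coincidence of the radiation fields of $\psi$ and $\psi_L$, and the $r^p$ machinery is precisely what converts this into a decay rate on the energy of $\tilde\psi$.
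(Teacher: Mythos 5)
The paper gives no explicit proof of this corollary — it states only that it follows by "combining \cref{main theorem,thm:trivial_scattering}" and defers the precise construction to \cref{sec:cauchy} — so your proposal is genuinely filling a gap, and the overall shape (glue the two regions, identify $u^\mu_\infty$ via the inverse radiation-field map, reduce to decay of $\tilde\psi=\psi-\psi_L$) is the natural one. However, the gluing step contains a concrete error. You claim that $\psi_{\mathrm{int}}$ and $\psi_{\mathrm{ext}}$ must coincide on the overlap because both have radiation field $\psi^{\mathrm{dat}}$ and "uniqueness for the characteristic scattering problem from $\scri$" applies. This is false on two counts that the paper itself flags: the Uniqueness remark after \cref{main theorem} says explicitly that the scattering solution is not unique given only the $\scri$ data (for trivial $\psi^{\mathrm{dat}}$ one has nonzero solutions driven by $Y_-$), and the well-posed version of the characteristic problem in \cref{def:scattering definition} and \cref{lemma:local_cauchy,lemma:global_cauchy} also prescribes data on a hypersurface $\Sigma_{\tau_2}$ — the region $\{u\in(\tau_1,u_2+1)\}\cap\{v>v_\infty\}$ has $\Sigma_{\tau_2}$ (or a null cone $\underline{\mathcal{C}}$) as an additional boundary, and a difference of two solutions with matching $\scri$ data but different $\Sigma_{\tau_2}$ data need not vanish there; "finite speed of propagation inward from $\scri$" does not dominate this boundary. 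The correct combination is to restrict the \cref{main theorem} solution to some $\Sigma_{\tau_2}\subset\{t-r>\tau_1\}$ and feed that trace, together with $\psi^{\mathrm{dat}}$, into \cref{lemma:local_cauchy,lemma:global_cauchy}; the uniqueness asserted there then makes the extension agree with the original by construction.

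Two secondary points. First, with $\psi^{\mathrm{dat}}$ compactly supported in $\{u\in(u_1,u_2)\}$ and $u_1>\tau_1$, the weighted bound in \cref{main theorem} forces $\|\psi^{\mathrm{dat}}\|_{L^2(\scri)}\lesssim\jpns{u_1}^{1/2-q}$, so the $\dot H^1$ size of the associated free wave is small (and shrinks as $u_1$ grows), contradicting your claim of "arbitrary $\dot H^1$ size"; the arbitrary-size freedom in the corollary lives in the \emph{exterior} construction at small $u$ (see the Remark after \cref{thm:trivial_scattering}), outside the support you chose. Second, the final $L^2(\{t=c\})$ convergence for $\tilde\psi$ is asserted rather than proved: invoking the $r^p$ hierarchy is contrary to the paper's own stance (\cref{introdcution:forward} explains that $r^p$/Morawetz machinery is deliberately not developed here), and what is actually needed is a direct $T$-energy estimate on $\{t=c\}$ for $\tilde\psi$, using the vanishing radiation field to kill the $\scri$ flux and the short-range bound on $\Box\tilde\psi=-V\psi-\mathcal{N}[\psi]$ (which follows from \cref{eq:mainTheorem:decay}) to control the bulk. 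Your sketch does not show that this yields convergence to zero rather than mere boundedness.
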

	\subsubsection{Backwards problem} 
	
	The setup and the question posed in \cref{main theorem sketch} are those appearing in \cite{dafermos_scattering_2013}. As highlighted in that work, the scattering (i.e. backward) construction is detached from many of the difficulties present in the stability (i.e. forward) problem. In particular, one does not need to capture the (retarded) time decay of waves, a necessary ingredient in stability problems. However, the exponentially decaying scattering data posed in \cite{dafermos_scattering_2013} also allows one to get away without proving energy boundedness for the linearised operator. This is of crucial importance, as the presence of an ergo region intertwines boundedness and decay statement (see \cite{Dafermos2016}). For polynomially decaying data, energy boundedness appears crucial. Let us now present the set up of \cite{dafermos_scattering_2013}, which we will follow closely.
	
	In \cite{dafermos_scattering_2013}, the authors solve finite scattering problem in the region $\mathcal{R}$ shown in \cref{fig:two_hypersurface_BH}. They prescribe data on $\Sigma_2,\underline{\mathcal{C}}_{v_\infty},\mathcal{H}_{\tau_1,\tau_2}$ and construct the solution by a bootstrap argument.

	\begin{figure}[h]
		\centering
		\begin{subfigure}{.6\textwidth}
			\centering
			\resizebox{.9\textwidth}{!}{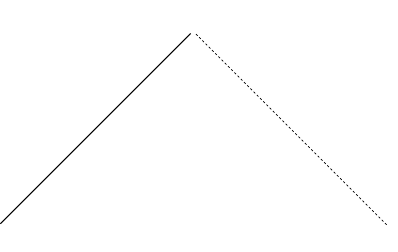}
			\caption{Black holes construction}
		\end{subfigure}%
		\begin{subfigure}{.5\textwidth}
			\centering
			\LARGE
			\resizebox{0.7\textwidth}{!}{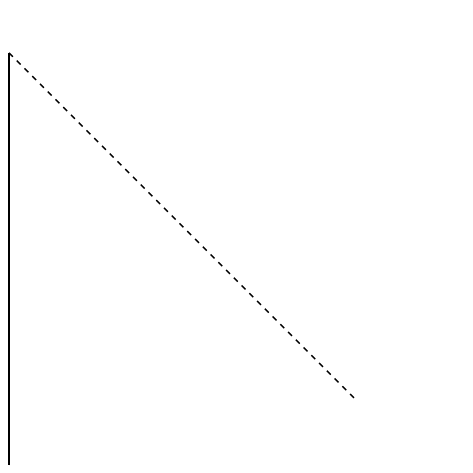}
			\normalsize
			\caption{Soliton construction}
		\end{subfigure}
		\caption{Penrose diagram of the region where the solution is constructed in \cite{dafermos_scattering_2013} (a)  with a limit taken as $\tau_2\to\infty$.  Scattering constructions in \cref{main theorem} ($I$), \cref{thm:trivial_scattering} ($II$) (b), $\tilde{\Sigma}$ denotes constant $t$ hypersurfaces.}
		\label{fig:two_hypersurface_BH}
	\end{figure}
	
	Then, they take the limit $v_\infty\to \infty$ thereby getting a scattering solution in bounded retarded time regions. Afterwards, they  take $u_\infty\to\infty$ and obtain uniform in $u_\infty$ estimates on the solution in $\mathcal{R}$. Existence of a scattering solutions follows by compactness argument.
	
	Let us briefly comment on this strategy. For equations that have quadratic nonlinearities the first step is highly non-trivial. Indeed, the famous example of Fritz John $\Box\phi=(\partial_t\phi)^2$ illustrates that this step might break down \footnote{though note that one may pose special, non generic, modified data on $\scri$ such that a scattering solution may exist \cite{yu_modified_2021,yu_nontrivial_2022}}. However, under very mild conditions --known as weak null structure-- this step should pose no difficulties if one uses the $r^p$ method of Dafermos--Rodnianski \cite{dafermos_new_2010}, see \cite{keir_weak_2018}. Indeed in \cite{dafermos_scattering_2013}, the authors introduce a structure, called $p-$indices to capture the behaviour of different parts of the system under study towards $\scri$. In our case, the quintic nonlinearities make this step trivial. 
	
	To proceed with the second limit, one uses energy estimates of the form
	\begin{equation}\label{introduction energy estimate}
		\begin{gathered}
			\mathcal{E}[\psi]_{\tau}+\int_{\mathcal{R}_{\tau,\tau_\infty}}\text{Bulk term}\lesssim\mathcal{E}[\psi]_{\tau_\infty}+\mathcal{E}_{\scri_{\tau,\tau_\infty}},
		\end{gathered}
	\end{equation}
	where $\mathcal{E}[\psi]_\tau$ is some $L^2$ based coercive norm of $\psi$ on $\Sigma_\tau$ and $\mathcal{E}_{\scri_{\tau,\tau_\infty}}$ is a similar quantity on $\scri$. In fact, it is sufficient to have such an estimate for the linearised problem. In most cases of interest, we have
	\begin{equation}
		\begin{gathered}
			\int_{\rdom}\text{Bulk term}\lesssim \int_{(\tau_1,\tau_2)}\mathcal{E}[\psi]_\tau.
		\end{gathered}
	\end{equation}
	Putting together these two estimates, and using the Gronwall inequality yields global control on $\mathcal{E}[\psi]_\tau$, provided that $\mathcal{E}_{\scri_{\tau_1,\tau_2}}$ decays exponentially and $\mathcal{E}[\psi]_{\tau_\infty}=0$. A similar issue also arises for \cref{main equation}. Even though there is no blueshift or ergo region present, the linearised problem does not admit a coercive energy because of a bad signed potential. It seems to be necessary to have energy boundedness for the linear problem to obtain global energy bounds.

	\subsubsection{Obstructions to boundedness: unstable and zero modes}\label{subsec:obstruction_to_decay}
	To understand the obstructions to boundedness, observe that the tangential part of the $\partial_t$ energy norm on $\tilde{\Sigma}_a=\{t=a\}$ slices
	\begin{equation}
		\begin{gathered}
			\mathcal{E}_{\tilde{\Sigma}_a,\parallel}^V[\psi]=\int_{\tilde{\Sigma}_a} \abs{\nabla\psi}^2-V\psi^2
		\end{gathered}
	\end{equation}
	of \cref{linearised eom} is not coercive. Indeed, we know (see \cite{duyckaerts_dynamics_2008}) that there is a unique eigenvalue $\lamed>0$ and corresponding eigenfunction $Y\in\dot{H}^1$ such that $(\Delta+V)Y=\lamed^2 Y$, moreover $Y\in\mathcal{C}^\infty, \abs{Y}\lesssim e^{-\lamed r}$.  This yields solutions $Y_\pm=e^{\pm\lamed t}Y$, called unstable modes, to \cref{linearised eom} with  $\mathcal{E}_{\tilde{\Sigma}_a,\parallel}^v[Y_\pm]<0$. To overcome this, one may try to capture the content of $Y_\pm$ present in a solution of \cref{linearised eom} $\psi$ by introducing
	\begin{equation}
		\begin{gathered}
			\alpha_{\pm}^\infty(s)=\int_{\tilde{\Sigma}} Y(\partial_t\pm\lamed)\psi\underbrace{\implies}_{\cref{linearised eom}} \partial_s\alpha_{\pm}^\infty(s)=\pm\lamed\alpha_{\pm}^\infty(s)
		\end{gathered}
	\end{equation}
	Indeed, controlling $\mathcal{E}^V_{\tilde{\Sigma},\parallel}$ and  the assumption $\alpha_{\pm}^\infty(s)=0$ implies that $\norm{\psi}_{\dot{H}^1}\geq 0$. For nonlinear applications, it is helpful if one is allowed to localise  $\alpha_{\pm}^\infty$ and introduce a cutoff function in its definition as done in \cite{luhrmann_stability_2022}. 
	
	However, non-negativity is not sufficient for nonlinear applications and it is immediate that all the symmetries of \cref{main equation} introduce zero mode solutions to \cref{linearised eom}. Indeed, note that $(\Box+V)Y^{\ker}=0$ for all $Y^{\ker}\in\mathcal{Z}:=\{\partial_i W,t\partial_i W,\Lambda W,t \Lambda W\}$. A crucial point is that $\mathcal{Z}$ contains all the obstruction for coercivity. In the terminology of Duyckaerts-Kenig-Merle \cite{duyckaerts_solutions_2016}, we require that $W$ is non-degenerate, i.e. $\ker(\Delta+5W^4)=\mathcal{Z}$.
	
	One could attempt to project these modes similar to the unstable ones, but due to the slow decay of $\Lambda W$, the corresponding quantity would not be well defined for $\dot{H}^1$ functions. If one localises via a cutoff function, that would introduce bad weights that cannot be controlled in a similar fashion as done in \cite{luhrmann_stability_2022}. In this paper, we instead capture the content of the zero modes (i.e. kernel elements) globally, similar to \cite{johnson_linear_2019,dafermos_linear_2019}. The exact procedure of this is the main content of the present work and it is included in \cref{sec:linear theory section}.
	
	\paragraph{Continuous vs discrete modulation}
	The presence of the kernel elements for the linearised operator is very well understood in many different settings and the treatment of these is usually referred to as modulation theory, see e.g. \cite{krieger_focusing_2007,luhrmann_stability_2022}. In this paper, we do this modulation in a discrete manner. We control such modulation parameters by applying appropriate projection operators to the solution of the linearised equation. In the scattering problem, we do not need to renormalise our solution, because we know that all of these parameters will converge to 0. For the stability problem, the final soliton might obtain some momentum and change in its scaling parameter compared to the perturbed one. To keep the solution perturbative, one needs to renormalise the soliton at timescales when the linear behaviour is modified by the nonlinear dynamics, see \cref{sec:intro_dyadic}.

	\subsubsection{Unstable mode and codimension}
	Let us discuss the data posed on $\Sigma_{\tau_\infty}$. Because of the unstable modes $Y_-$, it is easy to see that given generic scattering data and $(r\psi)|_{\Sigma_{\tau_\infty}}=0$ the solution may grow exponentially in retarded time ($\tau_\infty-t_\star$), even though the data on $\scri$ is only polynomial. However, by a topological argument, we can select $a_{\tau_\infty}\in\R$ depending on the scattering data, such that $\psi|_{\Sigma_{\tau_\infty}}=a_{\tau_\infty}Y_-$ yields a polynomially growing solution. In fact, $a_{\tau_\infty}\to 0$ as $\tau_\infty\to\infty$ with a rate prescribed by the decay of $(r\psi)|_{\scri}$.
	
	\subsubsection{Dyadic iteration}\label{sec:intro_dyadic}
	Finally, let us mention the structure of norms used in the construction of the scattering solution. We follow \cite{dafermos_quasilinear_2022}, and first consider the problem in bounded retarded time regions.  We prove an existence result and control of energy type quantities that holds on a timescale that  grows inverse polynomially with the size of the initial data. Then, we iterate this construction on dyadic slabs to get a global result.
	
	Even though the current problem is simple enough to not to use the dyadic technique, we hope to make further progress on the program outlined in \cref{gr motivation} in future works, where we believe that the dyadic approach has a more significant role.
	
	\subsection{Remarks on related problems}\label{sec:intro_related}
	
	\subsubsection{Supercritical problem}
	As remarked in \cref{introduction:motivation}, we develop methods that are applicable for supercritical equations as well. However, the detailed study of the soliton and its spectral properties are a necessary requirement for our methods to work. Consider the problem
	\begin{equation}\label{eq:supercritical:elliptic}
		\begin{gathered}
			-\Delta u=u^7-u^q,\quad u:\R^3\to\R\\
			u>0,\quad u(x)=u(\abs{x}),\quad u(r)=\mathcal{O}(r^{-1}) \text{ as } r\to\infty,\quad q\in\N_{>7}.
		\end{gathered}
	\end{equation}
	Then, we have
	\begin{theorem}[\cite{kwong_ground_1992,lewin_double-power_2020}]
		\cref{eq:supercritical:elliptic} admits a unique radially symmetric positive solution, $W_q$, and we have that the linearised operator $\mathcal{L}_{W_q}=-(\Delta+7W_q^6-qW^{q-1}_q)$ satisfies
		\begin{equation}
			\begin{gathered}
				\ker(\mathcal{L}_{W_q})=\{\partial W_q\}.
			\end{gathered}
		\end{equation}
	\end{theorem}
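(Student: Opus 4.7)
The plan is to split the statement into existence, uniqueness, and non-degeneracy, carried out in that order, following the strategies of \cite{kwong_ground_1992,lewin_double-power_2020}. For existence I would pass to the radial ODE $u''+\tfrac{2}{r}u' = u^q - u^7$ with $u(0)=\alpha$ and $u'(0)=0$, and run a shooting argument in $\alpha$. The sign change of the nonlinearity at $u=1$ splits the shooting parameter space into an open set of $\alpha$ for which $u$ crosses zero at finite radius, an open set for which $u$ is eventually trapped above $1$, and a non-empty intermediate set producing monotone decaying solutions. The $\mathcal{O}(r^{-1})$ asymptotic is then read off from the asymptotic equation $u''+\tfrac{2}{r}u'=0$ once one checks that $u^7-u^q$ is integrable against $r^2\,\d r$ at infinity.

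Uniqueness is the main obstacle. I would adapt Kwong's monotone separation scheme: given two positive radial decaying solutions $W,\tilde W$ with initial heights $\alpha<\tilde\alpha$, one tracks the linearised variation $v=\partial_\alpha u$ along the family and combines Sturm comparison with a Pohozaev-type functional
\begin{equation*}
P(r) = \tfrac{1}{2}(u')^2 + F(u) + \tfrac{u\, u'}{r}, \qquad F'(u) = u^7 - u^q,
\end{equation*}
to bound the number of sign changes of $v$ up to the first zero of $u$. The subtlety of the double-power case is that $f(u)/u = u^6 - u^{q-1}$ is not monotone, so the argument must be split at the critical value $u_\star = (6/(q-1))^{1/(q-7)}$ with separate monotonicity statements on $\{u>u_\star\}$ and $\{u<u_\star\}$; this is precisely the refinement carried out in \cite{lewin_double-power_2020}, and is where I expect the genuine work to be.

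For non-degeneracy, I would decompose $\phi\in\dot H^1$ satisfying $\mathcal{L}_{W_q}\phi=0$ into spherical harmonics $\phi=\sum_{\ell,m}\phi_{\ell,m}(r)Y_{\ell,m}(\omega)$. The $\ell=1$ sector is spanned by the translations $\partial_i W_q$; that these exhaust it follows from Sturm oscillation together with the fact that $W_q'$ does not change sign, both consequences of the shooting analysis in the first step. For $\ell\geq 2$ the centrifugal contribution $\ell(\ell+1)/r^2$ dominates the negative part of the potential and rules out any kernel element via an integration by parts. The radial $\ell=0$ sector is the most delicate: a nontrivial radial kernel element would, via the implicit function theorem applied at $W_q$, integrate to a one-parameter family of positive radial solutions with distinct initial heights, contradicting the uniqueness from the previous step.
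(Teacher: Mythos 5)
The paper does not prove this statement: it is imported as a black-box result from the cited references (note the attribution in the theorem header), so there is no internal proof to compare against --- the paper proves the Lemma that follows, but not this theorem. Your sketch is thus an attempted reconstruction of \cite{kwong_ground_1992,lewin_double-power_2020}, and the skeleton (shooting for existence, Kwong-style monotone separation refined for the double-power structure for uniqueness, spherical-harmonic splitting for non-degeneracy) is the correct one.

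There is, however, a concrete sign error in your shooting dichotomy. With $u''+\tfrac{2}{r}u'=u^q-u^7$, $u(0)=\alpha>1$, $u'(0)=0$, one has $3u''(0)=\alpha^q-\alpha^7>0$, and as long as $u>1$ the right-hand side stays positive, so $(r^2u')'>0$ and $u'>0$: the solution increases monotonically and blows up. No open set of $\alpha$ yields solutions ``eventually trapped above $1$.'' The shooting window is $\alpha\in(0,1)$, where every solution decreases monotonically and stays positive, and what distinguishes $\alpha_*$ is the decay \emph{rate}: the generic solution inherits the slow supercritical Emden--Fowler rate $u\sim c\,r^{-2/(7-1)}=c\,r^{-1/3}$ from the leading $u^7$ power, while $W_q$ has the fast rate $r^{-1}$ --- a solution that exists only because of the $-u^q$ correction. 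Separately, your $\ell=0$ non-degeneracy argument inverts the logic of the implicit function theorem: a nontrivial kernel element obstructs, rather than produces, a one-parameter family of solutions. The standard route is instead to observe that any radial $\dot H^1$ kernel element must be proportional to $\partial_\alpha u_\alpha|_{\alpha=\alpha_*}$, the unique solution of the linearized radial ODE regular at $r=0$, and then use the oscillation/separation information already obtained in the uniqueness step to rule out decay of this variation at infinity.
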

	Furthermore, we have
	\begin{lemma}
		a) The solution $W_q$ is conormal, i.e. $(r\partial_r)^m W_q=\mathcal{O}(r^{-1})$ for all $n$.
		
		b) Let $\mathcal{L}_{W_q}$ be as above, then $\dim\{Y\in H^1: \exists \lamed<0, \mathcal{L}_{W_q} Y=\lamed Y \}<\infty$.
	\end{lemma}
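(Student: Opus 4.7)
The plan is to treat the two parts separately: part (a) reduces to a radial ODE bootstrap, while part (b) is a spectral-theoretic statement for a rapidly decaying potential.

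For part (a), the first step is to write \cref{eq:supercritical:elliptic} in $3$d radial form. Since $r^2\Delta=(r\partial_r)^2+r\partial_r$, the equation reads
\begin{equation*}
(r\partial_r)^2 W_q+r\partial_r W_q=r^2\bigl(-W_q^7+W_q^q\bigr),
\end{equation*}
and the right-hand side is $\mathcal{O}(r^{-5})$ using the assumed bound $W_q=\mathcal{O}(r^{-1})$ and $q>7$. Setting $f:=r\partial_r W_q$, the left side equals $(rf)'$, so $rf$ has a finite limit at infinity and I conclude the base case $r\partial_r W_q=\mathcal{O}(r^{-1})$. The induction on $m$ is then purely algebraic: applying $(r\partial_r)^m$ to the ODE and using the commutation relation $(r\partial_r)^m(r^\alpha h)=r^\alpha (r\partial_r+\alpha)^m h$ together with the fact that $(r\partial_r)^k(W_q^p)$ is a polynomial in the $(r\partial_r)^j W_q$ for $j\leq k$ of homogeneity $p$, each source term on the right is $\mathcal{O}(r^{-5})$. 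Hence the inductive hypothesis $(r\partial_r)^{j}W_q=\mathcal{O}(r^{-1})$ for $j\leq m+1$ yields $(r\partial_r)^{m+2}W_q=\mathcal{O}(r^{-1})$.

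For part (b), the plan is to exploit the fact that the potential $V:=-7W_q^6+qW_q^{q-1}$ decays like $\jpns{r}^{-6}$, since $q-1\geq 7$. First, Weyl's theorem shows $\sigma_{\mathrm{ess}}(\mathcal{L}_{W_q})=\sigma_{\mathrm{ess}}(-\Delta)=[0,\infty)$ because multiplication by $V$ is a relatively compact perturbation of $-\Delta$, so the negative spectrum consists of isolated eigenvalues of finite multiplicity. To bound their total number I would invoke the Cwikel--Lieb--Rosenblum inequality in dimension $3$,
\begin{equation*}
\#\bigl\{\lamed<0\ \text{eigenvalue of}\ \mathcal{L}_{W_q}\bigr\}\leq C\int_{\R^3}V_-(x)^{3/2}\,\d x,
\end{equation*}
and observe that the right-hand side is finite because $V_-^{3/2}\lesssim\jpns{r}^{-9}$ is integrable over $\R^3$. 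An equivalent Birman--Schwinger formulation works just as well: the operator $V_-^{1/2}(-\Delta+E)^{-1}V_-^{1/2}$ is Hilbert--Schmidt uniformly in $E>0$ because $V_-$ decays like $r^{-6}$ and the convolution kernel of $(-\Delta)^{-1}$ in $\R^3$ is $c\abs{x-y}^{-1}$.

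The only subtle step is the base case of part (a): one needs to integrate $(rf)'=\mathcal{O}(r^{-5})$ starting from infinity and identify the integration constant, which requires knowing a priori that $rW_q$ actually admits a finite limit (as opposed to merely being bounded). This follows from the same ODE analysis applied to $v:=rW_q$, which satisfies $v''=-v^7/r^6+v^q/r^{q-1}=\mathcal{O}(r^{-6})$; the right-hand side is integrable at infinity, so $v'$ converges, and boundedness of $v$ forces the limit of $v'$ to be $0$, whence $v$ itself tends to a finite value. Once this is in place, both the higher-order induction and the spectral argument are routine.
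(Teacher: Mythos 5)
Your argument is correct for both parts, and it is a more explicit route than the paper's, which disposes of the lemma in two sentences (citing the theory of regular singular points for (a) and a compact weighted embedding $\dot{H}^1 \hookrightarrow L^2(\jpns{r}^{-4}\,\d x)$ for (b), following Duyckaerts--Kenig--Merle). For (a) you do an ODE bootstrap: the conversion $r^2\Delta=(r\partial_r)^2+r\partial_r$ and the commutation identity $(r\partial_r)^m(r^\alpha h)=r^\alpha(r\partial_r+\alpha)^m h$ are exactly the algebraic structure that the Frobenius/regular-singular-point theory exploits, but you carry out the induction by hand rather than invoking the general theorem, and you correctly flag and repair the one delicate step (the constant of integration at the base case, handled via $v=rW_q$ with $v''=\mathcal{O}(r^{-6})$ plus boundedness of $v$ to rule out a linear term). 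For (b) you replace the paper's qualitative compact-perturbation argument by the CLR inequality $\#\{\lamed<0\}\lesssim\int V_-^{3/2}$, with $V_-\lesssim W_q^6\lesssim\jpns{r}^{-6}$ so $V_-^{3/2}\in L^1(\R^3)$; this is a different but standard route and has the small advantage of also giving a quantitative bound on the number of negative eigenvalues rather than mere finiteness. (Weyl's theorem is used to identify the essential spectrum, CLR to rule out accumulation at $0$; both are needed, as you implicitly acknowledge.) No gaps.
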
 
	\begin{proof}
		a) This follows from the theory of regular singular points for ODEs.
		
		b) This is a simple consequence of the compact embedding $\dot{H}^1\to L^2(\R^3,\jpns{r}^{-4}dx)$, similar to the discussion after Claim 3.5 in \cite{duyckaerts_solutions_2016}.
	\end{proof}
	
	As these are all the properties that the we use in the proof of \cref{main theorem}, we can also show
	\begin{theorem}
		Given scattering data $\psi^{dat}$ as in \cref{main theorem}, there exist a scattering solution $\phi=W_q+\psi$ in a neighbourhood of $i_+$ for
		\begin{equation}
			\begin{gathered}
				\Box\phi = -\phi^7+\phi^q
			\end{gathered}
		\end{equation}
		such that $\psi$ satisfies \cref{eq:mainTheorem:decay,eq:mainTheorem:trace}.
	\end{theorem}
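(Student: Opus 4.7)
The plan is to follow the proof of \cref{main theorem} line by line, observing that the argument only uses three structural properties of the background: (i) conormal decay of the soliton, (ii) non-degeneracy of the kernel of the stationary linearised operator on $\dot{H}^1$, and (iii) finite-dimensionality of the unstable subspace. For $W_q$, items (i) and (iii) are the preceding lemma, and item (ii) is the non-degeneracy theorem quoted above. Accordingly, I would rewrite the equation as $(\Box+V_q)\psi=\mathcal{N}(\psi)$ with $V_q:=7W_q^6-qW_q^{q-1}$ and $\mathcal{N}(\psi):=-(W_q+\psi)^7+(W_q+\psi)^q+W_q^7-W_q^q-V_q\psi$. Since $W_q=\mathcal{O}(r^{-1})$ together with all conormal derivatives, $V_q$ decays like $r^{-6}$ with weighted derivatives bounded, which matches or exceeds the rates used for $V=5W^4$ in \cref{sec:linear theory section}; the energy boundedness modulo unstable modes, together with the global zero-mode capture, then extends with only cosmetic changes.

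Second, the zero-mode part of the construction simplifies: the kernel is $\{\partial_i W_q\}_{i=1,2,3}$ instead of the full $\mathcal{Z}$, so fewer projection operators need to be propagated along the dyadic slabs of \cref{sec:intro_dyadic}, and there are no linearly-growing-in-$t$ kernel elements to contend with. For the unstable subspace, which is now $k$-dimensional with $k<\infty$, I would replace the one-parameter topological argument that selects the scalar $a_{\tau_\infty}$ in the critical case by an $\R^k$-valued shooting/Brouwer argument: having fixed the scattering data and the zero-mode projections, I would solve backwards from $\Sigma_{\tau_\infty}$ parametrised by the component of $(r\psi)|_{\Sigma_{\tau_\infty}}$ in the unstable eigenspace, project the resulting evolution onto each unstable eigenfunction, and use a degree argument to select the unique parameter producing only polynomial growth; the rate $a_{\tau_\infty}\to 0$ follows by the same comparison with the decay of $(r\psi)^{dat}$ as in the critical case.

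Third, for the nonlinear estimate I would expand $\mathcal{N}(\psi)$ as a polynomial in $\psi$ with coefficients that are polynomials in $W_q$ and hence conormal with the same decay as $W_q$. In the scattering regime near $i_+$, $\psi$ is small in $L^\infty$ by the bootstrap bound \cref{eq:mainTheorem:decay}, so the terms of order $\psi^k$ with $k\geq 3$ carry additional smallness compared to the quadratic and cubic terms; the quadratic term $\binom{7}{2}W_q^5\psi^2$ enjoys the same decay budget as $W^3\psi^2$ in the critical case, and the dyadic bootstrap of \cref{sec:intro_dyadic} closes identically. The trace identities \cref{eq:mainTheorem:trace} are read off as before from the $r^p$-hierarchy at $\scri$, which is insensitive to the interior nonlinearity.

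The main obstacle I expect is confirming that the coercivity used in \cref{sec:linear theory section} still holds once the unstable subspace is projected out: the potential $V_q$ now has an indefinite sign because of the $-qW_q^{q-1}$ contribution, so $\mathcal{L}_{W_q}$ may have more negative eigenvalues than the critical operator. The finiteness statement (b) in the lemma is exactly what is needed to reduce this to a finite-codimension projection, but the quantitative spectral gap and the compatibility of this enlarged projection with the zero-mode capture and with the $r^p$-hierarchy must be verified; once this is in place the rest of the proof is a transcription of the critical case.
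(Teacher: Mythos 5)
Your proposal is correct and follows the same route as the paper, which records this theorem simply by observing (in the remark immediately preceding the statement) that the established properties of $W_q$ — conormal decay, non-degenerate kernel $\{\partial_i W_q\}$, and a finite-dimensional unstable subspace — are the only inputs used in the proof of \cref{main theorem}. The adaptations you spell out (dropping the $\Theta^\Lambda$ projection since the double-power equation is not scale-invariant, replacing the one-parameter shooting for $\alpha_-$ by an $\R^k$-valued Brouwer/degree argument, and checking the decay budget of the septic-and-higher nonlinearity) are exactly what the paper intends; in particular the higher-dimensional topological step is already flagged parenthetically in the proof of \cref{unstable lemma}, and the coercivity worry you raise is handled by the same compactness argument given in the preceding lemma's part~b).
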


	\subsubsection{Forward problem and assumed decay rates}\label{introdcution:forward}
	A closely related problem to \cref{main theorem} is the following
	\begin{theorem}[orbital stability, \cite{krieger_center-stable_2015}]\label{forward:theorem}
		There exists a smooth codimension one manifold of initial data for
		\begin{equation}
			\begin{gathered}
				\Box\phi=-\phi^5\\ \phi|_{t=0}=\phi_0,\, \partial_t\phi|_{t=0}=\phi_1 
			\end{gathered}
		\end{equation}
		in a neighbourhood of $(\phi_0,\phi_1)=(W,0)$ with respect to the energy topology that leads to a future global solution that stays close to the family of boosted and scaled solitons.
	\end{theorem}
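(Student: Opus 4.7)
The plan is to construct the center–stable manifold via a modulation ansatz combined with a Lyapunov–Perron (or Hadamard graph transform) fixed point that selects, within each small energy neighbourhood of $(W,0)$, the single codimension one piece of data whose unstable-mode coefficient decays in forward time. First I would write the solution as $\phi(t)=W_{\sigma(t)}+\psi(t)$, where $\sigma=(\lambda,y,\beta)\in\R_{>0}\times\R^3\times B_1(\R^3)$ parametrises the orbit of scaled, translated, and Lorentz-boosted solitons. The modulation path $\sigma(t)$ is chosen by the orthogonality conditions
\begin{equation}
\int \psi(t,\cdot)\, Z_k^{\sigma(t)}\,dx = 0,\qquad Z_k^{\sigma}\in\mathcal{Z}^{\sigma},
\end{equation}
against the kernel $\mathcal{Z}^{\sigma}$ of the linearised operator at the modulated soliton. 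Implicit-function-theorem arguments (using that $W$ is non-degenerate in the sense of Duyckaerts–Kenig–Merle) produce $\sigma(t)$ as a Lipschitz functional of $\psi$, and differentiating the orthogonality conditions yields ODEs for $\dot\sigma$ of the form $\dot\sigma=O(\|\psi\|_{\dot H^1}^2)$ modulo quadratically small error.

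Next I would decompose the perturbation into its spectral pieces. Writing $\mathcal{L}_{\sigma}=-\Delta-5W_{\sigma}^4$ and using the fact that $\mathcal{L}_\sigma$ has a single negative eigenvalue $-\lamed^2$ with eigenfunction $Y^\sigma$, I set $a_\pm(t)=\langle (\partial_t\pm\lamed)\psi,\widetilde Y^\sigma\rangle$ for an appropriately dualised $\widetilde Y^\sigma$, and $\psi_\perp$ the remaining part. The equations take the schematic form
\begin{equation}
\begin{aligned}
\dot a_\pm &= \pm\lamed\, a_\pm + N_\pm(\psi,\sigma),\\
(\Box+V_{\sigma})\psi_\perp &= \mathcal{N}(\psi,\sigma)+\text{modulation source},
\end{aligned}
\end{equation}
where $N_\pm,\mathcal{N}$ are at least quadratic in $(\psi,\dot\sigma)$. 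On the stable/centre piece one then needs coercive energy control together with dispersive (Strichartz) estimates for the linearised flow after projecting out $\mathcal{Z}^\sigma$ and $Y^\sigma$; these are exactly the bounds whose scattering analogue the present paper establishes in \cref{sec:linear theory section}.

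Given these estimates, the codimension one manifold is obtained by a Lyapunov–Perron fixed point in a space of solutions $t\mapsto(\psi(t),\sigma(t))$ on $[0,\infty)$ decaying to the soliton family in the sense that $\|\psi_\perp\|$ is globally bounded and $a_+(t)\to 0$. Writing Duhamel formulas with forward integration for $a_-,\psi_\perp$ and \emph{backward} integration for the unstable coefficient,
\begin{equation}
a_+(t)=-\int_t^{\infty} e^{\lamed(t-s)}N_+(\psi(s),\sigma(s))\,ds,
\end{equation}
fixes $a_+(0)$ as a smooth function $h$ of $(\psi_\perp(0),a_-(0),\sigma(0))$ provided data are sufficiently small. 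The graph $\{a_+(0)=h(\cdot)\}$ is the desired smooth codimension one manifold, and the derived bounds automatically give orbital stability in the energy topology.

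The main obstacle is the dispersive control of $\psi_\perp$: one needs Strichartz-type estimates for $\Box+V_{\sigma(t)}$ after the spectral projection, uniformly along the modulated trajectory. For a fixed $\sigma$ these follow from the absence of resonances/eigenvalues of $\mathcal{L}_\sigma$ off $\{-\lamed^2,0\}$, but for a time-dependent $\sigma(t)$ one must promote them to a Kato smoothing / local energy decay statement stable under the slow drift of $\sigma$; in the energy-critical setting this additionally requires a Bahouri–Gérard profile decomposition to exclude non-scattering blow-up profiles along minimising sequences. Compared with the problem treated in this paper, the extra difficulty here is precisely that forward global existence demands genuine decay, not just boundedness, of the linearised flow, so establishing integrated local energy decay for $\mathcal{L}_{\sigma(t)}$ with all symmetry-induced zero modes projected out would be the crux of the argument.
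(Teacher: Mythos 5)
The statement in question is not proved in this paper: it is quoted verbatim from \cite{krieger_center-stable_2015}, and the paper explicitly remarks immediately after it that \emph{``this result is a consequence of a classification and is not an asymptotic stability result like \cite{krieger_focusing_2007}. In particular, the authors do not track the modulation parameters of the soliton or the energy and size of the error term.''} Your proposed proof is essentially a modulation-theoretic Lyapunov--Perron construction: decompose $\phi = W_{\sigma(t)} + \psi$, impose orthogonality to $\mathcal{Z}^{\sigma}$, split off the unstable/stable coefficients $a_\pm$, solve $a_+$ by backward integration, and close a fixed point using Strichartz and integrated local energy decay for the modulated linearised flow $\Box + V_{\sigma(t)}$. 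This is the architecture of \cite{krieger_focusing_2007}, which is restricted to radial symmetry precisely because the required dispersive/ILED estimates for the projected linearised operator are only known there. Indeed the paper states in \cref{introdcution:forward}: ``To the best knowledge of the author, there is no asymptotic stability result for a single soliton for \cref{main equation} outside symmetry classes,'' and lists ILED for the projected linearised operator as a missing ingredient. So the step you identify as ``the crux of the argument'' is not merely hard; it is an open problem, and your route would not produce the theorem as stated (which has no symmetry assumption).

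The cited reference proves the result by an entirely different mechanism, the Nakanishi--Schlag ``one-pass'' / ejection-lemma method: a variational characterisation of $W$ gives a sign functional near the soliton, an ejection lemma controls exits from a neighbourhood of the ground state, and a no-return (one-pass) theorem combined with concentration compactness yields a global trichotomy of dynamics from which the center-stable manifold is extracted as a boundary set. No Strichartz estimates for $\Box + V_{\sigma(t)}$ with all symmetry modes projected out are needed, and no quantitative decay of $\psi$ or of the modulation parameters is obtained. That is exactly why the result is orbital rather than asymptotic stability, and why the present paper distinguishes it from what a direct modulation proof would give. To repair your proposal you would either have to restrict to radial data (losing the stated codimension-one statement in full generality) or supply the currently unavailable ILED estimate; alternatively you would replace the Lyapunov--Perron closure with the one-pass/variational machinery, which is a genuinely different argument.
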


	Note, that this result is a consequence of a classification and is not an asymptotic stability result like \cite{krieger_focusing_2007}. In particular, the authors do not track the modulation parameters of the soliton or the energy and size of the error term. Indeed, it was shown by \cite{donninger_nonscattering_2013} that asymptotic stability cannot hold in the energy topology, for further details see \cref{sec:intro:blow_up_at_infinity}. To the best knowledge of the author, there is no asymptotic stability result for a single soliton for \cref{main equation} outside symmetry classes, and no result concerning the dichotomy of asymptotic and orbital stability. We pursue this problem in future work. Let us comment on the necessary ingredients required for such stability problems.
	
	Following the general strategy outlined in \cite{dafermos_quasilinear_2022}, one needs to acomplish the following steps
	\begin{enumerate}[noitemsep]
		\item Prove energy boundedness for the linear problem.\label{forward: boundedness}
		\item Prove integrated local energy decay (Morawetz estimate)\label{forward: morawetz} for the linear problem.
		\item Obtain decay via $r^p$ estimate for the linear problem.
		\item Close error terms\label{forward: close errors}.
		\item Modulate the equation at the end of each dyadic interval so that the soliton is unit size located at the origin with no momentum.\label{forward: zero mode}
	\end{enumerate}
	In this paper, we achieve \cref{forward: boundedness,forward: zero mode}. As the $r^p$ estimate is an exterior estimate, it is extremely robust, the missing ingredients are \cref{forward: morawetz,forward: close errors}. 
	
	\begin{remark}
		Note, that \cref{forward: boundedness} was already proved in \cite{krieger_focusing_2007} in the radial case. Their method relies on finding an explicit form of the resolvent, which involves a similar projection for the scaling content of the solution that we find in \cref{linear scale conservation with force}.
	\end{remark}

	We comment on, whether given \cref{forward: morawetz} one could achieve stability. To obtain control for $\mathcal{E}$, the decay of modulation parameters has to be at least as fast as $\mathcal{E}^V$. The respective error for these quantities in a dyadic slab are
	\begin{equation}
		\begin{gathered}
			\mathfrak{Err}_{\mathcal{E}}\sim\int_{\rdom}\partial_t\psi F,\quad \mathfrak{Err}_{\text{mod}}\sim\Big(\int_{\rdom}(\tau_2-\tau)F\Big)^2.
		\end{gathered}
	\end{equation}
	The first one can be bounded by bulk integrals, so no retarded time decay is necessary. For the second, we can still use bulk integrals to control $\mathfrak{Err}_{\text{mod}}\sim \mathcal{E}^2{\tau_1}^2$. We can check that $\mathfrak{Err}_{\text{mod}}\lesssim \mathcal{E}$ whenever $\mathcal{E}\sim{\tau_1}^{-p}$ for $p>2$. Note, that the original $r^p$ method would only work for $p\leq2$, but by the improvements in \cite{schlue_decay_2013,moschidis_rp_2016}, we except the $p>2$ regime to be achievable \footnote{see \cite{luhrmann_stability_2022} 1.8 for further discussion on decay when there are 0 modes present}. In the present paper, we use a much stronger decay, because we do not prove \cref{forward: morawetz}.
	
%	\subsubsection{Multisoliton construction}
%	\red{
%		As this is work to be finished soon, maybe leave this out}
%	
%	As noted above, solutions to higher dimensional analogue of \cref{main equation} have already been constructed in \cite{martel_construction_2016,yuan_multi-solitons_2019,yuan_construction_2021}. Using the methods developed in this paper, we wish to complement these works, by showing that the solutions are very regular when measured with the correct norms. Indeed, as the nonlinearity is still smooth for the 4 dimensional energy critical wave equation, we expect that there are solution similar to \cite{yuan_construction_2021}, that are conormal on an appropriate blow-up of Minkowski space.
%	%\subsection{Acknowledgments}
%
%	
	
	\subsubsection{Blow up at infinity}\label{sec:intro:blow_up_at_infinity}
	
	In \cite{donninger_nonscattering_2013} Donninger-Krieger showed that there exist global in time solutions to \cref{main equation} that approach a soliton $W$ with a scale parameter $\lambda(t)$ that converges to 0 or blows up as $t\to\infty$. The method of proof is somewhat reminiscent to the construction that we perform in the polyhomogeneous part of the paper in the following sense. The authors first solve an elliptic problem to get an improved ansatz and then solve away the error. The first step is only performed once and so the ansatz still has an error term with relatively slow ($t^{-4}$) decay rate, but it is sufficient to close the estimates. Regarding the nonlinear part, Donninger-Krieger utilise Fourier transform in spatial variables with respect to the operator $\Delta+V$, called distorted Fourier transform, to prove decay estimates for the linearised problem reminiscent to \cite{krieger_focusing_2007}. They emphasise that the feasibility of this approach relies on the spherically symmetric nature of the problem.
	
	Even though the solutions constructed in \cite{donninger_nonscattering_2013} are not expected to be unique, just as for the solutions of \cref{main theorem}, the following is a natural conjecture regarding their behaviour
	\begin{conjecture}
		For $\abs{\alpha}$ sufficiently small the following holds.
		There exists a blow-up of the compactification of Minkowski space ($\mathfrak{M}_\alpha$), such that \cref{main equation} admits solution $\phi$ that is conormal on $\mathfrak{M}_\alpha$ and satisfies
		\begin{equation}
			\norm{\phi(t,\cdot)-t^{\alpha/2}W(t^{\alpha}\cdot)}_{L^\infty}\to0\, \text{ as }t\to\infty,\qquad \partial_u(r\phi)|_{\scri}=0.
		\end{equation}
	\end{conjecture}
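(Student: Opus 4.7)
The plan is to adapt the scattering construction of \cref{main theorem} to a time-dependent background soliton $W_\alpha(t,x):=t^{\alpha/2}W(t^\alpha x)$, imposing trivial radiation data on $\scri$ and solving backwards from $i_+$. The blow-up $\mathfrak{M}_\alpha$ of Minkowski is designed to resolve the region where the soliton concentrates (if $\alpha>0$) or spreads (if $\alpha<0$): concretely, introduce coordinates $y=t^\alpha x$ near the center and glue to the standard compactification along $\{r\sim t^{-\alpha}\}$, so that $W_\alpha$ becomes a smooth profile independent of $t$ in the new coordinate chart. The main structural ingredients from the present paper that should carry over are (i) energy boundedness for the linearization modulo unstable and kernel modes, and (ii) the dyadic $r^p$-type construction of conormal scattering solutions from \cref{sec:intro_dyadic}.

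The steps I would carry out are as follows. First, write $\phi=W_\alpha+\sum_{k=1}^{K}t^{-\beta_k}\log^{m_k}t\,\chi_k(t,x)+\psi$ where the corrections $\chi_k$ are obtained by iteratively solving elliptic problems of the form $\mathcal{L}_{W_\alpha(t,\cdot)}\chi_k=F_k$, with $F_k$ the error produced by $\Box W_\alpha+W_\alpha^5$ and the previously chosen corrections. Here the Fredholm alternative forces the source to be orthogonal to the kernel $\mathcal{Z}$; the scaling element $\Lambda W\in\mathcal{Z}$ is precisely the one driving the need to allow $\lambda(t)=t^\alpha$, and one \emph{chooses} $\alpha$ (to leading order) so that the projection of the leading error onto $\Lambda W$ vanishes, determining the relationship between $\alpha$ and the amplitude of the ansatz. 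Second, set up the equation for the remainder $\psi$, which will be of the form $(\Box+V_\alpha(t,\cdot))\psi=\mathcal{N}(\psi)+R_K$ with $V_\alpha=5W_\alpha^4$ and $R_K=\mathcal{O}(t^{-q_K})$ for $q_K$ as large as desired by choosing $K$. Third, reprove the energy-boundedness statement of \cref{sec:linear theory section} for the \emph{time-dependent} operator $\Box+V_\alpha$, using that the profile $V_\alpha$ is static in the rescaled coordinate $y=t^\alpha x$ on $\mathfrak{M}_\alpha$; the unstable-mode projection $\alpha_{\pm}^\infty$ and the kernel projections must now be defined with $Y=Y(t,\cdot),\Lambda W=\Lambda W(t,\cdot)$ transported along the time-dependent soliton, producing only lower-order commutator errors proportional to $\dot\lambda/\lambda=\alpha/t$. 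Fourth, run the dyadic scattering iteration of \cref{sec:intro_dyadic} backwards from $i_+$, selecting at each dyadic level the finite-codimensional initial data killing the unstable mode, exactly as in the topological argument reviewed for the static case.

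The main obstacle will be the time-dependence of the linearized operator and, relatedly, the conormal regularity on $\mathfrak{M}_\alpha$. The commutator $[\Box+V_\alpha,\Gamma]$ with the vector fields used to define conormality now contains terms involving $\partial_t V_\alpha\sim(\alpha/t)\Lambda V$, which must be absorbed into the energy hierarchy; because $\Lambda W$ is not in $L^2$, the natural projection onto the scaling kernel has to be carried out globally as in \cref{linear scale conservation with force} and tracked through the iteration, and the modulation equation determining the correction to $\lambda(t)$ becomes a coupled ODE that must close consistently with the decay of $R_K$ and $\mathcal{N}(\psi)$. Provided $|\alpha|$ is small enough that these commutator losses are weaker than the gain coming from the $r^p$ hierarchy, the fixed point argument should close; smallness of $|\alpha|$ is used exactly to ensure $\alpha/t$ is treatable as a perturbation of the static linearization. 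The trivial radiation condition $\partial_u(r\phi)|_{\scri}=0$ is consistent with this scheme because the ansatz $W_\alpha$ itself has vanishing radiation field, so no non-zero scattering data need be prescribed, and the polyhomogeneous part of \cref{main theorem} then yields the stated conormality of $\phi$ on $\mathfrak{M}_\alpha$.
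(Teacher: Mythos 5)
The statement you are addressing is labelled a \emph{conjecture} in the paper and is explicitly left unproved: the surrounding text (\cref{sec:intro:blow_up_at_infinity}) only remarks that the Donninger--Krieger construction in \cite{donninger_nonscattering_2013} is ``somewhat reminiscent'' of the polyhomogeneous part of the present work and defers the question to future work. So there is no proof in the paper to compare your proposal against, and you should not present your outline as a proof.

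That said, your sketch is a reasonable high-level plan, and it does align with the strategy the author hints at: an iteratively improved ansatz with elliptic solves on model faces, a scattering-type remainder estimate, and the use of the kernel-mode projections of \cref{sec:zero_modes} to absorb the non-Fredholm part of the linearisation. The gaps are concentrated exactly where the real mathematical work would have to happen, and none of them is addressed by appeal to the present paper because the present paper treats a \emph{static} soliton. Concretely: (i) the entire linear theory of \cref{sec:linear theory section} -- the conserved currents $\Theta^{mom},\Theta^{com},\Theta^{\Lambda}$, the master current of \cref{prop:master_current}, and the coercivity \cref{coercivity lemma} -- is proved for the time-independent potential $V=5W^4$; replacing $W$ by $W_\alpha(t,\cdot)$ destroys the exact conservation laws (the boost and scaling currents are built on $\partial^\mu\Tlin_{\mu\nu}=0$, which fails once $W$ is $t$-dependent), and nothing in your sketch explains how the resulting $O(\alpha/t)$ bulk errors are reabsorbed without losing the borderline $\tau$-weight needed for the dyadic iteration to close. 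Asserting that ``$\alpha/t$ is treatable as a perturbation'' for $|\alpha|$ small is not a proof: the loss is logarithmically divergent over a dyadic block unless one uses precise cancellations, and identifying those cancellations is the conjecture. (ii) Your step in which $\alpha$ is ``chosen so that the projection of the leading error onto $\Lambda W$ vanishes'' conflates two different things: in modulation theory the orthogonality condition fixes the next correction to $\lambda(t)$, not the leading power $\alpha$, which in Donninger--Krieger is a free parameter of the family. You would need to set up the modulation ODE carefully and show that the resulting $\lambda(t)$ is indeed polyhomogeneous on $\mathfrak{M}_\alpha$ rather than merely asymptotic to $t^\alpha$. (iii) Propagating conormality on $\mathfrak{M}_\alpha$ is not automatic from \cref{polyhomogeneity theorem}: that theorem is stated for the fixed blow-up $\tilde{\mathcal{R}}$ of \cref{fig:soliton_blowup}, whose front face lives at bounded $|x|$, whereas your $\mathfrak{M}_\alpha$ rescales the front face by $t^\alpha$; the model operator $N_\sigma$ on $I^+$ and the elliptic solve on the front face (\cref{model operator on i_+ punctured,model operator on F}) would both have to be reworked in the rescaled coordinate $y=t^\alpha x$, and it is not clear that the indicial roots and hence the index sets survive. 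If you intend to pursue this, these three items are the places to start, and each one requires new estimates rather than a citation of the static case.
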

	
	One could similarly investigate the  global regularity of solutions constructed in \cite{yuan_construction_2022}, or excited analogue of \cite{jendrej_construction_2020} in 4 dimensions.
	
	\subsection{Idea of the proof and outline}\label{sec:intro_ideas}
	
	In this subsection, we only discuss the \textit{linear} problem \cref{linearised eom}. The reason for this is the recent development of techniques by Dafermos-Holzegel-Rodnianski-Taylor in \cite{dafermos_quasilinear_2022}.  The authors develop a robust framework to study stability problems around localised perturbations asymptotically flat spacetimes by proving estimates in dyadic hyperboloidal regions. Global existence is in turn a consequence of dyadic iteration. More importantly, the stability in such dyadic regions is a direct consequence of robust linear estimates\footnote{ one actually needs to study the inhomogeneous problem, i.e. replacing 0 with some forcing $F$ in \cref{linearised eom}} plus some soft structural conditions on the nonlinear terms. Their approach is a guiding principle of our work. Let us turn to the linear problem.
	
	\subsubsection{Energy estimates}
	As mentioned previously we believe that energy boundedness is crucial to prove \cref{main theorem}, but the lack of coercivity in $\mathcal{E}^V$ renders a $\partial_t$ energy estimate useless on its own. The obstruction from unstable modes will be treated, as explained in \cref{subsec:obstruction_to_decay}, by a localised projection of the form
	\begin{equation}
		\begin{gathered}
			\alpha_\pm =\int \chi_R Y(\partial_t\pm\lamed)\psi
		\end{gathered}
	\end{equation}
	for a cutoff function, $\chi_R$, localising to $\{\abs{x}\leq R\}$. These will not be conserved in evolution. However, $\alpha_+$ decays exponentially in the backward direction, up to corrections by $e^{-R}\phi$ terms. In turn it is easy to see that $\alpha_+$ will stay about the same order as $\phi$. The unstable mode $\alpha_-$ will grow exponentially provided nonzero initial condition, plus correction of the form  $e^{-R}\phi$. Therefore, we must finely tune its initial value via a topological argument to get bounded solutions. This step is detailed in \cref{sec:unstable_mode}.
	
	Let us turn to the elements in $\{Y^{\ker}\}=\ker(\Box+V)$. Note, that these are not only obstruction to decay of the solution (e.g. $\partial_iW$), but also to boundedness (e.g. $t\partial_i W$). We intend to find projection operators $\Theta^{\bullet}$ for $\bullet\in\{mom,com,\Lambda\}$ --i.e. functionals from scattering solutions to $\R$-- such that for
	\begin{equation}
		\begin{gathered}
			\psi-\Theta^{com} Y^{\ker}_{com}-\Theta^{mom} Y^{\ker}_{mom}-\Theta^{\Lambda} Y^{\ker}_{\Lambda}
		\end{gathered}
	\end{equation}
	the energy functional $\mathcal{E}^V$ becomes coercive. We immediately note, that we do not need to project $t\Lambda W$ out. This is treated by insisting that $\psi$ decays toward $\scri$. 
	
	The momentum ($t\partial_i W$) and center of mass ($\partial_iW$) are projected similarly. Since \cref{main equation} is Poincaré invariant, it enjoys nonlinearly conserved quantities associated to translations (generated by $\partial_i$) and Lorentz boost (generated by $t\partial_i+x_i\partial_t$) called momentum and center of mas respectively. Linearising these quantities around the soliton $W$ yield functionals that are conserved for \cref{linearised eom}. Moreover we can evaluate $\Theta^{mom},\Theta^{com}$ on $t\partial_i W,\partial_iW$ and find that they yield non-vanishing terms. 
	
	In order to project out the scaling mode, $\Lambda W$, we make the observation that $t\Lambda W\sim1$ near null infinity. This in turn yields that $r\partial_v t\Lambda W$ decays much faster than $t\Lambda W$ and we can plug it in the linearised bilinear energy momentum tensor. Since $(\Box+V)t\Lambda W=0$ the associated $T$ energy current will be conserved. We simply set $\Theta^\Lambda[\psi](\tau)$ to be the value of this current through a hyperboloid $\Sigma_\tau$. We compute that $\Theta^\Lambda[\Lambda W]\neq0$. This concludes energy boundedness. The precise definition and the properties of $\Theta^\bullet$ are described in \cref{sec:zero_modes}.
		
	\subsubsection{Polyhomogeneity}
	To prove polyhomogeneity, we follow the general strategy employed by Hintz in \cite{hintz_gluing_2023} for gluing constructions in general relativity. For a comprehensive guide to this approach, see \cite{hintz_lectures_2023}.
	
	Let's assume that the non-linear solution $\phi$ to \cref{main equation} is polyhomogeneous on the compactification drawn in \cref{fig:soliton_blowup}. Here, $\mathcal{F}=\{\abs{x}<\infty,t=\infty\},I^+=\{\abs{x}/t\in(0,1),t=\infty\}$. In particular, we can write
	\begin{equation}
		\begin{gathered}
			\phi=\sum_{i,j}t^{-i}\log^j t\phi^+_{i,j}(x/t)+\mathcal{O}(t^{-N}),\qquad (x,t)\in\mathcal{N}_+.
		\end{gathered}
	\end{equation}
	For the sake of discussion let us focus on the terms with $j=0$. In the linearised operator $\Box+V$ we have, $\Box,V$ with scaling dimensions $-2$ and $-4$ respectively. Therefore, in this region $V$ is going to be a perturbation. We compute $\Box t^{-i}\phi^+_{i}=t^{-i-2}(N_i\phi^+_{i})(x/t)$ for some elliptic operator $N_i$\footnote{$N_i$ is just the Laplace-Beltrami operator of hyperbolic space in appropriate coordinates, see \cite{baskin_asymptotics_2015}}. In order to find $\phi^+_i$, we simply need to invert this elliptic operator, $N_i$, with boundary conditions and inhomogeneities coming from the radiation prescribed at $\scri$ and $\phi^5$ respectively. The $\phi^+_i$ found produces an error term $t^{-i}\phi^+_i(0)V(x)$. In particular, solving for an error $t^{-i-2}$ at $I^+$ we get and error $t^{-i}$ at $\mathcal{F}$. 
	
	Similarly, we can write
	\begin{equation}
		\begin{gathered}
			\phi=\sum_{i,j}t^{-i}\log^j t\phi^\mathcal{F}_{i,j}(x)+\mathcal{O}(t^{-N}),\qquad (x,t)\in\mathcal{N}_{\mathcal{F}}
		\end{gathered}
	\end{equation}
	We compute the action of the linear operator $(\Box+V)t^{-i}\phi^\mathcal{F}_i=t^{-i}(\Delta+V)\phi^\mathcal{F}_i+\mathcal{O}(t^{-i-1})$. Let's focus on the $l=0$ spherical harmonic part of the problem, for simplicity. Proceeding as before, we attempt to solve 
	\begin{equation}\label{eq:intro_pol_F}
		\begin{gathered}
			(\Delta+V)\phi^\mathcal{F}_i=f^\mathcal{F}_i
		\end{gathered}
	\end{equation}
	for some error term $f^\mathcal{F}_i$ coming from previous parts of the iteration. Since $(\Delta+V)$ is not invertible, this is not possible, unless $(f^F_i,\Lambda)_{L^2}=0$. We may attempt to enforce this orthogonality by adding $ct^{-i+2}\Lambda W$ to the solution, so that we get an extra $ct^{-i}\Lambda W$ in the error term. However, note that $\Lambda W\notin L^2$, and even worse, the error produced at $I^+$ is with decay $t^{-i-1}$ at $I^+$, creating even worse error on $\mathcal{F}$ when solved away.
	
	We instead correct our solution with $ct^{-i+1}\Lambda W$. Next, we solve away the error on $I^+$. The error thus produced on $\mathcal{F}$ is simply $ct^{-i}V(x)$. We can use this to correct $f^\mathcal{F}_i$ to ensure orthogonality, and solve \cref{eq:intro_pol_F}. The error produced on $I^+$ decays as $t^{-i-3}$. Iteratively solving away the above two errors yields the polyhomogeneous error. The detailed construction is in \cref{sec:polyhom}.
	
	\begin{figure}[h]
		\centering
		\resizebox{.5\textwidth}{!}{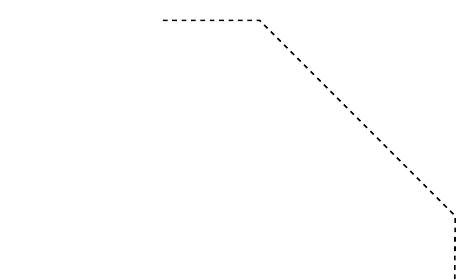}
		\caption{A compactification of Minkowski space on which scattering solution $\phi$ is conormal}
		\label{fig:soliton_blowup}
	\end{figure}
	
	\paragraph{Overview} The rest of the paper is divided into 3 parts. Until \cref{sec:nonlinear}, we are concerned with the proof of \textit{a)} of \cref{main theorem}. \cref{sec:polyhom,sec:cauchy} give a proof of \textit{b)} of \cref{main theorem} and \cref{thm:trivial_scattering} respectively, and are mostly unrelated to the rest of the paper. Let us give more detail on the first part. In \cref{sec:setup}, we give the definition of all geometric quantities used throughout the paper. Then, in \cref{sec:spectral}, we quote the necessary spectral information on the linearised operator. We also prove some weighted Sobolev embeddings for nonlinear applications. The next part, \cref{sec:linear theory section} is the main novelty of the current work. We introduce projection operators and prove a conditional energy boundedness for the linearised operator. We use this in \cref{sec:nonlinear} to construct the solution 
	%\paragraph{Acknowledgement:}....
	
	\section{Geometric set up and notation}\label{sec:setup}
	In this section we are going to introduce the notation used in the introduction and the rest of the paper as well. Furthermore, we will introduce the necessary geometric framework.
	
	We will use the notation $A\lesssim_{a,b,c...} B$ to say that there exists a constant $C$ depending on $a,b,c...$ such that $A\leq C B$. We will abbreviate $\partial f\cdot\partial g=\eta^{\mu\nu}\partial_\mu f\partial_\nu g$ where $\eta$ is the Minkowski metric.
	
	We start with introducing geometric qunatities related to solutions of the linearised equation \cref{linearised eom}. We define the energy momentum tensor
	\begin{equation}\label{energy mom tensor}
		\begin{gathered}
			\T^w_{\mu\nu}[\psi]=\partial_\mu\psi\partial_\nu\psi-\frac{\eta_{\nu\mu}}{2}(\partial\psi\cdot\partial\psi-w\psi^2),\qquad \partial\psi\cdot\partial\psi=\partial_\sigma\psi\partial^\sigma\psi.
		\end{gathered}
	\end{equation}
	Note that for $\psi$ a solution of \cref{linearised eom}, $\text{div}(\T^V[\psi])=\text{grad}(V)(\psi)^2$. We also introduce the bilinear energy momentum tensor
	
	\begin{equation}\label{bilinear energy mom tensor}
		\begin{gathered}
			\T^w_{\mu\nu}[f,g]=\partial_{(\mu}f\partial_{\nu)}g-\frac{\eta_{\mu\nu}}{2}(\partial f\cdot\partial g-wfg).
		\end{gathered}
	\end{equation}
	
	Following \cite{holzegel_boundedness_2014} and \cite{dafermos_quasilinear_2022} we introduce the twisted energy momentum tensor
	\begin{equation}
		\begin{gathered}
			\tilde{\T}^w_{\mu\nu}[\psi]=\tilde{\partial}_\nu\psi\tilde{\partial}_\mu\psi-\frac{\eta_{\mu\nu}}{2}\big(\tilde{\partial}\psi\cdot\tilde{\partial}\psi-w\psi^2+w'\psi^2\big)\\
			\tilde{\partial}(\cdot)=\beta\partial(\beta^{-1}\cdot ),\quad w'=-\frac{\Box\beta}{\beta}
		\end{gathered}
	\end{equation}
	Although $\tilde{\T}$ is not divergence free, we have the following result from \cite{holzegel_boundedness_2014} Proposition 3
	\begin{equation}
		\begin{gathered}
			\partial^\mu (X^\nu \tilde{\T}^w_{\mu\nu}[\psi])=X^\nu S_\nu[\psi]+\psi^2 X^\nu \partial_\nu w\\
			S_\nu[\psi]=-\frac{\beta^{-1}\partial_\nu (\beta w')}{2\beta}\psi^2+\frac{\beta^{-1}\partial_\nu\beta^2}{2\beta}\tilde{\partial}_\sigma\psi\cdot\tilde{\partial}^\sigma\psi
		\end{gathered}
	\end{equation} 
	where $X$ is a Killing vector field. When using $\tilde{\T}$, we will take $X=T$, $\beta=\jpns{r}^{-1}$ which yields $X^\nu S_\nu=0$ and $w'=3\jpns{r}^{-4}$.
	
	As we are looking for a solution perturbatively around $W$, we will write $W+\psi$ for a solution to \cref{main equation} and think of the nonlinear in $\psi$ terms as error and write
	\begin{equation}
		\begin{gathered}
			\mathcal{N}[\psi]=(W+\psi)^{5}-W^5-5W^4\psi\implies (\Box+V)\psi=\mathcal{N}[\psi].
		\end{gathered}
	\end{equation}
	\subsection{Foliation}\label{sec:foliation}
	Our foliations are going to be composed of a flat and a null piece following the framework of \cite{dafermos_quasilinear_2022}. Furthermore, we will have 2 radii $R_1<R_2$ labeling different regions of the flat part. $R_1$ will denote the region where we detect unstable modes and where the potential coming from the soliton destroys coercivity, while $R_2$ is the transition region to the null part.
	
	We use the usual null coordinates
	\begin{equation*}
		\begin{gathered}
			v=\frac{t+r}{2}\qquad u=\frac{t-r}{2}.
		\end{gathered}
	\end{equation*}
	Let $\bar{\chi}$ be a cutoff function such that $\bar{\chi}|_{\{x<1\}}=1,\, \bar{\chi}_{\{x>3\}}=0,\, \bar{\chi}'\in(1,3),\bar\chi'|_{(1,3)}\neq0$. Set $\chi_R=\bar{\chi}(\frac{x}{R})$ and $\chi^c_R=1-\chi_R$. The function $h(x)=\int_{-\infty}^xdy \bar{\chi}^c(y)$ satisfies: $h|_{y\leq1}=0,\, h|_{y\geq3}=h(3)+y,\, h'\in[0,1]$. Define new time coordinate $t_\star=t-h(r-R_2)$. We fix the vector fields
	\begin{equation}
		\begin{gathered}
			T=\partial_t|_r=\partial_{t_\star}|_r,\quad \Omega_{ij}=x_i\partial_j-x_j\partial_i,\quad  S=t\partial_t|_r+r\partial_r|_t,\quad X=\rtStar,\quad X^\circ=\partial_r|_t.
		\end{gathered}
	\end{equation}
	We will also use $\Lambda=\frac{1}{2}+S$, but only as it acts on $W$. In $(t_\star,x)$ coordinates the wave operator and partial derivatives take the form 
	\begin{equation}\label{wave operator in good coordinates}
		\begin{gathered}
			\Box=-(1-h'^2)T^2+(X-2h'T)X+h''T+\frac{2}{r}(X-h'T)+\frac{\slashed{\Delta}}{r^2},\\
			X^\circ=X-h'T,\quad\partial_u|_v=T-X^\circ=(1+h')T-X,\quad \partial_v|_u=T+X^\circ=(1-h')T+X.
		\end{gathered}
	\end{equation}
	We fix a foliation and hypersurfaces
	\begin{equation}\label{eq:notation:hypersurfaces}
		\begin{gathered}
			\Sigma_\tau^{v_\infty}=\{t_\star=\tau\}\cap\{t+r<v_\infty\},\qquad \Sigma_\tau=\Sigma^\infty_\tau\\
			\scri^{v_\infty}_{\tau_1,\tau_2}=\{(t+r)=v_\infty,t_\star\in[\tau_1,\tau_2]\},\qquad \scri_{\tau_1,\tau_2}=\scri^\infty_{\tau_1,\tau_2}\\
			\mathcal{R}^{v_\infty}_{(\tau_1,\tau_2)}=\bigcup_{\tau\in[\tau_1,\tau_2]}\Sigma^{v_\infty}_\tau,\qquad \mathcal{R}_{(\tau_1,\tau_2)}=\mathcal{R}^\infty_{(\tau_1,\tau_2)},\quad \tau_1\leq\tau_2\\
			\underline{\mathcal{C}}_x=\{v=x\},\quad \tilde{\underline{\mathcal{C}}}_x=\underline{\mathcal{C}}_x\cap\{r>R_2\}\cap\mathcal{R}_{\tau_1,\tau_2}.
		\end{gathered}
	\end{equation}
	Note, that $\scri_{\tau_1,\tau_2}\simeq[\tau_1,\tau_2]\times S^2$ is not part of the spacetime, though it is a useful idealised boundary. For an alternative approach to $\scri$, see \cref{sec:polyhom:geometric}.  Here, $\tilde{\underline{\mathcal{C}}}$ contains $\tau_1,\tau_2$ implicitly and these boundaries will refer to the current region under discussion and should be clear from context, when not, we will write $\underline{\mathcal{C}}^{\tau_1,\tau_2}_x:=\tilde{\underline{\mathcal{C}}}_x$ . Also, note that for most of the paper $\tau_1\sim\tau_2$, but this will be stated clearly in all appearances. Similarly, $\tau_1\gg R_2$ everywhere outside \cref{sec:cauchy}. For a hypersurface $\Sigma$ and a current (1-form) $J$, let's denote
	\begin{equation}
		\begin{gathered}
			\Sigma[J]=\int_\Sigma J\cdot n
		\end{gathered}
	\end{equation}
	the flux of the current through $\Sigma$. A quick calculation (see \cref{energy calculation}) yields that 
	\begin{equation}\label{energy integral}
		\begin{gathered}
			\mathcal{E}^w[\psi]:=-\Sigma_\tau[T\cdot \T^w[\psi]]=\frac{1}{2}\int_{\Sigma_\tau\simeq\R^3}  \Big((1-(h')^2)(T\psi)^2+(X \psi)^2+\frac{\abs{\slashed{\nabla}\psi}^2}{r^2}-w\psi^2\Big)
		\end{gathered}
	\end{equation}
	In the integral above, the measure is simply the one from the isomorphism between $\Sigma_\tau$ and $\R^3$ given by $(t,x)\mapsto x$.
	We will use this convention for hypersurface integrals for the rest of the paper. 
	Note, that $\mathcal{E}^w[\psi]$ is not coercive, so we cannot use it to control the solution. Indeed, the main part of this work is concerned with obtaining estimates on $\mathcal{E}:=\mathcal{E}^0$ in terms of $\mathcal{E}^V$. 
	Let us also introduce the norm $\norm{u}_{\mathfrak{X}_1}=\norm{u(1-h'^2)^{1/2}}_{L^2}$ for $u\in\mathcal{C}^\infty(\R^3)$ corresponding to the $T\psi$ content of $\mathcal{E}^w[\psi]$.
	
	We are going to control the solution in energy spaces, and we introduce the following norm
	\begin{equation}
		\begin{gathered}
			\masterNum{0}[\psi]:=	\sup_{\tau\in(\tau_1,\tau_2)}-\Sigma_\tau[T\cdot (\T^0[\psi]+\tilde{\T}^0[\psi])]+ \sup_{x>R_1} -\tilde{\underline{\mathcal{C}}}_{(\tau_1+x)/2}[T\cdot(\T^0[\psi]+\tilde{\T}^0[\psi])].
		\end{gathered}
	\end{equation}
	The reason to introduce the second term is because $\mathcal{E}_\tau$ only controls tangential derivatives on the null part. We also introduce commuted analogue with $J[\psi]=-T\cdot(\T+\tilde{\T})$
	\begin{equation}
		\begin{multlined}
			\master[\psi]:=\sum_{\abs{\alpha}\leq k-1}	\sup_{\tau\in(\tau_1,\tau_2)}{\tau}^{2\alpha_0}\Sigma_\tau[J[\Gamma^\alpha\psi]]+ \sup_{x>\tau_2/2} \tilde{\underline{\mathcal{C}}}_{-x}[{\tau}^{2\alpha_0}J[\Gamma^\alpha\psi]]\\
			+\sup_{x>R_1} \tilde{\underline{\mathcal{C}}}_{(\tau_1+x)/2}[{\tau}^{2(k-1)}J[T^{k-1}\psi]] ,\quad \Gamma\in\{T,\Omega_{ij},S\}
		\end{multlined}
	\end{equation}
	\subsection{Symbols and extra notation}
	To keep the paper accessible and easily readable to a wider audience, we do no introduce too much extra notation to prove the existence part of \cref{main theorem}.	For the polyhomogeneity part, we will employ a significant amount of notation to keep the paper of reasonable length. These will be introduced exclusively in \cref{sec:polyhom}, sparing the reader from unnecessary complexity for the rest of the paper.
	
	For convenience of the reader, we collect all the symbols used in the paper below
	\begin{itemize}\setlength\itemsep{-0.4em}
		\item $\bar{\chi},h$ cutoff functions
		\item $u,v,t_\star$ coordinates on spacetime
		\item $W$ soliton at a particular scale, $V$ potential created by soliton
		\item $\T^w$ energy momentum tensor with potential $w$, $\T=\T^0$.
		\item $T,\Omega,S,\Lambda$ vector fields;
		\item $\Sigma_\tau,\underline{\mathcal{C}},\scri_{\tau_1,\tau_2}$ hypersurfaces, $B_r:=\{x\in\R^3:\abs{x}<r\}$ is the open ball of radius $r$
		\item $\mathcal{X},\mathcal{E}^w,\mathfrak{X}$ are energy quantities
		\item $\lamed$ eigenvalue of the linearised problem with $Y$ as eigenfunction
		\item $J,\tilde{J},\Gamma$ will be placeholders for currents and vector fields and will be defined locally within parts of the paper, such as a subsection, and reused with new definition at later point
		\item $P^{S^2}_l$ is the projection operator acting on $L^2$ functions projecting onto the $l-$th spherical harmonic
	\end{itemize}
	Let us note, that we are always going to suppress the dependence on $R_1,R_2$ in the notation $\lesssim$, whenever the estimate holds uniformly for $R_1,R_2\to\infty$.
	\section{Energy estimates and spectral properties of $W$}\label{sec:spectral}
	In this section, we will recall some of the known result on how to deal with the non-coercivity of $\mathcal{E}^V$, and prove some basic inclusions of the space $\master$ into weighted spaces. 
	
	\subsection{Symmetries}
	Let's introduce the space of obstruction to decay for the linearised equation coming from the symmetries of \cref{main equation}
	
	\begin{equation}
		\begin{gathered}
			\mathcal{Z}=\text{span}\{\partial_j W,\,\frac{1}{2} W+x\cdot \nabla W\,,-2x_jW+\abs{x}^2\partial_jW-2x_jx\cdot\nabla W\}
		\end{gathered}
	\end{equation}
	corresponding to translational, scaling and special conformal symmetries of the equation. As $W$ is invariant under Kelvin transform, the special conformal symmetries coincide with translations, in particular, we have
	\begin{equation}
		\begin{gathered}
			\mathcal{Z}=\text{span}\{\partial_j W,\, \Lambda W\},
		\end{gathered}
	\end{equation}
	where we used $\Lambda=1/2+S$.
	
	\subsection{Coercivity}

	In this section, we summarise some of the spectral results discussed in \cite{duyckaerts_dynamics_2008}.
	
	We want to find suitable energy boundedness statements for the linearised operator
	\begin{equation}
		\begin{gathered}
			\Box+V=-\partial_t^2+\Delta+5W^4=-\partial_t^2+H.
		\end{gathered}
	\end{equation}
	Boundedness is not imminent because of positive eigenvalues of $H$, $E=\{f\in \mathcal{C}^\infty(\R^3) | Hf=\lambda f, \lambda>0\}$, and kernel elements, $\ker H=\{f\in\dot{H}^1:Hf=0\}$. These solutions make the bilinear form $(u,v)\mapsto(Hu,v)_{L^2}$ non definite. Indeed, we have the following results from \cite{duyckaerts_dynamics_2008} and \cite{duyckaerts_solutions_2016}.
	
	\begin{lemma}\label{coercivity cited}
		a) 
		$\mathcal{Z}=\ker H$ , and $E=span\{Y\}$  with $Y\in\mathcal{C}^\infty$ and $\abs{Y(r)}\lesssim\exp(-\lamed r)$ for $\lamed>0$ the eigenvalue $HY=\lamed Y$.  (\cite{duyckaerts_dynamics_2008} section 5.5)
				
		b) For $E_i\in \mathcal{C}^\infty_0(\R^4)$ with 
		\begin{equation}
			\begin{gathered}
				(E_i,V_j)_{L^2}=\delta_{ij}\qquad (E_i,Y)=0,\qquad \text{ for } V_j\in\ker H
			\end{gathered}
		\end{equation}
		the following holds. There exists $\mu>0$ depending on $E_i$  such that (\cite{duyckaerts_solutions_2016} Proposition 3.6)
		\begin{equation}
			\begin{gathered}
				(-Hf,f)\geq \mu \norm{f}_{\dot{H}^1}^2-\frac{1}{\mu}\Big(\sum (E_i,f)_{L^2}^2+(Y,f)_{L^2}^2\Big).
			\end{gathered}
		\end{equation}
	\end{lemma}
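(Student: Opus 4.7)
My plan is to treat the two parts separately, invoking the cited references where they do the heavy lifting, and sketch how one would independently verify the statements.

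For part (a), I would first note that $H=\Delta+5W^4$ is a Schrödinger operator whose potential $V=5W^4$ decays like $|x|^{-4}$ at infinity, so it is a relatively compact perturbation of $-\Delta$ and in particular the essential spectrum of $-H$ is $[0,\infty)$ and the negative spectrum of $-H$ (equivalently, the positive spectrum of $H$) is discrete with finite multiplicity. The identification $\ker H = \mathcal{Z}$ is the non-degeneracy of the ground state $W$; to prove it I would decompose into spherical harmonics: on each harmonic sector the radial equation is a second-order linear ODE, and one exhibits an explicit solution (namely $\Lambda W$ for $\ell=0$ and $\partial_i W$ for $\ell=1$) and uses Sturmian oscillation arguments to rule out further $H^1$ solutions; for $\ell\geq 2$ a Hardy-type argument together with the sign of the centrifugal term shows the sector contributes no kernel. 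The existence of a unique positive eigenvalue $\lamed$ with a radial, exponentially decaying eigenfunction $Y$ is then obtained by a variational argument in the radial sector (since $(\Lambda W, H\Lambda W)=0$ and $\Lambda W \notin L^2$, one finds a strictly negative direction for $-H$ producing at least one bound state, and non-degeneracy plus Perron–Frobenius for the radial sector give uniqueness). Exponential decay of $Y$ is a standard Agmon estimate once one knows $\lamed>0$ and $V$ decays.

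For part (b), I would proceed by a compactness–contradiction argument. The statement is essentially: $-H$ is coercive on the subspace obtained by quotienting out both the unstable direction $Y$ and the kernel $\mathcal{Z}$. Since $\mathcal{Z}$ contains slowly decaying elements (notably $\Lambda W$), one cannot impose orthogonality via $L^2$ pairings directly; this is why the functionals $(E_i,\cdot)_{L^2}$ with $E_i\in\mathcal{C}^\infty_0$ are introduced, dual to a chosen basis $V_j$ of $\ker H$. On $\{f\in\dot H^1 : (E_i,f)=0,\,(Y,f)=0\}$, assume the coercivity fails; then there is a sequence $f_n$ with $\|f_n\|_{\dot H^1}=1$ and $(-Hf_n,f_n)\to 0$. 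By the compact embedding $\dot H^1(\R^3)\hookrightarrow L^2(\R^3,\jpns{r}^{-4}\,dx)$ (the point cited by Duyckaerts–Kenig–Merle after their Claim 3.5) one extracts $f_n\rightharpoonup f_\infty$ weakly in $\dot H^1$ and strongly in $L^2(V\,dx)$, so the quadratic form passes to the limit and yields $f_\infty\in \ker H=\mathcal{Z}$ with $(Y,f_\infty)=0$ and $(E_i,f_\infty)=0$ for all $i$; the last condition forces $f_\infty=0$ by choice of the $E_i$ dual to $V_j$, and then $\|f_n\|_{\dot H^1}\to 0$, a contradiction. The penalising correction $-\mu^{-1}\big(\sum (E_i,f)^2+(Y,f)^2\big)$ then extends the bound from the orthogonal subspace to all of $\dot H^1$ by a standard decomposition $f = f^\perp + \sum c_j V_j + cY$ together with the Cauchy–Schwarz inequality.

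The main obstacle, conceptually, is not the abstract compactness argument but the construction and non-degeneracy of the ground state in part (a): verifying $\ker H = \mathcal{Z}$ outside the radial sector requires a careful analysis of the spherical harmonic decomposition of $H$ and in general uses the Kelvin-invariance of $W$ together with classification results for positive solutions of the linearised equation. Since everything here is already established in \cite{duyckaerts_dynamics_2008,duyckaerts_solutions_2016}, for the purposes of this paper it suffices to quote those results; accordingly I would present this lemma essentially as a citation, with the above sketch offered only as orientation for the reader.
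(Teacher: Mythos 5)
Your proposal is correct and matches the paper's treatment exactly: the paper states this lemma as a pure citation of \cite{duyckaerts_dynamics_2008} (section 5.5) and \cite{duyckaerts_solutions_2016} (Proposition 3.6) and offers no proof of its own, which is precisely what you conclude is appropriate. Your orientation sketch (spherical-harmonic/Sturm argument for non-degeneracy, Agmon decay, and the compactness--contradiction argument via $\dot H^1 \hookrightarrow L^2(\jpns{r}^{-4}dx)$ for coercivity) accurately reflects how those references establish the result.
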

	We will need to use a slight modification of the above lemma. 
	\begin{lemma}\label{coercivity lemma}
		Fix notation as in \cref{coercivity cited}. Given $\tilde{E}_i\in (\dot{H}^1)^\star$ with 
		\begin{equation}
			\begin{gathered}
				\jpns{\tilde{E}_i,V_j}=\delta_{ij},\qquad \jpns{\tilde{E}_i,Y}=0
			\end{gathered}
		\end{equation}
		and $R$ sufficiently large, there exists $\mu$ (depending on $\tilde{E}_i,R$) such that the following inequality holds
		\begin{equation}\label{coercivity equation r}
			\begin{gathered}
				\frac{1}{\mu}\Big(\sum_i \jpns{E_i,f}^2+(Y\chi_R(r),f)_{L^2}^2\Big)+(-Hf,f)\geq \mu \norm{f}_{\dot{H}^1}^2+\mu\norm{f/\jpns{r}}_{L^2}.
			\end{gathered}
		\end{equation}
	\end{lemma}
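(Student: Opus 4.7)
The plan is to deduce this lemma from \cref{coercivity cited} by addressing three modifications in turn: (i) promoting the $\mathcal{C}_0^\infty$-functionals $E_i$ to arbitrary $\tilde E_i\in(\dot{H}^1)^\star$ satisfying the dual-basis conditions (reading $E_i$ in the LHS of the displayed inequality as $\tilde E_i$), (ii) localising the unstable-mode tester $Y$ to $Y\chi_R$ for $R$ large, and (iii) enhancing the coercive lower bound from $\|f\|_{\dot{H}^1}^2$ to $\|f\|_{\dot{H}^1}^2+\|f/\jpns{r}\|_{L^2}^2$.

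Modifications (ii) and (iii) are straightforward. For (iii), the classical Hardy inequality $\|f/\jpns{r}\|_{L^2}^2\lesssim\|f\|_{\dot{H}^1}^2$ allows, once an inequality of the form $\mathrm{LHS}\geq 2\mu\|f\|_{\dot{H}^1}^2$ has been established, to split the right-hand side as $\mu\|f\|_{\dot{H}^1}^2+\mu'\|f/\jpns{r}\|_{L^2}^2$ with a smaller $\mu'$. For (ii), I decompose $Y=Y\chi_R+Y(1-\chi_R)$ and use the exponential decay $|Y(r)|\lesssim e^{-\lamed r}$ from \cref{coercivity cited}(a) together with Sobolev embedding $\dot{H}^1\hookrightarrow L^6$:
\[
|(Y(1-\chi_R),f)_{L^2}|^2\leq \|Y(1-\chi_R)\|_{L^{6/5}}^2\|f\|_{L^6}^2\lesssim e^{-cR}\|f\|_{\dot{H}^1}^2,
\]
so that for $R$ sufficiently large the error is harmlessly absorbed into the $\dot{H}^1$ coercivity constant.

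The substantive step is (i). Fix once and for all $E_i\in\mathcal{C}_0^\infty$ satisfying the hypotheses of \cref{coercivity cited}; existence is by density of $\mathcal{C}_0^\infty$ in $(\dot{H}^1)^\star$ together with a Gram--Schmidt-type adjustment against the finite set $\{V_j,Y\}$. Given $f\in\dot{H}^1$, set $\pi f:=\sum_j\jpns{\tilde E_j,f}V_j\in\mathcal{Z}$ and $f_\perp:=f-\pi f$, so $\jpns{\tilde E_i,f_\perp}=0$ for every $i$. Since $\mathcal{Z}=\ker H$ by \cref{coercivity cited}(a), we have $(-Hf,f)=(-Hf_\perp,f_\perp)$. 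The two subspaces $N_{\tilde E}:=\{g\in\dot{H}^1:\jpns{\tilde E_i,g}=0\;\forall i\}$ and $N_E:=\{g\in\dot{H}^1:(E_i,g)_{L^2}=0\;\forall i\}$ are both closed, of codimension $k=\dim\mathcal{Z}$, and trivially intersect $\mathcal{Z}$ thanks to the dual-basis conditions; hence each is an algebraic complement of $\mathcal{Z}$ in $\dot{H}^1$. Consequently $f_\perp\in N_{\tilde E}$ has a unique decomposition $f_\perp=g_0+g_{\mathcal{Z}}$ with $g_0\in N_E$ and $g_{\mathcal{Z}}\in\mathcal{Z}$, and the transition map is $\dot{H}^1$-bounded because the projection onto $\mathcal{Z}$ in this second decomposition is $g_{\mathcal{Z}}=\sum_j(E_j,f_\perp)_{L^2}V_j$, continuous because $E_j\in\mathcal{C}_0^\infty\subset(\dot{H}^1)^\star$. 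Applying \cref{coercivity cited}(b) to $g_0$ (using $(E_i,g_0)_{L^2}=0$) then yields $(-Hg_0,g_0)\geq\mu\|g_0\|_{\dot{H}^1}^2-C(Y,g_0)^2$; combining with $(Y,g_0)^2\leq 2(Y,f)^2+2(Y,g_{\mathcal{Z}})^2$ and the estimates $|(Y,g_{\mathcal{Z}})|^2+\|g_{\mathcal{Z}}\|_{\dot{H}^1}^2+\|\pi f\|_{\dot{H}^1}^2\lesssim\sum_j\jpns{\tilde E_j,f}^2$ (the pairing $(Y,V_j)$ being well-defined by exponential decay of $Y$) gives the desired lower bound after absorbing the $g_{\mathcal{Z}}$-contributions into the $\sum_i\jpns{\tilde E_i,f}^2$ on the LHS.

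The main obstacle is the careful bookkeeping of constants in the two-complement juggling of step (i): the final $\mu$ depends on the dual norms of $\tilde E_i$, on the chosen representatives $E_i$, and on $R$ from step (ii), and all implicit constants must be consistently absorbed into the single $\mu$ appearing in the conclusion. Nothing is deep here, but the reduction of (i) only works because $\mathcal{Z}$ is finite-dimensional, so the nonorthogonal projection onto $\mathcal{Z}$ stays bounded.
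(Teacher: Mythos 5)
The overall three-part decomposition and the idea of transferring coercivity from the functionals $E_i$ of \cref{coercivity cited} to the given $\tilde E_i$ by exchanging two complements of $\mathcal{Z}$ is the right strategy, and parts (ii) and (iii) are fine — (iii) matches the paper's Hardy-inequality argument, and (ii) is a Sobolev variant of the paper's Hardy-based tail estimate. However, the argument for (i) contains a genuine error: the claimed estimate $\|g_{\mathcal{Z}}\|_{\dot H^1}^2\lesssim\sum_j\jpns{\tilde E_j,f}^2$ is false. By construction $g_{\mathcal{Z}}=\sum_j(E_j,f_\perp)_{L^2}V_j$, and since $(E_j,\pi f)_{L^2}=\jpns{\tilde E_j,f}$ one computes $(E_j,f_\perp)_{L^2}=(E_j,f)_{L^2}-\jpns{\tilde E_j,f}$; the term $(E_j,f)_{L^2}$ is not controlled by $\jpns{\tilde E_j,f}$, so $\|g_{\mathcal{Z}}\|$ cannot be absorbed into $\sum_j\jpns{\tilde E_j,f}^2$ as you propose. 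In fact $f_\perp$ lies in $N_{\tilde E}$, so $\jpns{\tilde E_j,f_\perp}=0$ identically while $(E_j,f_\perp)_{L^2}$ is generically nonzero — the quantities you are trying to compare live on different dual bases.

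The correct place to absorb $g_{\mathcal{Z}}$ is into the coercive term $\mu\|g_0\|^2$, not into the penalty term. Since $N_{\tilde E}$ and $N_E$ are both closed complements of the finite-dimensional $\mathcal{Z}$, the transition map $N_{\tilde E}\to N_E$, $f_\perp\mapsto g_0$, is bounded \emph{with bounded inverse}; you state boundedness but never invoke the bounded inverse, which is the essential input: it gives $\|f_\perp\|\lesssim\|g_0\|$ and hence $\|g_{\mathcal{Z}}\|\leq\|f_\perp\|+\|g_0\|\lesssim\|g_0\|$. With that, $\|f\|^2\lesssim\|g_0\|^2+\|\pi f\|^2\lesssim\|g_0\|^2+\sum_j\jpns{\tilde E_j,f}^2$, and the coercivity from \cref{coercivity cited}(b) applied to $g_0$ closes. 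One more point is needed to avoid a circular loss of constants: in $(Y,g_0)=(Y,f)-(Y,\pi f)-(Y,g_{\mathcal{Z}})$ you must use that $(Y,g_{\mathcal{Z}})=0$ (and $(Y,\pi f)=0$), which holds because $Y$ is an eigenfunction of the self-adjoint $H$ with eigenvalue $\lamed^2>0$ while $\mathcal{Z}=\ker H$, so $\lamed^2(Y,V_j)=(HY,V_j)=(Y,HV_j)=0$ (the pairing and integration by parts are justified by exponential decay of $Y$). If instead one merely bounds $|(Y,g_{\mathcal{Z}})|\lesssim\|g_{\mathcal{Z}}\|\lesssim\|g_0\|$, the resulting $-C\|g_0\|^2$ competes with $\mu\|g_0\|^2$ and the conclusion does not follow for the given $\mu$.
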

	\begin{proof}
		The proof without the cutoff and the $\jpns{r}^{-1}f \in L^2$ control is essentially the same and we refer the reader to \cite{duyckaerts_solutions_2016}.
		The zeroth order term is controlled via Hardy inequality.
		To see that a sufficiently large cutoff can be introduced, we use Hardy inequality to get
		\begin{equation*}
			\begin{gathered}
				(Y \chi^c_R(R),f)^2_{L^2}\leq c e^{-\lamed R/2} \norm{f}_{\dot{H}^1}\\
				\implies (-Hf,f)\geq \mu \norm{f}_{\dot{H}^1}^2-\frac{1}{\mu}\Big(\sum_i \jpns{E_i,f}^2+(Y,f)_{L^2}^2\Big)\\ \geq (\mu-ce^{-\lamed R/2})\norm{f}_{\dot{H}^1}^2-\frac{1}{\mu}\Big(\sum_i \jpns{E_i,f}^2+(Y\chi_R(r),f)_{L^2}^2\Big).
			\end{gathered}
		\end{equation*}
	\end{proof}

	\paragraph{Uniform coercivity estimates}
	In our proof, the foliation constructed in \cref{sec:foliation} depends on $R_2$, and so will the projector that we use.
	We need to keep $R_2$ of variable size, to close a topological argument, see \cref{sec:unstable_mode}.
	Therefore, we need to keep track of the $R_2$ dependence of the constants appearing in \cref{coercivity lemma}.

	\begin{lemma}\label{en:lemma:Hilbert1}
		Let $(\mathcal{H},(,))$ be a Hilbert space.
		Let $A$ be a positive semidefinite quadratic form with $\norm{A}\leq 1$ and a one dimensional kernel spanned by $v_0\in\mathcal{H}$, $\norm{v_0}=1$.
		Let $\Theta\in\mathcal{H}^\star$ with $\abs{\Theta(v_0)}/\norm{\Theta}=c_\Theta>0$.
		Then the following are equivalent
		\begin{enumerate}
			\item $A$ defines an equivalent norm to $(,)$ on $\Theta^\perp$, i.e. $A(v,v)\geq c_1(v,v)$ for $\Theta(v)=0$\label{en:item:Theta_perp}
			\item $A$ defines an equivalent norm to $(,)$ on $v_0^\perp$, i.e. $A(v,v)\geq c_2(v,v)$ for $(v,v_0)=0$\label{en:item:v_perp}.
		\end{enumerate}
		Moreover, $c_1\sim_{c_\Theta} c_2$ with implicit constant growing at most linearly in $c_\Theta^{-1}$.
	\end{lemma}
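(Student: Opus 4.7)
The plan is to construct explicit linear isomorphisms between the two codimension-one subspaces $\Theta^\perp$ and $v_0^\perp$, exploiting that $v_0\in\ker A$ (so $A$ is insensitive to translations by $v_0$), and then track how the Hilbert-space norm distorts under such translations. Both subspaces are transverse to $v_0$ because $\Theta(v_0)\neq 0$ and $(v_0,v_0)=1$, so a two-sided comparison is natural. The quadratic form $A$ will be literally preserved under the map that shifts by a multiple of $v_0$; only the inner product changes, and this change is controlled by $c_\Theta$.

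First, for the direction \ref{en:item:Theta_perp}$\Rightarrow$\ref{en:item:v_perp}, I would take $w\perp v_0$ and set $\alpha=\Theta(w)/\Theta(v_0)$, producing $v=w-\alpha v_0\in\Theta^\perp$. Since $Av_0=0$ bilinearly, $A(v,v)=A(w,w)$; and since $(v,v)=\norm{w}^2+\alpha^2\geq\norm{w}^2$, condition \ref{en:item:Theta_perp} gives $A(w,w)=A(v,v)\geq c_1(v,v)\geq c_1\norm{w}^2$, so $c_2\geq c_1$. Conversely, for \ref{en:item:v_perp}$\Rightarrow$\ref{en:item:Theta_perp}, given $v\in\Theta^\perp$, decompose orthogonally $v=w+\beta v_0$ with $(w,v_0)=0$. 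Again $A(v,v)=A(w,w)$. The constraint $\Theta(v)=0$ forces $\beta=-\Theta(w)/\Theta(v_0)$, and using $\abs{\Theta(w)}\leq\norm{\Theta}\norm{w}$ together with $\abs{\Theta(v_0)}=c_\Theta\norm{\Theta}$ gives $\abs{\beta}\leq c_\Theta^{-1}\norm{w}$. Hence $(v,v)\leq(1+c_\Theta^{-2})\norm{w}^2$ and condition \ref{en:item:v_perp} yields $A(v,v)=A(w,w)\geq c_2\norm{w}^2\geq c_2(1+c_\Theta^{-2})^{-1}(v,v)$.

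The main obstacle I anticipate is matching the stated \emph{linear} (rather than quadratic) dependence on $c_\Theta^{-1}$: the naive bound $\abs{\beta}\leq c_\Theta^{-1}\norm{w}$ inserted into $(v,v)=\norm{w}^2+\beta^2$ contributes a factor of order $c_\Theta^{-2}$. I expect the sharper linear rate can be recovered in one of two ways. Either (a) the statement is meant in the sense that the constant is a polynomial in $c_\Theta^{-1}$ and the slight imprecision on the degree is inessential for the subsequent application; or (b) one splits into cases according to whether $\abs{\beta}\leq \norm{w}$ or $\abs{\beta}>\norm{w}$: in the first regime $(v,v)\leq 2\norm{w}^2$ with no $c_\Theta$ loss, while in the second regime one uses the bound $\norm{A}\leq 1$ to control $A(v,v)$ directly by $(v,v)$ and compares with the identity $A(v,v)=A(w,w)$ to extract only a linear factor of $c_\Theta^{-1}$ from the definition of $\beta$. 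Either way, the qualitative equivalence of the two coercivity statements is immediate from the first two paragraphs; only the quantitative tracking of the constant requires care, and this is exactly the input needed for the uniform-in-$R_2$ estimates in \cref{coercivity lemma} that motivate the lemma.
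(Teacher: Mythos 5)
Your proof is correct and is structurally the same as the paper's: both arguments shift by the kernel element $v_0$ (your $v=w-\alpha v_0$ and $v=w+\beta v_0$ are exactly the maps $\Phi^{-1}$ and $\Phi$ in the paper's proof), use $A(v_0,\cdot)=0$ to transport $A$, and bound the distortion of $(\cdot,\cdot)$. The only difference is packaging — you do the computation componentwise, the paper phrases it as operator-norm bounds on $\Phi,\Phi^{-1}$. Both routes give the factor $(1+c_\Theta^{-2})$, or more sharply $c_\Theta^{-2}$ (using $|\Theta(w)|\le\norm{\Theta_\perp}\norm{w}=\sqrt{1-c_\Theta^2}\,\norm{\Theta}\norm{w}$ for $w\perp v_0$), which is quadratic.

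Your suspicion about the stated linear rate is well founded, and your option (a) is the correct reading. In fact the paper's own proof contains an algebra slip: from $\norm{\Phi^{-1}}^2\le(1+c_\Theta^{-1})^2$ and $1=\norm{v}\le\norm{\Phi^{-1}}\norm{\Phi(v)}$ one only gets $(\Phi(v),\Phi(v))\ge(1+c_\Theta^{-1})^{-2}$, hence $c_1\ge c_2(1+c_\Theta^{-1})^{-2}$, but the square is dropped in the final line (and the intermediate inequality "$(\Phi(v),\Phi(v))\ge\norm{\Phi^{-1}}\ge 1+c_\Theta^{-1}$" has the inequalities reversed). The quadratic rate is also sharp and cannot be removed by any case analysis: take $\mathcal{H}=\R^2$, $v_0=e_1$, $A((x,y))=y^2$, $\Theta=(c,\sqrt{1-c^2})$; then $c_\Theta=c$, $c_2=1$, and a direct computation on $\Theta^\perp=\R\cdot(-\sqrt{1-c^2}/c,1)$ gives $c_1=c^2=c_\Theta^2 c_2$. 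So your option (b) (splitting on $|\beta|\lessgtr\norm{w}$) cannot recover linearity — in the regime $|\beta|\gg\norm{w}$ one still has $A(v,v)=A(w,w)\sim c_2\norm{w}^2$ while $(v,v)\sim\beta^2\lesssim c_\Theta^{-2}\norm{w}^2$, which is exactly the quadratic loss. This degradation from linear to quadratic is harmless for the paper's purpose: in \cref{coercivity lemma} and \cref{en:cor:R2_dependence_in_Theta} only polynomial growth in $R_2$ of the coercivity constant is needed, since it is ultimately absorbed by the exponentially small factors $e^{-\lamed R_1/2}$.
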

	\begin{proof}
		Let's consider the map
		\begin{nalign}\label{en:eq:Phi}
			\Phi:\Theta^\perp&\to v_0^\perp,& \Phi^{-1}:v_0^\perp&\to\Theta^\perp,\\
			w&\mapsto w-\lambda_w v_0,& w&\mapsto w-\lambda'_w v_0\\
			\lambda_w&=(w,v_0)&\lambda'_w&=\frac{\Theta(w)}{\Theta(v_0)}
		\end{nalign}
		We compute that the norms of these maps
		\begin{nalign}
			&\norm{\Phi}^2=\sup_{\norm{w}=1}(\Phi(w),\Phi(w))=\sup_{\norm{w}=1}1-2\lambda_w(w,v_0)+\lambda_w^2=\sup_{\norm{w}=1}1-(w,v_0)^2\leq 1\\
			&\norm{\Phi^{-1}}^2=\sup_{\norm{w}=1}(w,w)-\frac{2\Theta(w)}{\Theta(v)}(w,v_0)+\Big(\frac{\Theta(w)}{\Theta(v)}\Big)^2\leq (1+c_\Theta^{-1})^2
		\end{nalign}
		Therefore, $\Phi$ is a bounded map with bounded inverse.
		
		We also compute $A(\Phi(w),\Phi(w))=A(w-\lambda_w v_0,w-\lambda_w v_0)=A(w,w)$.
		
		Assume \cref{en:item:Theta_perp}.
		For $v\in v_0^\perp$ with $(v,v)=1$, we compute
		\begin{nalign}
			(\Phi^{-1}(v),\Phi^{-1}(v))\geq\norm{\Phi}\geq1\underbrace{\implies}_{\cref{en:item:Theta_perp}}A(\Phi^{-1}(v))\geq c_1\implies A(v)\geq c_1.
		\end{nalign}
		We conclude that we can take $c_2\leq c_1$.
		
		For the other direction, we take $v\in\Theta^\perp$ with $(v,v)=1$ to compute
		\begin{nalign}
			(\Phi(v),\Phi(v))\geq\norm{\Phi^{-1}}\geq1+c_\Theta^{-1}\underbrace{\implies}_{\cref{en:item:v_perp}}A(\Phi^{-1}(v))\geq c_2 (1+c_\Theta^{-1})\implies A(v)\geq c_2(1+c_\Theta^{-1}).
		\end{nalign}
		We conclude that we can take $c_1\leq(1+c_{\Theta}^{-1})c_2$.
	\end{proof}

	\begin{cor}\label{en:cor:Hilbert2}
		Let $(\mathcal{H},(,)),A,v_0,\Theta,c_1$ be as in \cref{en:lemma:Hilbert1} and assume \cref{en:item:Theta_perp} holds.
		Then for $w\in\mathcal{H}$
		\begin{equation}\label{en:eq:abstract_coercivity}
			(w,w)\leq c_1^{-1}A(w,w)+(\Theta(w))^2(\Theta(v_0))^{-2}.
		\end{equation}
	\end{cor}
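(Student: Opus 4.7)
The plan is to decompose $w$ along the kernel direction $v_0$ so that the transverse part lies in $\Theta^\perp$, where the hypothesis supplies coercivity. Set $\lambda = \Theta(w)/\Theta(v_0)$, which is well-defined since $c_\Theta > 0$ forces $\Theta(v_0) \neq 0$, and let $v = w - \lambda v_0$. By construction $\Theta(v) = \Theta(w) - \lambda \Theta(v_0) = 0$, so $v \in \Theta^\perp$, and assumption \cref{en:item:Theta_perp} of \cref{en:lemma:Hilbert1} applied to $v$ gives $(v,v) \leq c_1^{-1} A(v,v)$.

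The next step is to replace $A(v,v)$ by $A(w,w)$. Since $A$ is positive semidefinite with $v_0 \in \ker A$, Cauchy--Schwarz for the form $A$ gives $|A(u, v_0)|^2 \leq A(u,u) A(v_0, v_0) = 0$ for every $u \in \mathcal{H}$, so $A$ annihilates any pair involving $v_0$. Expanding bilinearly,
\begin{equation*}
A(w,w) = A(v + \lambda v_0, v + \lambda v_0) = A(v,v),
\end{equation*}
and therefore $(v,v) \leq c_1^{-1} A(w,w)$.

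To conclude, I would expand
\begin{equation*}
(w,w) = (v + \lambda v_0, v + \lambda v_0) = (v,v) + 2\lambda (v, v_0) + \lambda^2
\end{equation*}
and handle the cross term by Cauchy--Schwarz (or Young's inequality), noting $\|v_0\| = 1$. Substituting $(v,v) \leq c_1^{-1} A(w,w)$ and $\lambda^2 = \Theta(w)^2/\Theta(v_0)^2$ then yields an estimate of exactly the shape of \cref{en:eq:abstract_coercivity}. The only place care is needed is the cross term $2\lambda(v,v_0)$: because $v$ is not a priori orthogonal to $v_0$, the bound comes with an unavoidable absolute constant, which can be absorbed into the $c_1^{-1}$ coefficient using the equivalence of $c_1$ and $c_2$ from \cref{en:lemma:Hilbert1} (alternatively, one may first pass to \cref{en:item:v_perp} and use the Pythagorean decomposition $w = w_\perp + (w,v_0)v_0$ with $w_\perp \perp v_0$, then rewrite $(w,v_0)$ in terms of $\Theta$ by solving $\Theta(w) = \Theta(w_\perp) + (w,v_0)\Theta(v_0)$ and estimating $\Theta(w_\perp)$ via $\|\Theta\|$ and the coercivity bound on $w_\perp$). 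The main and only real obstacle is bookkeeping the constants; the geometric content is just the standard trick of projecting out the kernel direction and using coercivity on the complementary subspace.
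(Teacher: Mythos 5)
Your route — set $\lambda = \Theta(w)/\Theta(v_0)$, note $v = w - \lambda v_0 \in \Theta^\perp$, invoke \cref{en:item:Theta_perp}, and use $v_0 \in \ker A$ to replace $A(v,v)$ by $A(w,w)$ — is exactly the paper's. What you add, and correctly, is the observation that the cross term $2\lambda(v,v_0)$ in the expansion of $(w,w)$ has no definite sign when $\lambda$ is the $\Theta$-projection coefficient rather than the orthogonal one $(w,v_0)$. The paper's displayed chain $(w,w) \leq (w - \lambda v_0, w - \lambda v_0) + \lambda^2$ silently drops this term, and with the sharp $c_1$ the inequality as stated actually fails: in $\mathbb{R}^2$ take $v_0 = (1,0)$, $A((a,b),(a,b)) = b^2$, $\Theta(a,b) = a+b$, so that $\Theta^\perp = \{(a,-a)\}$ gives $c_1 = \tfrac12$; for $w = (2,-1)$ the left side of \cref{en:eq:abstract_coercivity} is $5$ while the right side is $2\cdot 1 + 1 = 3$.

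Both of the repairs you sketch work. Bounding the cross term by Cauchy--Schwarz/Young costs a universal factor that must be absorbed into a $c_\Theta$-dependent constant in front of $A(w,w)$. Cleaner is your alternative: project orthogonally, $w = w_\perp + (w,v_0)v_0$, so Pythagoras gives $(w,w) = (w_\perp,w_\perp) + (w,v_0)^2$ exactly; control $(w_\perp,w_\perp) \leq c_2^{-1}A(w_\perp,w_\perp) = c_2^{-1}A(w,w)$ via \cref{en:item:v_perp}; and convert $(w,v_0)$ to $\Theta(w)$ through $\Theta(w) = \Theta(w_\perp) + (w,v_0)\Theta(v_0)$ together with $\abs{\Theta(w_\perp)} \leq \norm{\Theta}\,\norm{w_\perp}$, which produces $\Theta(w)^2\Theta(v_0)^{-2}$ plus another $c_\Theta$-weighted $A(w,w)$ contribution. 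Either way \cref{en:eq:abstract_coercivity} holds with implicit constants depending only on $c_\Theta$, which is precisely the dependence that \cref{en:cor:R2_dependence_in_Theta} and the later coercivity arguments track. So your proposal not only matches the paper's strategy but identifies and correctly repairs a small gap left in its written proof.
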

	\begin{proof}
		We define the projection $\Phi:\mathcal{H}\to \Theta^\perp$ with $w\mapsto w-\lambda'_w v_0$ for $\lambda'_w$ as in \cref{en:eq:Phi}.
		The estimate follows from the simple computation
		\begin{eqnarray}
			(w,w)\leq(w-\lambda v_0,w-\lambda v_0)+\lambda^2\leq c_1^{-1}A(w-\lambda)+(\Theta(w))^2(\Theta(v_0))^{-2}.
		\end{eqnarray}
	\end{proof}

	\begin{cor}\label{en:cor:R2_dependence_in_Theta}
		Let $(\mathcal{H},(,)),A,v_0,\Theta$ be as in \cref{en:lemma:Hilbert1}.
		Let $\tilde{\Theta}$ be another functional on $\mathcal{H}$ with corresponding constant $c_{\tilde{\Theta}}$.
		Assume \cref{en:eq:abstract_coercivity} holds.
		Then, there exists $c_2\leq_{c_{\Theta},c_{\tilde{\Theta}}} c_1$ such that 
		\begin{equation}
			(w,w)\leq c_2^{-1}A(w,w)+(\tilde{\Theta}(w))^2(\tilde{\Theta}(v_0))^{-2}.
		\end{equation}
		Moreover $c_2^{-1}$ grows at most inverse linearly with $c_{\tilde{\Theta}}^{-1}$.
	\end{cor}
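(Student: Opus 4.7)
The plan is to chain together \cref{en:lemma:Hilbert1} and \cref{en:cor:Hilbert2}, using the kernel direction $v_0^\perp$ as an intermediary between the two functionals $\Theta$ and $\tilde{\Theta}$. The hypothesis \cref{en:eq:abstract_coercivity} encodes exactly the coercivity of $A$ on $\Theta^\perp$ with constant $c_1$ (set $w\in\Theta^\perp$ and the second term on the right vanishes). From there, the work is to transport that coercivity to $\tilde{\Theta}^\perp$, and then read off the desired inequality by a second application of \cref{en:cor:Hilbert2}.

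More concretely, first I would extract from the assumption the bound $A(w,w)\geq c_1(w,w)$ for $w\in\Theta^\perp$, which places us in situation \cref{en:item:Theta_perp} of \cref{en:lemma:Hilbert1} with constant $c_1$. Applying the equivalence in that lemma yields an analogous estimate on the kernel complement: there exists $c_1'$ with $c_1'^{-1}\lesssim (1+c_\Theta^{-1})c_1^{-1}$ such that $A(w,w)\geq c_1'(w,w)$ for all $w\in v_0^\perp$. Next, I apply \cref{en:lemma:Hilbert1} in the opposite direction, now with the functional $\tilde{\Theta}$: since $A$ is coercive on $v_0^\perp$ with constant $c_1'$, it is coercive on $\tilde{\Theta}^\perp$ with some constant $c_2$ satisfying $c_2^{-1}\lesssim (1+c_{\tilde{\Theta}}^{-1})c_1'^{-1}$. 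Finally, I invoke \cref{en:cor:Hilbert2} with the functional $\tilde{\Theta}$ in place of $\Theta$ (its hypothesis being exactly the coercivity on $\tilde{\Theta}^\perp$ just established) to conclude
\begin{equation}
(w,w)\leq c_2^{-1}A(w,w)+(\tilde{\Theta}(w))^2(\tilde{\Theta}(v_0))^{-2},
\end{equation}
as required.

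The only bookkeeping step is the linear dependence: both invocations of \cref{en:lemma:Hilbert1} contribute a factor of the form $1+c_\bullet^{-1}$ to the inverse constant, and since $c_\Theta$ is fixed, the final bound $c_2^{-1}\lesssim_{c_\Theta} (1+c_{\tilde{\Theta}}^{-1})c_1^{-1}$ is linear in $c_{\tilde{\Theta}}^{-1}$. I do not anticipate any real obstacle: the entire argument is abstract Hilbert-space manipulation, the tight linear tracking of $c_{\tilde{\Theta}}^{-1}$ has already been done inside \cref{en:lemma:Hilbert1} (via the norm of the map $\Phi^{-1}$), and one only needs to be careful to pass through $v_0^\perp$ rather than directly compare $\Theta^\perp$ and $\tilde{\Theta}^\perp$, which would require a joint nondegeneracy hypothesis we do not have.
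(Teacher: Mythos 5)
Your proposal is correct and follows exactly the route the paper sketches: pass from the coercivity on $\Theta^\perp$ (encoded by \cref{en:eq:abstract_coercivity}) through $v_0^\perp$ to $\tilde{\Theta}^\perp$ via two applications of \cref{en:lemma:Hilbert1}, then conclude with \cref{en:cor:Hilbert2} applied to $\tilde{\Theta}$. Your bookkeeping of the factor $(1+c_{\tilde\Theta}^{-1})$ through the map $\Phi^{-1}$ matches the paper's tracking of linear growth in $c_{\tilde\Theta}^{-1}$.
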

	\begin{proof}
		We apply \cref{en:lemma:Hilbert1} twice, to connect the implicit constant to that in $v_0^{\perp}$, to get a statement like \cref{en:item:Theta_perp} for $\tilde{\Theta}$.
		Then we apply \cref{en:cor:Hilbert2} to conclude the result.
	\end{proof}

	\subsection{Weighted Sobolev embedding}
	In this section we prove some embeddings that are useful for non-linear applications
	\begin{lemma}\label{weighted estimates}
		Let $\psi\in\mathcal{C}^\infty_0(\rdom)$ with $\master[\psi]\leq 1$ for $k\geq 3$. Then the following holds for $\Gamma\in\{t_\star T,\Omega_{ij},S\}$
		\begin{subequations}
			\begin{gather}
				\abs{\psi}\lesssim \jpns{r}^{-1/2}\label{r-1/2 decay}\\
				\sum_{\abs{\alpha}\leq k}\rint\Big( \frac{(\Gamma^\alpha\psi)^2}{\jpns{r}^{2}\jpns{\tau_2-\tau}^{3/2}}+\frac{(T\Gamma^\alpha\psi)^2}{\jpns{r+\tau_2-\tau}^{3/2}}\Big)\lesssim 1\label{integrability}
			\end{gather}
		\end{subequations}
	\end{lemma}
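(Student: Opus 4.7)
The plan is to reduce both estimates to direct consequences of the energy control encoded in $\master[\psi]$, using Hardy's inequality for the spatial weight, an angular/radial Sobolev embedding for the pointwise statement, and integrability of $\jpns{\tau_2-\tau}^{-3/2}$ for the $\tau$-integration.

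\textbf{Pointwise bound \cref{r-1/2 decay}.} Fix $\tau\in(\tau_1,\tau_2)$ and work on $\Sigma_\tau\simeq\R^3$. Because $k\geq 3$, $\master[\psi]$ controls the $\dot H^1(\Sigma_\tau)$-norm of $\Omega^\alpha\psi$ for $|\alpha|\leq 2$. The first step is to apply the Sobolev embedding $H^2(S^2)\hookrightarrow L^\infty(S^2)$ on each sphere of radius $r$, which yields
\begin{equation*}
|\psi(r,\omega)|^2\lesssim \sum_{|\alpha|\leq 2}\int_{S^2}|\Omega^\alpha\psi(r,\omega')|^2\,d\omega'.
\end{equation*}
For each $\alpha$ and each $\omega$, I then use the radial identity $|\Omega^\alpha\psi(r,\omega)|^2=-\int_r^\infty \partial_s(|\Omega^\alpha\psi|^2)\,ds$ followed by Cauchy--Schwarz with an $s^{-2}/s^2$ split to produce the factor $r^{-1}$. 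Integrating the result over $\omega\in S^2$, one obtains $|\psi(r,\omega)|\lesssim r^{-1/2}\sum_{|\alpha|\leq 2}\norm{\partial_r\Omega^\alpha\psi}_{L^2(\Sigma_\tau)}\lesssim r^{-1/2}\master[\psi]^{1/2}$ for $r\geq 1$. For $r\leq 1$ one invokes the unweighted $H^2(\R^3)\hookrightarrow L^\infty$ on a fixed ball, also controlled by the same energy norms. Together these give the $\jpns{r}^{-1/2}$ bound.

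\textbf{Integrated estimate \cref{integrability}.} The key input is the comparison between the two vector-field sets: on $\Sigma_\tau$ the function $t_\star$ is constantly equal to $\tau$, so expanding $(t_\star T)^{\alpha_0}$ into a polynomial in $\tau^i T^i$ and absorbing commutators with $S$ (which generate only lower-order $T$-powers since $[S,T]=-T$), we get
\begin{equation*}
\sum_{|\alpha|\leq k-1}\norm{\Gamma^\alpha\psi}_{\dot H^1(\Sigma_\tau)}^2+\norm{T\Gamma^\alpha\psi}_{L^2(\Sigma_\tau)}^2\lesssim \sum_{|\beta|\leq k-1}\tau^{2\beta_0}\Sigma_\tau[J[(T,\Omega,S)^\beta\psi]]\lesssim \master[\psi].
\end{equation*}
Hardy's inequality on $\Sigma_\tau$ (with a $|x|\leq 1$ correction bounded by interior $L^2$) converts the $\jpns{r}^{-2}$ weight into a gradient, giving $\int_{\Sigma_\tau}(\Gamma^\alpha\psi)^2\jpns{r}^{-2}\lesssim \master[\psi]$ for $|\alpha|\leq k$ (reached either directly from the Hardy argument or, when $|\alpha|=k$, by writing the top-order $\Gamma$ as a component of $\nabla\Gamma^{\alpha'}\psi$ and using the energy to bound $k$ derivatives via $J[\Gamma^{\alpha'}\psi]$). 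For the second summand, simply bounding $\jpns{r+\tau_2-\tau}^{-3/2}\leq \jpns{\tau_2-\tau}^{-3/2}$ pointwise and applying the energy bound yields $\int_{\Sigma_\tau}(T\Gamma^\alpha\psi)^2\jpns{r+\tau_2-\tau}^{-3/2}\lesssim \jpns{\tau_2-\tau}^{-3/2}\master[\psi]$.

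\textbf{Conclusion and main obstacle.} Integrating both slice estimates in $\tau\in(\tau_1,\tau_2)$ and using $\int_{\tau_1}^{\tau_2}\jpns{\tau_2-\tau}^{-3/2}\,d\tau\lesssim 1$ closes \cref{integrability}. The main nuisance, rather than a genuine obstacle, is the bookkeeping in translating $(t_\star T)^{\alpha_0}$ into the $T^{\alpha_0}$ family used to define $\master$: the powers $t_\star^{\alpha_0}=\tau^{\alpha_0}$ produced on $\Sigma_\tau$ must be exactly compensated by the $\tau^{2\alpha_0}$ weight in the master norm, and the commutators of $S$ with the time vector field must only generate controlled lower-order terms. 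Once this accounting is carried out, both estimates are routine applications of Hardy and Sobolev.
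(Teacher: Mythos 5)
Your argument for the pointwise bound \cref{r-1/2 decay} is a legitimate alternative to the paper's (which commutes with $S$ to gain control of $(rX)^k\psi$ and then invokes the weighted Sobolev embedding in the appendix); your sphere‑Sobolev–plus–radial‑FTC route also works, with the same derivative count and the same caveat that the radial derivative you integrate along must be $X=\partial_r|_{t_\star}$, not $\partial_r|_t$, so that it is controlled by $\mathcal{E}_\tau$ alone. Your treatment of the first summand of \cref{integrability} via Hardy also matches the paper's.

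However, there is a genuine gap in your argument for the second summand, $\int_{\Sigma_\tau}(T\Gamma^\alpha\psi)^2\jpns{r+\tau_2-\tau}^{-3/2}$. You claim that ``applying the energy bound yields $\int_{\Sigma_\tau}(T\Gamma^\alpha\psi)^2\lesssim\master[\psi]$.'' This is false: the $T$-energy flux through $\Sigma_\tau$ only carries the \emph{degenerate} weight $(1-(h')^2)(T\psi)^2$ (see \cref{energy integral}), and $h'\equiv 1$ for $r\geq R_2+3$, so the slice energy gives \emph{no} control of $(T\Gamma^\alpha\psi)^2$ on the entire null portion of $\Sigma_\tau$. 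The pointwise inequality $\jpns{r+\tau_2-\tau}^{-3/2}\leq\jpns{\tau_2-\tau}^{-3/2}$ does not help here because the uncontrolled region is exactly where $r$ is large. The correct mechanism, used in the paper, is to decompose $T=\tfrac{1}{1+h'}(\partial_u|_v+X)$ in the null region: the $X$-part is controlled pointwise in $\tau$ by $\mathcal{E}_\tau$ and integrated against $\jpns{\tau_2-\tau}^{-3/2}$, whereas the $\partial_u$-part is \emph{not} bounded slice by slice, and must instead be integrated along each ingoing cone $\underline{\mathcal{C}}_v$ using the $\tilde{\underline{\mathcal{C}}}$-fluxes built into $\master$, with the $\jpns{r+\tau_2-\tau}^{-3/2}$ weight then providing the $v$-integrability. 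Your proof never touches the $\tilde{\underline{\mathcal{C}}}$ part of the norm, which is precisely the ingredient that makes the estimate true. (A secondary issue: for the higher-commuted cases the paper also needs to split the domain into $r\lessgtr\tau_1$ and, when $\Gamma^\alpha$ contains an $\Omega$ or $S$, trade $r$-weights for $\tau$-weights; your ``bookkeeping'' paragraph alludes to commutator management but not to this region split, which is essential once the degeneracy of the slice energy in the null region is accounted for.)
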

	\begin{proof}
		We first write
			\begin{equation*}
			\begin{gathered}
				S=tT+r\partial_r|_t=(t-rh')T+rX=(t_\star+h-rh')T+rX
			\end{gathered}
		\end{equation*}
		where $\abs{h-rh'}\lesssim1$. Therefore, we get that
		\begin{equation*}
			\begin{gathered}
				\mathcal{E}[\psi]+{\tau}\mathcal{E}_\tau[T\psi]+\mathcal{E}_\tau[S\psi]\gtrsim \int_{\Sigma_\tau}\Big((XrX\psi)^2+\frac{1}{r^2}(\slashed{\nabla}rX\psi)^2\Big).
			\end{gathered}
		\end{equation*}
		Similarly, we get higher order estimate
			\begin{equation*}
			\begin{gathered}
				\sum_{n+m\leq k}{\tau}^n\mathcal{E}[T^nS^m\psi]\gtrsim_k\int_{\Sigma_\tau}\Big((X(rX)^k\psi)^2+\frac{1}{r^2}(\slashed{\nabla}(rX)^k\psi)^2\Big).
			\end{gathered}
		\end{equation*}
		Combining with $\Omega$ commutator bounds, by standard weighted Sobolev estimates we get \cref{r-1/2 decay}.
		
		Using that the twisted and untwisted current control undifferentiated quantities, we immedeatly have
		\begin{equation*}
			\begin{gathered}
				\int_{\Sigma_\tau}\frac{(\Gamma^\alpha\psi)^2}{\jpns{r}^2}\lesssim 1				
			\end{gathered}
		\end{equation*}
		which implies the first part of \cref{integrability}. For the second part, we split $T=\frac{1}{1+h'}(\partial_u|_v+X)$, which implies 
		\begin{equation*}
			\begin{gathered}
				\rint\frac{(T\psi)^2}{\jpns{r+\tau_2-\tau}^{3/2}}\leq 2\rint\frac{(\partial_u\psi)^2+(X\psi)^2}{\jpns{r+\tau_2-\tau}^{3/2}}\lesssim \mathcal{X}[\psi].
			\end{gathered}
		\end{equation*}
		For higher commutated quantities, we split the integration domain into $\rdom\cap \{r<\tau_1\}, \rdom\cap\{r>\tau_1\}$. In the first, we use the control from
		\begin{equation*}
			\begin{gathered}
				\sup_{x>\tau_2/2} \tilde{\underline{\mathcal{C}}}_{\tau_1+x}[{\tau}^{2\alpha_0}T\cdot\T[\Gamma^\alpha\psi]]
			\end{gathered}
		\end{equation*}
		to bound the commuted quantity similar to the uncommuted. In the second domain, we split cases. For $\abs{\alpha}=\alpha_0$, we may use the control provided by $\master$ to bound the integral similar to the uncommuted case. When $\Gamma^\alpha$ contain a term that is not $T$, we simply trade the $r$ weight with ${\tau}$ using that ${\tau}\gtrsim r$ in the domain of interest. Then, we use the control provided by $\mathcal{E}_\tau$ term for the estimate. For instance, when $\Gamma^\alpha=T^n\Omega$ we write
		\begin{equation*}
			\begin{gathered}
				\int_{\rdom\cap\{u>\tau_2\}}{\tau}^{2n}\frac{(T T^n\Omega\psi)^2}{\jpns{r+\tau_2-\tau}^{3/2}}\lesssim\int_{\rdom\cap\{u>\tau_2\}}{\tau}^{2n+2}\frac{(\partial_iT^{n+1}\psi)^2}{\jpns{r+\tau_2-\tau}^{3/2}}\lesssim\sup_\tau{\tau}^{n+1}\mathcal{E}_\tau[T^{n+1}\psi].
			\end{gathered}
		\end{equation*}
	
	\end{proof}

	\begin{lemma}\label{nonlinear bounds lemma}
		Let $\psi$ be a smooth function in $\rdom$ for $\tau_1\sim\tau_2$ with $\master[\psi]\leq1$ for $k\geq6$.
		Then we have the following bulk estimates
		
		\begin{subequations}
			\begin{gather}
				\sum_{\abs{\alpha}\leq k-1}\rint{t_\star}^{2\alpha_0}(T\Gamma^\alpha\psi)(\Gamma^\alpha \mathcal{N}[\psi])\lesssim \jpns{\tau_2-\tau_1}^{3/2},\label{nonlinear bounds 1st term}\\
				\int_{\rdom} \mathcal{N}[\psi]\frac{1}{\jpns{r}}\lesssim \jpns{\tau_2-\tau_1}^{3/2},\quad 				\int_{\rdom} \mathcal{N}[\psi]\frac{(t_\star-\tau_1)}{\jpns{r}^2}\lesssim \jpns{\tau_2-\tau_1}^{5/2}.
			\end{gather}
		\end{subequations}
		as well as boundary term estimates		
		\begin{subequations}
			\begin{gather}
				\sum_{l\leq k-1}\int_{\Sigma_\tau} \abs{t_\star}^l \abs{T^{l}\mathcal{N}[\psi]}\frac{1}{\jpns{r}}\lesssim 1\\
				\sum_{\abs{\alpha}\leq k-1}\int_{\Sigma_\tau}(\Gamma^\alpha \jpns{r}\mathcal{N}[\psi])^2\lesssim1\\
				\sum_{j\leq k}\int \abs{T^j\mathcal{N}[\psi]} e^{-r\lamed/2}\chi^c_{2R_1}\lesssim 1
			\end{gather}
		\end{subequations}	
	\end{lemma}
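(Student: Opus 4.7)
The plan is to reduce everything to direct integration using the binomial expansion
\[
\mathcal{N}[\psi] = 10W^3\psi^2 + 10W^2\psi^3 + 5W\psi^4 + \psi^5,
\]
so that every term contains at least two factors of $\psi$. Since $TW=\Omega_{ij}W=0$ and $SW=r\partial_r W$ has pointwise decay $\jpns{r}^{-1}$, any $\Gamma^\beta W$ with $\Gamma\in\{T,\Omega_{ij},S\}$ is uniformly bounded by $\jpns{r}^{-1}$. Expanding $\Gamma^\alpha \mathcal{N}[\psi]$ by Leibniz then produces a finite sum of terms pointwise majorised by $\jpns{r}^{-(3-j)}\prod_{i=1}^{2+j}|\Gamma^{\alpha_i}\psi|$ with $\sum_i|\alpha_i|\le|\alpha|\le k-1$ and $j\in\{0,1,2,3\}$. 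Since there are at least two $\psi$ factors, at most one index $|\alpha_i|$ can exceed $(k-1)/2$.

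For each such Leibniz term I would place the highest-derivative factor in $L^2$---using the bulk bounds of \cref{integrability} on $\rdom$, or the slice energy $\mathcal{E}^w$ for the boundary estimates---and all remaining factors in $L^\infty$ via $|\Gamma^{\alpha_i}\psi|\lesssim \jpns{r}^{-1/2}$. This is precisely \cref{r-1/2 decay} when $|\alpha_i|=0$, and for $0<|\alpha_i|\le(k-1)/2\le k-3$ the same weighted Sobolev argument applies since $k\ge 6$ provides enough commutators for the slice $H^2\hookrightarrow L^\infty$ embedding. Collectively, the product of all non-highest factors is then pointwise dominated by $\jpns{r}^{-(3-j)-(1+j)/2}\le \jpns{r}^{-2}$, independently of $j$. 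For the first bulk inequality, Cauchy--Schwarz paired between $T\Gamma^\alpha\psi$ and the highest-derivative $\Gamma^{\alpha_i}\psi$ factor, with the $\jpns{r}^{-2}$ weight split as $\jpns{r+\tau_2-\tau}^{3/4}\cdot\jpns{r}^{-2}\jpns{r+\tau_2-\tau}^{-3/4}$, reduces the estimate to the two bulk controls in \cref{integrability} at the price of a $\jpns{\tau_2-\tau_1}^{3/2}$ loss from the $\tau$-integration; the factor $t_\star^{2\alpha_0}$ is harmless since it is absorbed by the $\tau^{2\alpha_0}$ already present in $\master[\psi]$ and $\tau_1\sim\tau_2$.

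The remaining bulk estimates are simpler as no derivatives fall on $\mathcal{N}[\psi]$: the pointwise bound $|\mathcal{N}[\psi]|\lesssim \jpns{r}^{-5/2}$ is immediate, and integrating against $\jpns{r}^{-1}$ or $(t_\star-\tau_1)\jpns{r}^{-2}$ over $\rdom$ produces the claimed powers of $\jpns{\tau_2-\tau_1}$. The boundary estimates on $\Sigma_\tau$ follow the same pattern, with slice Sobolev embeddings replacing the bulk integrability bounds; the exponentially weighted estimate is immediate because $e^{-r\lamed/2}\chi^c_{2R_1}$ absorbs any polynomial weight in $r$. The main technical obstacle will be the combinatorial bookkeeping of which factor in each Leibniz term receives the top-order derivatives and checking that the weights line up to give exactly the powers $\jpns{\tau_2-\tau_1}^{3/2}$ and $\jpns{\tau_2-\tau_1}^{5/2}$; once this distribution of derivatives is fixed, no new analytic ingredient beyond \cref{r-1/2 decay}, \cref{integrability} and the slice norms already embedded in $\master[\psi]$ is required.
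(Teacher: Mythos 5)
Your approach is the same as the paper's: expand $\mathcal{N}[\psi]$ so every term has at least two $\psi$ factors with conormal $W$-weights, run Cauchy--Schwarz between the multiplier $T\Gamma^\alpha\psi$ and the nonlinearity, place the unique high-order factor (order $>k/2$) in the bulk $L^2$ norms of Lemma 3.4 against the weight $\jpns{r}^{-2}\jpns{\tau_2-\tau}^{-3/2}$, bound all remaining factors pointwise by $\jpns{r}^{-1/2}$ via the weighted Sobolev estimate, and handle the $\tau^{2\alpha_0}$ weight by distributing it into the factors $\tau^{2\beta_0}$, $\tau^{2\gamma_0}$ already present in $\master$. The paper likewise treats the boundary estimates by Cauchy--Schwarz against the slice $L^2$ control and the exponential-weight bound trivially. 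Your slightly different split of the $\jpns{r}^{-2}$ weight across the two Cauchy--Schwarz factors is cosmetically different from the paper's but produces the same bound, so there is nothing to flag.
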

	\begin{proof}
		 We start with \eqref{nonlinear bounds 1st term} and use Cauchy-Schwartz with \cref{weighted estimates}
		\begin{equation*}
			\begin{gathered}
				\rint{\tau}^{2\alpha_0}(T\Gamma^\alpha)(\Gamma^\alpha\mathcal{N}[\psi])\leq \Big(\rint \frac{{\tau}^{2\alpha_0}(T\Gamma^\alpha\psi)^2}{\jpns{r+\tau-\tau_1}^{3/2}}\Big)^{1/2}\Big(\rint {\tau}^{2\alpha_0}\jpns{r+\tau-\tau_1}^{3/2}(\Gamma^\alpha\mathcal{N}[\psi])^2\Big)^{1/2}
			\end{gathered}
		\end{equation*}
		Consider each term in $\mathcal{N}[\psi]$ separately. Let $\alpha=\beta+\gamma$ with $\abs{\beta}\geq k/2,\abs{\gamma}\leq k/2$. The quadratic term is bounded by
		\begin{equation*}
			\begin{multlined}
				\rint {\tau}^{2\alpha_0}\jpns{r+\tau-\tau_1}^{3/2}(\jpns{r}^{-3}\Gamma^{\beta}\psi\Gamma^\gamma\psi)^2\\
				\lesssim \sup_{\rdom}\jpns{r}^{2-6}\jpns{\tau-\tau_1}^{3/2}\jpns{r+\tau-\tau_1}^{3/2}{\tau}^{2\gamma_0}(\Gamma^\gamma\psi)^2\rint {\tau}^{2\beta_0}\frac{(\Gamma^\beta\psi)^2}{\jpns{r}^{2}\jpns{\tau-\tau_1}^{3/2}}\\
				\lesssim\jpns{\tau-\tau_1}^3
			\end{multlined}
		\end{equation*}
		While the quintic is
				\begin{equation*}
			\begin{multlined}
				\rint {\tau}^{2\alpha_0}\jpns{r+\tau-\tau_1}^{3/2}(\Gamma^{\beta}\psi\Gamma^\gamma\psi^4)^2\\
				\lesssim \sup_{\rdom}\jpns{r}^{2}\jpns{\tau-\tau_1}^{3/2}\jpns{r+\tau-\tau_1}^{3/2}\underbrace{{\tau}^{2\gamma_0}(\Gamma^\gamma\psi)^8}_{\jpns{r}^{-4}}\rint {\tau}^{2\beta_0}\frac{(\Gamma^\beta\psi)^2}{\jpns{r}^{2}\jpns{\tau-\tau_1}^{3/2}}\\
				\lesssim\jpns{\tau-\tau_1}^3
			\end{multlined}
		\end{equation*}
		The intermediate terms are treated similarly. Putting the estimates together yields the first inequality. The proof for the rest of the bulk terms proceeds similarly with Cauchy-Schwartz inequality and \cref{weighted estimates}.
		
		For the boundary terms, we again will only prove the first inequality, the other follows similarly. 
		Using the pointwise estimate from \eqref{weighted estimates} and undifferentiated term control we get for all $l\leq k-1$
		\begin{equation*}
			\begin{gathered}
				\int_{\Sigma_\tau}\frac{\abs{t_\star}^l}{\jpns{r}}T^l\mathcal{N}[\psi]\lesssim\sum_{j\leq l}\int_{\Sigma_\tau}\abs{t_\star}^j(T^j\psi)\frac{1}{\jpns{r}^3}\lesssim\sum_{j\leq l}\Big(\int_{\Sigma_\tau}\frac{\abs{t_\star}^{2j}(T^j\psi)^2}{\jpns{r}^2}\Big)^{1/2}\lesssim1.
			\end{gathered}
		\end{equation*}
	\end{proof}

	\section{Scattering solutions}\label{sec:scattering}
	In this section, we set up the framework in which we are going to work and recall some standard existence results. For this section, there are some factors of $r^{1/2}$ and similar weights appearing in $L^2$ norms. This is a consequence of the relation between $L^2$ and $L^2_b$ spaces, where the latter is more closely related to $L^\infty$ estimates, see \cref{sec:polyhom}. 
	\begin{definition}\label{def:scattering definition}
		a) Fix $N\geq 0,\epsilon>0$ and $q\in\R^+,d\in[-1/2,1/2)$. We say that $\psi_k^\Sigma\in \mathcal{C}^\infty(\R^3), 0\leq k\leq N+1$ and $(r\psi)^{\scri},\in \mathcal{C}^\infty(\R^+_u\times S^2)$ are \textit{scattering data} for \eqref{main equation} respectively for \eqref{linearised eom2} in $\rdom$ if it holds that
		\begin{equation}
			\begin{gathered}
					\sum_{\abs{\alpha}\leq N+1}\norm{\jpns{u}^{-1/2}\jpns{u}^{q}\Gamma^\alpha (r\psi)^\scri}_{L^2(\scri)}<\epsilon,\quad\Gamma\in\{u\partial_u,\Omega_{ij}\}\\
				\sum_{k+\abs{\alpha}\leq N+1}\norm{r^{-2}\Gamma^\alpha r\psi^\Sigma_k }_{L^2(\Sigma)}+\sum_{\substack{k+\abs{\alpha}+i\leq N+1\\
						i\neq0}}\norm{\jpns{r}^{d-3/2}\Gamma^\alpha (\jpns{r}X)^i r\psi^\Sigma_k }_{L^2(\Sigma)}<\epsilon \jpns{\tau_2}^{-q},\quad \Gamma\in\{\Omega_{ij},\chi_1(r)\partial_i\}\\
			\end{gathered}
		\end{equation}
		Furthermore, we require the constraints
		\begin{subequations}\label{constraint non-linear}
			\begin{gather}
					(1-h'^2)\psi_k^\Sigma=(h''-2h'X-\frac{2h'}{r})\psi^\Sigma_{k-1}+(X^2+\frac{2}{r}X+\frac{\slashed{\Delta}}{r^2}+V)\psi^\Sigma_{k-2}+T^{k-2}F\quad 2\leq k\leq N+1
			\end{gather}
		\end{subequations}
		are satisfied. Here $F=\mathcal{N}[\psi]=(W+\psi)^5-W^5-5W^4\psi$ and $T^j\psi=\psi_j$ for \eqref{main equation}. We abbreviate $\underline{\psi}=(\psi^\Sigma_k,(r\psi)^\scri)$ and call the sum of the above two norms $\norm{\underline{\psi}}_{\mathcal{X}_{data}}$. Furthermore, for \eqref{linearised eom2} $F$ is part of the scattering data and we require 
		\begin{equation}
			\begin{gathered}
				\sum_{\abs{\alpha}+i_1\leq N}\norm{t^{1/2}\jpns{r}^{d-1}\Gamma^\alpha (\jpns{r}X)^{i_1} F}_{L^2(\rdom)}<\epsilon,\quad \Gamma\in\{\Omega_i,t_\star T\}
			\end{gathered}
		\end{equation}
		We say that the scattering data has order $N$, decay $q$ and size $\epsilon$.
		
		b) We say that $\psi\in\mathcal{C}^1(\mathcal{R}_{\tau_1,\tau_2})$ is a \textit{scattering solution} associated to the scattering data $\underline{\psi}$ (and $F$) if $\psi$ solves \eqref{main equation} (or \eqref{linearised eom2}) and 
		\begin{equation}
			\begin{gathered}
				\lim_{r\to\infty} r\psi(r,\tau,\omega)= (r\psi)^\scri(\tau,\omega),\quad \text{in }L^2\\
				\psi|_{\Sigma_{\tau_1}}=\psi_0^\Sigma,\quad \partial_t\psi|_{\Sigma_{\tau_1}}=\psi_1^\Sigma
			\end{gathered}
		\end{equation}
	\end{definition}
	
	\begin{remark}
		In the above definition, $\psi_k$ represents the $k$-th time derivative of the spacetime function $\psi$ that we construct. We prescribe these higher derivatives because otherwise we would face loss of regularity when trying to recover them from transport equations along null surfaces. This can be seen from \cref{constraint non-linear} where on the null part, we get a transport equation for $\psi_{k-1}$ in terms of second derivatives of $\psi_{k}$. Let us also note that for a \textit{scattering solution} $\psi\in \mathcal{C}^k(\mathcal{R}_{\tau_1,\tau_2})$, $T^j\psi|_{\Sigma_\tau}$ give valid scattering data for any  $\tau\in(\tau_1,\tau_2)$.
	\end{remark}
	
	\begin{remark}
		Note, that at $\scri$ there are no constraint equations.
	\end{remark}
	The following is a standard local scattering result
	
	\begin{lemma}\label{existence of scattering solution}
		a) (Linear) Given scattering data $\underline{\psi},F$ for \eqref{linearised eom2}, there exists a unique scattering solution $\psi$ such that for all $\tau_1<\tau_2$ the following holds
		\begin{equation}\label{scattering boundedness}
			\begin{gathered}
				\sum_{\abs{\alpha}\leq N}\norm{r^{-2}t_\star^{-1/2}\Gamma^\alpha r\psi}_{L^2(\rdom)},\sum_{\substack{\abs{\alpha}+\abs{i}\leq N\\ i\neq0}}\norm{\jpns{r}^{d-3/2}t_\star^{-1/2}\Gamma^\alpha (\jpns{r}X)^{i}\jpns{r}\psi}_{L^2(\rdom)}<\infty,
				\quad \Gamma\in\{\Omega_i,\partial_\tau\}
			\end{gathered}
		\end{equation}
		furthermore
		\begin{equation}\label{scattering tracelessness}
			\begin{gathered}
				\norm{\partial_\tau^k\Gamma^\alpha (r\psi)|_{\scri_{\tau_1,\tau_2}}}_{L^2(\scri_{\tau_1,\tau_2})}:=\lim_{r\to\infty} \norm{\partial_\tau^k\Gamma^\alpha (r\psi)|_{\scri_{\tau_1,\tau_2}}}_{L^2(C_r)}<\infty,\quad \abs{\alpha}\leq N,\, k\in\{0,1\} \\
				\norm{\partial_\tau\Gamma^\alpha (\jpns{r}X)^{i}(r\psi)|_{C_r}}_{L^2(\scri_{\tau_1,\tau_2})}=0,\quad \abs{\alpha}\leq N \land i\neq 0
			\end{gathered}
		\end{equation}
		
		b) (Non-linear) Given scattering data $\underline{\psi}$ for \eqref{main equation} with regularity $N\geq 6$, and $\tau_2-\tau_1>0$ sufficiently small, there exists unique scattering solution such that \eqref{scattering boundedness} and \eqref{scattering tracelessness} hold.
	\end{lemma}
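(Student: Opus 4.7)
The approach is the standard one for characteristic scattering problems: approximate $\scri$ by finite outgoing null hypersurfaces $\underline{\mathcal{C}}_{v_\infty}$ at large $v_\infty$, solve the mixed initial/characteristic value problem in $\mathcal{R}^{v_\infty}_{\tau_1,\tau_2}$, derive estimates uniform in $v_\infty$ in the target norm \eqref{scattering boundedness}, and pass to the limit $v_\infty \to \infty$ by weak-$\star$ compactness. The trace at $\scri$ in \eqref{scattering tracelessness} is then recovered from the uniform control on $\partial_v(r\psi)$, which makes the family of traces on $\underline{\mathcal{C}}_{v_\infty}$ Cauchy in $L^2(\scri_{\tau_1,\tau_2})$.

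For part (a), the plan is as follows. On $\underline{\mathcal{C}}_{v_\infty}$ I would build characteristic data by taking $(r\psi)^\scri$ and extending it smoothly to the interior in a compatible way with the prescribed data on $\Sigma_{\tau_1}$; since no constraints are present at $\scri$, this is harmless. Standard hyperbolic theory then yields a smooth solution $\psi^{v_\infty}$ on $\mathcal{R}^{v_\infty}_{\tau_1,\tau_2}$ of the inhomogeneous equation $(\Box+V)\psi^{v_\infty}=F$. Commuting the equation with $\Omega_{ij}$, $\partial_\tau$ and $(\jpns{r}X)^i$, and performing $T$-energy estimates using the currents $\T^0, \tilde{\T}^0$ introduced in \cref{sec:setup}, one gets estimates of the form
\begin{equation*}
\sum_{|\alpha|\le N}\|r^{-2}t_\star^{-1/2}\Gamma^\alpha r\psi^{v_\infty}\|_{L^2(\mathcal{R}^{v_\infty}_{\tau_1,\tau_2})}\lesssim \|\underline{\psi}\|_{\mathcal{X}_{data}}+\|F\|_{\mathcal{X}_{data}},
\end{equation*}
where the $V\psi^2$ contribution, being supported and bounded, is absorbed via Grönwall over the finite interval $(\tau_1,\tau_2)$. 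The $r^p$-type pieces controlling $(\jpns{r}X)^i$ derivatives are handled by the standard Dafermos--Rodnianski multiplier near $\scri$; these estimates are uniform in $v_\infty$. A diagonal/weak-$\star$ compactness argument then produces $\psi$ satisfying \eqref{scattering boundedness}, and uniqueness of scattering solutions follows from an energy estimate for the difference with zero scattering data and forcing.

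For part (b), I would set up a Picard iteration in the space defined by the norm \eqref{scattering boundedness}. Given $\psi^{(n)}$, define $F_n := \mathcal{N}[\psi^{(n)}]$ and let $\psi^{(n+1)}$ be the linear scattering solution from (a) with forcing $F_n$ and the prescribed data $\underline{\psi}$. Using the pointwise bound $|\psi^{(n)}|\lesssim \jpns{r}^{-1/2}$ from \cref{weighted estimates} together with the bulk nonlinear estimates of \cref{nonlinear bounds lemma} (or their direct analogues in the $\mathcal{X}_{data}$ norm here), one checks that
\begin{equation*}
\|F_n\|_{\mathcal{X}_{data}}\lesssim |\tau_2-\tau_1|^{c}\bigl(\|\psi^{(n)}\|^{2}+\cdots+\|\psi^{(n)}\|^{5}\bigr),
\end{equation*}
for some $c>0$, since each copy of $\psi^{(n)}$ contributes either a factor of $\langle r\rangle^{-1/2}$ pointwise (through $|\psi^{(n)}|$) or a factor controlled by the ambient norm, and the coefficient $W^{k}$ in $\mathcal{N}[\psi^{(n)}]$ supplies decay in $r$. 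Shrinking $\tau_2-\tau_1$ makes the map a contraction on a small ball of the target space, producing the unique scattering solution.

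The main obstacle I expect is the interplay of the constraint equations \eqref{constraint non-linear} with the characteristic approximation scheme in (a). The Cauchy data $(\psi^\Sigma_k)$ involves $N+1$ prescribed time derivatives tied together by the equation, while the approximate problem on $\underline{\mathcal{C}}_{v_\infty}$ only sees zeroth-order characteristic data. One must verify that the family $\psi^{v_\infty}$, which exactly solves the equation in the interior, matches the prescribed $\psi^\Sigma_k$ in the limit; this is where the constraints \eqref{constraint non-linear} are needed, as they guarantee that time derivatives recovered from the equation on $\Sigma_{\tau_1}$ coincide with the prescribed ones. A secondary technical point is propagating regularity and decay uniformly across the transition region $r\sim R_2$ where $h''$ and $h'$ are non-trivial; this is routine but produces commutator terms that must be absorbed, and motivates the restriction to $\tau_1\gg R_2$ flagged in \cref{sec:setup}.
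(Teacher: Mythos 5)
The paper labels this lemma as \emph{standard} and does not supply a proof (the sentence immediately preceding it reads ``The following is a standard local scattering result''); what you have written out is the standard argument it is implicitly invoking, namely the finite-$v_\infty$ approximation scheme of \cite{dafermos_scattering_2013}, which the paper itself reviews in the introduction as the template it follows. Your sketch is correct: truncation at $\underline{\mathcal{C}}_{v_\infty}$, commuted $T$-energy plus $r^p$ multiplier estimates uniform in $v_\infty$, weak-$\star$ compactness, recovery of the trace at $\scri$ from uniform $\partial_v(r\psi)$ control, and Picard iteration for the nonlinear short-time case, with your remarks on the compatibility constraints and the transition region identifying exactly the two bookkeeping points one must check.
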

	
	\begin{remark}[$r^p$ estimates]
		The extended control over $rXr\psi$ is a standard form of an $r^p$ estimate, see \cref{polyhomogeneity theorem}. The maximum value of $d<1/2$ can be increased to $d_{max}=3$ after commuting with $r^2\partial_v$, see \cite{angelopoulos_asymptotics_2018}. The value of $d_{max}$ follows from the following simple computation. Given that $d>0$, we know that $\psi$ admits a radiation field, so $\mathcal{N}[\psi]\sim \jpns{r}^{-5}$. Furthermore, to leading order close to $\scri$ we have $\partial_u \partial_v (r\psi)=r\psi^5$. Therefore, if $r\partial_v(r\psi)\sim \mathcal{O}(r^{-d})$, than this property is conserved. Indeed, there is a conservation law --with associated quantity called Newman-Penrose charge-- at $\scri$ whenever the data is such that it can be peeled. For further information and the use of these quantities we refer to \cite{angelopoulos_late-time_2018,gajic_relation_2022}.
	\end{remark}
	
	\section{Linear theory}\label{sec:linear theory section}

	In this section, we study the behaviour of solutions to
	\begin{equation}\label{linearised eom2}
		\begin{gathered}
			(\Box+5W^4)\psi=F,\qquad F,\psi:\mathcal{R}_{(\tau_1,\tau_2)}\to \R\\
			(r\psi)|_{\scri_{(\tau_1,\tau_2)}}=\psi^{dat}, \, \psi|_{\Sigma_{\tau_1}}=\psi_0,\, \partial_t\psi|_{\Sigma^f_{\tau_1}}=\psi_1.
		\end{gathered}
	\end{equation}
	We introduce projection operators to the unstable mode associated to \cref{linearised eom2}
	\begin{equation}
		\begin{gathered}
			\alpha_\pm[\psi](\tau)=\int_{\Sigma_{\tau}} \chi_{R_1} Y(\partial_t\pm\lamed)\psi,
		\end{gathered}
	\end{equation}
	and write $\alpha_\pm=\alpha_\pm[\psi](\tau)$ whenever $\psi,\tau$ are clear from context. For $\psi$ a solution of \eqref{linearised eom2}, we have
	\begin{equation}\label{ode unstable mode}
		\begin{gathered}
			\partial_\tau \alpha_\pm=\pm \lamed\alpha_\pm+\mathfrak{Err}_{\pm}\\
			\mathfrak{Err}_{\pm}=\int_{\Sigma_\tau} \Big\{-\Big(Y\Delta\chi_{R_1}+ Y'\cdot\chi_{R_1}^{\prime}\Big)\psi-\chi_{R_1}^c Y F\Big\}.
		\end{gathered}
	\end{equation}
	The main result of the current section is the boundedness for the linear problem as captured by the existence of a master current in \cref{prop:master_current}.
	
	\subsection{Zero modes}\label{sec:zero_modes}
	In this section we introduce a method to detect the obstructions to the coercivity of $\mathcal{E}^V$. The idea here is to do a projection similar to \cite{dafermos_linear_2019}. In that work, all non-gauge zero mode solutions are supported on $l=0,1$ spherical harmonic. Similarly, in the current work, we know that on $l\geq2$ modes $\mathcal{E}^V$ is coercive, but unlike in \cite{dafermos_linear_2019} there are still radiative physical degrees of freedom left in the $l=0,1$ sector. To detect these in compact region as done in \cite{luhrmann_stability_2022} seems to be difficult for the current problem, therefore we opt to detect them globally as done in \cite{dafermos_linear_2019}. We will introduce $\Theta^{mom},\Theta^{com},\Theta^\Lambda$ functionals that precisely determine the momentum, center of mass and scale content of the solution on a hypersurface $\Sigma_\tau$.
	
	The main point of this subsection is to introduce the notation and prove the following lemma
		
	\begin{lemma}
		a) Let $\psi^{dat},\psi_0,\psi_1$ be scattering initial data for \eqref{linearised eom2} and $\psi$ the corresponding solution.
		Then the following conservation laws and inequality hold for $\psi$
		\begin{subequations}
			\begin{gather}
				\Theta^{mom}(\tau_2)=\Theta^{mom}(\tau_1)+\mathcal{B}^{mom}\\ \Theta^{com}(\tau_2)=\Theta^{com}(\tau_1)+(\tau_2-\tau_1)\Theta^{mom}(\tau_1)+\mathcal{B}^{com}\\ \Theta^{\Lambda}(\tau_2)=\Theta^{\Lambda}(\tau_1)+\mathfrak{R}^{\Lambda}_{\tau_1,\tau_2}+\mathcal{B}^{\Lambda}\\
				\mathcal{E}_{\tau_2}^V=\mathcal{E}^V_{\tau_1}+\mathcal{E}_{\scri_{\tau_1,\tau_2}}+\mathcal{B}_{\mathcal{E}},\quad \tilde{C}_x[J]\leq\mathcal{E}_{\tau_1}+\mathcal{E}_{\scri_{\tau_1,\tau_2}}+\mathcal{B}_{\mathcal{E}},\quad J=T\cdot\T^V[\psi].
			\end{gather}
		\end{subequations}
		
		where
		\begin{equation}\label{inhomogeneous errors}
			\begin{gathered}
				\abs{\mathcal{B}^{mom}}\leq c\int_{\rdom}\abs{F}\frac{1}{\jpns{r}^2};\quad
				\abs{\mathcal{B}^{com}}\leq c\int_{\rdom}\abs{F}\frac{\tau_2-t_\star}{\jpns{r}^2};\\
				\abs{\mathcal{B}^{\Lambda}}\leq c\int_{\rdom}\abs{F}\frac{1}{\jpns{r}};\quad 
				\abs{\mathcal{B}_{\mathcal{E}}}\leq \int_{\rdom}\abs{F} (\partial_t\psi).
			\end{gathered}
		\end{equation}		
	\end{lemma}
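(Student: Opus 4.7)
The plan is to define each functional $\Theta^\bullet$ as a bilinear flux integral associated with an appropriate zero mode $Y^{\ker}\in\ker(\Box+V)$ of the linearised operator—specifically $\partial_iW$, $t\partial_iW$, and a decay-modified version of $t\Lambda W$ for the three cases—and then to run the divergence-theorem argument on $\rdom$. Because each such $Y^{\ker}$ lies in the kernel of $\Box+V$ and because $T=\partial_t$ is Killing with $\partial_tV=0$, the divergence of the bilinear current $T\cdot\T^V[Y^{\ker},\psi]$ collapses down to a single source term linear in $F=(\Box+V)\psi$ and in $\partial_tY^{\ker}$. Applying Stokes on $\rdom$ then produces, for each case, an identity of the schematic form
\begin{equation*}
	\Theta^\bullet(\tau_2)-\Theta^\bullet(\tau_1)=(\text{flux through }\scri)+\Big(\text{bulk integral of }F\text{ against }\partial_tY^{\ker}\Big).
\end{equation*}

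Controlling the right-hand side is where the three cases diverge. For $\Theta^{mom}$ the mode $\partial_iW=O(\jpns{r}^{-2})$ is stationary, so $\partial_tY^{\ker}=0$ and the only genuine source enters through the asymmetric role of $\psi$ in the bilinear pairing, producing a weight $|\partial_iW|\lesssim\jpns{r}^{-2}$ and an $\scri$-flux that vanishes in the limit; this gives the claimed $\mathcal{B}^{mom}$. For $\Theta^{com}$ I would exploit that on $\Sigma_\tau$ one has $t|_{\Sigma_\tau}=\tau+h(r-R_2)$, so by bilinearity
\begin{equation*}
	\Theta^{com}(\tau) = \tau\,\Theta^{mom}(\tau)+\tilde\Theta(\tau),
\end{equation*}
where $\tilde\Theta$ is built only from the compactly supported cutoff $h$. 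Combining this with the already-established conservation law for $\Theta^{mom}$ produces the affine-in-$\tau$ identity; the residual weight $(\tau_2-t_\star)$ in $\mathcal{B}^{com}$ arises naturally from the double time-integration that turns a momentum source into a center-of-mass source. For $\Theta^\Lambda$, the mode $t\Lambda W$ is only $O(1)$ near $\scri$, and it is precisely this nontrivial boundary flux that is recorded as $\mathfrak{R}^\Lambda_{\tau_1,\tau_2}$, while the bulk source is bounded using $|\Lambda W|\lesssim\jpns{r}^{-1}$.

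For the energy identity I apply the classical $T$-multiplier to $\T^V[\psi]$: the divergence reduces to $F\,\partial_t\psi$ by the Killing property of $T$ and the time-independence of $V$, so Stokes on $\rdom$ gives $\mathcal{E}^V_{\tau_2}=\mathcal{E}^V_{\tau_1}+\mathcal{E}_{\scri_{\tau_1,\tau_2}}+\mathcal{B}_\mathcal{E}$ with $\mathcal{B}_\mathcal{E}=-\int_\rdom F\,\partial_t\psi$, matching the stated bound. For the flux through $\tilde{\underline{\mathcal{C}}}_x$ I would apply the same divergence identity to the truncated region $\rdom\cap\{v\le x\}$, whose new outgoing null boundary is exactly $\tilde{\underline{\mathcal{C}}}_x$; there the $T$-flux is manifestly nonnegative as a sum of squared good derivatives, and the only sign-indefinite piece $-V\psi^2$ is transferred to the right-hand side and absorbed into the $\mathcal{E}_{\tau_1}$ term via a Hardy-type estimate together with the compact support of $V$.

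The main obstacle I anticipate is the treatment of $\Theta^\Lambda$. Since $t\Lambda W$ is only marginally decaying at $\scri$, the naive bilinear current is borderline non-integrable and a direct contraction does not converge. Following the hint in the introduction, one must replace $\partial_v(t\Lambda W)$ in the $\scri$-boundary integrand by $r\partial_v(t\Lambda W)$, which by a direct calculation decays strictly faster than $t\Lambda W$ itself and renders the limit meaningful. Verifying that the resulting $\mathfrak{R}^\Lambda$ is indeed expressible as a well-defined integral on $\scri$ computable from the scattering data, and simultaneously checking the nondegeneracy $\Theta^\Lambda[\Lambda W]\neq 0$ that is needed for the projector to kill the scaling mode, together require a careful asymptotic analysis near null infinity; once this is in place, the rest of the proof is just the unified divergence-theorem template described above.
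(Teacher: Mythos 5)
Your unified bilinear-current template is the right mechanism only for the scaling functional; for the momentum and centre-of-mass it does not produce the correct quantities. With $Y^{\ker}=\partial_iW$ the functional $\Sigma_\tau[T\cdot\T^V[\partial_iW,\psi]]$ is identically zero: on a $\{t=\mathrm{const}\}$ slice it reduces (after integration by parts, using $(\Delta+V)\partial_iW=0$ and the $O(r^{-2})$ decay of $\partial_iW$) to $\tfrac12\int(-\Delta\partial_iW-V\partial_iW)\psi=0$, and as you yourself note the bulk source $\tfrac12 F\,\partial_tY^{\ker}$ also vanishes because $\partial_t\partial_iW=0$. The phrase ``the only genuine source enters through the asymmetric role of $\psi$'' is not an escape route: $\T^V[f,g]$ is symmetric in $(f,g)$, so there is no such asymmetric contribution. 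In fact the bilinear $T$-current with partner $t\partial_iW$ is what detects the \emph{momentum} (it gives $\tfrac12\int\partial_iW\,\partial_t\psi$ on $\{t=0\}$ and bulk source $\tfrac12\int F\,\partial_iW$), not the centre of mass as your labelling suggests. For the centre of mass the approach breaks down entirely: to reproduce the stated source $\mathcal{B}^{com}=\int F(t-\tau_2)\partial_iW$ you would need $Y^{\ker}$ with $\partial_tY^{\ker}\sim(t-\tau_2)\partial_iW$, i.e.\ $Y^{\ker}\sim\tfrac12(t-\tau_2)^2\partial_iW+g(x)$, and requiring $(\Box+V)Y^{\ker}=0$ forces $(\Delta+V)g=\partial_iW$, which has no solution since $\partial_iW\in\ker(\Delta+V)$ and is not orthogonal to itself. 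So no choice of kernel element makes the $T$-multiplied bilinear current detect $\Theta^{com}$.

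The paper does something genuinely different for these two sectors. The Noether currents for momentum and centre of mass are $\partial_i\cdot\Tlin[\psi]$ and $\Gamma^i\cdot\Tlin[\psi]$ with $\Gamma^i=t\partial_i+x_i\partial_t$, where $\Tlin_{\mu\nu}[\psi]=2\partial_{(\mu}W\partial_{\nu)}\psi-\eta_{\mu\nu}(\partial W\cdot\partial\psi-W^5\psi)$ is the linearisation about $W$ of the \emph{nonlinear} stress--energy current associated to translations and boosts. This is not a bilinear current $\T^V[\cdot,\cdot]$: notice that the potential appearing in $\Tlin$ is $\Vlin=W^5=V W/5$, not $V$. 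Its divergence-freeness comes from the Poincar\'e invariance of \cref{main equation}, and the $\scri$-flux computations (and the bulk source $\nabla^\mu\Tlin_{\mu\nu}=F\partial_\nu W$) must be done for this tensor, not for $\T^V$. Only the scaling sector, for which no Poincar\'e Killing field exists, is handled by the bilinear current $T\cdot\T^V[\psi,t\Lambda W]$ as you propose. You should also correct the claim that $V$ is compactly supported; $V=5W^4\sim r^{-4}$ only decays polynomially, and absorbing the $-V\psi^2$ piece into $\mathcal{E}_{\tau_1}$ relies on this decay and a Hardy inequality, not on compact support.
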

	\begin{proof}
		This is a summary of \cref{momentum content} \cref{com content} \cref{scaling content} and divergence theorem applied to $T\cdot\T^V[\psi]$.
	\end{proof}

	\subsubsection{Momentum}
	The full non-linear equation \eqref{main equation} has a nonlinear conserved quantity associated to translation invariance. Indeed, the corresponding divergence free Noether current is 
	\begin{equation}
		\begin{gathered}
			(\partial_i)\cdot \T^{\phi^4/3}[\phi]
		\end{gathered}
	\end{equation}
	where $\T$ is given in \eqref{energy mom tensor}. Linearising around $W$, we may write $\phi=W+\psi$ for small $\psi$. Evaluating the linear contribution to the above quantity yields
	\begin{equation}\label{linearised energy mom tensor}
		\begin{gathered}
			\Tlin_{\mu\nu}[\psi]:=2\partial_{(\mu} W\partial_{\nu)}\psi-\eta_{\mu\nu}(\partial W\cdot\partial\psi-\Vlin\psi),\qquad
			\Vlin=W^5.
		\end{gathered}
	\end{equation}
	We introduced $\Vlin$ to highlight that it is different from $V=5\Vlin/W$. A simple computation shows that $\Tlin$ is divergence free in the linear theory, i.e. if $\psi$ solves \eqref{linearised eom} then $\partial^\mu \Tlin_{\mu\nu}[\psi]=0$. The corresponding current on an incoming null hypersurface $\underline{\mathcal{C}}$ is
	\begin{equation}
		\begin{gathered}
			(\partial_t-\partial_r)^\mu(\partial_i)^\nu \Tlin_{\mu\nu}[\psi]=\partial_t\psi\partial_iW-\partial_r W\partial_i\psi-\partial_r\psi\partial_iW+\hat{x}_i(\partial_rW\partial_r\psi-\Vlin \psi)
			\\=\partial_t\psi\partial_iW-\partial_rW\partial_i\psi-\Vlin\psi.
		\end{gathered}
	\end{equation}
	Evaluating this flux on an outgoing null cone is
	\begin{equation}
		\begin{gathered}
			\int_{\underline{\mathcal{C}}^{v_1,v_2}_{u}}\d\omega \d v r^2\Big(\partial_r W(\hat{x}_i\partial_t-\partial_i)\psi+\Vlin\psi\Big).
		\end{gathered}
	\end{equation}
	Using $\partial_rW\underset{r\to\infty}{\sim} r^{-2}$ and the boundedness of the incoming energy from $\scri$ for a scattering solution $\psi$ we get that the corresponding flux through $\scri$ vanishes.
	\begin{lemma}
		For $\psi$ a scattering solution as described in \cref{existence of scattering solution} 
		\begin{equation}
			\begin{gathered}
				\mathfrak{R}^{mom}:=\int_{\scri_{v_1,v_2}}(\partial_t-\partial_r)^\mu(\partial_i)^\nu \Tlin_{\mu\nu}[\psi]=0.
			\end{gathered}
		\end{equation} 	
	\end{lemma}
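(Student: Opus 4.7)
The plan is to take the limit $r\to\infty$ of the flux formula already displayed in the text, and show that it converges to zero pointwise in $(u,\omega)$ with a uniform bound in $L^1(\scri_{v_1,v_2})$, so that dominated convergence applies. Concretely, starting from
\begin{equation*}
\int_{\underline{\mathcal{C}}^{v_1,v_2}_u}\!\!\d\omega\,\d v\, r^2\bigl(\partial_r W(\hat{x}_i\partial_t-\partial_i)\psi+\Vlin\psi\bigr),
\end{equation*}
the claim is that the integrand tends to $0$ as $r\to\infty$ along constant-$u$ slices. The first step is to record the exact rates at infinity: from \cref{soliton}, one has $r^2\partial_r W \to -\sqrt{3}$ and $r^2\Vlin = r^2 W^5 = O(r^{-3})$. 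Then, using $\partial_i=\hat{x}_i\partial_r+r^{-1}\slashed{\nabla}_i$, I would rewrite the crucial angular-radial combination as
\begin{equation*}
\hat{x}_i\partial_t\psi-\partial_i\psi = \hat{x}_i(\partial_t-\partial_r)\psi - r^{-1}\slashed{\nabla}_i\psi = 2\hat{x}_i\,\partial_u\psi - r^{-1}\slashed{\nabla}_i\psi,
\end{equation*}
so that the entire first term is a product of an $O(1)$ factor with a genuine derivative that is tangential to $\scri$ up to an $r^{-1}\slashed{\nabla}_i\psi$ correction.

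Next, I would plug in the asymptotics of a scattering solution as guaranteed by \cref{existence of scattering solution} and \cref{def:scattering definition}: the trace $r\psi|_{\scri}=(r\psi)^\scri$ exists, and in fact the estimates \cref{scattering boundedness,scattering tracelessness} ensure $\psi=(r\psi)^\scri(u,\omega)/r+o(1/r)$, together with $\partial_u\psi = \partial_u(r\psi)^\scri/r + o(1/r)$ in the relevant $L^2$ sense, and $\slashed{\nabla}\psi=\slashed{\nabla}(r\psi)^\scri/r+o(1/r)$. Substituting these asymptotics yields
\begin{equation*}
r^2\partial_r W\,(\hat{x}_i\partial_t-\partial_i)\psi = O(1/r),\qquad r^2\Vlin\,\psi = O(r^{-4}),
\end{equation*}
so the integrand vanishes pointwise as $r\to\infty$, proving the limit integrand is zero.

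To upgrade pointwise to $L^1$ convergence on $\scri_{v_1,v_2}$ (a compact range in $u$ times $S^2$), the plan is to use the $L^2$-controlled quantities from \cref{scattering boundedness} together with the $r^p$-style weighted control of $rX(r\psi)$ appearing in \cref{def:scattering definition}. By commuting with $\Omega_{ij}$ and using the weighted Sobolev embeddings of \cref{weighted estimates}, one extracts an $L^\infty$-in-angle, $L^2$-in-$u$ bound on $r\psi$ and $r\partial_u\psi$ near $\scri$, while the $i\neq 0$ weighted bound dominates $r\slashed{\nabla}\psi$ analogously. These produce a common $L^1$ majorant on $\scri_{v_1,v_2}$ (times a decaying-in-$r$ factor coming from the $O(1/r)$ above), so dominated convergence yields $\mathfrak{R}^{mom}=0$.

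The main obstacle I anticipate is bookkeeping the difference between the pointwise asymptotics $\psi\sim(r\psi)^\scri/r$ and the $L^2$-type estimates actually provided by the scattering theory; in particular, showing that the remainder $\psi-(r\psi)^\scri/r$, and its derivatives, decay fast enough in $r$ to kill the potentially non-integrable $r^2\partial_r W$ prefactor uniformly in $u$. This is where the $r^p$ improvement over $X(r\psi)$ in \cref{scattering boundedness} (the parameter $d\in[-1/2,1/2)$) is essential: without it, one only knows the radial derivative of $r\psi$ in $L^2$ on $\Sigma$, whereas for the current computation one needs decay in $r$ along constant-$u$ slices, which is exactly what the weighted $(\jpns{r}X)$ control provides.
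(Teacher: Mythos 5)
Your proposal is correct and matches the paper's (one-sentence) argument: both hinge on $\partial_r W\sim r^{-2}$ killing the $r^2$ Jacobian, with the residual $(\hat{x}_i\partial_t-\partial_i)\psi=\hat{x}_i(\partial_t-\partial_r)\psi-r^{-1}\slashed{\nabla}_i\psi$ being tangential to $\scri$ and hence gaining an extra $r^{-1}$ by the scattering energy/trace bounds, so the flux vanishes in the limit. (A minor slip: with the paper's convention $u=(t-r)/2$ one has $\partial_t-\partial_r=\partial_u$, not $2\partial_u$; this does not affect the conclusion.)
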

	This means that linear momentum is not radiated through $\scri$. Using the divergence theorem on the spacetime region $\rdom$ implies
	\begin{equation}\label{linear momentum conservation}
		\begin{gathered}
			\Theta_i^{mom}(\tau_1):=\Sigma_{\tau_1}[J_i]=\Sigma_{\tau_2}[J_i],\qquad J_i=(\partial_i)\cdot \Tlin[\psi]
		\end{gathered}
	\end{equation}
	for a solution $\psi$ of \eqref{linearised eom}.
	
	For the convenience of the reader, we derive an explicit form of $\Theta^{mom}_i$ in the coordinates we use.
	We can calculate $\Theta^{mom}$ similar to the energy \cref{energy calculation},
	\begin{equation}\label{eq:app:mom}
		\begin{multlined}
			\Theta^{mom}(\tau)[\psi]=\int_{\R^3}\Tlin[\psi](\partial_i,dt_\star^\#)=-\int_{\R^3}h'\Big((X^\circ W) (X^\circ_i\psi)+\hat{x}_i \Vlin \psi\Big)+X^\circ_iWT \psi\\
			=-\int_{\R^3}h'\Big((XW)(X_i-\hat{x}_ih'T)\psi+\hat{x}_i\Vlin \psi\Big)+X_iWT\psi\\
			=-\int_{\R^3}h'\Big((XW)(X_i\psi)+\hat{x}_i\Vlin \psi\Big)+X_iWT\psi(1-h'^2).
		\end{multlined}
	\end{equation}
	Note that $\Theta^{mom}(\tau):(\psi|_{\Sigma_\tau},T\psi|_{\Sigma_\tau})\to\R$ is a bounded function on $\dot{H}^1\times\mathfrak{X}_1$.
	The same quantity evaluated on $\tilde{\Sigma}=\{t=0\}$ would give
	\begin{equation}
		\begin{gathered}
			\tilde{\Sigma}[\partial_i\cdot\Tlin[\psi]]=-\int_{\tilde{\Sigma}}\Tlin[\psi][\partial_i,T]=-\int_{\tilde{\Sigma}}T\psi\partial_i W.
		\end{gathered}
	\end{equation}
	
	Whenever it is convenient, we suppress the subscript on $\Theta^{mom}$ and treat it as a vector. We call this the momentum content of the solution. Furthermore, note the following explicit computations for solutions of \eqref{linearised eom}:
	\begin{equation}
		\begin{gathered}
			\Theta^{mom}_i[\partial_jW]=\Theta^{mom}_i[\Lambda W]=\Theta^{mom}_i[t\partial_kW]=0,\quad k\neq i\\
			\Theta^{mom}_i[t\partial_iW]=\Sigma_{\tau}[(\partial_i)\cdot \Tlin[t\partial_iW]]=\tilde{\Sigma}[(\partial_i)\cdot \Tlin[t\partial_iW]]=\int (\partial_iW)^2=:C^{mom},\quad \tilde{\Sigma}=\{t=0\}\\
		\end{gathered}
	\end{equation}
	which means that $\psi-\frac{\Theta_i^{mom}[\psi]}{C^{mom}}t\partial_iW$ has zero momentum.
	
	Introducing a forcing term makes the energy momentum tensor non-divergence free 
	\begin{equation}
		\begin{gathered}
			\nabla^\mu \Tlin_{\mu\nu}[\psi]=F\partial_{\nu} W
		\end{gathered}
	\end{equation}
	for $\psi$ a solution to \eqref{linearised eom2}. In particular \eqref{linear momentum conservation} changes to
	\begin{equation}\label{linear momentum conservation with force}
		\begin{gathered}
			\Sigma_{\tau_1}[J]=\Sigma_{\tau_2}[J]+\mathcal{B}^{mom}, \qquad
			\mathcal{B}=\int_{\mathcal{R}_{\tau_1,\tau_2}} F\partial_i W.
		\end{gathered}
	\end{equation}

	\begin{lemma}[Momentum content]\label{momentum content}
		Let $\psi$ be a scattering solution to \eqref{linearised eom2} in $\mathcal{R}_{\tau_1,\tau_2}$. Then \eqref{linear momentum conservation with force} holds.
	\end{lemma}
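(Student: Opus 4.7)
The plan is to derive the result by applying the divergence theorem to the current $J_i^\mu=(\partial_i)^\nu \Tlin^\mu{}_\nu[\psi]$ on the region $\mathcal{R}^{v_\infty}_{\tau_1,\tau_2}$ and then taking $v_\infty\to\infty$. First I would verify the pointwise identity $\partial^\mu\Tlin_{\mu\nu}[\psi]=F\,\partial_\nu W$ whenever $(\Box+V)\psi=F$. This is a direct computation: the symmetric structure of $2\partial_{(\mu}W\partial_{\nu)}\psi$ generates, after differentiation, the terms $\Box W\,\partial_\nu\psi+\partial_\nu W\,\Box\psi$; the trace term contributes $\partial_\nu\Vlin\cdot\psi+\Vlin\,\partial_\nu\psi$; and using $\Box W=\Delta W=-W^5=-\Vlin$ and $\Box\psi=F-V\psi=F-5W^4\psi$, everything except $F\,\partial_\nu W$ cancels algebraically (the $5W^4\partial_\nu W\,\psi$ produced by the $V\psi$ part cancels against $\partial_\nu\Vlin\cdot\psi$, and the $W^5$ terms cancel against each other).

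Next I would apply the divergence theorem to $J_i$ on the truncated slab $\mathcal{R}^{v_\infty}_{\tau_1,\tau_2}$, whose boundary consists of $\Sigma^{v_\infty}_{\tau_1}$, $\Sigma^{v_\infty}_{\tau_2}$, and the incoming null cone $\underline{\mathcal{C}}_{v_\infty}$. This yields
\begin{equation*}
  \Sigma^{v_\infty}_{\tau_2}[J_i]-\Sigma^{v_\infty}_{\tau_1}[J_i]+\underline{\mathcal{C}}_{v_\infty}[J_i]=\int_{\mathcal{R}^{v_\infty}_{\tau_1,\tau_2}} F\,\partial_i W.
\end{equation*}
The integrand along the incoming cone, as computed in the text preceding the lemma, has the schematic form $r^2\bigl(\partial_r W(\hat{x}_i\partial_t-\partial_i)\psi+\Vlin\psi\bigr)$; using $\partial_r W=O(r^{-2})$, $\Vlin=O(r^{-5})$, and the scattering decay $\psi=O(r^{-1})$ with $\partial_u(r\psi)$ bounded in $L^2(\scri)$, each factor $r^2\partial_r W\cdot\partial\psi$ and $r^2\Vlin\psi$ is integrable and tends to zero after integration against the $v_\infty$-level set (this is the same argument given for $\mathfrak{R}^{mom}=0$ in the homogeneous case, and it depends only on decay of $\psi$, not on the equation).

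Taking $v_\infty\to\infty$, and invoking dominated convergence justified by the norms \cref{scattering boundedness} in the scattering definition, the boundary term on $\underline{\mathcal{C}}_{v_\infty}$ vanishes and the slice integrals pass to $\Sigma_{\tau_j}[J_i]$. This gives \eqref{linear momentum conservation with force} with $\mathcal{B}^{mom}=\int_{\mathcal{R}_{\tau_1,\tau_2}}F\,\partial_iW$. Finally, the stated bound $|\mathcal{B}^{mom}|\leq c\int_{\mathcal{R}}|F|\jpns{r}^{-2}$ in \eqref{inhomogeneous errors} follows at once from the pointwise estimate $|\partial_i W|\lesssim\jpns{r}^{-2}$ coming from \eqref{soliton}.

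The main obstacle, and the only analytic (as opposed to algebraic) step, is the passage to the limit $v_\infty\to\infty$: one must verify that the scattering decay assumed in \cref{def:scattering definition} and provided by \cref{existence of scattering solution} really does kill the null-infinity flux of $J_i$ uniformly in slabs, and that the bulk integral $\int F\partial_iW$ converges absolutely. Both reduce to Cauchy--Schwarz against the weighted norms already appearing in \eqref{scattering boundedness}--\eqref{scattering tracelessness}, combined with the pointwise decay of $W$ and its derivatives.
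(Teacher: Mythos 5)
Your proposal is correct and follows exactly the route the paper takes: the pointwise divergence identity $\partial^\mu\Tlin_{\mu\nu}[\psi]=F\,\partial_\nu W$ (which the paper asserts just before the lemma), the divergence theorem on $\mathcal{R}^{v_\infty}_{\tau_1,\tau_2}$, and the vanishing of the flux through $\underline{\mathcal{C}}_{v_\infty}$ as $v_\infty\to\infty$, which the paper records as $\mathfrak{R}^{mom}=0$ using the same decay $\partial_r W\sim r^{-2}$, $\Vlin\sim r^{-5}$. Your spelling out of the algebraic cancellations and the dominated-convergence step merely makes explicit what the paper leaves implicit in the preceding discussion; the bound on $\mathcal{B}^{mom}$ via $|\partial_i W|\lesssim\jpns{r}^{-2}$ is also exactly how \eqref{inhomogeneous errors} is obtained.
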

	
	\subsubsection{Translation}
	We also have the nonlinearly conserved quantity associated to the Lorentz boosts $\Gamma^i=(t\partial_i+x_i\partial_t)$. The current on incoming null cone $\underline{\mathcal{C}}$ is
	\begin{equation}
		\begin{gathered}
			(\Gamma^i)^\nu(\partial_t-\partial_r)^\mu \Tlin_{\mu\nu}[\psi]=x_i(\partial_rW\partial_r\psi-\Vlin\psi)+t\partial_iW\partial_t\psi-x_i\partial_rW\partial_t\psi\\
			-t\hat{x}_j(\partial_jW\partial_i\psi+\cancel{\partial_iW\partial_j\psi})+\hat{x}_it(\cancel{\partial_rW\partial_r\psi}-\Vlin\psi)\\
			=\partial_rW\Big(x_i\partial_r+\hat{x}_it\partial_t-x_i\partial_t-t\partial_i\Big)\psi-\Vlin\psi(x_i+\hat{x}_it)\\
			=-\partial_rW\Big(\hat{x}_i\cdot\slashed{\nabla}\Big)\psi+2\hat{x}_iu\partial_rW\partial_u\psi-\Vlin\psi(x_i+\hat{x}_it)
		\end{gathered}
	\end{equation}
	where in the last equality we used that $v=\frac{t+r}{2}$ and $\partial_i=\hat{x}_i\partial_r+\frac{1}{r}\hat{x}_i\cdot\slashed{\nabla}$. Similarly as for the momentum, we have no radiation
	\begin{lemma}
		For a scattering solution $\psi$ we have
		\begin{equation}\label{com flux through scri}
			\begin{gathered}
				\mathfrak{R}^{com}:=\int_{\scri_{v_1,v_2}}(\Gamma^i)^\nu(\partial_t-\partial_r)^\mu \Tlin_{\mu\nu}[\psi]=0.
			\end{gathered}
		\end{equation}		
	\end{lemma}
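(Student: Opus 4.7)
The plan is to mimic the momentum flux argument of the immediately preceding lemma, paying extra attention to the growing prefactors $t,x_i$ introduced by the Lorentz boost generator $\Gamma^i=t\partial_i+x_i\partial_t$. Starting from the explicit contraction obtained just above the statement,
\begin{equation*}
(\Gamma^i)^\nu(\partial_t-\partial_r)^\mu \Tlin_{\mu\nu}[\psi]=-\partial_r W\,(\hat x_i\cdot\slashed{\nabla})\psi+2\hat x_i u\,\partial_r W\,\partial_u\psi-\Vlin\,\psi\,(x_i+\hat x_i t),
\end{equation*}
I would realise the flux through $\scri$ as the limit of the flux through the null hypersurface $\underline{\mathcal{C}}_{v_\infty}=\{v=v_\infty\}$ as $v_\infty\to\infty$; on such a cone one parametrises by $(u,\omega)$ with $r=v_\infty-u\to\infty$ for bounded $u$, and the induced volume form is $r^2\,du\,d\omega$.

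Once the representation as a limit is in place, the task reduces to checking that each of the three summands, when multiplied by the measure weight $r^2$, tends to zero pointwise on a fixed $u$-interval and stays dominated by an integrable function of $(u,\omega)$. Recall $W=O(r^{-1})$, $\partial_r W=O(r^{-2})$ and $\Vlin=W^5=O(r^{-5})$; combined with the scattering behaviour built into \cref{def:scattering definition}, namely $r\psi\to\psi^{dat}$, $r\partial_u\psi\to\partial_u\psi^{dat}$, and $\slashed{\nabla}\psi=O(r^{-2})$ (the extra $r^{-1}$ comes from angular derivatives on the sphere of radius $r$), the three weighted summands are of order $r^{-2}$, $|u|\,r^{-1}$ and $r^{-3}$ respectively. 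For the last estimate I used the identity $x_i+\hat x_i t=2v\hat x_i$ together with $v\sim r$ in the approach to $\scri$ along $\underline{\mathcal{C}}_{v_\infty}$. Dominated convergence, whose majorants are supplied by the uniform scattering norms of \cref{def:scattering definition} applied to $\psi^{dat}$ together with its one angular and $u$-derivative, then yields $\mathfrak{R}^{com}=0$.

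The only step that is genuinely more delicate than in the momentum case is the last summand: the factor $x_i+\hat x_i t$ grows linearly on $\scri$, so naive estimation could fail. This growth is, however, compensated by the quintic decay of $\Vlin$ inherited from $W$; this compensation is the heart of the calculation. Everything else is a direct transcription of the scheme used for $\mathfrak{R}^{mom}$, and the conclusion can be read physically as saying that neither angular momentum nor boost charge is radiated to future null infinity by the linearised radiation, so that $\Theta^{com}$ evolves only through the bulk source $F$ and the non-radiative contribution of $\Theta^{mom}$ recorded in \cref{com content}.
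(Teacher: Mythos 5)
Your proof is correct and is essentially the argument the paper leaves implicit by writing ``similarly as for the momentum.'' The key point you correctly isolate is the only genuinely new feature of the boost case: the term $\Vlin\psi(x_i+\hat x_i t)=2v\hat x_i\Vlin\psi$ grows linearly in $v\sim r$, but this is amply compensated by the $O(r^{-5})$ decay of $\Vlin=W^5$, so after inserting the measure weight $r^2$ the integrand still vanishes along $\underline{\mathcal{C}}_{v_\infty}$ with a margin of $r^{-3}$. The realisation of the $\scri$ flux as a limit over finite null cones and the dominated-convergence step with majorants drawn from the scattering norms of \cref{def:scattering definition} are also exactly what the argument for $\mathfrak{R}^{mom}$ requires, so the transcription is faithful.

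Two very minor imprecisions that do not affect the conclusion. First, given the paper's normalisation (where $\slashed{\Delta}/r^2$ appears in the wave operator), $\slashed{\nabla}$ is a unit-sphere derivative, so $\slashed{\nabla}\psi=O(r^{-1})$, not $O(r^{-2})$; the first weighted summand is then $O(r^{-1})$ rather than $O(r^{-2})$, but this still tends to zero. Second, the closing remark refers to ``angular momentum'' — the quantity in question is the boost charge (centre of mass), not the $\Omega_{ij}$-current; the physical interpretation is otherwise accurate, namely that $\Theta^{com}$ is not radiated through $\scri$ and evolves only through the source $F$ and the linear-in-$\tau$ drift from $\Theta^{mom}$ recorded in \cref{com content}.
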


	Also, note that we may split the vector field 
	\begin{equation}
		\begin{gathered}
			\Gamma^i=t_\star\partial_i+x_i\partial_t+R_2h\partial_i=:t_\star \partial_i+\tilde{\Gamma}^i.
		\end{gathered}
	\end{equation}
	Using the divergence theorem, we get
	\begin{equation}\label{linear com conservation}
		\begin{gathered}
			\Theta^{com}_i(\tau_1)+\tau_1\Theta^{mom}_i(\tau_1):=\Sigma_{\tau_1}[\tilde{J}_i+t_\star J_i]=\Sigma_{\tau_2}[\tilde{J}_i+t_\star J_i]\\
			\tilde{J}_i[\psi]=\tilde{\Gamma}^i\cdot \Tlin[\psi],\, J_i=(\partial_i)\cdot \Tlin[\psi]
		\end{gathered}
	\end{equation}
	Note, that using \eqref{linear momentum conservation} the $\Sigma_{\tau_1}[J]=0$ case gives boundedness (in fact, no $\tau$ dependence) for the $\tilde{J}$ current. We call this the center of mass content of the solution.
	
	Because we are using a weighted vector field, namely a boost, to define the center of mass, it is not a priori clear that $\Theta^{com}_i$ is well defined for scattering solutions.
	We calculate
	\begin{equation}
		\begin{multlined}
			-\Tlin[x_iT+R_2hX_i^\circ,\d t_\star^\#]=\Tlin[x_iT+R_2hX_i^\circ,T+h'X^\circ]\\
			=\Tlin[x_iT,T]+\Tlin[R_2hX_i^\circ,T]+\Tlin[x_iT,h'X^\circ]+\Tlin[hX_i^\circ,h'X^\circ]
			\\=x_i(\partial W\cdot \partial\psi-\Vlin\psi)+R_2hX^\circ_i WT\psi+x_iT\psi h'X^\circ W+h'R_2h(X_i^\circ\psi X^\circ W+\hat{x}_i\Vlin\psi)\\			
			=(rX_iWX\psi+R_2hh'X_i\psi XW)+X_iWT\psi\underbrace{R_2h(1-h'^2)}_{=0, r>2R_2}+\Vlin\psi (h'R_2h\hat{x}_i-x_i).
		\end{multlined}
	\end{equation}
	Integrating on a sphere removes all but the $l=1$ mode of $\psi$, so without loss of generality, write $\psi=\frac{x_k \bar{\psi}(r,t_\star)}{r^2}+\text{other modes}$ to get
	\begin{equation}
		\begin{gathered}
			\int_{S^2}	(rX_iWX\psi+R_2hh'X_i\psi XW)=\int_{S^2}r\hat{x}_iW'\frac{x_k\bar{\psi}'-\hat{x}_k\bar{\psi}}{r^2}+R_2hh'W'\frac{\delta_{ik}\bar{\psi}+\hat{x}_ix_k\bar{\psi}'-2\hat{x}_i\hat{x}_k\bar{\psi}}{r^2}\\
			=\delta_{ik}\frac{4\pi}{3}W'\Big(\bar{\psi}'\frac{r+R_2hh'}{r}+\bar{\psi}\frac{R_2hh'-r}{r^2}\Big).
		\end{gathered}
	\end{equation}
	In conclusion we get
	\begin{equation}
		\begin{multlined}
			\Theta_i^{com}[\psi](\tau)=\int_{\Sigma_\tau}
			\hat{x}_i\Big(-\frac{r+R_2hh'}{r}W'X(r\psi)+R_2hW'T\psi(1-h'^2)\\+\psi(\Vlin r-h'R_2h\Vlin+W'\frac{R_2hh'-r}{r})\Big).
		\end{multlined}
	\end{equation}
	We compute the first term as follows
	\begin{equation}
		\begin{gathered}
			\int_{\Sigma_\tau}\hat{x}_i\frac{r+R_2hh'}{r}W' X(r\psi)=\lim_{R\to\infty}\Big( \int_{S^2,\Sigma_\tau\cap\{r=R\}}\hat{x}_i2W'r^2(r\psi)\\
			-\int_{\Sigma_\tau\cap\{r\leq R\}}r\psi\frac{r+R_2hh'}{r}\Delta W+r\psi W'X\frac{R_2hh'}{r}\Big)\\
			=-\int_{\Sigma\tau\cap\scri}2\sqrt{3}\hat{x}_i(r\psi)+\int_{\Sigma_\tau}\psi\Big((r+R_2hh')\Vlin-W'X\frac{R_2hh'}{r})\Big).
		\end{gathered}
	\end{equation}
	We conclude that
	\begin{equation}\label{eq:app:com}
		\begin{gathered}
			\Theta^{com}_i[\psi]=\int_{\Sigma_\tau\cap\scri}2\sqrt{3}x_i\psi+\int_{\Sigma_\tau}\hat{x}_i\bigg(R_2hW' T\psi (1-h'^2)
			+\psi\underbrace{\Big(W'\frac{R_2hh'-r}{r}+W'X\frac{R_2hh'}{r}-2R_2hh'\Vlin\Big)}_{\mathcal{O}(r^{-3})}\bigg)
		\end{gathered}
	\end{equation}
	In particular the second part, which we call $\tilde{\Theta}^{com}$ is in $(\mathfrak{X}_1\times\dot{H}^1)^\star$.
	Moreover, its size is bounded by a constant times $R_2$.
	Let us also note the center of mass content on $\tilde{\Sigma}$ for vanishing momentum 
	\begin{equation}
		\begin{gathered}
			\tilde{\Sigma}[(x_iT+tX_i)\cdot\Tlin[\psi]]=	\tilde{\Sigma}[(x_iT)\cdot\Tlin[\psi]]=-\int_{\tilde{\Sigma}}x_i(\partial W\cdot \partial\psi-\Vlin\psi)=\int_{\tilde{\Sigma}}\psi\partial_iW\\
			\text{given } \Theta^{mom}[\psi]=0.
		\end{gathered}
	\end{equation}

	We now calculate the $\tilde{J}$ current of the solutions in $\ker H$. As for the momentum, we may perform this computation on $\tilde{\Sigma}=\{t=0\}$. A simple calculation then  gives
	\begin{equation}\label{eq:com_detection}
		\begin{gathered}
			\Theta^{com}_i[\Lambda W]=\Theta^{com}_i[\partial_jW]=0,\quad j\neq i\\
			C^{com}:=\tilde{\Sigma}[\tilde{J}_i[\partial_i W]]=\int x_i\Big(\partial_rW\partial_r\partial_iW-\Vlin\partial_iW\Big)=-\int\hat{x}_i\hat{x}_iW\partial_r^2W-\int x_iW\cancel{(\Delta+W^4)\partial_iW}\\
			=2\pi\int_0^\infty dr r^2W(W^5+\frac{2}{r}\partial_rW)=2\pi\int_0^\infty dr W^2-r^2W^6\neq0.
		\end{gathered}
	\end{equation} 
	Therefore, we know that for $\psi$ with $\Theta^{mom}[\psi]=0$ we have that $\psi-\frac{\Theta_i^{com}[\psi]}{C^{com}}\partial_iW$ has no center of mass content. Furthermore, using that $\partial_i W\sim r^{-2}$, we get that $\Theta^{com}[\partial_iW]=\tilde{\Theta}^{com}[\partial_i W]$.
	
	As for the momentum content, the presence of inhomogeneity ruins the conservation and instead for $\psi$ a solution of \eqref{linearised eom2} we have 
	\begin{equation}\label{linear com conservation with force}
		\begin{gathered}
			\Theta^{com}_i(\tau_2)=\Theta^{com}_i(\tau_1)+(\tau_1-\tau_2)\Theta^{mom}_i(\tau_1)+\mathcal{B}^{com}\\
			\mathcal{B}^{com}=\int_{\rdom} F(t-\tau_2)\partial_iW
		\end{gathered}
	\end{equation}
	
	\begin{lemma}[Linear com]\label{com content}
		Let $\psi$ be a scattering solution in $\rdom$ to \eqref{linearised eom2}. Then \eqref{linear com conservation with force} holds.
	\end{lemma}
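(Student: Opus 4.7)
The plan is to apply the divergence theorem to the current $(\Gamma^i)^\nu \Tlin_{\mu\nu}[\psi]$ on $\rdom$, exploit the fact that $\Gamma^i = t\partial_i + x_i\partial_t$ is a Killing vector field for $\eta$, and then reduce to the already-established momentum identity \cref{linear momentum conservation with force} after decomposing $\Gamma^i = t_\star \partial_i + \tilde{\Gamma}^i$.

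The first step is to compute
\begin{equation*}
\nabla^\mu\!\big((\Gamma^i)^\nu \Tlin_{\mu\nu}[\psi]\big)=(\Gamma^i)^\nu\nabla^\mu\Tlin_{\mu\nu}[\psi]+\Tlin[\psi](\nabla\Gamma^i)=F\,\Gamma^i W,
\end{equation*}
using that $\Gamma^i$ is Killing (so its deformation tensor vanishes and contracts to zero against the symmetric $\Tlin$) and that $\nabla^\mu\Tlin_{\mu\nu}[\psi]=F\partial_\nu W$ when $\psi$ solves \cref{linearised eom2}. Since $W$ is stationary, $\Gamma^i W=t\partial_i W$. Applying the divergence theorem to $\rdom$ and using the vanishing flux \cref{com flux through scri} at $\scri$ (which only uses the $\partial_r W\sim r^{-2}$ fall-off and the scattering bounds on $\psi$, and which is unaffected by the presence of an $F$-term in the bulk since it is a statement about boundary regularity), one obtains
\begin{equation*}
\Sigma_{\tau_2}\!\big[\Gamma^i\!\cdot\!\Tlin[\psi]\big]-\Sigma_{\tau_1}\!\big[\Gamma^i\!\cdot\!\Tlin[\psi]\big]=\int_{\rdom} t\,F\,\partial_i W.
\end{equation*}

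Next I split $\Gamma^i=t_\star\partial_i+\tilde{\Gamma}^i$ as in the construction of $\tilde{J}_i$. Because $t_\star\equiv\tau_k$ on $\Sigma_{\tau_k}$, this decomposition gives
\begin{equation*}
\Sigma_{\tau_k}\!\big[\Gamma^i\!\cdot\!\Tlin[\psi]\big]=\Theta^{com}_i(\tau_k)+\tau_k\,\Theta^{mom}_i(\tau_k),
\end{equation*}
where the $\tilde{\Gamma}^i$-part is precisely $\Theta^{com}_i$ by the definition leading to \cref{eq:app:com} and the $t_\star\partial_i$-part reduces to $\tau_k\Theta^{mom}_i(\tau_k)$. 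Substituting \cref{linear momentum conservation with force} to trade $\Theta^{mom}_i(\tau_2)$ for $\Theta^{mom}_i(\tau_1)+\mathcal{B}^{mom}$ and collecting terms yields
\begin{equation*}
\Theta^{com}_i(\tau_2)-\Theta^{com}_i(\tau_1)+(\tau_2-\tau_1)\Theta^{mom}_i(\tau_1)=\int_{\rdom}(t-\tau_2)F\,\partial_i W,
\end{equation*}
which is exactly \cref{linear com conservation with force}.

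The main obstacle is justifying the boundary calculation at $\scri$ with the weighted vector field $\Gamma^i$: the coefficient of $\Gamma^i$ grows linearly in $t,r$, so one has to verify that the component of $\Tlin$ contracted here decays fast enough against $W$'s derivatives. This is precisely the content already packaged in \cref{com flux through scri}, together with the well-definedness check carried out just before \cref{eq:app:com}, which shows that $\tilde{\Theta}^{com}\in(\mathfrak{X}_1\times\dot H^1)^\star$ and that the limit as $r\to\infty$ of the $2\sqrt3\,x_i\psi$ boundary term exists for a scattering solution. Once these ingredients are in place, the argument is a clean application of Stokes' theorem combined with the linear momentum identity; no new analytic input is required beyond what was developed for $\Theta^{mom}$.
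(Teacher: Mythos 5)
Your proof is correct and follows essentially the same route as the paper: apply the divergence theorem to the current $(\Gamma^i)\cdot\Tlin[\psi]$ with the boost $\Gamma^i = t\partial_i + x_i\partial_t$, use that $\Gamma^i$ is Killing so the deformation-tensor contraction drops out, use \cref{com flux through scri} to kill the $\scri$ flux, and then split $\Gamma^i = t_\star\partial_i + \tilde{\Gamma}^i$ on the slices so that the $t_\star\partial_i$ part becomes $\tau\Theta^{mom}_i(\tau)$ and the $\tilde{\Gamma}^i$ part becomes $\Theta^{com}_i(\tau)$, exactly as in the derivation preceding \eqref{linear com conservation with force}. Your rearrangement via the already-established identity \eqref{linear momentum conservation with force} to replace $\tau_2\Theta^{mom}_i(\tau_2)$ and thereby produce the $(t-\tau_2)$ weight in $\mathcal{B}^{com}$ matches the paper's bookkeeping, and your observation that the vanishing-flux argument through $\scri$ depends only on decay of the integrand and not on the bulk inhomogeneity is the point the paper leaves implicit.
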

	
	\subsubsection{Scaling}
	For the scaling part of the kernel of \eqref{linearised eom} we have no killing field at our disposal. Nonetheless, we know that both $\Lambda W$ and $t\Lambda W$ solve \eqref{linearised eom}. A quick computation shows that $\Lambda W$ has no radiation field on $\scri$ and has 0 $\mathcal{E}^V$ energy flux through $\{t=\text{const}\}$ and $\Sigma_\tau$. Similarly, one can calculate that the $\mathcal{E}^V$ energy flux of $t\Lambda W$ is infinite on $\{t=\text{conts}\}$ slices, but finite through $\Sigma_\tau$ because it has $\mathcal{O}(1)$ behaviour towards $\scri$. Indeed, by the divergence theorem, we have that for any scattering solution $\psi$ solving \eqref{linearised eom}
	
	\begin{equation}\label{divergence with tLW}
		\begin{gathered}
			\scri_{v_1,v_2}[J_\psi]+\Sigma_{\tau_1}[J_\psi]=\Sigma_{\tau_2}[J_\psi], \qquad J_\psi=(\partial_t)\cdot \T^V[\psi,t\Lambda W].
		\end{gathered}
	\end{equation}
	More explicitly, we may calculate the radiation field of $t\Lambda W$
	\begin{equation}
		\begin{gathered}
			\lim_{r\to \infty} r\partial_u t\Lambda W|_{\Sigma_\tau}=\lim_{r\to\infty} r(1-tX^{\circ})\Lambda W=\lim_{r\to\infty}r(1-tX^{\circ})\frac{-1}{2}W=
			\lim_{r\to\infty}-rW=-\sqrt{3}.
		\end{gathered}
	\end{equation}
	Using this, we evaluate the radiative term
	\begin{equation}
		\begin{gathered}
			\mathfrak{R}_{\tau_1,\tau_2}^\Lambda[\psi]:=\scri_{v_1,v_2}[J_\psi]=\lim_{r\to \infty}\int_{\tau_1}^{\tau_2}\int_{S^2}-\sqrt{3}r\partial_\tau\psi \d\tau\d\omega=-\sqrt{3}\lim_{r\to \infty}\int_{S^2}\d\omega(r\psi|_{\Sigma_{\tau_2}}-r\psi|_{\Sigma_{\tau_1}}) 
		\end{gathered}
	\end{equation}
	which is simply the difference of the $l=0$ mode of the radiation fields on the two corresponding spheres at infinity. This transforms \eqref{divergence with tLW} to
	\begin{equation}\label{scaling conservation law}
		\begin{gathered}
			\Theta^{\Lambda}[\psi](\tau_2):=\Sigma_{\tau_1}[J_\psi]+\sqrt{3}\lim_{r\to\infty}\int_{S^2}\d\omega r\psi|_{\Sigma_{\tau_1}}=
			\Sigma_{\tau_2}[J_\psi]+\sqrt{3}\lim_{r\to\infty}\int_{S^2}\d\omega r\psi|_{\Sigma_{\tau_2}}.
		\end{gathered}
	\end{equation}
	We also define $\tilde{\Theta}^{\Lambda}[\psi]=\Sigma_\tau[J_\psi]$ which as before satisfies $\tilde{\Theta}^\Lambda\in(\dot{H}^1,\mathfrak{X}_1)$ with norm growing linearly with $R_2$.
	We compute $\Theta^{\Lambda}$ using the bilinear energy calculation from \cref{energy calculation}
	\begin{equation}\label{eq:app:Lambda}
		\begin{gathered}
			\tilde{\Theta}^\Lambda[\psi](\tau)=\int_{\Sigma_\tau}(1-h'^2)T\psi\Lambda W+X\psi X(t\Lambda W)-V\psi t\Lambda W\\
			=\int_{\Sigma_\tau}(1-h'^2)T\psi\Lambda W-X\psi \underbrace{X(h\Lambda W)}_{\sim r^{-3}}+V\psi h\Lambda W+\tau \cancel{(X\psi X\Lambda W-V\psi\Lambda W)}
		\end{gathered}
	\end{equation}
	We can also evaluate it on $\tilde{\Sigma}=\{t=0\}$ to get
	\begin{equation}
		\begin{gathered}
			\tilde{\Sigma}[\partial_t\cdot\T[\psi,t\Lambda W]]=-\int_{\tilde{\Sigma}}\Lambda W T\psi.
		\end{gathered}
	\end{equation}

	To capture the stationary part of the linear solution ($\psi$) that comes from the scaling ($\Lambda W$), we must evaluate this conserved quantity on $\Lambda W$

	\begin{equation}\label{eq:scale_detection}
	\begin{gathered}
			\Sigma_{\tau}[J_{\Lambda w}]+4\pi\sqrt{3}\lim_{r\to \infty}r\Lambda W=-6\pi+\int_{\Sigma_\tau} (1-h'^2)(T\Lambda W) T(t\Lambda W)+(X\Lambda W)(X t\Lambda W)-V\Lambda W t\Lambda W\\
			=-6\pi+\int_{\Sigma_\tau}\tau\cancel{\Big((X\Lambda W)^2-V(\Lambda W)^2\Big)}+(X\Lambda W)(Xh\Lambda W)-hV (\Lambda W)^2\\
			=-6\pi+\lim_{R\to\infty}\int_{\Sigma_\tau\cap\{r\leq R\}}(X\Lambda W)(Xh\Lambda W)-hV (\Lambda W)^2\\
			=-6\pi+\lim_{R\to\infty}[4\pi r^2(X\Lambda W)(h\Lambda W)]|_R-\int_{\Sigma_\tau\cap\{r\leq R\}}+h (\Lambda W)\cancel{\big(\Delta+V\big)\Lambda W}=-9\pi:=C^\Lambda\neq0.
	\end{gathered}
\end{equation}
	Importantly, this quantity does not vanish, so we may use it to detect the $\Lambda W$ content of $\psi$. Hence, for $\psi$ a solution of \eqref{linearised eom} with $\mathfrak{R}^\Lambda_{\tau_1,\tau_2}[\psi]=0$ for all $\tau_2>\tau_1$ we have $\psi-\frac{\Theta^\Lambda[\psi]}{C^\Lambda}\Lambda W$  is scale content free. The above computation also yields that $\tilde{\Theta}^\Lambda[\Lambda W]\neq0$.
	
	Following the theme of the previous sections, we get the inhomogeneous case for $\psi$ a solution of \eqref{linearised eom2}
	\begin{equation}\label{linear scale conservation with force}
		\begin{gathered}
			\Sigma_{\tau_2}[J_\psi]=\Theta^\Lambda[\psi](\tau_1)+\mathfrak{R}^\Lambda_{\tau_2,\tau_1}[\psi]+\mathfrak{B}^\Lambda\\
			\mathfrak{B}^\Lambda=\int_{\rdom} F\Lambda W
		\end{gathered}
	\end{equation}
	
	\begin{lemma}[Scaling detection]\label{scaling content}
		Let $\psi$ be a scattering solution in $\rdom$ to \eqref{linearised eom2}. Then \eqref{linear scale conservation with force} holds.
	\end{lemma}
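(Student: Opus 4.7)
The plan is to apply the divergence theorem to the 1-form current $J_\psi = (\partial_t) \cdot \T^V[\psi, t\Lambda W]$ over the region $\rdom$. The boundary of this region consists of three pieces: $\Sigma_{\tau_2}$, $\Sigma_{\tau_1}$, and the null portion $\scri_{v_1,v_2}$ (the latter realised as a limit along outgoing null cones, exactly as done above for the homogeneous case). The divergence theorem then reads $\Sigma_{\tau_2}[J_\psi] = \Sigma_{\tau_1}[J_\psi] + \scri_{v_1,v_2}[J_\psi] + \int_{\rdom}\nabla^\mu J_{\psi,\mu}$, and the statement will follow by identifying each piece with the corresponding term in \eqref{linear scale conservation with force}.

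For the bulk term, I would compute $\nabla^\mu\bigl((\partial_t)^\nu \T^V_{\mu\nu}[\psi, t\Lambda W]\bigr)$ directly from the bilinear formula \eqref{bilinear energy mom tensor}. Since $\partial_t$ is Killing and $V$ is $t$-independent, the only surviving contributions come from the wave operator acting on the two entries: one obtains something proportional to $(\Box+V)\psi \cdot \partial_t(t\Lambda W) + (\Box+V)(t\Lambda W)\cdot \partial_t\psi$. The second term vanishes because $t\Lambda W$ is a kernel element for $\Box+V$, while the first yields $F\cdot \Lambda W$, producing exactly $\mathfrak{B}^\Lambda = \int_{\rdom}F\Lambda W$ after normalisation of the bilinear form.

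For the $\scri$ contribution, I would repeat the limit argument already carried out for the homogeneous problem: since only the radiation field of $t\Lambda W$ contributes at leading order, and that field equals $-\sqrt{3}$ by the pointwise computation $\lim_{r\to\infty} r\partial_u(t\Lambda W)|_{\Sigma_\tau} = -\sqrt 3$ performed in the scaling paragraph, the null flux reduces to $-\sqrt{3}\lim_{r\to\infty}\int_{S^2}\d\omega\bigl(r\psi|_{\Sigma_{\tau_2}} - r\psi|_{\Sigma_{\tau_1}}\bigr)$, which is $\mathfrak{R}^\Lambda_{\tau_1,\tau_2}[\psi]$. Combining this with the $\Sigma_{\tau_1}$ boundary term using the definition \eqref{scaling conservation law} of $\Theta^\Lambda(\tau_1)$ (which absorbs a $\sqrt{3}\lim r\psi|_{\Sigma_{\tau_1}}$ correction), the right-hand side rearranges to exactly $\Theta^\Lambda[\psi](\tau_1) + \mathfrak{R}^\Lambda_{\tau_2,\tau_1}[\psi] + \mathfrak{B}^\Lambda$.

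The main subtlety will be justifying the limit procedures. Because $t\Lambda W \sim \mathcal{O}(1)$ near $\scri$ and fails to be integrable on $\{t=\text{const}\}$ slices, the individual terms in $J_\psi$ do not all have manifestly integrable flux through $\Sigma_\tau$; one must verify, as in the computation \eqref{eq:app:Lambda}, that the dangerous pieces cancel exactly (the $\tau$-proportional piece does because $\Lambda W \in \ker H$), leaving only the bounded remainder $\tilde{\Theta}^\Lambda$ plus the radiation correction. Similarly, the cutoff-limit definition $\scri[J_\psi] = \lim_{r\to\infty}\underline{C}_{(\tau+r)/2}[J_\psi]$ must be shown to converge; this uses the scattering assumption on $\psi$ together with the explicit asymptotic form of $t\Lambda W$, mirroring the argument already used in the $F=0$ case. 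Once these convergences are in place, the identity follows from a single application of the divergence theorem exactly as above.
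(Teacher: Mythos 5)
Your proposal is correct and takes essentially the same approach as the paper: the paper derives \eqref{linear scale conservation with force} by the same route — divergence theorem applied to $J_\psi = T\cdot\T^V[\psi, t\Lambda W]$, with the bulk term computed from the failure of the bilinear current to be divergence-free and the radiative term extracted via the radiation field of $t\Lambda W$, then rearranged using the definition of $\Theta^\Lambda$. Your careful attention to the convergence of the $\Sigma_\tau$ integrals (the cancellation of the $\tau$-proportional divergent piece, as checked in \eqref{eq:app:Lambda}) is exactly what makes $\Theta^\Lambda$ well-defined and matches the paper's implicit argument.
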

	
	\subsubsection{Unstable modes}
	Throughout the paper, we will use localised projection operator for the unstable modes. In this section, we give a brief alternative description on how to detect these globally.
	
	We recall from Proposition 3.9 in \cite{duyckaerts_solutions_2016}, that $Y=e^{-\lamed r}(\jpns{r}^{-1}+\mathcal{O}(r^{-1-1/2}))$. In particular, we get that both $Y_\pm$ are valid scattering solutions. We start with the detection of the $Y_+=Ye^{\lamed t}$ content of $\psi$. We note, that as $Y_-$ decays exponentially towards $\scri$, the current $J_+=T\cdot\T^V[\psi,Y_-]$ has trivial flux through $\scri$. Therefore $\Sigma_\tau[J_+]$ is conserved. Evaluating the flux of $T\cdot\T^V[Y_+,Y_-]$ through $\{t=0\}$ we find that it is non-vanishing, therefore $J_+$ can be used to detect the $Y_+$ content. 
	
	Proceeding similarly for $Y_-$, we observe the following problem. $Y_+$ has a nontrivial radiation field, indeed $(r\partial_uY)|_{\scri}=e^u$. Therefore the current $J_-[\psi,Y_+]$ has a non zero flux through $\scri$. Using coordinates $t_\star,r$ we can write the above as
	\begin{equation}
		\begin{gathered}
			\partial_\tau\Big(\Sigma_{\tau}[T\cdot\T^V[e^{\lamed(t_\star+R_2h)}Y(r),\psi]]\Big)=ce^{\lamed \tau}\int_{\Sigma_\tau\cap\scri}\partial_ur\psi\\
			\implies \partial_\tau\Big(e^{\tau\lamed}\underbrace{\Sigma_{\tau}[T\cdot \T^V[e^{\lamed R_2h}Y(r),\psi]]}_{=:\tilde{\alpha}_-}\Big)=ce^{\lamed \tau}\int_{\Sigma_\tau\cap\scri}\partial_ur\psi\\
			\implies \tilde{\alpha}_-(\tau)+\tilde{\alpha}_-'(\tau)=c\int_{\Sigma_\tau\cap\scri}\partial_ur\psi.
		\end{gathered}
	\end{equation}
	Therefore, even though $\tilde{\alpha}_-$, i.e. the $Y_-$ content, is not conserved, it can be bounded by the radiation field of $\psi$. Given that all the above constructed projection operators indeed capture the obstruction to boundedness and decay it is natural to formulate the following conjecture.
	\begin{conjecture}
		Let $\psi^{dat}$ be data as in \cref{def:scattering definition} with $d>0$ and $N>5$ for \cref{linearised eom} on $\Sigma_{1}$. Provided that $\Theta^{mom}[\psi]=\Theta^{com}[\psi]=\Theta^{\Lambda}[\psi]=\Sigma_{1}[J_+]=0$, the forward solution to \cref{linearised eom} with $\psi^{dat}$ initial data decay polynomially as $t\to\infty$.
	\end{conjecture}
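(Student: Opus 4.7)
The plan is to combine the energy boundedness developed in this paper with an integrated local energy decay (Morawetz) estimate and iterated $r^p$ estimates in the spirit of Dafermos--Rodnianski to obtain polynomial decay of the forward solution. The four vanishing conditions are precisely the obstructions identified in \cref{subsec:obstruction_to_decay} and in \cref{sec:zero_modes}, so the strategy is to show that they propagate (or remain controlled) and then deploy the standard decay machinery on the reduced system.

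First I would verify persistence of the vanishing conditions. Since $F=0$, the conservation laws \cref{linear momentum conservation,linear com conservation} give $\Theta^{mom}[\psi](\tau)=0$ and $\Theta^{com}[\psi](\tau)=0$ for all $\tau\geq 1$. The current $J_+=T\cdot \T^V[\psi,Y_-]$ is conserved since $T$ is Killing, both $\psi$ and $Y_-$ solve \cref{linearised eom}, and $Y_-$'s exponential spatial decay kills the flux through $\scri$; hence $\Sigma_\tau[J_+]=0$ for all $\tau\geq 1$, so the full (global) $Y_+$ content of $\psi$ vanishes. The localised $\alpha_+$ then differs from the global projector by $\mathcal{O}(e^{-\lamed R_1/2}\|\psi\|)$, an error which is absorbed using a sufficiently large $R_1$. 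The scaling functional $\Theta^{\Lambda}$ is \emph{not} conserved: \cref{linear scale conservation with force} with $F=0$ gives $\Theta^{\Lambda}(\tau)=\mathfrak{R}^{\Lambda}_{1,\tau}[\psi]$, a difference of $l=0$ modes of the radiation field, which must be tracked dynamically rather than set to zero.

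Second, combining the above with \cref{coercivity lemma} and the master current $\master[\psi]$ from \cref{sec:linear theory section}, the remainder
\begin{equation*}
    \psi-\frac{\Theta^{mom}_i}{C^{mom}}t\partial_iW-\frac{\Theta^{com}_i}{C^{com}}\partial_iW-\frac{\Theta^{\Lambda}}{C^{\Lambda}}\Lambda W
\end{equation*}
satisfies a coercive energy bound, giving uniform-in-$\tau$ control of a non-degenerate $\dot{H}^1$-type norm. The third and hardest step is an integrated local energy decay (Morawetz) estimate. A standard $f(r)\partial_r$-multiplier produces a positive bulk in the far region, but near $r=0$ the soliton potential $V$ contributes a zeroth-order term of indefinite sign and the kernel elements $\partial_iW,\Lambda W$ obstruct positivity. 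My approach would be to adapt the twisted-multiplier framework of \cite{holzegel_boundedness_2014} used elsewhere in this paper, absorbing the kernel contributions through the projections $\Theta^\bullet$ already constructed, so that the Morawetz bulk controls the projected solution. This is precisely step \cref{forward: morawetz} flagged as open in \cref{introdcution:forward}, and I expect the main obstacle to lie here: the compatibility between the projection subtraction and the multiplier identity requires delicate commutator bookkeeping, and the weights of $t\partial_iW$ (which is not in $\dot{H}^1$) cause borderline integrability issues near $\scri$.

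With Morawetz in hand, the final step is the iterated $r^p$ hierarchy of Dafermos--Rodnianski--Moschidis. Starting from the assumed $d>0$ decay at infinity, commuting with $r^p\partial_v$ on the null part propagates polynomial decay inward, yielding $\mathcal{E}^V[\psi](\tau)\lesssim\tau^{-p}$ for $p$ up to a threshold determined by $q>5$. Commutation with $T,\Omega_{ij},S$ together with the weighted Sobolev embedding \cref{weighted estimates} upgrades this to pointwise polynomial decay of $\psi$. A bootstrap closes the coupling between the decay of $\Theta^{\Lambda}(\tau)=\mathfrak{R}^{\Lambda}_{1,\tau}$ and the decay of the full energy: polynomial decay of the $l=0$ radiation feeds into polynomial decay of $\Theta^{\Lambda}$, which in turn feeds back into coercivity and hence into further energy decay.
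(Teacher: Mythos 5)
The statement you are asked to prove is presented in the paper as a \emph{conjecture}, not a theorem: the paper offers no proof, and explicitly identifies the missing ingredient. In \cref{introdcution:forward} the author lists the steps needed for the forward problem and writes that only \cref{forward: boundedness,forward: zero mode} are achieved in the paper, with \cref{forward: morawetz,forward: close errors} (integrated local energy decay and closing the error terms) left open. Your proposal correctly reconstructs this roadmap --- persistence of the projection conditions, coercivity via \cref{coercivity lemma}, Morawetz, then the $r^p$ hierarchy --- but you yourself flag the Morawetz step as unresolved and ``the main obstacle.'' This is honest and accurate, but it means your argument does not prove the conjecture; it only restates the strategy the paper already outlines and names the same gap.

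Two smaller comments on the parts you do carry out. First, your statement that ``$\Theta^{\Lambda}$ is \emph{not} conserved'' conflates $\Theta^\Lambda$ with $\tilde\Theta^\Lambda$. By \cref{scaling conservation law}, $\Theta^\Lambda[\psi]$ \emph{includes} the radiative correction term $\sqrt{3}\lim_{r\to\infty}\int_{S^2}\d\omega\, r\psi|_{\Sigma_\tau}$ and is conserved when $F=0$; the quantity that fails to be conserved (and whose decay must be tracked via the $l=0$ radiation field) is the slice value $\tilde\Theta^\Lambda(\tau)=\Sigma_\tau[J_\psi]$, which is what enters the slice-wise coercivity estimate via \cref{eq:scale_detection}. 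The distinction matters: the hypothesis $\Theta^\Lambda[\psi]=0$ propagates automatically, whereas $\tilde\Theta^\Lambda(\tau)$ must be controlled via decay of the $l=0$ radiation field, exactly as you describe in your bootstrap sketch, so your eventual bookkeeping is right even if the attribution is off. Second, since the four hypotheses of the conjecture make the $\Theta$ projections vanish, the ``remainder'' you display is just $\psi$ itself; the nontrivial content is that $\tilde\Theta^\bullet(\tau)$ (as opposed to $\Theta^\bullet$) must still be estimated at each slice, which again depends on the decay you are trying to establish.

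In summary, there is a genuine gap: no Morawetz estimate is supplied, so the argument does not close. The paper acknowledges exactly this, which is why the statement is phrased as a conjecture. What you have written would serve as an accurate introduction to a future proof, not as a proof.
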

	
	For compactly supported initial data $\psi$ on a slice $\{t=0\}$ the requirement of the conjecture translates to 
	\begin{equation}
		\begin{gathered}
			\int \Lambda WT\psi=\int \psi \partial_i W=\int \partial_iW T\psi=\int Y(\lamed+T)\psi=0.
		\end{gathered}
	\end{equation}
	\subsection{Uncommuted estimates}\label{uncommuted estimate section}
	As discussed in the introduction, we deal with 0 modes and unstable modes separately. In fact, we will only care about the unstable mode at the very end of the scattering construction. Until then, we will simply work with the following bootstrap assumption whenever we need it:
	\begin{equation}\label{unstable bootstrap assumption}
		\begin{gathered}
			\abs{\alpha_-}|_{\tau}\leq e^{-\lamed R_1/2}\epsilon
		\end{gathered}
	\end{equation}

	\begin{lemma}
		Let $\psi$ satisfy \eqref{unstable bootstrap assumption} in $\mathcal{R}_{\tau_1,\tau_2}$ with $\epsilon=1$ and
		\begin{equation}
			\begin{gathered}
				e^{R_1\lamed/2}\abs{\alpha_-(\tau_1)},\sup_\tau \mathcal{E}^V(\tau),\mathcal{E}_{\scri_{\tau_1,\tau_2}},\abs{\tilde{\Theta}^\bullet(\tau)}\leq 1,\quad \bullet\in\{com,\Lambda\}\\
				\int_{\rdom}\abs{F}T\psi,e^{\lamed R_1}\sum_{\tau\in\{\tau_1,\tau_2\}}\int_{\Sigma_\tau}\abs{F}e^{-\lamed r}\leq1 .
			\end{gathered}
		\end{equation}
		There exists $c>0$ such that 
		\begin{subequations}\label{hom bounds}
			\begin{gather}
				\masterNum{0}[\psi]
				\leq c\label{energy control}\\
				\sup_\tau\abs{\alpha_\pm(\tau)}e^{\lamed R_1/2}\leq c \label{unstable mode hom}.
			\end{gather}
		\end{subequations}
	\end{lemma}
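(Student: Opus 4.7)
The plan is to run the standard energy identity for the linearised equation and then upgrade the resulting (non-coercive) $\mathcal{E}^V$ control to genuine $\mathcal{E}$ control via the coercivity lemma \cref{coercivity lemma}, combined with an ODE analysis of the unstable-mode projections $\alpha_\pm$. The three ingredients (energy identity, coercivity, ODE) depend on each other and close simultaneously by a bootstrap in which the exponential smallness $e^{-\lamed R_1/2}$ acts as the small parameter.

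First I integrate the divergence identities for the currents $T\cdot\T^V[\psi]$ and $T\cdot\tilde\T^V[\psi]$ over $\mathcal{R}_{\tau_1,\tau}$ and over sub-regions truncated by the incoming cones $\underline{\mathcal{C}}_x$. Since $V$ is stationary and the twisting function $\beta=\jpns{r}^{-1}$ kills the $X^\mu S_\mu$ term, the only bulk sources are $\int F\,T\psi$ and its twisted analogue, both controlled by the assumed $\int|F||T\psi|\leq 1$. Combined with $\sup_\tau\mathcal{E}^V\leq 1$ and $\mathcal{E}_{\scri_{\tau_1,\tau_2}}\leq 1$, this delivers the twisted part of $\masterNum{0}$ outright, the cone fluxes $\tilde{\underline{\mathcal{C}}}_x[T\cdot\tilde\T^V]$, and a (still non-coercive) bound on the untwisted slice flux. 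To turn the latter into a coercive bound, I apply \cref{coercivity lemma} on the flat piece $\{r<R_2\}$ of $\Sigma_\tau$, where the geometry is Euclidean, projecting out the kernel $\mathcal{Z}=\spn\{\partial_iW,\Lambda W\}$ and the eigenfunction $Y$. Via \cref{en:cor:R2_dependence_in_Theta} the abstract projections are replaced by $\tilde\Theta^{com}$, $\tilde\Theta^\Lambda$ and $\alpha_\pm$, whose non-degeneracy on the corresponding kernel elements is recorded in \cref{eq:com_detection,eq:scale_detection}; on the null piece $\{r>R_2\}$ the twisted current is already coercive through the weight $w'=3\jpns{r}^{-4}$, so no projection is needed there.

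Finally I close the loop through the ODE \eqref{ode unstable mode}. Since $\chi'_{R_1},\Delta\chi_{R_1}$ and $\chi^c_{R_1}$ are all supported in $\{r\gtrsim R_1\}$, where $Y\lesssim e^{-\lamed r}$, a Hardy estimate yields
\begin{equation*}
|\mathfrak{Err}_\pm(\tau)|\lesssim e^{-\lamed R_1}R_1\sqrt{\mathcal{E}(\tau)} + \int_{\Sigma_\tau}\chi^c_{R_1}|YF|,
\end{equation*}
where the last term is controlled at the boundary times $\tau\in\{\tau_1,\tau_2\}$ by the assumption on $\int|F|e^{-\lamed r}$ and, after time-integration in the bulk, by $\int|F||T\psi|$ together with the pointwise bound on $\psi$ produced by the coercivity step. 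The stable forward ODE for $\alpha_-$, fed with $|\alpha_-(\tau_1)|e^{\lamed R_1/2}\leq 1$, then reproduces $|\alpha_-(\tau)|\lesssim e^{-\lamed R_1/2}$ by Gr\"onwall. The ODE for $\alpha_+$ is stable backwards; integrating from $\tau_2$ and re-using the coercivity inequality at $\tau_2$ itself to control $|\alpha_+(\tau_2)|$ in terms of the assumed small quantities on that slice gives the same bound.

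The main obstacle is the circular dependence between the coercivity estimate (which needs $|\alpha_\pm|$ small) and the $\alpha_\pm$ ODE (whose error term $\mathfrak{Err}_\pm$ depends on the very energy $\mathcal{E}$ we are trying to bound). The loop closes only because the $e^{-\lamed R_1}$ factor carried by $\mathfrak{Err}_\pm$ beats the polynomial-in-$R_1$ losses incurred in Hardy's inequality on $\{R_1<r<3R_1\}$, which forces $R_1$ to be chosen sufficiently large and motivates the explicit $R_2$-dependence tracking in \cref{en:cor:R2_dependence_in_Theta}. A secondary technical nuisance is that the $F$-assumption $\int|F|e^{-\lamed r}\leq e^{-\lamed R_1}$ is posed only at the endpoint slices, so intermediate-time estimates for $\mathfrak{Err}_\pm$ must either be absorbed into the spacetime integrand $\int|F||T\psi|$ or converted via the Duhamel weight $e^{\mp\lamed(\tau-s)}$ into a spacetime bound.
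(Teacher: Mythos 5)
Your proposal is correct and reproduces essentially the paper's own argument: control $\mathfrak{Err}_\pm$ using the exponential localisation of $Y$ against polynomial Hardy losses, feed this into the Duhamel formula for the $\alpha_\pm$ ODE, invoke the coercivity statement \cref{coercivity lemma} with $\tilde\Theta^{com},\tilde\Theta^\Lambda,\alpha_\pm$ as the detecting functionals (non-degeneracy from \cref{eq:com_detection,eq:scale_detection}), and absorb the $e^{-\lamed R_1}R_1\,\mathcal{E}(\tau)$ error by taking $R_1$ large, with the cone fluxes then recovered by the divergence theorem. Your secondary remark about the $F$-hypothesis is apt: as literally stated, $\sum_{\tau\in\{\tau_1,\tau_2\}}\int_{\Sigma_\tau}|F|e^{-\lamed r}$ only constrains the two endpoint slices, while the Duhamel integral for $\alpha_+$ requires control of the $\chi^c_{R_1}YF$ contribution to $\mathfrak{Err}_-$ at every intermediate $\tau$; this is best read as a $\sup_{\tau\in[\tau_1,\tau_2]}$ (or an absorbed spacetime integral, as you suggest) rather than a sum over the two endpoints.
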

	\begin{proof}
%		\{Step 1: unstable mode.}
%		Consider the unstable mode first.Using \eqref{unstable bootstrap assumption} on $\alpha_+$, we control the error term with
%		
%		\begin{equation}\label{unstable error bound}
%			\begin{gathered}
%				\begin{multlined}
%					\int_{\Sigma_{\tau}}\mathfrak{Err}_+[\psi]\leq c\int_{\mathcal{D}}e^{-\lamed r}\Big(\frac{\abs{\psi}+\abs{\partial\psi}}{R_1^2}+\abs{F}\Big)\leq c\gamma e^{-\lamed R_1}+ c e^{-\lamed R_1}R_1\Big(\int_{\mathcal{D}} \frac{\psi^2}{r^2}+\abs{\partial\psi}^2\Big)^{1/2}\\ \leq  ce^{-\lamed R_1}R_1 \mathcal{E}(\tau)+c\gamma e^{-\lamed R_1}
%				\end{multlined}\\
%				\implies\lamed\abs{\alpha_+}\leq e^{-\lamed R_1/2}\mathcal{E}_t[\psi]+\int_{\Sigma_{\tau}}\mathfrak{Err}_+\leq ce^{-\lamed R_1/2}R_1\mathcal{E}_\tau+c\gamma e^{-\lamed R_1}
%			\end{gathered}
%		\end{equation}
%		for $\mathcal{D}=(B_{2R_1}\backslash B_{R_1})\cap _{\Sigma_{\tau}}$. 
		
		\textit{Step 1: stable mode.}
		Using the equation for $\alpha_+$, \eqref{ode unstable mode} we get
		\begin{equation}\label{stable mode uncommuted bound}
			\begin{gathered}
				\begin{multlined}
					\int_{\Sigma_{\tau}}\mathfrak{Err}_\pm[\psi]\leq c\int_{\mathcal{D}}e^{-\lamed r}\Big(\frac{\abs{\psi}+\abs{\partial\psi}}{R_1^2}+\abs{F}\Big)\leq c e^{-\lamed R_1}+ c e^{-\lamed R_1}R_1\Big(\int_{\mathcal{D}} \frac{\psi^2}{r^2}+\abs{\partial\psi}^2\Big)^{1/2}\\ \leq  ce^{-\lamed R_1}R_1 \mathcal{E}(\tau)+c e^{-\lamed R_1}
				\end{multlined}\\
				\alpha_+(\tau_1)=e^{-\lamed(\tau_2-\tau_1)}\alpha_+(\tau_2)+\int_{\tau\in(\tau_1,\tau_2)} e^{-\lamed(\tau_2-\tau)}\mathfrak{Err}_-(\tau)\leq  e^{-R_1\lamed/2}+ ce^{-\lamed R_1/2}R_1\sup_\tau\mathcal{E}(\tau)
			\end{gathered}
		\end{equation}
		for $\mathcal{D}=(B_{2R_1}\backslash B_{R_1})\cap _{\Sigma_{\tau}}$.
		 
		\textit{Step 2: coercivity.}
		Let us note that $\mathfrak{E}^V$ is coercive in the $T\psi$ part. In particular, we can write
		\begin{equation}
			\begin{gathered}
				(\tilde{\Theta}_{T\psi}^{\bullet})^2\lesssim \mathcal{E}^V_{T\psi}
			\end{gathered}
		\end{equation}
		for $\bullet\in\{com,\Lambda\}$, where we use the subscript $T\psi$ to refer to the part of a quantity depending on $T\psi$. We will similarly write $\mathcal{E}^V_{\parallel}=\mathcal{E}^V-\mathcal{E}^V_{T\psi}$. The results from \cref{eq:com_detection,eq:scale_detection} yields that we can use $\tilde{\Theta}_{\parallel}^\bullet\in\dot{H}^1$ for detecting the kernel elements $\{\Lambda W,\partial_i W\}$. Using coercivity from \cref{coercivity lemma} we conclude
		\begin{equation}
			\begin{gathered}
				\mathcal{E}(\tau)\leq \mu \mathcal{E}^V(\tau)-\frac{1}{\mu}\Big(\abs{\alpha_{\pm}}^2+\sum_\bullet\abs{\tilde{q}^\bullet(\tau)}^2\Big)\leq \mu-\frac{1}{\mu}(1+c e^{-\lamed R_1/2}R_1\mathcal{E}(\tau))\\
				\implies \mathcal{E}(\tau)\leq c				
			\end{gathered}
		\end{equation}	
		where we used that $R_1$ is sufficiently large, and $\mu$ depends on $R_2$ linearly.
		To control the integral on outgoing cones, we just use the divergence theorem.
	\end{proof}

	\subsection{Higher order estimates}
	In this section, we extend the previous results to higher order estimates. We commute with vector field $\{\tau T,S,\Omega\}$ to get that the solution is conormal. Since we lack good commutation properties for $\tau T$, we will simply include extra $\tau$ weight in the definition of the current. Since we need stronger decay in $\tau$ for $T$ commuted quantities, we need to make a new unstable mode bootstrap assumption\footnote{Note, that if we did not want to get extra $\tau$ factor for $T$ commutator, i.e. we did not prove conormality, then we could recover the corresponding higher order bootstrap from the lower order one and elliptic estimates.}:
	\begin{equation}\label{higher unstable bootstrap}
		\begin{gathered}
			\abs{\lamed\alpha_-[{T^N\psi}]}\leq e^{-\lamed R_1/2}{\tau}^{-N}\epsilon
		\end{gathered}
	\end{equation}

	In order to get boundedness with $\tau T$ commutators, we will need to perform a 0-mode analysis for each of these quantities.
	
	\begin{lemma}[Higher order zero modes]\label{higher order zero modes}
		
		a) Let $\psi$ be a solution to \cref{linearised eom2} with $F=0$. Than $T\psi$ also solves \cref{linearised eom2} with 
		\begin{equation}
			\begin{gathered}
				\Theta^{mom}[T\psi]=\Theta^\Lambda[T\psi]=0,\qquad 
				\Theta^{com}[T\psi]=\Theta^{mom}[\psi].
			\end{gathered}
		\end{equation}
		
		b) For $F$ nonzero, smooth we have the following corrections
		\begin{equation}
			\begin{gathered} 
				\Theta^{mom}[T\psi]=\int_{\Sigma_\tau}FX_i W,\quad \Theta^\Lambda[T\psi]=\int_{\Sigma_\tau}F\Lambda W,\\
				\Theta^{com}[T\psi]=\Theta^{mom}[\psi].
			\end{gathered}
		\end{equation}
	\end{lemma}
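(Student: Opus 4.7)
The plan is the following. First, since $V=5W^4$ depends only on the spatial variables, $[\Box+V,T]=0$, so $T\psi$ solves $(\Box+V)(T\psi)=TF$. More importantly, I will exploit that $T=\partial_{t_\star}|_r$ generates the foliation $\{\Sigma_\tau\}$, and that every coefficient appearing in the explicit formulas \cref{eq:app:mom}, \cref{eq:app:com}, \cref{eq:app:Lambda} for $\Theta^{mom},\Theta^{com},\Theta^\Lambda$ (namely $W$, $V$, $\Vlin$, $h$, and $\chi_{R_1}$) is $t_\star$-independent. Combined with linearity of each functional in $\psi$, this gives the master identity
\begin{equation}
\frac{d}{d\tau}\Theta^\bullet[\psi](\tau)=\Theta^\bullet[T\psi](\tau), \qquad \bullet\in\{mom,com,\Lambda\}.
\end{equation}

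With this identity in hand, the homogeneous claims in (a) follow by $\tau$-differentiating the free conservation laws. In the free case, \cref{momentum content} and \cref{scaling content} give $\Theta^{mom}(\tau)\equiv\Theta^{mom}(\tau_1)$ and $\Theta^\Lambda(\tau)\equiv\Theta^\Lambda(\tau_1)$, whose $\tau$-derivatives vanish; \cref{com content} with $F=0$ reads $\Theta^{com}(\tau)=\Theta^{com}(\tau_1)+(\tau-\tau_1)\Theta^{mom}[\psi]$, whose $\tau$-derivative equals $\Theta^{mom}[\psi]$, proving the three formulas in (a).

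For (b), I will differentiate the inhomogeneous conservation laws \cref{linear momentum conservation with force}, \cref{linear com conservation with force}, \cref{linear scale conservation with force} in the upper endpoint $\tau$. For momentum, $\partial_\tau\int_{\mathcal{R}_{\tau_1,\tau}}F\partial_iW=\int_{\Sigma_\tau}F\partial_iW$, and $\partial_iW=X_iW$ since $TW=0$. For the scaling content, $\partial_\tau\mathfrak{B}^\Lambda=\int_{\Sigma_\tau}F\Lambda W$ analogously. For the center of mass, the $(\tau-\tau_1)\Theta^{mom}(\tau_1)$ piece contributes $\Theta^{mom}(\tau_1)$ under differentiation, while the $(\tau-t_\star)$-weighted bulk $\mathcal{B}^{com}$ contributes $\int_{\mathcal{R}_{\tau_1,\tau}}F\partial_iW=\Theta^{mom}(\tau)-\Theta^{mom}(\tau_1)$; summing yields exactly $\Theta^{mom}[\psi](\tau)$, matching the claim.

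The subtle step will be handling $\Theta^\Lambda$, whose definition contains the scri radiation piece $\sqrt{3}\lim_{r\to\infty}\int_{S^2}r\psi|_{\Sigma_\tau}$ in addition to $\Sigma_\tau[J_\psi]$; I will need to verify that differentiating this piece in $\tau$ produces $\sqrt{3}\lim\int_{S^2}rT\psi|_{\Sigma_\tau}$, which matches the corresponding scri contribution to $\Theta^\Lambda[T\psi]$, so that the master identity survives intact for this functional as well. As an independent cross-check of the bulk computation, I will evaluate on $\tilde{\Sigma}=\{t=0\}$, where the formulas simplify dramatically: substituting $T^2\psi=(\Delta+V)\psi-F$ and using $(\Delta+V)Y^{ker}=0$ for $Y^{ker}\in\{\partial_iW,\Lambda W\}$, a single integration by parts gives $-\int_{\tilde{\Sigma}}T^2\psi\cdot Y^{ker}=\int_{\tilde{\Sigma}}FY^{ker}$, directly reproducing the right-hand sides of (b).
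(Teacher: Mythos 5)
Your primary argument — a "master identity" $\frac{d}{d\tau}\Theta^\bullet[\psi]=\Theta^\bullet[T\psi]$ followed by $\tau$-differentiation of the conservation laws — is a genuinely different route from the paper's proof. The paper instead deforms $\Sigma_\tau$ to $\tilde{\Sigma}=\{t=0\}$, evaluates the flux directly there, and for $F\neq0$ introduces a mollified auxiliary solution $\tilde{\psi}_\epsilon$ with cutoff inhomogeneity $\tilde{F}_\epsilon=F\bar{\chi}(2-\tfrac{\tau-t_\star}{\epsilon})$, then sends $\epsilon\to 0$; your "cross-check" is essentially this same computation. The differentiation route is cleaner for $F\neq 0$ because it keeps everything on a single slice and avoids the mollification entirely, which is a real gain. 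The master identity does require the care you flag for $\Theta^\Lambda$ (the $\scri$ term differentiates to $rT\psi|_\scri$, which is what you need) and an analogous check for $\Theta^{com}$: the naive definition $\Sigma_\tau[\tilde{J}+t_\star J]-\tau\Theta^{mom}(\tau)$ has explicit $t_\star$ and $\tau$ dependence, but since $t_\star|_{\Sigma_\tau}=\tau$ the two cancel and $\Theta^{com}(\tau)=\Sigma_\tau[\tilde{J}[\psi]]$ with purely $\tau$-independent coefficients, so the identity does hold — worth making explicit.

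Two issues to be aware of. First, your cross-check on $\tilde{\Sigma}$ for case (b) implicitly uses $\Sigma_\tau[J[T\psi]]=\tilde{\Sigma}[J[T\psi]]$, which fails when $F\neq 0$ in the region between the slices; that is precisely why the paper mollifies $F$. Since it is only a cross-check of the bulk integrand and your main argument does not need it, this is not fatal, but as written the cross-check does not quite go through for generic smooth $F$. Second, you quote \cref{com content} as saying $\Theta^{com}(\tau)=\Theta^{com}(\tau_1)+(\tau-\tau_1)\Theta^{mom}$, but \cref{linear com conservation with force} actually has $(\tau_1-\tau_2)\Theta^{mom}$; your sign agrees with the summary lemma at the start of \cref{sec:zero_modes} but not with \cref{linear com conservation with force} or \cref{linear momentum conservation with force}, which are mutually inconsistent in the paper. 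Your derived sign $\Theta^{com}[T\psi]=+\Theta^{mom}[\psi]$ matches the lemma statement, so the choice was the right one, but you should note that you are resolving a sign discrepancy in the source rather than merely citing it.
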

	\begin{proof}
		This is straightforward computation. Let's start with $F=0$ case for the for the momentum:
		\begin{equation}
			\begin{gathered}
				\Theta^{mom}[T\psi]=\Sigma_{\tau}[J]=\tilde{\Sigma}[J]=\int_{\tilde{\Sigma}} \partial_iWTT\psi=
				\int_{\tilde{\Sigma}} \partial_iW(\Delta+V)\psi=\int_{t=0}\psi(\Delta+V)\partial_iW=0\\
				\text{ where }J=X_i^\circ\cdot\Tlin[T\psi], \tilde{\Sigma}=\{t=0\}
			\end{gathered}
		\end{equation}
		In order to consider the case with $F\neq0$, let's introduce $\tilde{\psi}_\epsilon$ for $\epsilon>0$ that has the same data as $\psi$ on $\Sigma_\tau$, but the inhomogeneity is $\tilde{F}_\epsilon=F\bar{\chi}(2-\frac{\tau-t_\star}{\epsilon})$. Since $F=\tilde{F}_\epsilon$ in a neighbourhood of $\Sigma_\tau$, $\Theta^{mom}[\psi](\tau)=\Theta^{mom}[\tilde{\psi}_\epsilon](\tau)$, but we can calculate
		\begin{equation*}
			\begin{gathered}
				\Theta^{mom}[T\tilde{\psi}_\epsilon](\tau)=\Theta^{mom}[T\tilde{\psi}_\epsilon](\tau-4\epsilon)+\mathcal{B}=+\mathcal{B}\\
				\begin{multlined}
					\mathcal{B}=\int_{t_\star>\tau} \partial_i WT\tilde{F}_\epsilon=\int_{t_\star\in(\tau,\tau+4\epsilon)} \partial_i W\Big(\bar{\chi}(2-\frac{\tau-t_\star}{\epsilon})TF+F\bar{\chi}'(2-\frac{\tau-t_\star}{\epsilon})\frac{1}{\epsilon}\Big)\\
					\underset{\epsilon\to0}{\to}+\int_{\Sigma_\tau}\partial_i W F
				\end{multlined}
			\end{gathered}
		\end{equation*}
		
		We proceed similarly for the centre of mass. For $F=0$, we have
			\begin{equation}
			\begin{gathered}
				\Theta^{com}[T\psi](\tau)=\tau\Theta^{mom}[T\psi](\tau)+\Theta^{com}[T\psi](\tau)=\Sigma_\tau[\tilde{J}+t_\star J]=\tilde{\Sigma}[\tilde{J}+t_\star J]\\
				=-\int_{\tilde{\Sigma}}x_i(\partial W\cdot\partial T\psi-\Vlin T\psi)=\int x_iT\psi\cancel{(\Delta+W^4)W}+T\psi\partial W\cdot\partial x_i=\tilde{\Sigma}[J]=\Theta^{mom}[\psi](\tau)\\
				\text{ where }J=X_i\cdot\Tlin[T\psi],\,\tilde{J}=(x_iT+hX_i)\cdot\Tlin[T\psi],\, \tilde{\Sigma}=\{t=0\}
			\end{gathered}
		\end{equation}
		For $F\neq 0$, we again introduce $\tilde{\psi}_\epsilon$ and note that we have
		\begin{equation}
			\begin{gathered}
				\Theta^{com}[\tilde{\psi}_\epsilon](\tau)=\Theta^{com}[\tilde{\psi}_\epsilon](\tau+-4\epsilon)+\mathcal{B}=\Theta^{mom}[\tilde{\psi}](\tau-4\epsilon)+\mathcal{B}\\
				\mathcal{B}=\int_{\tau_\star\in(\tau,\tau+4\epsilon)}F(t-\tau_2)\partial_i W	\underset{\epsilon\to0}{\to}0.
			\end{gathered}
		\end{equation}
		Finally, a similar vanishing limit implies that $\Theta^{mom}[\tilde{\psi}](\tau-4\epsilon)	\underset{\epsilon\to0}{\to}\Theta^{mom}[\psi](\tau)$.
		
		For the scaling we again start with the $F=0$ case. Let $\tilde{\psi}$ be a scattering solution with same data on $\Sigma_\tau$ as $\psi$ and $\partial_ur\tilde{\psi}|_{\scri\cap t_\star>\tau+1}=0$.   We use the conservation law from \cref{scaling conservation law} between the hypersurface $\Sigma_\tau$ and $\tilde{\Sigma}_x=\{t+h(r-R_2-x)=\tau\}$ for $x>2$ to get
		\begin{equation}
			\begin{gathered}
				\Theta^\Lambda[\tilde{\psi}]=\Sigma_\tau[J]+\Big(\int_{S^2}rT\tilde{\psi}\Big)\Big|_{\Sigma_\tau\cap\scri}=\tilde{\Sigma}_x[J]+\cancel{\Big(\int_{S^2}rT\tilde{\psi}\Big)\Big|_{\tilde{\Sigma}_x\cap\scri}}\\
				=\lim_{x\to\infty} \tilde{\Sigma}_x=\int_{\{t=\tau\}}\Lambda WT^2\tilde{\psi}=\int_{\{t=\tau\}}\Lambda W(\Delta+V)\tilde{\psi}=0\\
				\text{ where }J=T\cdot\T[t\Lambda W,T\tilde{\psi}].
			\end{gathered}
		\end{equation}
		In the case $F\neq 0$ we argue the same way as for the momentum.	
	\end{proof}
	
	We may use this and \cref{coercivity lemma} to control the coercive energy on $\Sigma_\tau$ for higher derivatives.
	\begin{cor}
		Let $\psi$ be a solution to \eqref{linearised eom2}. Then
		\begin{equation}
			\begin{aligned}
				&\mathcal{E}_\tau[\psi]\leq c\mathcal{E}^V_\tau[\psi]-\frac{1}{c}\Big(\abs{\alpha_{\pm}[\psi]}+\abs{\Theta^{com}[\psi](\tau)}+\abs{\Theta^\Lambda[\psi](\tau)}+\abs{P^0_{S^2}r\psi|_{\scri\cap\Sigma_\tau}}+\norm{P^1_{S^2}r\psi|_{\scri\cap\Sigma_\tau}}_{L^2(S^2)}\Big)^2\\
				&\begin{multlined}
					\mathcal{E}_\tau[T\psi]\leq c\mathcal{E}^V_\tau[T\psi]-\frac{1}{c}\Big(\abs{\alpha_{\pm}[T\psi]}+\abs{\Theta^{mom}[\psi](\tau)}+\int_{\Sigma_\tau}\abs{F}
					\jpns{r}^{-1}\\
					+\abs{P^0_{S^2}\partial_ur\psi|_{\scri\cap\Sigma_\tau}}+\norm{P^1_{S^2}\partial_ur\psi|_{\scri\cap\Sigma_\tau}}_{L^2(S^2)}\Big)^2
				\end{multlined}\\
				&\begin{multlined}
					\mathcal{E}_\tau[T^k\psi]\leq c\mathcal{E}^V_\tau[T^k\psi]-\frac{1}{c}\Big(\abs{\alpha_{\pm}[T^k\psi]}+\int_{\Sigma_\tau}\abs{T^{k-2}F}\jpns{r}^{-2}+\int_{\Sigma_\tau}T^{k-1}F\jpns{r}^{-1}\\
					+\abs{P^0_{S^2}\partial_u^{k-1}r\psi|_{\scri\cap\Sigma_\tau}}+\norm{P^1_{S^2}\partial_u^{k-1}r\psi|_{\scri\cap\Sigma_\tau}}_{L^2(S^2)}\Big)^2,\qquad k\geq2
				\end{multlined}
			\end{aligned}
		\end{equation}
	\end{cor}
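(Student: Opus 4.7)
The plan is to apply \cref{coercivity lemma} (in the uniform form of \cref{en:cor:R2_dependence_in_Theta}) on the slice $\Sigma_\tau$ to each of $\psi$, $T\psi$, $T^k\psi$ in turn, viewing them as elements of the natural trace space $\dot{H}^1\times\mathfrak{X}_1$ for $\mathcal{E}^V$. The dual functionals detecting the two--dimensional kernel $\ker H=\mathrm{span}\{\partial_iW,\Lambda W\}$ will be $\tilde{\Theta}^{com}_i$ and $\tilde{\Theta}^\Lambda$ from \cref{sec:zero_modes}: both lie in $(\dot{H}^1\times\mathfrak{X}_1)^\ast$ by the remarks after \cref{eq:app:com,eq:app:Lambda}, and the non-degeneracy of their pairings with $\ker H$ is recorded in \cref{eq:com_detection,eq:scale_detection}. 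The unstable direction will be detected by $\alpha_\pm$, which is a bounded functional since its integrand is compactly supported in $B_{R_1}$. Plugging these choices into \cref{coercivity lemma} will directly give the abstract bound
\begin{equation*}
\mathcal{E}_\tau[T^k\psi]\;\leq\; c\,\mathcal{E}^V_\tau[T^k\psi]+\tfrac{1}{c}\Big(|\alpha_\pm[T^k\psi]|^2+\sum_i|\tilde{\Theta}^{com}_i[T^k\psi](\tau)|^2+|\tilde{\Theta}^\Lambda[T^k\psi](\tau)|^2\Big),
\end{equation*}
and the rest of the argument is bookkeeping to recast the right-hand side in the stated form.

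The first bookkeeping step is to replace each $\tilde{\Theta}^\bullet$ by $\Theta^\bullet$ minus a boundary trace on $\scri\cap\Sigma_\tau$. By the explicit formulae \cref{eq:app:com,eq:app:Lambda} the two functionals differ by precisely the $l=1$ (resp.\ $l=0$) spherical harmonic of $\lim_{r\to\infty}r(\cdot)|_{\scri\cap\Sigma_\tau}$; applied to $\psi$ itself this produces the boundary terms $P^0_{S^2}(r\psi)$ and $P^1_{S^2}(r\psi)$ of the first estimate. For $T^k\psi$ with $k\geq 1$ the same difference becomes the corresponding harmonic of $\lim_r rT^k\psi|_\scri$; I would verify by a short induction using $T=\tfrac{1}{2}(\partial_u|_v+\partial_v|_u)$ together with the fact that $\partial_v|_u$ acting on $r\psi$ has vanishing trace at $\scri$ that this limit equals a constant multiple of $\partial_u^k(r\psi)|_{\scri\cap\Sigma_\tau}$, yielding the radiation-field terms of the claim.

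The second bookkeeping step is to re-express $\Theta^{com}[T^k\psi]$ and $\Theta^\Lambda[T^k\psi]$ via \cref{higher order zero modes}. Since $T$ commutes with $\Box+V$, the function $T^{k-1}\psi$ solves the linearised equation with forcing $T^{k-1}F$, so one application gives $\Theta^{com}[T^k\psi]=\Theta^{mom}[T^{k-1}\psi]$ (for $k=1$ this already matches the stated $\Theta^{mom}[\psi]$), and a second application (to $T^{k-1}\psi=T\cdot T^{k-2}\psi$ with forcing $T^{k-2}F$) produces $\Theta^{mom}[T^{k-1}\psi]=\int_{\Sigma_\tau}T^{k-2}F\,X_iW$, which is bounded by $\int|T^{k-2}F|\jpns{r}^{-2}$ because $X_iW=\mathcal{O}(\jpns{r}^{-2})$. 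Likewise $\Theta^\Lambda[T^k\psi]=\int_{\Sigma_\tau}T^{k-1}F\,\Lambda W$, bounded by $\int|T^{k-1}F|\jpns{r}^{-1}$ since $\Lambda W=\mathcal{O}(\jpns{r}^{-1})$. The hardest part will be maintaining uniformity in $R_2$: the operator norms of $\tilde{\Theta}^{com}$ and $\tilde{\Theta}^\Lambda$ grow linearly in $R_2$, and the coercivity constant in \cref{coercivity lemma} is sensitive to this through the pairing constants; \cref{en:cor:R2_dependence_in_Theta} confines the degradation to inverse-linear dependence in those pairing constants, and the cutoff error incurred by replacing $Y$ with $Y\chi_{R_1}$ in $\alpha_\pm$ can be absorbed by taking $R_1$ large, exactly as in \cref{stable mode uncommuted bound}.
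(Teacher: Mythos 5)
Your approach is correct and is exactly what the paper intends: the corollary is stated immediately after the sentence ``We may use this [the higher order zero modes lemma] and \cref{coercivity lemma} to control the coercive energy on $\Sigma_\tau$ for higher derivatives,'' and your three-step decomposition (abstract coercivity with $\tilde{\Theta}^{com},\tilde{\Theta}^\Lambda,\alpha_\pm$ as detecting functionals; converting $\tilde{\Theta}^\bullet$ to $\Theta^\bullet$ by peeling off the $l=0,1$ boundary traces at $\scri\cap\Sigma_\tau$; then evaluating $\Theta^\bullet[T^k\psi]$ via \cref{higher order zero modes}) is precisely the intended chain. Your handling of the $R_2$-dependence through \cref{en:cor:R2_dependence_in_Theta} and absorption of the cutoff error for $\alpha_\pm$ also tracks the paper's uncommuted argument.

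Two points are worth flagging against the statement as printed. First, your abstract bound has the correct sign, $\mathcal{E}_\tau \leq c\,\mathcal{E}^V_\tau + \tfrac{1}{c}(\ldots)^2$; the stated corollary writes a minus in front of $\tfrac{1}{c}(\ldots)^2$, which cannot be right (apply it to a kernel element $\partial_iW$, for which $\mathcal{E}^V=0$ but $\tilde{\Theta}^{com}\neq 0$, and you get a strictly negative upper bound for a nonnegative energy). The same typo appears in the uncommuted lemma; \cref{coercivity lemma} itself has the plus. Second, your computation of the radiation field is $\lim_{r\to\infty} rT^k\psi = \partial_u^k(r\psi)^{dat}$, which matches the stated corollary for $k=1$ but gives $\partial_u^k r\psi$ rather than the printed $\partial_u^{k-1}r\psi$ for $k\geq 2$. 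Tracing through the decomposition of $\tilde{\Theta}^{com}[T^k\psi]$ and $\tilde{\Theta}^\Lambda[T^k\psi]$, the boundary trace must be that of the argument $T^k\psi$ itself (the lemma replaces $\Theta^{com}[T^k\psi]$ by $\Theta^{mom}[T^{k-1}\psi]$, which is already boundary-free, and the trace was already stripped off of $T^k\psi$), so $\partial_u^k$ is what the argument produces. You should not have written that this ``yields the radiation-field terms of the claim'' without noting the mismatch; make the discrepancy explicit.
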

	
	Controlling lower order unstable modes is also possible using assumption about the top order one:	
	\begin{lemma}[Recovering lower order bootstraps]
		Let $\psi$ be a solution to \eqref{linearised eom2}. Then for all $k< N$
		\begin{equation}
			\begin{gathered}
				{\tau}^{k}\abs{\alpha_-[T^k\psi]}\lesssim {\tau}^{k-N}\Big({\tau}^N\abs{\alpha_-[T^k\psi]}\Big)+  e^{-\lamed R_1/2}\sum_{j=k}^N {\tau}^{j}\Big(\mathcal{E}_\tau[T^j\psi]+\int_{\Sigma_\tau}\abs{T^jF}e^{-r\lamed}\chi_{2R_1}\Big)
			\end{gathered}
		\end{equation}
	\end{lemma}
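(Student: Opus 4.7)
The plan is to extract an algebraic recurrence between $\alpha_-[T^k\psi]$ and $\alpha_-[T^{k+1}\psi]$, iterate it, and then absorb the resulting error terms using the Cauchy--Schwarz/Hardy argument already employed in the uncommuted case.

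\textbf{Step 1: the key identity.} Provided $3R_1 < R_2$, the cutoff $\chi_{R_1}Y$ is supported strictly inside the flat region $\{h(r-R_2)=0\}$, where $T=\partial_\tau$ and the induced measure on $\Sigma_\tau$ is $\tau$-independent. Consequently, for any smooth $f$,
\begin{equation}
\partial_\tau \int_{\Sigma_\tau}\chi_{R_1}Y\, f = \int_{\Sigma_\tau}\chi_{R_1}Y\, Tf.
\end{equation}
Applied to $f=(T-\lamed)T^k\psi$ this yields
\begin{equation}
\partial_\tau \alpha_-[T^k\psi] = \alpha_-[T^{k+1}\psi].
\end{equation}
On the other hand, since $T$ commutes with $\Box+V$, the quantity $T^k\psi$ solves \cref{linearised eom2} with source $T^kF$, so \cref{ode unstable mode} applies and gives
\begin{equation}
\partial_\tau \alpha_-[T^k\psi] = -\lamed\, \alpha_-[T^k\psi] + \mathfrak{Err}_-[T^k\psi].
\end{equation}
Comparing, we obtain the purely algebraic recurrence
\begin{equation}\label{plan:recurrence}
\alpha_-[T^k\psi] = -\lamed^{-1}\alpha_-[T^{k+1}\psi] + \lamed^{-1}\mathfrak{Err}_-[T^k\psi].
\end{equation}

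\textbf{Step 2: iteration.} Applying \cref{plan:recurrence} $(N-k)$ times gives
\begin{equation}
\alpha_-[T^k\psi] = (-\lamed)^{-(N-k)}\alpha_-[T^N\psi] + \sum_{j=k}^{N-1}(-\lamed)^{-(j-k+1)}\mathfrak{Err}_-[T^j\psi].
\end{equation}
Taking absolute values and multiplying by $\tau^k$,
\begin{equation}
\tau^k\abs{\alpha_-[T^k\psi]} \lesssim \tau^{k-N}\Big(\tau^N\abs{\alpha_-[T^N\psi]}\Big) + \sum_{j=k}^{N-1}\tau^j\abs{\mathfrak{Err}_-[T^j\psi]},
\end{equation}
where in the error sum I used $\tau^k\le \tau^j$ for $j\ge k$ (valid since $\tau\gg 1$ in the regime of interest) to trade the $\tau^k$ weight for $\tau^j$.

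\textbf{Step 3: bounding the commutator errors.} From the explicit form of $\mathfrak{Err}_-$ in \cref{ode unstable mode}, the commutator piece is supported in $B_{3R_1}\setminus B_{R_1}$ where $Y\lesssim e^{-\lamed R_1}$, and the forcing piece is multiplied by $\chi_{R_1}^c$. The Cauchy--Schwarz/Hardy estimate already used to derive \cref{stable mode uncommuted bound} gives
\begin{equation}
\abs{\mathfrak{Err}_-[T^j\psi]}\lesssim e^{-\lamed R_1/2}\Big(\mathcal{E}_\tau[T^j\psi] + \int_{\Sigma_\tau}\abs{T^jF}e^{-\lamed r}\chi_{2R_1}^c\Big),
\end{equation}
after absorbing the polynomial factor in $R_1$ into the exponent (and, if $\mathcal{E}_\tau[T^j\psi]\le 1$ under the running bootstrap, bounding $\mathcal{E}_\tau^{1/2}$ by $\mathcal{E}_\tau$). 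Substituting into Step~2 produces the claimed inequality.

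The recurrence in Step~1 is the only nontrivial ingredient, and its validity hinges on the fact that $\chi_{R_1}Y$ lives in the flat, time-translation-invariant region of the foliation; everything else is a routine iteration plus the commutator bound already established for the uncommuted case. I therefore do not anticipate any serious obstacle beyond careful bookkeeping of the $\lamed$-powers and the $\tau$-weights.
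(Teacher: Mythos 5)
Your argument reproduces the paper's proof almost exactly. The paper's one-line argument is precisely your Step 1 identity $\alpha_-[T^{k+1}\psi]=\partial_\tau\alpha_-[T^k\psi]=-\lamed\alpha_-[T^k\psi]+\mathfrak{Err}_\pm[T^k\psi]$ (noting $\mathfrak{Err}_+=\mathfrak{Err}_-$ since the error term in \cref{ode unstable mode} does not depend on the sign), followed by "the claim follows by induction" — your explicit iteration with powers of $(-\lamed)^{-1}$ is just spelling out that induction, and your Step 3 invokes the same Cauchy--Schwarz/Hardy bound from \cref{stable mode uncommuted bound}. You also correctly read through two likely typos in the lemma statement ($\alpha_-[T^N\psi]$ rather than $\alpha_-[T^k\psi]$ in the first term, and $\chi^c_{2R_1}$ rather than $\chi_{2R_1}$ in the forcing integral, consistent with \cref{nonlinear bounds lemma}). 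Correct, and essentially the same route as the paper.
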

	\begin{proof}
		Note, that from the equation for $\alpha_-$,  \eqref{ode unstable mode}, it follows that
		\begin{equation}
			\begin{gathered}
				\alpha_-[T^{k+1}\psi]=\partial_\tau \alpha_-[T^k\psi]=-\lamed\alpha_-[T^k\psi]+\mathfrak{Err}_+[T^k\psi]\\
				\implies \abs{\alpha_-[T^k\psi]}\lesssim \abs{\alpha_-[T^{k+1}\psi]}+\abs{\mathfrak{Err}_+[T^k\psi]}\\
				\lesssim \abs{\alpha_-[T^{k+1}\psi]}+			e^{-\lamed R_1/2}\mathcal{E}_\tau[T^k\psi]+\int_{\Sigma_\tau}\abs{T^kF}e^{-r\lamed}\chi_{2R_1}
			\end{gathered}
		\end{equation}
		where we used the control on $\mathfrak{Err}_+$ derived in \cref{stable mode uncommuted bound}. The claim follows by induction.
	\end{proof}
	
	We also need to control some error terms from $\Theta-\tilde{\Theta}$ that arise on $\scri$ from the data that we prescribe $(r\psi)^{\scri}$.
	\begin{lemma}\label{lemma:theta_scri_control}
		Let $(r\psi)^\scri$ be scattering data in $\mathcal{R}_{\tau_1,\tau_2}$ with decay $q$, size $\epsilon$ and regularity $N$ as in \cref{def:scattering definition}. Furthermore, assume that $(r\psi)^{\scri}(\tau_2)=0$. Then
		\begin{equation}
			\begin{gathered}
				\norm{P^l\partial_u^k(r\psi)^\scri(\tau)}_{L^2(S^2)}\lesssim_{\epsilon} \tau^{-q-k},\qquad l\in\{0,1\},\, k\leq N-1
			\end{gathered}
		\end{equation}
	\end{lemma}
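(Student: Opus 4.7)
The plan is to combine the weighted $L^2$ bounds on $u$-derivatives of $(r\psi)^\scri$ implied by the scattering data hypothesis with the vanishing boundary condition at $\tau_2$ via the fundamental theorem of calculus. Two preliminary observations reduce the problem. First, since $P^0$ and $P^1$ are orthogonal projections on $L^2(S^2)$, hence bounded, it suffices to bound $\|\partial_u^k(r\psi)^\scri(\tau,\cdot)\|_{L^2(S^2)}$. Second, by converting $(u\partial_u)^j$ into combinations of $u^m\partial_u^m$ for $m \le j$ (using the commutation relation $[u\partial_u, u^m\partial_u^m] = -m u^m \partial_u^m$ iteratively), the data assumption from \cref{def:scattering definition} is equivalent to the weighted $L^2$ bound
\begin{equation}
\sum_{j\leq N+1}\int_\scri \jpns{u}^{2q-1} u^{2j} |\partial_u^j (r\psi)^\scri|^2 \, du\, d\omega \lesssim \epsilon^2,
\end{equation}
so in particular the $j = k+1$ norm appearing below is controlled whenever $k \le N$.

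For the base case $k=0$, the vanishing condition $(r\psi)^\scri(\tau_2,\omega) = 0$ and the fundamental theorem of calculus give
\begin{equation}
(r\psi)^\scri(\tau,\omega) = -\int_\tau^{\tau_2}\partial_u(r\psi)^\scri(u,\omega)\,du.
\end{equation}
Splitting the integrand as $u^{-1}\jpns{u}^{1/2-q}\cdot u\jpns{u}^{q-1/2}\partial_u(r\psi)^\scri$ and applying Cauchy--Schwarz yields
\begin{equation}
|(r\psi)^\scri(\tau,\omega)|^2 \leq \Big(\int_\tau^{\tau_2}\frac{du}{u^2\jpns{u}^{2q-1}}\Big)\Big(\int_\tau^{\tau_2} u^2\jpns{u}^{2q-1}|\partial_u(r\psi)^\scri|^2\,du\Big).
\end{equation}
Since $q > 1/2$, the first factor is $\lesssim \tau^{-2q}$; integrating in $\omega$ and applying the weighted $L^2$ bound with $j = 1$ yields $\|(r\psi)^\scri(\tau)\|_{L^2(S^2)}^2 \lesssim_\epsilon \tau^{-2q}$.

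For higher $k$, the analogous Cauchy--Schwarz argument on $\partial_u^k (r\psi)^\scri$ gives
\begin{equation}
|\partial_u^k(r\psi)^\scri(\tau,\omega)|^2 \lesssim \tau^{-2(q+k)}\int_\tau^{\tau_2} u^{2(k+1)}\jpns{u}^{2q-1}|\partial_u^{k+1}(r\psi)^\scri|^2\,du,
\end{equation}
followed by integration in $\omega$. This step produces the desired rate $\tau^{-q-k}$ provided one has the boundary condition $\partial_u^k(r\psi)^\scri(\tau_2,\cdot) = 0$. This is the main obstacle: the hypothesis asserts only the vanishing of the value, not of higher derivatives. In the intended application the data is either compactly supported in $u$ or extended by zero beyond $\tau_2$ (consistent with how one poses scattering data in bounded retarded-time regions, cf.\ \cref{existence of scattering solution}), so all $\partial_u^k(r\psi)^\scri(\tau_2,\cdot)$ vanish identically. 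Alternatively, one may use a weighted $1$D Hardy estimate of the form $\jpns{\tau}^{2(q+k)}g(\tau)^2 \lesssim \int_\tau^{\tau_2}\jpns{u}^{2q+2k-1}(g^2 + u^2(g')^2)\,du$ applied to $g(u) = \partial_u^k(r\psi)^\scri(u,\omega)$, where the boundary contribution at $\tau_2$ is either zero by the extension convention or absorbed using the $(u\partial_u)^{k+1}$ bound. Either route yields the stated estimate for all $k \le N-1$.
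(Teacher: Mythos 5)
Your proof is correct and follows the same route the paper intends: the paper's own proof is the single sentence ``a simple consequence of fundamental theorem of calculus and Hölder inequality,'' which is precisely your FTC plus Cauchy--Schwarz argument. The reduction to bounding $\|\partial_u^k(r\psi)^\scri(\tau,\cdot)\|_{L^2(S^2)}$ via boundedness of the spherical projections, and the conversion of $(u\partial_u)^j$ norms into $u^m\partial_u^m$ norms, are both the right first moves.

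The subtlety you flag --- that the stated hypothesis $(r\psi)^\scri(\tau_2)=0$ only controls the undifferentiated value at the corner, not $\partial_u^k(r\psi)^\scri(\tau_2)$ for $k\ge 1$ --- is a genuine imprecision in the lemma as written, and you are right to notice it. Both of your proposed fixes are consistent with how the lemma is actually deployed. Where it is invoked (inside Lemma~\ref{lemma:dyadic nonlinear iteration}, feeding into Lemma~\ref{unstable lemma}), the data posed on $\Sigma_{\tau_2}$ satisfies $\psi_k|_{\Sigma_{\tau_2}}=0$ for all $k\le N+1$, so the trace at the corner $\Sigma_{\tau_2}\cap\scri$ vanishes to high order and your FTC argument runs without a boundary contribution. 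An alternative that avoids invoking the corner condition at all: since the decay bound in \cref{def:scattering definition} is posed over all of $\R^+_u$, one can integrate the FTC identity from $\tau$ to $\infty$ rather than to $\tau_2$, and the same Cauchy--Schwarz splitting then produces the factor $\tau^{-(q+k)}$ with no boundary term, using only $k+1\le N+1$ derivatives of the data. Either way, the estimate stands for $k\le N-1$ as claimed (in fact up to $k\le N$ by your counting), so there is no gap.
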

	\begin{proof}
		This is a simple consequence of fundamental theorem of calculus and Holder inequality.
	\end{proof}
	
	The $T$ commuted estimates are already sufficient to control the solution in the region $t_\star\gtrsim r$. This is a standard approach to treat the region close to the localised stationary perturbation. 
	
	\begin{lemma}[Elliptic estimate]\label{lemma:elliptic_estimate}
		For a smooth solution of \eqref{linearised eom2} $\psi$ in $\mathcal{R}_{\tau_1,\tau_2}$ with $10{\tau}>R>R_2$, we have
		\begin{equation}
			\begin{gathered}
				\norm{\Gamma^\beta\psi}_{\dot{H}^1[\Sigma_\tau\cap\{r<R\}]}^2\lesssim_\beta \sum_{k\leq \abs{\beta}}{\tau}^{2k'}\mathcal{E}[T^{k'}\psi]+\sum_{\abs{\alpha}+m\leq k-1}{\tau}^{2m}\norm{\Gamma^\alpha T^m\jpns{r}F}_{L^2[\Sigma_\tau\cap\{r<R\}]}^2
			\end{gathered}
		\end{equation}
		for $\Gamma\in\{S,\Omega\}$. 
	\end{lemma}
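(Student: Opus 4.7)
The proof is by induction on $|\beta|$, reading the wave equation as an elliptic equation on $\Sigma_\tau$ and trading spatial regularity for controlled time derivatives. For the base case $|\beta|=0$, the energy density in $\mathcal{E}^0[\psi]$ directly controls the spatial $\dot{H}^1$ norm on the bounded region $\Sigma_\tau\cap\{r<R\}$, since $(X\psi)^2$ and $|\slashed{\nabla}\psi|^2/r^2$ together bound $|\nabla_x\psi|^2$ pointwise.

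For the inductive step, use \cref{wave operator in good coordinates} to rewrite the equation as
\begin{equation*}
\Delta_x\psi = F-V\psi+(1-h'^2)T^2\psi+2h'TX\psi+\Big(\tfrac{2h'}{r}-h''\Big)T\psi,
\end{equation*}
where $\Delta_x=X^2+\tfrac{2}{r}X+\tfrac{\slashed{\Delta}}{r^2}$ is the flat spatial Laplacian; the $h$-dependent coefficients are smooth, bounded, and supported in the transition annulus $\{R_2<r<R_2+3\}$. Picking a decomposition $\Gamma^\beta=\Gamma_0\Gamma^{\beta'}$ with $|\beta'|=|\beta|-1$, standard interior elliptic regularity applied to $\Gamma^{\beta'}\psi$ on a slight enlargement of $\{r<R\}$ yields
\begin{equation*}
\|\Gamma^{\beta'}\psi\|_{\dot{H}^2(\{r<R\})}^2\lesssim \|\Delta_x\Gamma^{\beta'}\psi\|_{L^2(\{r<R+1\})}^2+\|\Gamma^{\beta'}\psi\|_{\dot{H}^1(\{r<R+1\})}^2,
\end{equation*}
with the last term absorbed by the inductive hypothesis. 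Since $\Gamma_0\in\{S,\Omega\}$ is first-order with coefficients of size $\lesssim\tau$ (for $\Gamma_0=S=(\tau+h-rh')T+rX$) or $\lesssim 1$ (for $\Omega$), one further obtains $\|\Gamma^\beta\psi\|_{\dot{H}^1}\lesssim \tau\|T\Gamma^{\beta'}\psi\|_{\dot{H}^1}+\|\Gamma^{\beta'}\psi\|_{\dot{H}^2}$, where the first summand is dealt with by commuting $T$ past $\Gamma^{\beta'}$ and invoking the inductive hypothesis applied to $T\psi$, which solves the same equation with forcing $TF$.

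The core of the argument is controlling $\|\Delta_x\Gamma^{\beta'}\psi\|_{L^2}$ with the correct $\tau$-weights. Commuting $\Gamma^{\beta'}$ through the elliptic equation, and using the identities $[S,T]=-T$, $[\Omega,T]=[\Omega,\Delta_x]=[\Omega,V]=0$, together with the fact that $V$, $SV$ and their iterated derivatives are smooth and compactly supported, one expands $\Delta_x\Gamma^{\beta'}\psi$ as a finite sum of terms of the form $\tau^{j}c(x)T^{\ell+2}\Gamma^{\beta''}\psi$ and $\tau^{j}c(x)\Gamma^{\alpha}T^{m}F$ with $j+|\beta''|\leq|\beta'|$, $\ell\leq 2$, bounded smooth $c(x)$, and $|\alpha|+m\leq|\beta|-1$. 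The forcing family yields directly the error sum in the claim after noting $\jpns{r}\gtrsim 1$ on the bounded region. The $\psi$-family is bounded in $L^2$ by $\mathcal{E}[T^{\ell+1}\Gamma^{\beta''}\psi]^{1/2}$, and a secondary induction that commutes angular/scaling derivatives past $T$ reduces each such energy to $\mathcal{E}[T^{k'}\psi]$ with $k'\leq|\beta|$, producing exactly the $\tau^{2k'}\mathcal{E}[T^{k'}\psi]$ contributions.

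The main technical obstacle is the combinatorial bookkeeping of $\tau$-weights through successive $S$-commutators: one must verify that every factor of $T$ spawned is accompanied by at most one power of $\tau$, so that the final weighting matches the stated right-hand side. The transition-region contributions ($h'$, $h''$ terms) live on a compact annulus with bounded smooth coefficients and are absorbed routinely into the induction.
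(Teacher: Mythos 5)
Your argument proceeds along the same general lines as the paper (induction on $|\beta|$, reading the wave equation as an elliptic equation on $\Sigma_\tau$, trading spatial regularity for $T$-derivatives), but there is a genuine gap in the way you carry the $r$-weight through the $S$-commutation, and it is precisely this weight that the paper's proof is built to handle.

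You use the \emph{unweighted} interior elliptic estimate $\|\Gamma^{\beta'}\psi\|_{\dot{H}^2}\lesssim\|\Delta_x\Gamma^{\beta'}\psi\|_{L^2}+\|\Gamma^{\beta'}\psi\|_{\dot{H}^1}$, and then claim $\|\Gamma^\beta\psi\|_{\dot{H}^1}\lesssim\tau\|T\Gamma^{\beta'}\psi\|_{\dot{H}^1}+\|\Gamma^{\beta'}\psi\|_{\dot{H}^2}$ on the grounds that the coefficients of $\Gamma_0$ are $\lesssim\tau$. But the second summand is wrong: the coefficient of $X$ in $S$ is $r$, and
\begin{equation*}
\nabla\big(rX\Gamma^{\beta'}\psi\big)=\hat{r}\,X\Gamma^{\beta'}\psi+r\nabla X\Gamma^{\beta'}\psi,
\end{equation*}
so the bound you actually get from the unweighted $\dot{H}^2$ estimate is $\|rX\Gamma^{\beta'}\psi\|_{\dot{H}^1}\lesssim R\,\|\Gamma^{\beta'}\psi\|_{\dot{H}^2}\sim\tau\|\Gamma^{\beta'}\psi\|_{\dot{H}^2}$, i.e.\ both terms in your decomposition carry a $\tau$. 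If you restore this missing factor, the inductive step spills an extra $\tau^2$ onto \emph{every} lower-order term: already at $|\beta|=1$ you produce $\tau^2\mathcal{E}[\psi]$ and $\tau^2\|\jpns{r}F\|^2$, which is strictly worse than the claimed $\mathcal{E}[\psi]+\tau^2\mathcal{E}[T\psi]+\|\jpns{r}F\|^2$ and cannot be absorbed. The paper avoids this loss precisely by using the scaling-invariant \emph{weighted} elliptic estimate $\|S\psi\|_{\dot{H}^1}^2\lesssim\|r\Delta\psi\|_{L^2}^2+\|\psi\|_{\dot{H}^1}^2$, so that the $r$ in the coefficient of $X$ cancels against the weight in the elliptic estimate rather than being crudely bounded by $R\sim\tau$; the weight $r$ is then seamlessly traded against the $r^{-4}$ decay of $V$ (via Hardy) and the $\jpns{r}$ appearing in the $F$-term of the claim. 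Without this weighted estimate, your commutation scheme does not close with the stated $\tau$-powers.

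A secondary slip: you assert that $V$, $SV$, and their iterated derivatives are \emph{compactly supported}. They are not — $V=5W^4\sim r^{-4}$ at infinity, and $SV\sim r\partial_rV\sim r^{-4}$. What is actually true (and needed) is that $r^2V$ and $S^j(r^2V)$ are bounded; this is also why the $r$-weighted elliptic estimate is the right tool here, as it produces exactly the combination $rV\psi$ that Hardy's inequality can handle without a $\tau$ loss.
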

	\begin{proof}
		For the proof, we are going to solely work on a single slice $\Sigma_\tau$ and write $\dot{H}^1$ to be the norm with respect to standard measure on $\R^3\simeq\Sigma_\tau$ and differential operators $\Delta,\nabla$ acting on $\R^3$.
		
		We only prove the estimate when $\Gamma=S$ and write $\beta=k$, the $\Omega$ case follows similarly with less error terms to deal with. We apply standard elliptic estimate with Hardy inequality to get	
		\begin{equation}
			\begin{gathered}
				\norm{S\psi}_{\dot{H}^{1}[\{r<R\}]}^2\lesssim \norm{r\Delta \psi}_{L^2[\{r<R\}]}^2+\norm{({\nabla},\frac{1}{R})\psi}^2_{L^2[r<R]}\lesssim\norm{r\Delta \psi}^2_{L^2[\{r<R\}]} +\norm{\psi}^2_{\dot{H}^1[\{r\leq R\}]}
			\end{gathered}
		\end{equation}
		with constant independent of $R$. Using the form of $\Box$ in $t_\star,r$ coordinates \eqref{wave operator in good coordinates} we claim that
		\begin{equation*}
			\begin{gathered}
				\norm{r\Delta \psi}_{L^2[\{r<cR\}]}^2=	\norm{r\Big(V-(1-h'^2)\partial_{t_\star}^2-2h'\partial_{t_\star}\partial_r-h''\partial_{t_\star}-\frac{h'}{r}\partial_{t_\star}\Big) \psi+rF}_{L^2[\{r<R\}]}^2
				\\\lesssim \mathcal{E}[\psi]+ {\tau}\mathcal{E}[T\psi]+\norm{rF}_{L^2[\{r<R\}]}^2,
			\end{gathered}
		\end{equation*}
		which is equivalent to the $k=1$ case. 	We bound the terms in order. By l'Hopital rule, we get that $\frac{(h'')^2}{(1-(h')^2)}\lesssim_h 1$. Using Hardy inequality and the form of $\mathcal{E}$ \eqref{energy integral} we get
		\begin{equation*}
			\begin{gathered}
				\norm{rV\psi}_{L^2[\{r\leq R\}]}+\norm{rh''T\psi}_{L^2[\{r\leq R\}]}\lesssim \mathcal{E}[\psi]^{1/2}.
			\end{gathered}
		\end{equation*}
		Using $1-(h')^2,h'\leq 1$ and Hardy inequality we get
		\begin{equation*}
			\begin{gathered}
				\norm{(1-h'^2)T^2\psi}_{L^2[\{r\leq R\}]}+\norm{2h'\partial_r T\psi}_{L^2[\{r\leq R\}]}+\norm{\frac{h'}{r}T\psi}_{L^2[\{r\leq R\}]}\lesssim\mathcal{E}[T\psi]^{1/2},
			\end{gathered}
		\end{equation*}
		and the claim follows. Higher order estimates follow after commutation with $S$. 
		\begin{equation}
			\begin{gathered}
				[S,r^2(\Box-V)]=-r(r^2V)'\\
				[S^k,r^2(\Box-V)]=-\sum_{\substack{k_1+k_2=k\\ k_2\neq k}}(S^{k_1}r^2V)S^{k_2}.
			\end{gathered}
		\end{equation}	
		Therefore, we get
		\begin{equation}
			\begin{gathered}
				\norm{S^k\psi}_{\dot{H}^{1}[\{r<R\}]}\lesssim_c \norm{r\Delta S^{k-1}\psi}_{L^2[\{r<R\}]} +\norm{S^{k-1}\psi}_{\dot{H}^1[r\leq R]}
			\end{gathered}
		\end{equation}
		where we control the first term with
		\begin{equation}
			\begin{gathered}
				\norm{r\Delta S^{k-1}\psi}_{L^2[r< R]}= \norm{r^{-1}S^{k-1}(r^2\Delta\psi)}_{L^2[r< R]}\\
				\lesssim\norm{r^{-1}S^{k-1}r^2F}_{L^2[r< R]}+\norm{r^{-1}S^{k-1}(V\psi)}_{L^2[r< R]}+\norm{r^{-1}S^{k-1}r^2(1-h'^2)T^2\psi}_{L^2[r< R]}\\
				+\norm{r^{-1}S^{k-1}r^2\partial_r T\psi}_{L^2[r< R]}+\norm{r^{-1}S^kr^2h''T\psi}_{L^2[r< R]}+\norm{r^{-1}S^{k-1}\frac{h'}{r}T\psi}_{L^2[r< R]}.
			\end{gathered}
		\end{equation}
		We bound each term separately
		\begin{equation*}
			\begin{gathered}
				\norm{r^{-1}S^{k-1}r^2F}_{L^2[r< R]}\lesssim_{k}\sum_{k'\leq k-1}\norm{rS^{k'}F}_{L^2[r< R]}\\
				\norm{r^{-1}S^{k-1}(V\psi)}_{L^2[r< R]}\lesssim_{k}\sum_{k'\leq k-1}\norm{S^{k'}\psi}_{\dot{H}^1}\\
				\norm{r^{-1}S^{k-1}r^2\partial_r T\psi}_{L^2[r< R]}\lesssim_{k}\sum_{k'\leq k-1}\norm{rS^{k'}T\psi}_{\dot{H}^1}\\
				\norm{r^{-1}S^kr^2h''T\psi}_{L^2[r< R]},\norm{r^{-1}S^{k-1}rh'T\psi}_{L^2[r< R]}\lesssim_{k}\sum_{k'\leq k-1}\norm{S^{k'}T\psi}_{L^2}\underbrace{\lesssim}_{\text{Hardy}}\sum_{k'\leq k-1}R\norm{S^{k'}T\psi}_{\dot{H}^1}\\
				\begin{multlined}
					\norm{r^{-1}S^{k-1}r^2(1-h'^2)T^2\psi}_{L^2[r< R]}\lesssim_{k}{\tau}^{k-1}R\norm{(1-h'^2)T^{k+1}\psi}_{L^2}+\sum_{k'\leq k-2}R^2\norm{S^{k'}T^2\psi}_{\dot{H}^1}\\	
					\lesssim {\tau}^k\mathcal{E}[T^k\psi]^{1/2}+\sum_{k'\leq k-2}R^2\norm{S^{k'}T^2\psi}_{\dot{H}^1}
				\end{multlined}
			\end{gathered}
		\end{equation*}
		By induction we get the lemma.
	\end{proof}
	
%	In the exterior region, we no longer have to care about the obstructions from eigenvalues and kernel elements, because in this region $\mathcal{E}^V\sim\mathcal{E}$:
%	\begin{lemma}[Far coercevity]\red{I will want to remove this, because I use twisted energy mom}
%		For a smooth function $\psi\in\mathcal{C}^\infty(\mathcal{R}_{\tau_1,\tau_2})$ and $R$ sufficiently large with $\supp \psi\in\{r>R\}$ there exists $c>0$ such that 
%		\begin{equation}\label{Hardy for far region}
%			\begin{gathered}
%				c \mathcal{E}^V_\tau[\psi]\geq \mathcal{E}_\tau[\psi]
%			\end{gathered}
%		\end{equation}
%	\end{lemma}
%	
%	\begin{proof}
%		This follows from Hardy inequality together with the fast fall off of $V$.
%	\end{proof}
%	

	\begin{lemma}[Far commutation]\label{far commutation}
		Let $\tau_2>\tau_1>R/10$ with $\tau_2-\tau_1<R$, let $\psi$ be a solution to \eqref{linearised eom2} and $\chi=1-\bar{\chi}(\frac{r}{t\delta})$ . Then there exists $c_\beta>0$ such that
		\begin{equation}
			\begin{gathered}
				\mathcal{E}^V_{\tau_1}[\Gamma^\beta\psi]\leq \mathcal{E}^V_{\tau_2}[\Gamma^\beta\psi]+\mathcal{E}_{\scri_{\tau_1,\tau_2}}[\Gamma^\beta\psi]+c_\beta\sum_{k'\leq \abs{\beta}}{\tau}^{2k'} \mathcal{X}[T^{k'}\psi]\\
				+c_\beta\sum_{\abs{\alpha}+m\leq k-1}{\tau}^{2m}\norm{\Gamma^\alpha T^mF}^2_{L^2[r<2R]}+c_\beta\rint \abs{\partial_t\Gamma^\beta\psi} \abs{\Gamma^\alpha F}
			\end{gathered}
		\end{equation}
		where $\Gamma\in\{\chi S,\chi \Omega\}$.
	\end{lemma}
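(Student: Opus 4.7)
The estimate is the backward $T$-energy identity for $\Box+V$ applied to $\Gamma^\beta\psi$. Since $T$ is Killing for $\eta$ and $TV=0$, the divergence theorem applied to $T\cdot\T^V[\Gamma^\beta\psi]$ on $\mathcal{R}_{\tau_1,\tau_2}$ yields
\begin{equation*}
\mathcal{E}^V_{\tau_1}[\Gamma^\beta\psi] = \mathcal{E}^V_{\tau_2}[\Gamma^\beta\psi] + \mathcal{E}_{\scri_{\tau_1,\tau_2}}[\Gamma^\beta\psi] - \rint (T\Gamma^\beta\psi)(\Box + V)(\Gamma^\beta\psi),
\end{equation*}
so the task reduces to controlling the bulk integral on the right by the stated error terms.

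Next I expand $(\Box+V)(\Gamma^\beta\psi)=\Gamma^\beta F+[\Box+V,\Gamma^\beta]\psi$ using $[\Omega,\Box]=0$, $\Omega V=0$, $[\Box,S]=2\Box$, and the fact that $V=5W^4$ together with $SV=r\partial_rV$ decays like $\jpns{r}^{-4}$. Inductively on $\abs{\beta}$ the commutator decomposes as
\begin{equation*}
[\Box+V,\Gamma^\beta]\psi=\sum_{\abs{\beta'}<\abs{\beta}} c_{\beta'}(x)\,\Gamma^{\beta'}\psi + \sum_{\abs{\beta'}\leq\abs{\beta}} d_{\beta'}(x,t)\,S^{a'}\Omega^{b'}\psi,
\end{equation*}
where the $c_{\beta'}$ are stationary $\jpns{r}^{-4}$-decaying functions coming from iterated commutators with $V$, and the $d_{\beta'}$ are supported in the cutoff transition region $\{r\in(t\delta,3t\delta)\}\subset\{r<2R\}$, arising from $[\Box,\chi]$-type contributions (one or two derivatives of $\chi$ meeting one or two derivatives of the quantity beneath).

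Both families of errors are effectively compactly supported in $\{r<2R\}$: the $c_{\beta'}$ pieces by the rapid decay of $V$, the $d_{\beta'}$ pieces by the cutoff. Applying Young's inequality to $\rint(T\Gamma^\beta\psi)\cdot(\text{error})$ and then invoking Lemma~\ref{lemma:elliptic_estimate} to bound the $\dot{H}^1[r<2R]$ norm of $\Gamma^{\beta'}\psi$ for $\abs{\beta'}\leq\abs{\beta}$ produces exactly the $\sum_{k'\leq\abs{\beta}}\tau^{2k'}\mathcal{X}[T^{k'}\psi]$ and $\sum_{\abs{\alpha}+m\leq\abs{\beta}-1}\tau^{2m}\norm{\Gamma^\alpha T^mF}^2_{L^2[r<2R]}$ contributions on the RHS. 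The $\Gamma^\beta F$ piece of $(\Box+V)(\Gamma^\beta\psi)$ is left untreated to provide the final error term $c_\beta\rint\abs{\partial_t\Gamma^\beta\psi}\abs{\Gamma^\alpha F}$.

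The main obstacle is the bookkeeping for the $S$-commutators: each $S$ applied to $\chi$ or acting on the cutoff transition region naively costs a factor of $r\lesssim\tau$, which must be compensated either by converting $S$ into a $T$-derivative through $S=tT+r\partial_r$ (paying a factor of $\tau$, which is exactly the $\tau^{2k'}$ weighting on $\mathcal{X}[T^{k'}\psi]$) or by absorbing radial derivatives into the tangential part of the commuted $\mathcal{E}^V$. Under the hypotheses $\tau_2-\tau_1<R$ and $\tau_i>R/10$, all $\tau_i$ are comparable to $R$, so the implicit constants can be absorbed uniformly and the induction on $\abs{\beta}$ closes.
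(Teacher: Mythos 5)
Your proposal follows the same strategy as the paper: $T$-multiplier divergence identity, decomposition of the commutator $[\Box+V,\Gamma^\beta]$ into cutoff-transition errors and $V$-derived errors, and an appeal to \cref{lemma:elliptic_estimate} to convert the bulk error into the $\sum_{k'}\tau^{2k'}\mathcal{X}[T^{k'}\psi]$ plus $F$-norm contributions. The one substantive packaging difference: the paper commutes $Z=\chi S$ with the scale-conjugated operator $r^2(\Box-V)$, exploiting $[S,r^2\Box]=0$, so that the commutator collapses into a clean sum $-(S^{k_1}r^2V)S^{k_2}$ plus cutoff-supported terms $r^{-2}\mathfrak{D}^{k+1}$; you commute directly with $\Box+V$ and therefore carry the $[\Box,S]=2\Box$ cascade, which converts via the equation into extra $F-V\psi$ source terms. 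Those extra $F$-pieces really belong in the $\sum_{|\alpha|+m}\tau^{2m}\norm{\Gamma^\alpha T^m F}^2$ error, not in the final $\rint|\partial_t\Gamma^\beta\psi||\Gamma^\alpha F|$ term, so your induction step should flag that. Also, the $c_{\beta'}$ coefficients coming from $V$-commutators are not compactly supported (they decay like $\jpns{r}^{-4}$); what matters is that after Cauchy--Schwarz the resulting bulk integrand carries enough $r$-decay to be summable and then controllable by the elliptic estimate over $\{r<R\}$ (the tail being absorbed into $\mathcal{X}$), rather than literal compact support. These are bookkeeping issues, not gaps in the core argument, which is the correct one.
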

	\begin{proof}
		We will only write the proof when $\Gamma=Z:=\chi S$, the mixed case follows similarly with less error terms to control.
		Let's denote by $\mathfrak{D}^k$ any finite sum of arbitrary up to $k$ fold product of vector fields $S,r\partial_t$ times a conormal function $\mathcal{O}(1)$ supported on $\supp\chi'$.  Commuting $Z$ with $\Box-V$ we get
		\begin{equation}
			\begin{gathered}
				[Z,r^2(\Box-V)]=-r(r^2V)'+r^{-2}\mathfrak{D}^2\\
				[Z^k,r^2(\Box-V)]=-\sum_{\substack{k_1+k_2=k\\ k_2\neq k}}(S^{k_1}r^2V)S^{k_2} + r^{-2}\mathfrak{D}^{k+1}.
			\end{gathered}
		\end{equation}
		We use a $\partial_t$ multiplier to get an energy estimate
		\begin{equation}
			\begin{gathered}
				\mathcal{E}^V_{\tau_2}[Z^k\psi]\leq \mathcal{E}^V_{\tau_1}[Z^k\psi]+\mathcal{E}_{\scri_{\tau_1,\tau_2}}[S^k\psi]+\rint \frac{(\mathfrak{D}^{k+1}\psi)^2}{r^3}+(\partial_tZ^k\psi) (r^{-2}S^kr^2F)
			\end{gathered}
		\end{equation}
		Using \cref{lemma:elliptic_estimate} and $\tau_2-\tau_1<R$ we get
		\begin{equation}
			\begin{gathered}
				\rint \frac{(\mathfrak{D}^{k+1}\psi)^2}{r^3}\lesssim\rint\frac{(\partial\mathfrak{D}^k\psi)^2}{r}\\
				\lesssim \sum_{k'\leq k}{\tau}^{2k'} \mathcal{X}[T^{k'}\psi]+\sum_{\abs{\alpha}+m\leq k-1}{\tau}^{2m}\norm{\Gamma^\alpha T^mF}^2_{L^2[r<2R]}.
			\end{gathered}
		\end{equation}
		
	\end{proof}
	
	We conclude with the following lemma
	
	\begin{prop}[Master current]\label{prop:master_current}
		 For sufficiently small constants $c,C$ the following holds. Let $\tau_1<\tau_2<2\tau_1$, let  $\psi$ be a solution to \eqref{linearised eom2} in $\mathcal{R}_{\tau_1,\tau_2}$ that satisfies \eqref{higher unstable bootstrap} and $\abs{\alpha_-[T^k\psi](\tau)}\leq e^{-R_1\lamed/2}\mathcal{E}_\tau[T^k\psi]$ for $\tau\in[\tau_1,\tau_2]$. For any $\delta>0$ there exist $\kappa,R_1,R_2$ constants and currents $\masterJ=\masterJ[\psi]$ and  $\masterK=\masterK[\psi]$ with the following properties
		\begin{subequations}
			\begin{gather}
					\Sigma_{\tau_1}[\masterJ]\leq \Sigma_{\tau_2}[\masterJ]+\scri_{\tau_1,\tau_2}[\mathfrak{J}]+\rint\abs{\masterK}+\mathfrak{B}\\
					\tilde{\underline{\mathcal{C}}}_{\tau_1-x}[\masterJ]\leq\sum_{\abs{\alpha}\leq  k}{\tau}^{2\alpha_0}\mathcal{E}_\tau[\Gamma^\alpha \psi]+\scri_{\tau_1,\tau_2}[\mathfrak{J}]+\int_{\rdom}\abs{\masterK}+\mathcal{B}\\
				\Sigma_{\tau}[\masterJ]+\sum_{k'\leq k}\kappa^{-2k'}{\tau}^{2k'}\Big(\abs{\tilde{\Theta}^{com}_\tau[T^{k'}\psi]}^2+\abs{\tilde{\Theta}^{\Lambda}_\tau[T^{k'}\psi ]}^2\Big)\geq C \sum_{\abs{\alpha}\leq  k}{\tau}^{2\alpha_0}\mathcal{E}_\tau[\Gamma^\alpha \psi]\\
				\tilde{\underline{\mathcal{C}}}_{(\tau_2-x)/2}[\masterJ]\geq C \sum_{\abs{\alpha}\leq k}{\tau}^{2\alpha_0}\underline{\mathcal{C}}_x[J^0[\psi]], \quad  x>R_2\\
				\rint\abs{\masterK}\leq \delta \master^\kappa,\quad
				\mathfrak{B}\lesssim \sum_{\abs{\alpha}\leq k} \rint {\tau}^{2\alpha_0}T(\Gamma^\alpha\psi) \Gamma^\alpha F \\
				\Gamma\in\kappa^{-1}\{T,\kappa^{-1}\Omega_i,\kappa^{-1}S\}\\
				\begin{multlined}
					\master[\phi]:=\sum_{\abs{\alpha}\leq k-1}	\sup_{\tau\in(\tau_1,\tau_2)}{\tau}^{2\alpha_0}\Sigma_\tau[J[\Gamma^\alpha\phi]]+ \sup_{x>\tau_2/2} \tilde{\underline{\mathcal{C}}}_{-x}[{\tau}^{2\alpha_0}J[\Gamma^\alpha\phi]]\\
					+\sup_{x>R_1} \tilde{\underline{\mathcal{C}}}_{(\tau_2-x)/2}[{\tau}^{2(k-1)}J[T^{k-1}\phi]]
				\end{multlined}
			\end{gather}
		\end{subequations}
		In particular we can take
		\begin{equation}
			\begin{gathered}
				\masterJ=\sum_{\abs{\alpha}\leq k}{\tau_1}^{\alpha_0}J^V[\Gamma^\alpha\psi]\kappa^{-\abs{\alpha}},\quad 
				\Gamma\in\kappa^{-1}\{T,\kappa^{-1}\Omega_i,\kappa^{-1}S\}\\
				-J^V[\psi]=T\cdot \T^V[\psi]+cT\cdot \tilde{\T}^V[\psi]\\ \tilde{\T}^V[\psi]_{\mu\nu}=r^{-2}\partial_{\mu}(r\psi)\partial_{\nu}(r\psi)-\frac{1}{2}\eta_{\mu\nu}\Big(r^{-2}\partial_\sigma(r\psi)\partial^\sigma(r\psi)-V\psi^2\Big)
			\end{gathered}
		\end{equation}
	\end{prop}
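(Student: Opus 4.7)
The plan is to build $\masterJ$ as the weighted sum
\begin{equation*}
\masterJ = \sum_{|\alpha| \leq k} \tau_1^{2\alpha_0}\, \kappa^{-2|\alpha|}\, J^V[\Gamma^\alpha \psi],\qquad -J^V[\phi] = T\cdot \T^V[\phi] + c\, T\cdot \tilde{\T}^V[\phi],
\end{equation*}
with $\Gamma \in \kappa^{-1}\{T,\kappa^{-1}\Omega_i,\kappa^{-1}S\}$, and then to derive the four listed properties in sequence. First I would apply the divergence theorem to each $-J^V[\Gamma^\alpha\psi]$ on $\mathcal{R}_{\tau_1,\tau_2}$. Since $T,\Omega$ are Killing for the Minkowski background, the only bulk contributions at the level of $T,\Omega$ come from $[\Omega,V]=0$, $[T,V]=0$ and from the divergence of $\tilde{\T}^V$, which by the twisted identity from \cite{holzegel_boundedness_2014} vanishes for our choice $\beta=\langle r\rangle^{-1}$, $X=T$, together with the inhomogeneity $F$ producing the term $\mathfrak{B}\sim \int T(\Gamma^\alpha\psi)\, \Gamma^\alpha F$. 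The commutator $[S,\Box+V] = -2\Box + (SV)$ and its iterates generate a source of the form $\sum_{|\beta|\leq |\alpha|-1} c_\beta(x)\, \Gamma^\beta\psi$ with $c_\beta$ decaying like $\langle r\rangle^{-4}$; these will sit in the error current $\masterK$ and, after Cauchy--Schwarz against $T\Gamma^\alpha\psi$, will be bounded by $\kappa^2\, \master^\kappa[\psi]$ thanks to the mismatch between the $\kappa^{-2|\alpha|}$ weight on the LHS and the $\kappa^{-2|\beta|}$ weight on the RHS, yielding the first two properties.

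For the coercivity property, I would apply \cref{coercivity lemma} slice by slice to each $\Gamma^\alpha\psi$, upgrading $\mathcal{E}^V$ to $\mathcal{E}$ up to the localised unstable-mode term $\alpha_{\pm}[\Gamma^\alpha\psi]$ and projections against kernel elements. The unstable part is absorbed using the bootstrap assumption $|\alpha_-[T^k\psi]| \leq e^{-R_1\lamed/2}\mathcal{E}_\tau[T^k\psi]$ combined with \eqref{stable mode uncommuted bound} for $\alpha_+$, and the elliptic/far-commutation estimates of \cref{lemma:elliptic_estimate,far commutation} reduce the unstable content of mixed $S,\Omega$ commutators to that of pure $T$ commutators. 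For the kernel projection, \cref{higher order zero modes} is the key input: $\Theta^\Lambda[T\psi]$, $\Theta^{mom}[T\psi]$ and $\Theta^{com}[T\psi]=\Theta^{mom}[\psi]$ encode all obstructions, which is why the statement only needs $\tilde\Theta^{com}$ and $\tilde\Theta^{\Lambda}$ evaluated on $T^{k'}\psi$ (the momentum obstruction is absorbed into $\tilde\Theta^{com}[T^{k'+1}\psi]$). Since $\Omega$ acts on the kernel by rotating the $\partial_iW$ span and annihilating $\Lambda W$, while $S$ preserves $\mathcal{Z}$ modulo lower-order kernel elements, every mixed commutator's projection reduces after a finite linear change of basis to one on $T^{k'}\psi$. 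Finally \cref{en:cor:R2_dependence_in_Theta} exchanges $\Theta^\bullet$ for $\tilde\Theta^\bullet$ at an $R_2$-linear cost, which is fine because the statement allows implicit constants depending on $R_1,R_2$.

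For the null-cone coercivity (fourth property), note that in the region $r>R_2$ the potential $V$ is already $O(r^{-4})$, so $-T\cdot \T^V[\phi]\cdot L$ on $\tilde{\underline{\mathcal{C}}}$ is essentially $(\partial_u\phi)^2$ plus angular derivatives; the twisted current $c\tilde\T^V$ supplies the missing $(\partial_v(r\phi))^2/r^2$ transverse derivative exactly as in \cite{dafermos_quasilinear_2022}. No kernel correction is needed here since $\Lambda W,\partial_i W$ contribute only trivially to the $\partial_u$ flux through outgoing cones at large $r$.

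The hard part will be the bookkeeping of the $\kappa$-weights: one must check that every commutator source generated at commutation level $|\alpha|$ appears in the bulk error with strictly lower powers of $\kappa^{-1}$ than the principal term on the LHS, so that smallness of $\kappa$ delivers the factor $\delta$ in $\rint|\masterK|\leq \delta\master^\kappa$. Concretely, the $S$-commutator produces terms like $\kappa^{-2|\alpha|}(SV)\Gamma^{\alpha-S}\psi$, which carry one fewer $S$-factor, hence one fewer $\kappa^{-1}$, and after Hardy inequality and Cauchy--Schwarz against the RHS weight $\kappa^{-2(|\alpha|-1)}$ in $\master^\kappa$ leaves the desired factor of $\kappa^2$. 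A secondary subtlety is to ensure the linear bound $\mu \lesssim R_2$ in \cref{coercivity lemma} is taken into account while keeping the $c_\beta$ in the elliptic/commutator estimates uniform; this is what forces us to choose $R_1,R_2$ first and $\kappa$ last, precisely as stated in the proposition. Combining all these observations and summing over $|\alpha|\leq k$ produces the master current with the claimed properties.
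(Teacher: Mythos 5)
Your proposal follows essentially the same approach as the paper: defining $\masterJ$ as the $\kappa$-weighted sum of $J^V[\Gamma^\alpha\psi]$, picking $c,C$ via \cref{coercivity lemma} and Hardy, observing that the $|\alpha|=0$ coercivity and null-cone positivity come from the uncommuted analysis (and $V=\mathcal{O}(r^{-4})$ for $r>R_2$), reducing the $S,\Omega$ commutators to $T$-energies via \cref{lemma:elliptic_estimate,far commutation}, and choosing $\kappa$ last so the $\kappa^{-1}$-mismatch makes the commutator bulk term $\delta$-small. Your observation that only $\tilde\Theta^{com}$ and $\tilde\Theta^{\Lambda}$ on $T^{k'}\psi$ are needed (because $\Theta^{com}[T\psi]=\Theta^{mom}[\psi]$ absorbs the momentum obstruction, per \cref{higher order zero modes}) correctly fills in the detail the paper leaves implicit.
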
	
	
	\begin{proof}
		Let's first pick the constants $c,C$. Let $c_1$ be the constant $\mu$ from \cref{coercivity lemma}. Using Hardy inequality and the boundedness of $Vr^2$ we get that there exists $c_2$ such that
		\begin{equation*}
			\begin{gathered}
				\mathcal{E}[\psi]+c_2\Sigma_\tau[T\cdot \tilde{\T}^V[\psi]]\geq0.
			\end{gathered}
		\end{equation*}
		We fix $C=\min (\frac{1}{100},c_1/2),c=\min(\frac{c_1c_2}{2},\frac{1}{100})$, to get
		\begin{equation*}
			\begin{gathered}
				2C^{-1}(\abs{\tilde{\Theta}^{com}[\psi]}^2+\abs{\tilde{\Theta}^\Lambda[\psi]}^2)+\Sigma_\tau[J^V[\psi]]\geq C\mathcal{E}[\psi].
			\end{gathered}
		\end{equation*} 
		Furthermore, note that 
		\begin{equation*}
			\begin{multlined}
				2J^V[\psi]\cdot(\partial_u)\geq (1+c)(\partial_u\psi)^2-2c\frac{\psi}{r}\partial_u\psi+cr^{-2}\psi^2-(1+c)V\psi^2\\
				\geq (1-c)(\partial_v\psi)^2+\frac{c}{2r^2}\psi^2-(1+c)V\psi^2
			\end{multlined}
		\end{equation*}
		and so we get that for $R_2^2>10c^{-1}$
			\begin{equation*}
			\begin{gathered}	
				\underline{\mathcal{C}}_x[J^V]\geq \frac{1}{2}\underline{\mathcal{C}}_x[J^0]
			\end{gathered}
		\end{equation*}
		For this choice, $\abs{\alpha}=0$ estimate follows from \cref{uncommuted estimate section}. For higher commuted estimates, we pick $\kappa$ to be the largest constant that appears in \cref{far commutation} and in \cref{lemma:elliptic_estimate} times $\delta^{-1}$.
	\end{proof}

	\section{Nonlinear part}\label{sec:nonlinear}
	We will apply the linear theory developed in the previous section as a black box. In particular, we will consider $\psi$ a solution of \eqref{main equation} to be a the solution of \eqref{linearised eom2} with $F=\mathcal{N}[\psi]$.
	
	Using \cref{existence of scattering solution}, we know that it is sufficient to do a bootstrap argument to get a solution in $\rdom$, since local solution is known to exists. Let's introduce the quantity
	\begin{equation}
		\begin{gathered}
			\Theta_\tau[\psi]:=(\kappa^{-1}\tau\Theta^{mom}(\tau),\Theta^{com}(\tau)+\tau\Theta^{mom}(\tau),\Theta^\Lambda(\tau))
		\end{gathered}
	\end{equation}
	where $\kappa$ will be the same one as in $\masterJ$.
	
	\begin{lemma}\label{lemma:dyadic nonlinear iteration}
		Fix $\gamma\in(0,1)$. There exists $C_1,\epsilon_0$ such that the following holds.
		Let $\underline{\psi}$ be scattering data for \eqref{main equation} of order $6$, decay $q=0$ and size $\epsilon_0$ at $\scri$ and $\Sigma_{\tau_2}$. Assume that there exist $\psi$ a scattering solution in $\rdom$ that satisfies \eqref{higher unstable bootstrap}
		and 
		\begin{equation}
			\begin{gathered}
				\abs{\Theta_{\tau_2}}^2,\abs{\mathcal{R}^\Lambda_\scri}^2,\Sigma_{\tau_2}[\masterJ]+\scri_{\tau_1,\tau_2}[\masterJ]\leq \epsilon
			\end{gathered}
		\end{equation}
		Then, for all $\epsilon<\epsilon_0<C_1^{-1}$ and $2\tau_1>\tau_2>\tau_1\gg1$ it holds that
		\begin{equation}
			\begin{gathered}
				\master[\psi]\leq C_1\epsilon,\quad \epsilon(\tau_2-\tau_1)^6<1,\quad
				\implies 	\master[\psi]\leq C_1\epsilon/2.
			\end{gathered}
		\end{equation}
		In particular for $\tau_2-\tau_1<\epsilon^{-1/6}$ we have $\mathcal{X}_{\rdom}[\psi]\leq C_1\epsilon$. Furthermore, for $\masterJ$ introduced in \cref{prop:master_current} we have
		\begin{equation}
			\begin{gathered}
				\gamma \Big(\abs{\Sigma_{\tau_1}[\masterJ]}+\abs{\Theta_{\tau_1}}^2\Big)\leq \Big(\abs{\Sigma_{\tau_2}[\masterJ]}+\abs{\Theta_{\tau_2}}^2\Big)+\abs{\scri_{(\tau_1,\tau_2)}[\masterJ]}+\abs{\mathcal{R}^\Lambda_\scri}^2+C_2\epsilon^{1.5}\jpns{\tau_2-\tau_1}^{5/2}
			\end{gathered}
		\end{equation}
	\end{lemma}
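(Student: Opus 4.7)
The plan is a standard bootstrap improvement: view $\psi$ as a solution of the inhomogeneous linear equation with source $F=\mathcal{N}[\psi]$, so the full linear machinery of \cref{sec:linear theory section} applies directly, and bound the nonlinearity via \cref{nonlinear bounds lemma}. I would work at regularity $k=6$ (the minimum required by \cref{weighted estimates,nonlinear bounds lemma}), and first fix $\delta=\delta(\gamma)$ in \cref{prop:master_current} sufficiently small. This determines $R_1$, $R_2$ and $\kappa$; the constants $C_1$ and $\epsilon_0$ will be chosen at the very end in terms of all the implicit constants.

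For the first improvement, \cref{prop:master_current} yields
\begin{nalign}
\Sigma_{\tau_1}[\masterJ]\leq \Sigma_{\tau_2}[\masterJ]+\scri_{\tau_1,\tau_2}[\masterJ]+\rint|\masterK|+\mathfrak{B}.
\end{nalign}
The first two right-hand side terms are $\leq 2\epsilon$ by hypothesis, the bulk $\rint|\masterK|\leq \delta\master[\psi]\leq \delta C_1\epsilon$ by the choice of $\delta$, and the nonlinear source $\mathfrak{B}$ is controlled via \cref{nonlinear bounds lemma} by $\mathfrak{B}\lesssim \epsilon^{3/2}\jpns{\tau_2-\tau_1}^{5/2}$. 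For the zero-mode content, \cref{momentum content,com content,scaling content} give the evolution of $\Theta^{mom},\Theta^{com},\Theta^\Lambda$; the weights $\kappa^{-1}\tau\Theta^{mom}$ and the additional $\tau\Theta^{mom}$ sitting inside the definition of $\Theta_\tau$ are chosen precisely so that the linear-in-time drift $(\tau_2-\tau_1)\Theta^{mom}$ from \cref{linear com conservation with force} is absorbed on both sides, leaving only the nonlinear corrections $|\mathcal{B}^\bullet|\lesssim \epsilon^{3/2}\jpns{\tau_2-\tau_1}^{5/2}$. The coercivity half of \cref{prop:master_current} then gives
\begin{nalign}
\master[\psi]\leq C\bigl(\Sigma_{\tau_1}[\masterJ]+|\Theta_{\tau_1}|^2\bigr)\leq C\epsilon+C'\epsilon^{3/2}\jpns{\tau_2-\tau_1}^{5/2}+\delta C_1\epsilon.
\end{nalign}
The smallness hypothesis $\epsilon\jpns{\tau_2-\tau_1}^6<1$ gives $\epsilon^{1/2}\jpns{\tau_2-\tau_1}^{5/2}\ll 1$ for $\epsilon<\epsilon_0$ small, and choosing $C_1>4C$ yields the improved bound $\master[\psi]\leq C_1\epsilon/2$. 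The second (dyadic transfer) inequality follows from the same identity, but this time without invoking coercivity to pass to $\master[\psi]$ on the left: the factor $\gamma<1$ arises by rewriting $\rint|\masterK|\leq\delta C_*(\Sigma_{\tau_1}[\masterJ]+|\Theta_{\tau_1}|^2)$ via coercivity applied backwards and absorbing it into the left-hand side, which is possible once $\delta$ is chosen small enough relative to $1-\gamma$.

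The main obstacle is the precise bookkeeping between the three $\Theta^\bullet$ quantities and the unstable mode $\alpha_\pm$ --- the top-order unstable-mode bootstrap \cref{higher unstable bootstrap} is inherited from the hypothesis and not improved here. Each $\Theta^\bullet$ carries a different natural $\tau$ weight, and the design of $\Theta_\tau$ specifically turns $\Theta^{com}+\tau\Theta^{mom}$ into an almost-conserved quantity under pure momentum transport; meanwhile the bulk errors $\mathcal{B}^\bullet$ from \cref{nonlinear bounds lemma} grow at different polynomial rates in $(\tau_2-\tau_1)$. The structural match between these weights and the sharp decay rates is what makes the closure possible, and the slack in the threshold $\epsilon\jpns{\tau_2-\tau_1}^6<1$ (rather than the sharper $\epsilon^{1/2}\jpns{\tau_2-\tau_1}^{5/2}<1$ suggested by the leading error) absorbs the losses from commuting with $\Gamma\in\{T,\Omega,S\}$ and from the elliptic estimate \cref{lemma:elliptic_estimate}.
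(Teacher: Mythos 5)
Your proposal follows essentially the same route as the paper: interpret $\psi$ as a solution of the inhomogeneous linear problem with $F=\mathcal{N}[\psi]$, apply \cref{prop:master_current} to get the divergence identity, bound $\mathfrak{B}$ via \cref{nonlinear bounds lemma}, feed the zero-mode conservation laws into the coercivity half, and close at $\epsilon\jpns{\tau_2-\tau_1}^6<1$; the second inequality is read off from the same identity by absorbing the $\delta$-term into the left side. Two small points you glossed over but which the paper handles explicitly: the coercivity in \cref{prop:master_current} is stated in terms of the \emph{truncated} functionals $\tilde{\Theta}^\bullet$, while the conservation laws control $\Theta^\bullet$, and the gap is closed by \cref{lemma:theta_scri_control}, which bounds the $\scri$-boundary difference by the data decay — without this the two halves of the argument don't match; and the coercive output of \cref{prop:master_current} controls $\sup_{\tau\in[\tau_1,\tau_2]}\Sigma_\tau[\masterJ]$ \emph{together with} the cone fluxes $\tilde{\underline{\mathcal{C}}}$, so your display $\master[\psi]\leq C(\Sigma_{\tau_1}[\masterJ]+|\Theta_{\tau_1}|^2)$ should be the supremum over all $\tau$ plus the $\tilde{\underline{\mathcal{C}}}$-flux term, not just the value at $\tau_1$. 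Neither issue changes the structure — both are repaired by running the same estimates uniformly in $\tau$ — but a careful write-up must address them.
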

	\begin{remark}
		This lemma in particular implies that for scattering data with incoming $\masterJ$ energy smaller than $\epsilon$, $\psi$ can only fail to exist on $\rdom$ if the unstable bootstrap assumption \eqref{higher unstable bootstrap} fails.
	\end{remark}
	\begin{proof}[Proof of \cref{lemma:dyadic nonlinear iteration}]
		Throughout this proof, we can use \cref{lemma:theta_scri_control} to bound $\abs{\tilde{\Theta}^\bullet[T^k\psi]}\leq\abs{\Theta^\bullet[T^k\psi]}+\epsilon$ for $k\leq N,\bullet\in\{com,\Lambda\}$. Therefore, without loss of generality, we drop the tilde whenever we use the coercivity from \cref{prop:master_current}.
		
		Let $\Tau=\tau_2-\tau_1$ denote the maximum existence time of the scattering solution $\psi$ such that
		\begin{equation}\label{eq:dyadic_bootstrap}
			\begin{gathered}
				\mathcal{X}[\psi]_{\rdom}\leq C_1\epsilon
			\end{gathered}
		\end{equation}
		for some $C_1\gg1$ to be determined.	From \cref{existence of scattering solution}, we know that there exists $C>0$ such that $\Tau>0$ for $C_1>C$. Let's pick $\masterJ$ current from \cref{prop:master_current}. Introduce notation
		\begin{equation}
			\begin{gathered}
				\theta_\tau:=\sum_{k'\leq k}\kappa^{-2k'}{\tau}^{2k'}\Big(\abs{\Theta^{com}_\tau[T^{k'}\psi]}^2+\abs{\Theta^\Lambda_\tau[T^{k'}\psi ]}^2\Big).
			\end{gathered}
		\end{equation}
		Because of its coercivity property, we may use the current $\masterJ$ to control the $\master^\kappa$ norm of the solution
		\begin{equation}\label{control of master with masterJ}
			\begin{gathered}
				\sup_{x>R_2}\tilde{\underline{\mathcal{C}}}_{(\tau_2-x)/2}[\masterJ]+\sup_{\tau\in[\tau_1,\tau_2]}	(\Sigma_{\tau}[\masterJ]+\theta_\tau)\geq C\master^\kappa
			\end{gathered}
		\end{equation}
		for some absolute $C$. We now pick $\delta(\gamma,C)$ sufficiently small to be determined later. \cref{prop:master_current} says
		\begin{equation}\label{nonlinear bound on current}
			\begin{gathered}
				\sup_\tau\Sigma_{\tau}[\masterJ]\leq  \Sigma_{\tau_2}[\masterJ]+\scri_{\tau_1,\tau_2}[\masterJ]+\delta\master^\kappa+\mathcal{B}\leq 2\epsilon +\delta\master^\kappa +\mathcal{B}\\
				\sup_{x>R_2}\tilde{\underline{\mathcal{C}}}_{(\tau_2-x)/2}[\masterJ]\leq C^{-1}(\Sigma_{\tau_2}[\masterJ]+\theta_{\tau_2})+\scri_{\tau_1,\tau_2}[\masterJ]+\delta\master^\kappa+\mathcal{B}\leq C^{-1}2(\epsilon+\theta_{\tau_1})+\delta\master^\kappa+\mathcal{B}
			\end{gathered}
		\end{equation}
		We trivially have that $\master^\kappa\sim_\kappa\master$. Using the scaling of $\mathcal{B}$ with $\psi$, \cref{nonlinear bounds lemma,eq:dyadic_bootstrap} we get $
		\mathcal{B}\lesssim_\kappa C_1^{1.5}\epsilon^{1.5}\jpns{\tau_2-\tau_1}^{3/2}$.	
		For the 0 modes, using \cref{higher order zero modes,nonlinear bounds lemma} together with the scaling of $F$ with $\psi$ we get
		\begin{equation}\label{nonlinear bound on modes}
			\begin{gathered}
				\theta_\tau\leq\abs{\Theta^{com}_\tau}^2+\kappa^{-2}\tau^2\abs{\Theta^{mom}_\tau}^2+\abs{\Theta^\Lambda_\tau}^2+C_1^2\epsilon^{1.5}\\ \leq \abs{\Theta^{com}_{\tau_2}}^2+\kappa^{-2}{\tau_2}^2\abs{\Theta^{mom}_{\tau_2}}^2+\abs{\Theta^\Lambda_{\tau_2}}^2+\abs{\mathcal{R}^\Lambda_\scri}^2+C_1^2\epsilon^{1.5}+C_1^2\jpns{\tau_2-\tau_1}^{5/2}\epsilon^{1.5}\lesssim \epsilon+ C_1^2\epsilon^{1.5}\jpns{\tau_2-\tau_1}^{5/2}
			\end{gathered}
		\end{equation}
		Putting all of these estimates together, we conclude that
		\begin{equation}
			\begin{gathered}
				\master^\kappa\leq (C^{-1}+3)\epsilon+\delta\master^\kappa+C_1^2\epsilon^{1.5}\jpns{\tau_2-\tau_1}^{5/2}\\
				\implies \master\lesssim_\kappa \epsilon+ C_1^2\epsilon^{1.5}\jpns{\tau_2-\tau_1}^{5/2},
			\end{gathered}
		\end{equation}
		where the implication follows by choosing $\delta<1$. Choosing $C_1$ larger than twice the implicit constant in the second equation and letting $\epsilon_0$ be sufficiently small based on the implicit constant and $C_1$ we get the first part of the lemma.

		For the second part, we simply use \cref{control of master with masterJ} in \cref{nonlinear bound on current,nonlinear bound on modes} together with the nonlinear bounds to get
		\begin{equation}
			\begin{gathered}
				\sup_\tau (\theta_\tau+\Sigma_{\tau}[\masterJ])\leq \abs{\Theta_{\tau_2}}+\Sigma_{\tau_2}[\masterJ]+\delta 3\jpns{C^{-2}}	\sup_\tau (\theta_\tau+\Sigma_{\tau}[\masterJ])+C_\kappa\epsilon^{1.5}\jpns{\tau_2-\tau_1}^{5/2}
			\end{gathered}
		\end{equation}
		Picking $\delta$ in this equation small enough yields the result.
		
	\end{proof}
		
	\subsection{Iteration}
	Let's define $\mathcal{R}_{m}=\mathcal{R}_{2^{m-1},2^{m}}$, and 
	\begin{equation}
		\begin{gathered}
			\mathcal{X}^{q}_{\rdom}[\psi]=\sum_{m\in (\floor{\log{\tau_1}},\ceil{\log{\tau_2}})} 2^{q m}\mathcal{X}_{R_m\cap\rdom}[\psi]\\
			\mathfrak{I}_{\rdom}^{q}=\sum_{m\in (\floor{\log{\tau_1}},\ceil{\log{\tau_2}})} 2^{q m}\scri_{-2^{m+1},2^{-m}}[\masterJ]\\
			\mathfrak{R}^{q}_{\rdom}=\sum_{m\in (\floor{\log{\tau_1}},\ceil{\log{\tau_2}})} 2^{q m/2}\mathcal{R}^\Lambda_{-2^{m+1},2^{-m}}[\psi]\\
			\theta_{\rdom}^{q}=\sup_{\substack{m\in (\log{\tau_1},\log{\tau_2}) \\\tau\in\mathcal{R}_m\cap\rdom}} 2^{qm/2}\abs{\Theta(\tau)}.
		\end{gathered}
	\end{equation}
	We also introduce an unstable mode bootstrap assumption
	\begin{equation}\label{dyadic unstable bootstrap assumption}
		\begin{gathered}
			\sup_{\tau\in(\tau_1,\tau_2)}\abs{\alpha_-[{T^N\psi}]}{\tau}^{q/2+N}< e^{-\lamed R_1/2}
		\end{gathered}
	\end{equation}
	\begin{lemma}\label{iteration lemma}
		Let $\tau_2=2^{n}$ for $n\gg1$. Let $\underline{\psi}$ be scattering data for \eqref{main equation} of order $6$ size $1$ and decay $q>5$ at $\scri$ and $\Sigma_{\tau_2}$. Assume that there exist $\psi$ a scattering solution in $\rdom$ that satisfies \eqref{dyadic unstable bootstrap assumption} and 
		\begin{equation}
			\begin{gathered}
				{\tau_2}^{q}(\abs{\Theta(\tau_2)}^2,\Sigma_{\tau_2}[\masterJ]),\mathfrak{R}^{q}_{{0,\tau_2}},\mathfrak{I}_{{0,\tau_2}}^{q}\leq 1
			\end{gathered}
		\end{equation}
		Then, there exists $C_1>0$ and $\tau_1$ such that for any $\tau_2>\tau_1$
		\begin{equation}\label{iterative bootstrap}
			\begin{gathered}
				\mathcal{X}^{q}_{\rdom}[\psi]\leq C_1
			\end{gathered}
		\end{equation}
	\end{lemma}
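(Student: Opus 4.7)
I run a backward dyadic iteration with Lemma~\ref{lemma:dyadic nonlinear iteration} as a black box on each slab $\mathcal{R}_m$. First I pick $\gamma\in(2^{-q},1)$, possible since $q>5>0$, and let $C_1,\epsilon_0,C_2$ denote the constants produced by Lemma~\ref{lemma:dyadic nonlinear iteration} with this $\gamma$. The argument is a continuity bootstrap in $\tau_1$: by local existence (Lemma~\ref{existence of scattering solution}) the bound \eqref{iterative bootstrap} holds with constant $2C_1$ close to $\tau_2$, and I improve it back to $C_1$, which closes openness of the bootstrap region.

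Set $Z_m:=|\Sigma_{2^m}[\masterJ]|+|\Theta_{2^m}|^2$ and label slabs by $m\in\{m_0+1,\ldots,n\}$ with $m_0=\lceil\log_2\tau_1\rceil$ and $n=\log_2\tau_2$. The bootstrap furnishes the effective size $\epsilon_m=2C_1\cdot 2^{-qm}$ on slab $m$. For $\tau_1$ large enough the requirement $\epsilon_m(\tau_2-\tau_1)^6<1$ of Lemma~\ref{lemma:dyadic nonlinear iteration} holds on every slab, possibly after subdividing each $\mathcal{R}_m$ into a bounded number of sub-slabs if $5<q\leq 6$. Applying the lemma backwards on each slab then yields simultaneously the slab-wise bound $\mathcal{X}_{\mathcal{R}_m}[\psi]\leq C_1\epsilon_m/2$ and the backward recursion
\begin{equation*}
\gamma Z_{m-1}\leq Z_m+B_m+E_m,\qquad B_m:=|\scri_{2^{m-1},2^m}[\masterJ]|+|\mathcal{R}^\Lambda_{2^{m-1},2^m}[\psi]|^2,\quad E_m:=C_2\epsilon_m^{3/2}\cdot 2^{5m/2}.
\end{equation*}

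Iterating the recursion downward from $m=n$, where the hypothesis gives $Z_n\lesssim 2^{-qn}$, produces $Z_m\leq\gamma^{-(n-m)}Z_n+\sum_{k=m+1}^n\gamma^{-(k-m)}(B_k+E_k)$. Multiplying by $2^{qm}$, interchanging the order of summation, and using that $\gamma 2^q>1$ makes every internal geometric series in $m$ dominated by its top term, one obtains
\[\sum_{m=m_0}^n 2^{qm}Z_m\lesssim 2^{qn}Z_n+\sum_{k=m_0}^n 2^{qk}(B_k+E_k).\]
The first two contributions are controlled by the hypotheses on $Z_n$, $\mathfrak{I}^q_{0,\tau_2}$ and $\mathfrak{R}^q_{0,\tau_2}$; the nonlinear error is the geometric series $C_2(2C_1)^{3/2}\sum_k 2^{k(5-q)/2}$ with ratio $2^{(5-q)/2}<1$, convergent precisely because $q>5$. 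The coercivity clause of Proposition~\ref{prop:master_current}, for which the top-order unstable-mode bootstrap \eqref{dyadic unstable bootstrap assumption} provides exactly the input needed, then upgrades this to control of $\mathcal{X}^q_\rdom[\psi]$. Choosing $\tau_1$ large enough makes the $m\geq m_0$ tail smaller than $C_1/4$, closing the bootstrap.

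The main technical obstacle is the $\gamma^{-(n-m)}$ expansion produced by iterating the backward estimate, which naively blows up as $n-m_0\to\infty$. The resolution is that the dyadic weight $2^{qm}$ telescopes against $\gamma^{-(n-m)}$ as soon as $\gamma>2^{-q}$; the threshold $q>5$ then enters only afterwards, ensuring summability of the nonlinear error $\sum_k 2^{k(5-q)/2}$. The two constraints on $\gamma$ and $q$ are compatible for any $q>5$, so the scheme is sharp at this level, and a lower decay exponent would require either a Morawetz improvement or a more refined bookkeeping of the nonlinear contributions.
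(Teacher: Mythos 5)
Your proposal follows the same route as the paper: a downward dyadic iteration built on the backward estimate of \cref{lemma:dyadic nonlinear iteration}, with $\gamma^{-1} < 2^q$ making the dyadic weight telescope against the $\gamma^{-1}$ losses and $q>5$ making the nonlinear error $\sum_k 2^{k(5-q)/2}$ summable. The paper runs this as an induction maintaining $a_{2^m}\leq c\,2^{-mq}$ rather than unrolling the sum, but the two presentations are the same argument.

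One remark is off and worth correcting: your suggestion that for $5<q\leq 6$ the constraint $\epsilon_m(\tau_2-\tau_1)^6<1$ can be restored by subdividing $\mathcal{R}_m$ into a \emph{bounded} number of sub-slabs does not work, since $\epsilon_m(\tau_2-\tau_1)^6\sim 2^{m(6-q)}$ grows, forcing the number of sub-slabs to grow with $m$ (and the accumulated $\gamma^{-N_m}$ losses would then defeat the dyadic gain). But this is a non-issue: the condition $\epsilon(\tau_2-\tau_1)^6<1$ is needed only for the slab-local bootstrap closure $\master\leq C_1\epsilon\Rightarrow C_1\epsilon/2$ inside \cref{lemma:dyadic nonlinear iteration}, which your final argument never invokes. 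The backward recursion (the ``Furthermore'' part) is derived from the master-current inequality together with \cref{nonlinear bounds lemma}, which only requires the bootstrap hypothesis $\master_{\mathcal{R}_m}[\psi]\leq C_1\epsilon_m\leq 1$ --- supplied by the global continuity bootstrap you are running --- with no slab-length restriction. So you can drop the subdivision clause and the ``$\mathcal{X}_{\mathcal{R}_m}\leq C_1\epsilon_m/2$'' mention; after that the proof is clean and correct.
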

	\begin{proof}
	Similarly as in the proof of \cref{lemma:dyadic nonlinear iteration}, we will treat $\tilde{\Theta}$ and $\Theta$ exchangeable for coercivity estimates.	
		
	Let's fix $\masterJ$ as in \cref{lemma:dyadic nonlinear iteration}, in particular, choose a fixed $\kappa,\delta$ such that we can pick $\gamma=1-\frac{1}{100}$. We will not indicate the dependence of the constant on $\delta,\kappa,\gamma$, as we treat them as fix parameters from now on.
	
	For ease of notation, we will only prove the case when $
	\tau_1=2^{M_1},\,\tau_2=2^{M_2}$, the other case follows similarly.
	
	We will proceed by induction. Let's introduce
	\begin{equation}
		\begin{gathered}
			a_\tau:=\abs{\Theta(\tau)}^2+\Sigma_{\tau}[\masterJ]\\
			b_{\tau}:=\mathcal{R}^\Lambda_{\tau/2,\tau}+\scri_{\tau/2,\tau}[\masterJ].			
		\end{gathered}
	\end{equation}		
	The assumption of the lemma in these variable implies
	\begin{equation*}
		\begin{gathered}
			2^{qM_2}a_{2^{M_2}},2^{qm}b_{2^{m}}\leq 1,\quad \forall m
		\end{gathered}
	\end{equation*}
	We now prove the following
	\begin{claim}
		There exists $c$ such that $a_{2^m}\leq c2^{-mq}$ whenever $m>M_1$ with $M_1\gg 1$.
	\end{claim}
	Pick $M_2$, depending on $c$, such that $(c+1)2^{-M_2q}\leq\epsilon_0$, where $\epsilon_0$ is the constant from \cref{lemma:dyadic nonlinear iteration}. Note, that for $m=M_1$ the claim holds for $c>1$. Using \cref{lemma:dyadic nonlinear iteration} and induction on $m$, we get 
	\begin{equation*}
		\begin{gathered}
			a_{2^m}\leq\gamma^{-1}\big(a_{2^{m+1}}+b_{2^{m+1}}\big)+C\big(a_{2^{m+1}}+b_{2^{m+1}}\big)^{1.5}2^{5m/2}\\
			\leq \frac{\gamma^{-1}}{2}2^{-qm}\Big((\frac{c}{2^{q}}+1)+C(\frac{c}{2^{q}}+1)^{1.5}2^{m(5/2-q/2)}\Big)
		\end{gathered}
	\end{equation*}
	Fixing $c>2C+10$ closes the induction, which proves the claim.
	
	Using the boundedness of the fluxes, the bound on $\master$ follows from \cref{lemma:dyadic nonlinear iteration}.

	\end{proof}
	
	\subsection{Unstable mode}\label{sec:unstable_mode}
	To treat the unstable mode contribution, we introduce the solutions to the modified problem
	\begin{equation}\label{mlinear eom}
		\begin{gathered}
			(\Box+V)\psia=F,\qquad F,\psia:\mathcal{R}_{(\tau_1,\tau_2)}\to \R\\
			(r\psia)|_{\scri_{(\tau_1,\tau_2)}}=\psia^{dat}, \quad \psia|_{\Sigma_{\tau_1}}=a\lamed Y\chi\big(\frac{r}{3R_1}\big)+\psi_0,\quad \partial_t\psia|_{\Sigma^f_{\tau_1}}=-aY\chi\big(\frac{r}{3R_1}\big)+\psi_1.
		\end{gathered}
	\end{equation}
	We also introduce $\alphaa_\pm$ similar to $\alpha_\pm$. 
	\begin{lemma}\label{unstable lemma}
		Let $\underline{\psi}$ be scattering data for \eqref{main equation} with $\psi_k|_{\Sigma_{\tau_2}}=0$. Let $\psia$ be the modified scattering solution to \cref{mlinear eom}. There exists $\abs{a}\lesssim{\tau_2}^{-q}$ such that the corresponding scattering solution $\psia$ exists in $\rdom$ and satisfies \cref{dyadic unstable bootstrap assumption} as well as \cref{iterative bootstrap}.
	\end{lemma}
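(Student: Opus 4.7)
The plan is to apply a topological shooting argument. For each $a$ in the compact interval $I := [-C_0 \tau_2^{-q}, C_0 \tau_2^{-q}]$ (with $C_0$ to be chosen large), the dyadic iteration of \cref{iteration lemma} generates a scattering solution $\psia$ to \cref{mlinear eom} on a maximal subinterval $[T_\star(a), \tau_2]$ on which the bootstrap \cref{dyadic unstable bootstrap assumption} holds with, say, a factor of two slack. Writing
\begin{equation*}
G := \{a \in I : T_\star(a) = \tau_1\},
\end{equation*}
the task reduces to showing $G \neq \emptyset$, since any $a \in G$ satisfies the conclusion by definition and \cref{iterative bootstrap} then follows from \cref{iteration lemma}.

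The key feature of \cref{mlinear eom} is that the modification depends \emph{linearly} on $a$ with a direct, nondegenerate effect on the unstable projections. A one-line calculation yields $\alphaa_-[\psia](\tau_1) = \alpha_-[(\psi_0,\psi_1)](\tau_1) - a(1+\lamed^2)\int \chi_{R_1}(r)\chi(r/3R_1) Y^2$, and analogous affine-in-$a$ expressions for $\alphaa_-[T^k \psia](\tau_1)$ follow by differentiating in $t_\star$ and using \cref{higher order zero modes} to express $T^k\psia|_{\Sigma_{\tau_1}}$ in terms of the modified initial data. Openness of $G$ in $I$ is a standard continuation consequence of \cref{iteration lemma}: strict satisfaction of the bootstrap at $\tau_1$ propagates past $\tau_1$ and persists under small perturbations of $a$. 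If $G \neq I$, then for every $a \in I \setminus G$ the bootstrap must saturate at $T_\star(a)$, and we may partition $I \setminus G$ into two relatively open subsets $I^\pm$ according to the sign of $\alphaa_-[T^N \psia](T_\star(a))$.

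To finish, it suffices to show that the two endpoints of $I$ lie in opposite subsets $I^\mp$. Integrating \cref{ode unstable mode} forward from $\tau_1$ yields
\begin{equation*}
\alphaa_-[T^k\psia](\tau) = e^{-\lamed(\tau-\tau_1)}\alphaa_-[T^k\psia](\tau_1) + \int_{\tau_1}^\tau e^{-\lamed(\tau-s)} \mathfrak{Err}_-[T^k\psia](s)\, ds,
\end{equation*}
and the error bounds of \cref{uncommuted estimate section} combined with the dyadic energy control of \cref{iteration lemma} give $|\mathfrak{Err}_-[T^N\psia]| \lesssim e^{-\lamed R_1/2}$ times a polynomially decaying factor, which is much smaller than the principal $a$-term once $C_0$ and $R_1$ are chosen large. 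Therefore $\alphaa_-[T^N\psia]$ inherits the sign of $-a$ throughout $[T_\star(a), \tau_2]$, placing the positive endpoint of $I$ in $I^-$ and the negative endpoint in $I^+$. Connectedness of $I$ forces $G \neq \emptyset$, and $|a| \leq C_0 \tau_2^{-q}$ follows by construction. The main obstacle is the uniform-in-scale Lipschitz dependence of $T_\star(a)$, and of the commuted projections $\alphaa_-[T^k\psia](T_\star(a))$, on $a$: this continuity is what makes $I^\pm$ open, and it follows from the linearity of the modification together with the quadratic and higher-order nonlinear estimates already deployed in \cref{lemma:dyadic nonlinear iteration} and \cref{nonlinear bounds lemma}.
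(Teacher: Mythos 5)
Your proof follows the same topological shooting structure as the paper's: modify the data at the data slice by a multiple $a$ of the unstable eigenfunction, show that the bootstrap region is open in $a$, partition the rest of the parameter interval by the sign of $\alpha_-[T^N\psia]$ at the saturation time, and invoke connectedness. This is the right skeleton, and the computation showing the modification enters $\alpha_-$ affinely and nondegenerately is correct (the factor $(1+\lamed^2)\int\chi_{R_1}\chi(\cdot/3R_1)Y^2$).

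The genuine gap is in the openness of $I^\pm$, which you identify as ``the main obstacle'' and then dismiss by appeal to linearity plus the already-proved nonlinear bounds. That does not close the gap. Lipschitz dependence of $\psia$ on $a$ over a \emph{fixed} compact time interval is indeed cheap, but $T_\star(a)$ is a stopping time, and stopping times are not continuous in general: if the bootstrap quantity $\abs{\alpha_-[T^N\psia]}\tau^{q/2+N}$ touches the threshold $e^{-\lamed R_1/2}$ tangentially at $T_\star(a)$, a small perturbation of $a$ can jump $T_\star$ and flip the sign at saturation. What rules this out is a transversality (strict monotonicity) argument at $\tau = \mathcal{T}(a)$: the paper computes
\begin{equation*}
\partial_\tau\Big(\alpha_-[T^N\psia](\tau)\tau^{q/2+N}\Big)\Big|_{\tau=\tilde\tau} = -e^{-\lamed R_1/2}\Big(\lamed-\tfrac{q+2N}{2\tilde\tau}\Big)+\tilde\tau^{q/2}\mathfrak{Err}_+[T^N\psia](\tilde\tau),
\end{equation*}
estimates the error term via the iteration bounds, and concludes for $R_1$ large that this derivative is $\leq -\tfrac12 e^{-\lamed R_1/2}$, hence bounded away from zero. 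Combined with Cauchy stability this gives continuity of $\mathcal{T}$ via the implicit function theorem, and then continuity of the sign function follows. Without this step, the sign partition being open is an unjustified assertion, and the intermediate value argument does not go through.

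Two smaller issues. First, you claim $\alphaa_-[T^N\psia]$ ``inherits the sign of $-a$ throughout $[T_\star(a),\tau_2]$,'' which is stronger than needed and may not hold once $\alpha_-$ has evolved; the paper instead chooses the endpoint value $C_1$ so that the bootstrap saturates \emph{immediately} at $\tau_2$, which gives $\mathcal{S}(\pm C_1)=\pm1$ without any propagation claim. Second, the direction of integration: you integrate the $\alpha_-$ ODE forward from $\tau_1$, but the scattering solution is constructed backward from $\Sigma_{\tau_2}$ where the data (and the $a$-modification) lives; $\alpha_-$ is the mode that grows backward in $\tau$, and the relevant Duhamel formula should pivot on $\alpha_-(\tau_2)$, not $\alpha_-(\tau_1)$. (The paper's own statement of the modified problem contains a $\tau_1/\tau_2$ typo, which may have misled you, but the logic must propagate backward from the data slice.)
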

	\begin{proof}
		Let $\tau_1$ be from \cref{iteration lemma}. First, let us introduce $\mathcal{T}:[-C_1,C_1]\to\R$ 
		\begin{equation}
			\begin{gathered}
				\mathcal{T}(a):=\inf\{\max\{\tau_1,\tau\}: \exists \psia\text{ in }\mathcal{R}_{\tau,\tau_2} \text{ solution of } \cref{mlinear eom} \text{ satisfying \cref{dyadic unstable bootstrap assumption} and \cref{iterative bootstrap}} \}
			\end{gathered}
		\end{equation}

		\textit{Boundary values:} Let's pick $C_1={\tau_2}^{-q/2}C_2$ such that $\tau^{N+q/2}\abs{\alpha_-[{T^N\prescript{}{{\tiny{\pm C_2}}}{\psi}}](\tau_2)}=e^{-\lamed R_1/2}$. It's easy to see that in this case $\im(\mathcal{T})\subset[\tau_1,\tau_2]$.
		
		\textit{Continuity:} We prove that $\mathcal{T}$ is continuous. Let $a\in[-C_1,C_1]$ be such that the maximum in the definition of $\mathcal{T}$ is only attained at $\tau_1$. Then by Cauchy stability $\mathcal{T}=\tau_1$ in a neighbourhood of $a$. 
		
		Pick $a$ otherwise. Then from \cref{iteration lemma} it follows that
		 \begin{equation}
		 	\begin{gathered}
		 		\abs{\alpha_-[{T^N\psia}](\mathcal{T}(a))}\abs{\mathcal{T}(a)}^{q/2+N}=e^{-\lamed R_1/2}.
		 	\end{gathered}
		 \end{equation}
	 	Furthermore at $\tau=\tilde{\tau}:=\mathcal{T}(a)$, we have
	 	\begin{equation}
	 		\begin{gathered}
	 			\partial_\tau \Big(\alpha_-[{T^N\psia}](\tau){\tau}^{q/2+N}\Big)|_{\tau=\tilde{\tau}}=\Big(-\lamed\alpha_-[{T^N\psia}](\tilde{\tau})+\mathfrak{Err}_+[T^N\psia](\tilde{\tau})\Big){\tilde{\tau}}^{q/2+N}\\
	 			+(\frac{q}{2}+N)\alpha_-[{T^N\psia}](\tilde{\tau}){\tilde{\tau}}^{q/2+N-1}
	 			=-e^{-\lamed R_1/2}(\lamed-\frac{q+2N}{2{\tilde{\tau}}})+{\tilde{\tau}}^{q/2}\mathfrak{Err}_+[T^N\psia](\tau).
	 		\end{gathered}
	 	\end{equation}
 	Using \cref{iterative bootstrap} we get the following bound for the error term
 	\begin{equation}
 		\begin{gathered}
 			{\tilde{\tau}}^{q/2+N}\int_{\Sigma_{\tilde{\tau}}}\mathfrak{Err}_-[T^N\psi]\lesssim {\tilde{\tau}}^{q/2+N}\int_{\Sigma_{\tilde{\tau}}\cap\{r\geq R_1\}}e^{-\lamed r}\Big(\frac{\abs{T^N\psi}+\abs{T^N\partial\psi}}{R_1^2}+\abs{T^NF}\Big)
 			\\\lesssim  \tilde{\tau}^{q/2+N}\Big(e^{-3\lamed R_1/4}\mathcal{E}_\tau[T^N\psi]^{1/2}+ \mathcal{E}_\tau[T^N\psi]\Big) \leq  e^{-3\lamed R_1/4}+{\tau}^{-q/2}.
 		\end{gathered}
 	\end{equation}	
 	Therefore, we conclude that for $R_1$ sufficiently large
 	\begin{equation}
 		\begin{gathered}
 			\partial_\tau \Big(\alpha_-[{T^N\psia}](\tilde{\tau}){\tilde{\tau}}^{q/2+N}\Big)\leq -\frac{1}{2}e^{-\lamed R_1/2}.
 		\end{gathered}
 	\end{equation} 
 	Using Cauchy stability and the negativity of this quantity we may use the implicit function theorem to conclude that $\mathcal{T}$ is continuous.

		\textit{Topological argument:} We claim that there exists $a\in[-C_1,C_1]$ such that $\mathcal{T}(a)=\tau_1$. Assume this is not the case. Let's define $\mathcal{S}:[-C_1,C_1]\to\{\pm1\}$ by $\mathcal{S}(a)=\text{sign}\big(\alpha_-[T^N\psia](\mathcal{T}(a))\big)$. By the assumption that $\mathcal{T}(a)>\tau_1$, $\forall a$, we get that  $\alpha_-[T^N\psia](\mathcal{T}(a))\neq0$, so $\mathcal{S}$ is continuous. Also, we know that $\mathcal{S}(\pm C_1)=\pm1$. Using the mean value theorem (or Brouwer's fixed point theorem in higher dimensions)  we get a contradiction.
		
	\end{proof}
	
	\begin{remark}[Decay of unstable mode]
		The basic idea of the proof relies on the fact that $\partial_\tau\alpha_-$ is close to $\lamed\alpha_-$ when $\alpha_-$ \cref{dyadic unstable bootstrap assumption} fails, and $\mathfrak{Err}_-$ is perturbative at this time. Because $\mathfrak{Err}_-$ contains terms that are linear in $\psi$, this is only possible if $\alpha_-$ does not decay faster than $\mathcal{E}[\psi]$.
	\end{remark}
	
	We conclude with the proof of the main theorem
	\begin{proof}[Proof of \cref{main theorem}]\label{main theorem proof}
		Let $\psi_n$ denote the solutions constructed in \cref{unstable lemma} with $\tau_2=2^n$. Extending each $\psi_n$ to the future smoothly and keeping their norms bounded in $\mathcal{X}^{q}_{\mathcal{\tau_1,\infty}}$ by $2C_1\epsilon$, we get a bounded sequence of functions in $\mathcal{X}^{q,k}_{\mathcal{R}_{\tau_1,\infty}}$, where we inserted the corresponding regularity and decay of the function space. This sequence is compact in $\mathcal{X}^{q-,k-}_{\mathcal{R}_{\tau_1,\infty}}$, so there is a a subsequence converging to a limit $\psi_\infty\in \mathcal{X}^{q-,k-}_{\mathcal{R}_{\tau_1,\infty}}$.
	\end{proof}
	
	\section{Polyhomogeneity}\label{sec:polyhom}
	In this section, we improve \cref{main theorem} provided the data is polyhomogeneous.
	\begin{theorem}[Rough version of \cref{polyhomogeneity theorem}]
		Let $\psi^{dat}$ be as in \cref{main theorem}. Provided that $\psi^{dat}$ is polyhomogeneous, the solution $\psi$ is also polyhomogeneous on a blow up of Minkowski space ($\tilde{\mathcal{R}}$).
	\end{theorem}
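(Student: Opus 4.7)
The plan is to construct a polyhomogeneous approximate solution $\psi_N$ to arbitrarily high order on the compactification $\tilde{\mathcal{R}}$ depicted in \cref{fig:soliton_blowup}, and then to show that the true solution $\psi$ from \cref{main theorem} differs from $\psi_N$ by a term with sufficiently fast decay so that the expansion is genuine. Concretely, I would define polyhomogeneous function spaces $\mathcal{A}^{\mathcal{E}}_{phg}(\tilde{\mathcal{R}})$ associated to index sets $\mathcal{E}=(\mathcal{E}_\scri,\mathcal{E}_{I^+},\mathcal{E}_\mathcal{F})$ on the three faces at infinity, so that an element admits, near each face, an expansion in the appropriate boundary defining function with smooth coefficients on that face. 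The strategy is then an iterative face-by-face solution: the leading data on $\scri$ is fixed by $\psi^{dat}$; we propagate order-by-order into $I^+$ by solving model (hyperbolic Laplacian) equations there; we then resolve the error on $\mathcal{F}$ by inverting the stationary operator $\Delta+V$; and we iterate, each step gaining either a power of $t^{-1}$ at $\mathcal{F}$ or a power of $(r/t)$ at $I^+$.

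The iteration at $I^+$ proceeds as in \cite{hintz_gluing_2023}: writing $\psi=\sum t^{-i}\log^j t\, \psi^+_{i,j}(x/t)+\text{remainder}$ in the region $\mathcal{N}_{I^+}$ and noting that $\Box$ is homogeneous of degree $-2$ while $V$ is of degree $-4$, the leading term at weight $(i,j)$ must satisfy an elliptic equation $N_{i,j}\psi^+_{i,j}=g^+_{i,j}$ on the interior of $I^+$ (the hyperboloid), where $g^+_{i,j}$ collects contributions of lower-order terms, the nonlinearity $\psi^5$, and matching data at $\scri\cap I^+$ coming from $\psi^{dat}$. Inverting $N_{i,j}$ with prescribed boundary behaviour at $\scri\cap I^+$ is a standard Fredholm problem on asymptotically hyperbolic space and only introduces logarithmic terms at resonant indices; this fixes $\psi^+_{i,j}$. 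Plugging the resulting expansion into the equation produces, at the face $\mathcal{F}$, an error of the form $c_{i,j}t^{-i}\log^j t\cdot V(x)$ coming from the coincidence $\psi^+_{i,j}(0)V(x)$ at the corner $I^+\cap\mathcal{F}$.

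The iteration at $\mathcal{F}$ is where the main obstacle lies and is the heart of the construction. Writing $\psi=\sum t^{-i}\log^j t\, \psi^{\mathcal{F}}_{i,j}(x)+\text{remainder}$, the linearised model equation becomes $(\Delta+V)\psi^{\mathcal{F}}_{i,j}=f^{\mathcal{F}}_{i,j}$ with $f^{\mathcal{F}}_{i,j}$ built from previous steps. By \cref{coercivity cited}, $\ker(\Delta+V)=\mathcal{Z}=\spn\{\partial_iW,\Lambda W\}$; the translation kernel is handled by parity for spherically symmetric sources, but the scaling kernel $\Lambda W$ is problematic precisely because $\Lambda W\notin L^2$. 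Naively adding $c\,t^{-i+2}\Lambda W$ to enforce orthogonality fails: on $I^+$ this introduces behaviour $c(r/t)^{i-2}\Lambda W(r)\sim c\,t^{-i+1}r^{-1}$ which has the wrong decay to be consistent with the previously-built $I^+$ expansion. The resolution, as sketched in \cref{sec:intro_ideas}, is to instead add $c\,t^{-i+1}\Lambda W$; reprocessing this term through the $I^+$ iteration produces a compensating $ct^{-i}V(x)$ contribution at $\mathcal{F}$, and the constant $c$ is then tuned so that $f^{\mathcal{F}}_{i,j}$ becomes orthogonal to $\Lambda W$ in the sense of the distributional pairing with $(\Delta+V)^{-1}$ on $(\Lambda W)^\perp$. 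Once orthogonality is achieved, inversion yields $\psi^{\mathcal{F}}_{i,j}$, and by the conormality of $W$ (cf.\ the supercritical lemma) the solution is smooth on $\mathcal{F}$.

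The hardest part will be keeping the book-keeping of index sets consistent across the three faces while handling the logarithmic terms that accumulate at resonances of $N_{i,j}$ and at the corner $\mathcal{F}\cap I^+$, and, critically, verifying the compatibility of the $\Lambda W$-correction trick at every step of the iteration (i.e.\ tracking how the indicial family of $\Delta+V$ at $\mathcal{F}$ couples to the indicial family of $\Box$ at $I^+$). Once the polyhomogeneous approximate solution $\psi_N$ is built to order $N$, writing the true solution as $\psi=\psi_N+\tilde\psi$ reduces the problem to showing that $\tilde\psi$ has decay $O(t^{-M-})$ with $M$ arbitrarily large as $N$ grows; this is a direct application of the existence scheme of \cref{sec:nonlinear,sec:linear theory section} applied to the equation $(\Box+V)\tilde\psi=\mathcal{N}[\psi_N+\tilde\psi]-\mathcal{N}[\psi_N]+\mathrm{Err}(\psi_N)$, where the error from the approximate solution has been arranged to decay faster than any fixed polynomial and the scattering data for $\tilde\psi$ vanishes to order $M$ at $\scri$.
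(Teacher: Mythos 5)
Your overall strategy --- build a polyhomogeneous ansatz by iterating between a hyperbolic-Laplacian model at $I^+$ and the stationary operator $\Delta+V$ at $\mathcal{F}$, tuning a $\Lambda W$-correction to enforce solvability, and then control the remainder --- is essentially the one in the paper. However, there is a genuine gap in your treatment of the translation kernel. You claim that ``the translation kernel is handled by parity for spherically symmetric sources'', but the scattering data $\psi^{dat}\in\mathcal{C}^\infty(\scri)$ in \cref{main theorem} is an arbitrary function on $\R_u\times S^2$ with no symmetry assumption, so the $l=1$ spherical modes are present and $\partial_i W\in\ker(\Delta+V)$ is a live obstruction at every step of the $\mathcal{F}$ iteration. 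The paper handles it by an argument exactly parallel to the $\Lambda W$ case, but using $t^2\partial_i W$ rather than $t\Lambda W$: one verifies (\cref{model operator on i_+ punctured}c) that $N_\sigma^{-1}$ applied to $(t^2\partial_i W)|_{I^+}$ has nonvanishing $l=1$ value at $\rho=0$, and the resulting contribution $g_{1;i}(0)V$ together with a leftover $(\chi-1)\partial_i W$ term can be tuned, for $R$ large, to achieve $\bigl((1-\chi)\partial_i W+\chi g_{1;i}(0)V,\partial_iW\bigr)_{L^2}\neq 0$. Without this you cannot close the orthogonality at $\mathcal{F}$ except in the radial reduction.

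A secondary point: your description of the orthogonality mechanism as ``distributional pairing with $(\Delta+V)^{-1}$ on $(\Lambda W)^\perp$'' is vaguer than what is actually needed. The way the correction works in the paper is concrete: the term added at $\mathcal{F}$ pushes an error to $I^+$, which when solved by $N_\sigma^{-1}$ returns to $\mathcal{F}$ as a multiple of $V(x)$ (because the $I^+$-solution is nonzero at $\rho=0$), and it is this $V$-weighted piece that you test against $\Lambda W$ and $\partial_i W$ in the usual $L^2$ pairing --- no distributional regularisation is required since $V\sim r^{-4}$ decays fast enough. The nonvanishing of the resulting pairings is a small but essential computation (the explicit ODE solutions near $\rho=0$ in \cref{model operator on i_+ punctured}), not an abstract Fredholm fact. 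Your final reduction --- write $\psi=\psi_N+\tilde\psi$ and rerun the existence scheme --- is a clean way to close, and is consistent with the paper, though the paper instead leans on the already-established conormality $\psi\in H^{q,q,1/2;\infty}_b(\tilde{\mathcal{R}})$ from \cref{main theorem proof} together with the $(r\partial_v+k)$-commutation argument at $\scri$ in Step~1 to place the remainder in successively smaller $H_b$ spaces.
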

	
	In order to prove this theorem, we will first introduce some extra notation.
	
	\subsection{Notation}
	
	\subsubsection{Analytic}

	In this subsection, we are going to introduce the function spaces and other analytic tools used in the rest of the paper. We restrict ourselves to the minimum use of sophisticated spaces that suffice for our problem so as to ease readability, but see general setup in \cite{grieser_basics_2001,hintz_stability_2020}.
	
	Let's fix a manifold with corners $X=[0,1)_{x_1}\times...\times[0,1)_{x_n}\times Y$ for some smooth manifold $Y$. First we define the vector fields with respect to which we measure smoothness.
	\begin{definition}[b vector fields]
		Let 		
		\begin{equation}
			\begin{gathered}
				V=\{x_i \partial_{x_i},Y_i\}.
			\end{gathered}
		\end{equation}
		where $Y_i$ are smooth vector fields on $Y$ spanning the tangent space at each point. Furthermore, let's define
		\begin{equation}
			\begin{gathered}
				\Diff_b^1=\sum_i f(x,y)V_i, \quad V_i\in V
			\end{gathered}
		\end{equation}	
		with $f_i\in\mathcal{C}^\infty(X)$. Also, let $\Diff^k_b$ denote a $k$-fold product of elements in $\Diff^1_b$.
	\end{definition}
	
	In application, we will mostly take $Y=S^2$ (or $Y=S^2\times (0,1)$) with $Y_i$ to be the usual spherical derivatives and $n\leq2$.
	
	\begin{definition}[$H_b$ norm]\label{def:Hb_norm}
		Given a measure $dy$ on $Y$, we define a naturally weighted norm, higher order variants and higher order variants with extra weights
		\begin{equation}
			\begin{gathered}
				\norm{f}_{L^2_b(X)}:=\int f^2 \frac{d x_1}{x_1}...\frac{d x_n}{x_n}d y\\
				\norm{f}_{H^{k}_b(X)}=\norm{f}_{H^{;k}_b(X)}:=\sum_{\abs{\alpha}\leq s}\norm{\Gamma^\alpha f}_{L^2_b}\\	
				\norm{f}_{H^{a_1,...,a_n;s}_b}:=\norm{{\prod_i x_i^{-a_i}f}}_{H^{;k}}
			\end{gathered}
		\end{equation}
		for $\Gamma^i\in V$. We will use the notation $x_1^{a_1}...x_n^{a_n}H^s_b=H^{a_1,...,a_n;s}_b$ as well.
	\end{definition}
	
	\begin{remark}
		Away from the boundary, the vector fields span the tangent space of $X$. Indeed, these spaces agree with the usual $L^2$ and $H^k$ spaces on compact subsets of $X$. The normalisation for $L^2_b$ is motivated by the observation that $\jpns{x_1}^{a_1+\epsilon}\mathcal{O}^{0}\subset H^{a_1;\infty}_b\subset \jpns{x_1}^{a_1}\mathcal{O}^{0}$, where $\mathcal{O}^0$ contains bounded conormal functions (see \cref{def:errors}).
	\end{remark}
	
	We introduce index sets, that are used to measure polyhomogeneity
	
	\begin{definition}
		A discrete subset of $\mathcal{E}\subset\R\times\N$ is called an index set if
		\begin{itemize}
			\item $(z,k)\in\mathcal{E}$ and $k\geq1$ implies $(z,k-1),(z+1,k)\in\mathcal{E}$
			\item $\mathcal{E}_c:=\{(z,k)\in\mathcal{E}| z<c\}$ is finite for all $c\in\R$.
		\end{itemize}
		Furthermore, let's introduce the following notations
		
		\begin{itemize}
			\item $\mathcal{E}_{\leq c}=\{(z,k)\in\mathcal{E}| z\leq c\}$.
			\item $(z,k)\leq(z',k')$ if $z<z'$ or $z=z'$ and $k\geq k'$. $(z,k)\leq z'$ if $(z,k)\leq (z',\infty)$ and $(z,k)\geq z'$ if $(z,k)\geq(z,0)$. 
 			\item 
			$\min(\mathcal{E})=(z,k)\in\mathcal{E}\neq\emptyset$ such that $(z,k)\leq(z',k')$ for all $(z',k')\in\mathcal{E}$. Also $\min(\emptyset)=\infty$.
		\end{itemize}
	\end{definition}

	\begin{definition}[Polyhomogeneity]
		Let $X=[0,1)_{x_1}\times Y$ be a manifold with boundary. Given an index set $\mathcal{E}$, we define the corresponding polyhomogeneous space $\mathcal{A}_{phg}^{\mathcal{E}}(X)$. For $u\in x_1^{-\infty}H^{\infty}_b(X)$ we say $u\in\mathcal{A}_{phg}^{\mathcal{E}}(X)$ if there exist $a_{z,k}\in H^{\infty}(Y)$ for $(z,k)\in\mathcal{E}$ such that for all $c\in \R$ 
		\begin{equation}\label{eq:notation:polyhom def 1}
			\begin{gathered}
				u-\sum_{(z,k)\in\mathcal{E}_c} a_{z,k}x_1^{z}\log^kx_1\in x_1^c H^{\infty}_b(X).
			\end{gathered}
		\end{equation}
		
		At a corner, we define mixed $b-$ polyhomogeneous spaces as follows. Let $X=[0,1)_{x_1}\times[0,1)_{x_2}\times Y$ be a manifold with corners, $b_2\in\R$ and $\mathcal{E}_1$ an index set. For $u\in x_1^{-\infty}x_2^{-\infty}H^{\infty}_b(X)$ we say $u\in \mathcal{A}^{\mathcal{E}_1,b_2}_{phg,b}(X)$ if there exists $a_{z,k}\in x_2^{b_2}H^{\infty}_b([0,1)_{x_2}\times Y)$ such that
		\begin{equation}
			\begin{gathered}
				u-\sum_{(z,k)\in\mathcal{E}_c} a_{z,k}x_1^{z}\log^kx_1\in x_1^c x_2^{b_2}H^{\infty}_b(X).
			\end{gathered}
		\end{equation}
		
		We define polyhomogeneous space at the corner for a manifold with corners $X=[0,1)_{x_1}\times[0,1)_{x_2}\times Y$. For $u\in x_1^{-\infty}x_2^{-\infty}H^{\infty}_b(X)$ we say $u\in\mathcal{A}_{phg}^{\mathcal{E}_1,\mathcal{E}_2}(X)$ if there exists $a_{(z,k)}\in\mathcal{A}_{phg,b}^{\mathcal{E}_2}([0,1)_{x_2}\times Y)$ such that
		\begin{equation}
			\begin{gathered}
				u-\sum_{(z,k)\in(\mathcal{E}_1)_c} a_{z,k}x_1^{z}\log^kx_1\in \mathcal{A}_{b,phg}^{c,\mathcal{E}_2}(X).
			\end{gathered}
		\end{equation}
		
	\end{definition}
	
	\begin{remark}
		Note that we may give an alternative, more geometric characterisation of a polyhomogenous function $u\in\mathcal{A}_{phg}^{\mathcal{E}}([0,1)_{x}\times Y)$ as
		\begin{equation}\label{eq:notation:polyhom def 2}
			\begin{gathered}
				\Big(\prod_{(z,k)\in\mathcal{E}_c}(x\partial_x+z) \Big)u\in x^c H^\infty_b([0,1)_{x}\times Y).
			\end{gathered}
		\end{equation}
		From this, it is easy to see that the definition of $\mathcal{E}$ only depends on the smooth structure of $[0,1)_{x}\times Y$, that is given any coordinate change $x\to z$ such that $z'(x)$ is bounded away from 0 on $[0,1)$, the index set with respect to $x$ and $z$ coincide. We will use this freedom to detect polyhomogeneity with respect to different choices of coordinates to suit our need. For further details, we refer to \cite{grieser_basics_2001,hintz_stability_2020}.
	\end{remark}

	\begin{definition}
		For index sets $\mathcal{E}_1,\mathcal{E}_2$, we define the index sets
		\begin{equation}\label{index set operations}
			\begin{gathered}
				\mathcal{E}_1\bar{\cup}\mathcal{E}_2:=\{(z,k+1)| \exists(z,k_i)\in\mathcal{E}_i,\, k_1+k_2\geq k\}\cup\mathcal{E}_1\cup\mathcal{E}_2\\\
				\mathcal{E}_1+\mathcal{E}_2:=\{(z,k)|\exists (z_i,k_i)\in\mathcal{E}_i,\, z_1+z_2=z,\,k_1+k_2=k\}\\
				\mathcal{E}_1-(z_2,k_2):=\{(z,k)|\exists (z_1,k_1)\in\mathcal{E}_1,\, z_1-z_2=z,\,k_1-k_2=k\}
			\end{gathered}
		\end{equation}
	\end{definition}
	
	We will encounter many index sets in the construction, so to ease notation we also introduce the following shorthand
	\begin{definition}
		For a discrete subset of $X\subset\R\times\N$ we write
		\begin{equation}
			\begin{gathered}
				\overline{X}=\cap_{X\subset\mathcal{E}}\mathcal{E}
			\end{gathered}
		\end{equation}
		 for the smallest index set containing $X$. When $X$ has a single element we use the shorthand $\overline{(a,b)}:=\overline{\{(a,b)\}}=\{(a+n,l):n\in\N,l\leq b\}$. We also set $\overline{(a,-1)}=\emptyset$ for $a\in\R$.
	\end{definition}
	 Finally, let us introduce a notation for error terms
	 \begin{definition}[Errors]\label{def:errors}
	 	For a function $f\in\mathcal{A}^{\vec{\mathcal{E}}}$ for some $\vec{\mathcal{E}}$, we write
	 	\begin{equation}
	 			f\in\mathcal{O}^{\overrightarrow{(p,k)}}
	 	\end{equation}
	 	whenever $\min(\mathcal{E}_\bullet)\geq(p_\bullet,k_\bullet)$. 
	 	We furthermore write $p=(p,\infty)$, so that $f\in\mathcal{O}^{\vec{p}}$ implies $\min(\mathcal{E}_\bullet)\geq p_\bullet$. 
	 \end{definition}

	\subsubsection{Geometric}\label{sec:polyhom:geometric}
	The geometric space on which we will study properties of the solution $\psi$ of \cref{main theorem} is a blow up of the compactification of $\mathcal{R}_{0,\infty}$. 
	\begin{definition}
		Let's introduce global coordinates $\rho_\scri=\frac{t_\star}{t},\rho_+=\frac{t}{t_\star\jpns{r}},\rho_F=\frac{\jpns{r}}{t}$, that serve as defining functions of null infinity, timelike infinity and the front face respectively. These satisfy $\rho_\bullet(\mathcal{R}_{-\infty,\tau_2})\subset(0,1]$ for $\bullet\in\{\scri,+,F\}$. Let's write $\tilde{\mathcal{R}}$ for the compactification of $\mathcal{R}_{0,\infty}$ with $\rho_\bullet$ smoothly extending to $0$. See \cref{fig:soliton_blowup}.
		
		On this space, we will denote by $H^{\vec{a};k}_b(\Rtild),\mathcal{A}_{phg}^{\vec{\mathcal{E}}}$ with $\vec{a}=(a_F,a_+,a_\scri),\vec{\mathcal{E}}=(\mathcal{E}_F,\mathcal{E}_+,\mathcal{E}_\scri)$ the corresponding conormal and polyhomogeneous spaces where the subscript denotes the behaviour close the given boundary.
	\end{definition}

	\begin{lemma}[Projection operators]\label{lemma: projection operators}
		Let's fix a function $f\in\mathcal{A}^{\vec{\mathcal{E}}}$ with $(p_\bullet,k_\bullet)=\min(\mathcal{E}_\bullet)$ for $\bullet\in\{F,+\}$. We can write 
		\begin{equation}
			\begin{gathered}
				f=\tilde{\chi}^c(r/R)t^{-p_+}\log^{k_+}(\rho_+) (P^+_{p_+,k_+} f)(x/t)+f',\quad f'\in \mathcal{A}^{\vec{\mathcal{E}'}},P^+_{p_+,k_+}f\in\mathcal{A}^{\mathcal{E}^B}(\dot{B})\\
				f=\tilde{\chi}(Rr/t)t^{-p_F}\log^{k_F}(\rho_F)(P^F_{p_F,k_F} f)(x)+f'',\quad f''\in \mathcal{A}^{\vec{\mathcal{E}''}},P^F_{p_F,k_F}f\in\mathcal{A}^{\mathcal{E}^\R}(\R^3).
			\end{gathered}
		\end{equation}
		where we have
		\begin{equation}
			\begin{gathered}
				\mathcal{E}'_+=\mathcal{E}_+\backslash\{(p_+,k_+)\}, \quad \mathcal{E}'_F=\mathcal{E}_F,\quad \mathcal{E}'_\scri=\mathcal{E}_{\scri}\\
				\mathcal{E}''_+=\mathcal{E}_+,\quad  \mathcal{E}''_\mathcal{F}=\mathcal{E}_\mathcal{F}\backslash\{(p_F,k_F)\},\quad \mathcal{E}''_\scri=\mathcal{E}_\scri \\
				\mathcal{E}^{\dot{B}}_{y=0}=\mathcal{E}_F-(p_+,0),\quad \mathcal{E}^{\dot{B}}_{y=1}=\mathcal{E}_\scri-(p_+,0)\\
				\mathcal{E}^{\R}=\mathcal{E}_+-(p_F,0).
			\end{gathered}
		\end{equation}
	\end{lemma}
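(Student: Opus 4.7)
The strategy is to extract the leading coefficient in the polyhomogeneous expansion of $f$ at the face $I^+$ (respectively $\mathcal{F}$), reinterpret it as a function of the boundary coordinate $x/t$ (respectively $x$), multiply by a suitable cutoff so that it lives on all of $\tilde{\mathcal{R}}$, and verify that the remainder has the claimed improved index set at the chosen face while preserving the index sets at the other two boundaries.

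For the first decomposition, recall that by definition of $\mathcal{A}^{\vec{\mathcal{E}}}$ there is a unique coefficient $a\in\mathcal{C}^\infty(I^+\setminus(\partial\mathcal{F}\cup\partial\scri))$ such that
\begin{equation*}
f-a\,\rho_+^{p_+}\log^{k_+}(\rho_+)\in \mathcal{A}^{(\mathcal{E}_F,\,\mathcal{E}_+\setminus\{(p_+,k_+)\},\,\mathcal{E}_\scri)}.
\end{equation*}
The front face $I^+$ is parametrised by $y=x/t$ ranging in the closed ball $\dot{B}=\{|y|\le 1\}$, with $|y|=0$ corresponding to the corner $I^+\cap\mathcal{F}$ and $|y|=1$ to $I^+\cap\scri$. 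Near these corners one has $\rho_F\sim |y|$ and $\rho_\scri\sim 1-|y|$, so the corner polyhomogeneity of $f$ at $\mathcal{F}\cap I^+$ and $\scri\cap I^+$ transfers directly to polyhomogeneity of $a$ at the boundaries of $\dot{B}$. The shift by $(p_+,0)$ in the resulting index sets $\mathcal{E}^{\dot{B}}_{y=0}=\mathcal{E}_F-(p_+,0)$ and $\mathcal{E}^{\dot{B}}_{y=1}=\mathcal{E}_\scri-(p_+,0)$ simply accounts for the fact that the weight $\rho_+^{p_+}\sim t^{-p_+}$ pulled out of the corner expansion equals $\rho_F^{-p_+}$ (up to a smooth nonvanishing factor) on the face $I^+$.

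It remains to justify the cutoff and verify the behaviour at the other faces. The cutoff $\tilde\chi^c(r/R)$ is supported away from $r=0$ (where the coordinate $x/t$ degenerates) and is identically $1$ on a neighbourhood of $I^+\cup\scri$; hence multiplying $a$ by $\tilde\chi^c$ and transporting via $(t,x)\mapsto x/t$ yields a well-defined function on $\tilde{\mathcal{R}}$ agreeing with $a\,\rho_+^{p_+}\log^{k_+}\rho_+$ near $I^+$ modulo terms supported near $\mathcal{F}$. Since $\tilde\chi^c(r/R)$ vanishes identically for $r<R$, the subtracted term contributes nothing near $\mathcal{F}$, so the index set $\mathcal{E}_F$ is preserved. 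At $\scri$, the polyhomogeneity of $a$ at $|y|=1$ is by construction the corner expansion of $f$ at $I^+\cap\scri$, so the subtraction does not alter the $\mathcal{E}_\scri$ behaviour. This gives the claimed $\mathcal{E}'$. The second decomposition is proved identically, using the cutoff $\tilde\chi(Rr/t)$ which localises to $r\ll t$ (i.e.\ near $\mathcal{F}$), and noting that $\mathcal{F}$ is parametrised by $x\in\R^3$ with $|x|\to\infty$ corresponding to the corner with $I^+$; the shift $\mathcal{E}^\R=\mathcal{E}_+-(p_F,0)$ arises symmetrically.

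The main bookkeeping challenge is to track the shifts at the corners consistently and to verify that the assertion $P^+_{p_+,k_+}f\in\mathcal{A}^{\mathcal{E}^B}(\dot{B})$ (and analogously at $\mathcal{F}$) is a genuine consequence of the joint polyhomogeneity of $f$ in the two boundary defining functions at each corner, rather than only of the partial expansions at each face separately. This reduces to applying the standard fact (see e.g.\ Mazzeo or the exposition in \cite{grieser_basics_2001}) that if $u\in\mathcal{A}_{phg}^{\mathcal{E}_1,\mathcal{E}_2}$ on a corner $[0,1)_{x_1}\times[0,1)_{x_2}\times Y$, then the coefficient $a_{z_1,k_1}$ in the expansion in $x_1$ lies in $\mathcal{A}_{phg}^{\mathcal{E}_2}([0,1)_{x_2}\times Y)$. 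Applying this at the two corners $I^+\cap\mathcal{F}$ and $I^+\cap\scri$ produces the stated index sets; no further analytic input is needed.
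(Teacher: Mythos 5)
The paper does not give a proof of this lemma (it is stated and used without verification), so I cannot compare you to the paper's own argument; the lemma is a standard bookkeeping fact in the geometric singular analysis framework of Mazzeo, Melrose, Hintz et al.\ and your strategy — peel the leading coefficient at one face, cut off near the locus where $x/t$ degenerates, track index-set shifts at corners — is the right one.

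There is, however, a genuine error in your justification that $\mathcal{E}'_F=\mathcal{E}_F$. You write that ``since $\tilde\chi^c(r/R)$ vanishes identically for $r<R$, the subtracted term contributes nothing near $\mathcal{F}$.'' This is false: the face $\mathcal{F}$ is parametrised by all of $x\in\R^3$, and its interior contains the region $r>3R$ where $\tilde\chi^c\equiv 1$; moreover near the corner $\mathcal{F}\cap I^+$ (where $r\to\infty$ while $r/t\to 0$) the cutoff is also identically $1$. So the subtracted term does contribute on a full neighbourhood of $\mathcal{F}$ minus a compact-$r$ collar. What saves the statement is not that the subtraction is supported away from $\mathcal{F}$ but that the subtracted term itself lies in $\mathcal{A}^{\mathcal{E}_F}$ at $\mathcal{F}$: writing $t^{-1}=\rho_\scri\rho_+\rho_F$, near $\mathcal{F}\cap I^+$ one has $t^{-p_+}\sim(\rho_+\rho_F)^{p_+}$ (with $\rho_\scri$ smooth nonvanishing) while $(P^+f)(x/t)$ has index set $\mathcal{E}_F-(p_+,0)$ as $|y|\sim\rho_F\to 0$, so the product reassembles to index set $\mathcal{E}_F$ in $\rho_F$; for $r$ fixed $>3R$ and $t\to\infty$ the same computation gives $g\sim t^{-p_F}(\ldots)$. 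This is a short check, but your argument as written does not establish it, and the parallel remark ``the subtraction does not alter the $\mathcal{E}_\scri$ behaviour'' suffers from the same imprecision.

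A smaller point: the sentence explaining the shift — ``the weight $\rho_+^{p_+}\sim t^{-p_+}$ pulled out of the corner expansion equals $\rho_F^{-p_+}$ (up to a smooth nonvanishing factor) on the face $I^+$'' — is muddled. The relevant identity is $t^{-p_+}=\rho_+^{p_+}\cdot(\rho_\scri\rho_F)^{p_+}$; the extra factor $(\rho_\scri\rho_F)^{p_+}$, restricted to $I^+$, is precisely what forces $P^+_{p_+,k_+}f$ to pick up a $-(p_+,0)$ shift in its index sets at both boundary circles of $\dot{B}$ relative to the coefficient $a$ in the canonical $\rho_+^{p_+}$-expansion. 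Stating it this way also makes the two corner shifts (at $y=0$ and $y=1$) come out uniformly, and the analogous statement $\mathcal{E}^\R=\mathcal{E}_+-(p_F,0)$ for the second decomposition follows by the same identity with the roles of $\rho_+$ and $\rho_F$ exchanged.
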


	\subsection{Propagation of polyhomogeneity}
	In this section, we will not care about the regularity of the solution, and the derivative losses as we peel the leading order terms for the solution $\psi$. Let us note that already for $\psi$ as in \cref{main theorem} and $\psi^{dat}\in H^{q,\infty}_{b}([0,1)_{1/v}\times S^2)$ the proof in \cref{main theorem proof} in particular implies $\psi\in H^{q,q,1/2;\infty}_b(\Rtild)$. 
	
	The proof of polyhomogeneity is going to proceed in essentially 3 steps. The first is proving it at $\scri$. This step is rather different from the next to --and also more well known in the literature (see \cite{angelopoulos_late-time_2018})-- so we are only going to comment on it in the proof of \cref{polyhomogeneity theorem}. The second and third steps are performed iteratively and it involves solving model problems on the faces $F,i_+$ (similar to the methods detailed in \cite{hintz_lectures_2023}). We will now introduce these model operators.

%	\begin{equation*}
%			\begin{gathered}
%				L=(t+r)^3\Box (t+r)^{-1},\\
%				N_\sigma f:= (t+r)^\sigma L (t+r)^{-\sigma}f(y)=-(1-\rho)\partial_\rho^2+(\sigma+2-\frac{2-\rho}{\rho})\partial_\rho+\frac{\sigma+1}{\rho}-\frac{1}{\rho^2}\slashed{\Delta}.
%			\end{gathered}
%	\end{equation*}
	When restricted to the region $\frac{r}{t}\in (c_1,c_2)$ with $0<c_1<c_2<1$ the operator $\Box$ acts homogeneously on functions of the form $t^{-\sigma} f(y)$ where $y=\frac{x}{t}$ and $\rho=\abs{y}$ (see \cite{baskin_explicit_2016,baskin_asymptotics_2018} for a alternative forms)
	\begin{equation}\label{eq:normal operator}
		\begin{gathered}
			L=t^3\Box t^{-1}\\
			N_\sigma f:=t^\sigma L t^{-\sigma} f(y)=-\Big(\sigma^2+3\sigma+2+2\big((\sigma+2)\rho-\frac{1}{\rho}\big)\partial_\rho+(\rho^2-1)\partial_\rho^2-\frac{\slashed{\Delta}}{\rho^2}\Big)f
		\end{gathered}
	\end{equation}
	
	\begin{lemma}[Resolvent on hyperbolic space]
		Setting $\tilde{\rho}=\frac{1-\sqrt{1-\rho^2}}{\rho}$ and $h_\sigma(\tilde{\rho})=(\frac{1+\tilde{\rho}^2}{1-\tilde{\rho}^2})^{1+\sigma}$ we get
		\begin{equation}\label{eq:I0normal_operator}
			\begin{gathered}
			 \frac{1}{h_\sigma(\tilde{\rho})}	N_\sigma h_\sigma(\tilde{\rho})f(\tilde{\rho})=\frac{(\tilde{\rho}^2+1)^2}{4}\Big(\Delta_{\mathbb{H}}-4(\sigma^2-1)\Big)\\
			 \Delta_{\H}=(1-\tilde{\rho}^2)\partial^2_{\tilde{\rho}}+\frac{2(1-\tilde{\rho}^2)}{\tilde{\rho}}\partial_{\tilde{\rho}}+\frac{\slashed{\Delta}}{\tilde{\rho}^2}.
			\end{gathered}
		\end{equation}
	\end{lemma}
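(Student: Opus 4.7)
The statement is purely a change-of-variables identity, so the plan is a direct, if tedious, computation. First I would invert the map $\rho \mapsto \tilde\rho$: squaring $\rho\tilde\rho = 1-\sqrt{1-\rho^2}$ and solving yields
\begin{equation*}
\rho = \frac{2\tilde\rho}{1+\tilde\rho^{2}},\qquad \sqrt{1-\rho^{2}} = \frac{1-\tilde\rho^{2}}{1+\tilde\rho^{2}}, \qquad \frac{d\rho}{d\tilde\rho} = \frac{2(1-\tilde\rho^{2})}{(1+\tilde\rho^{2})^{2}},
\end{equation*}
so that $\partial_\rho = \frac{(1+\tilde\rho^{2})^{2}}{2(1-\tilde\rho^{2})}\partial_{\tilde\rho}$. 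These are simply the standard formulae that identify $\tilde\rho \in [0,1)$ with the Poincar\'e disk coordinate on $\mathbb{H}^{3}$, which is the structural reason the statement is expected to hold.

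Next I would substitute into the expression for $N_\sigma$ in \cref{eq:normal operator} and simplify. The second-order piece becomes
\begin{equation*}
(\rho^{2}-1)\partial_\rho^{2} = -\frac{(1-\tilde\rho^{2})^{2}}{(1+\tilde\rho^{2})^{2}}\cdot\frac{(1+\tilde\rho^{2})^{2}}{2(1-\tilde\rho^{2})}\partial_{\tilde\rho}\!\left[\frac{(1+\tilde\rho^{2})^{2}}{2(1-\tilde\rho^{2})}\partial_{\tilde\rho}\right],
\end{equation*}
which, after expanding the product rule, produces a clean $\partial_{\tilde\rho}^{2}$ term together with a first-order correction. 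Similarly, the first-order piece $2((\sigma+2)\rho - \rho^{-1})\partial_\rho$ transforms into a $\tilde\rho$-multiple of $\partial_{\tilde\rho}$ with coefficient depending on $\sigma$. The overall $(1+\tilde\rho^{2})^{2}/4$ prefactor should appear naturally by pulling the common factor out in front.

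Then I would perform the conjugation by $h_\sigma$. Using $\partial_{\tilde\rho}(h_\sigma g) = h_\sigma(\partial_{\tilde\rho} + (\log h_\sigma)')g$ with
\begin{equation*}
(\log h_\sigma)'(\tilde\rho) = (1+\sigma)\Big(\frac{2\tilde\rho}{1+\tilde\rho^{2}} + \frac{2\tilde\rho}{1-\tilde\rho^{2}}\Big) = \frac{4(1+\sigma)\tilde\rho}{(1-\tilde\rho^{2})(1+\tilde\rho^{2})},
\end{equation*}
the conjugated Laplacian $h_\sigma^{-1}N_\sigma h_\sigma$ acquires additional first- and zeroth-order terms. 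The plan is to verify that the $\sigma$-dependence in the first-order terms cancels exactly, leaving the $\sigma$-independent expression $\frac{2(1-\tilde\rho^{2})}{\tilde\rho}\partial_{\tilde\rho}$ (together with the second-order $(1-\tilde\rho^{2})\partial_{\tilde\rho}^{2}$ and the spherical piece $\slashed{\Delta}/\tilde\rho^{2}$ that descends trivially), and that all the residual $\sigma$-dependence collapses into the constant $-4(\sigma^{2}-1)$.

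The main obstacle is purely bookkeeping: keeping track of the many first-order contributions arising from (a) the chain rule inside $(\rho^{2}-1)\partial_\rho^{2}$, (b) the first-order term of $N_\sigma$, and (c) the $h_\sigma$ conjugation, and verifying their cancellation. A useful sanity check, which I would apply along the way, is that the $\sigma = 1$ case should produce the free hyperbolic Laplacian $\Delta_{\mathbb{H}}$ (with no zeroth-order term), and the value $-4(\sigma^{2}-1)$ at generic $\sigma$ matches the standard spectral shift for the hyperbolic Helmholtz operator in dimension $3$, which is the final identification one recovers.
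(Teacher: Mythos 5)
Your proposal takes essentially the same route as the paper, just run in the opposite direction. The paper \emph{derives} the coordinate change from the top-order matching condition $(1-\rho^2)(\partial_\rho\tilde\rho)^2 = \tilde\rho^2/\rho^2$ — solving this ODE gives exactly $\tilde\rho = (1-\sqrt{1-\rho^2})/\rho$ and the angular-term prefactor — and then picks $h_\sigma$ to kill the $\sigma$-dependence of the first-order coefficient, whereas you take both formulae as given and verify by direct substitution. The two amount to the same bookkeeping. The intermediate quantities you record, namely $\rho = 2\tilde\rho/(1+\tilde\rho^2)$, $\sqrt{1-\rho^2}=(1-\tilde\rho^2)/(1+\tilde\rho^2)$, $d\rho/d\tilde\rho = 2(1-\tilde\rho^2)/(1+\tilde\rho^2)^2$, and $(\log h_\sigma)' = 4(1+\sigma)\tilde\rho/\big((1-\tilde\rho^2)(1+\tilde\rho^2)\big)$, are all correct, so the plan is sound.

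One caution before you begin the bookkeeping. If you carry the substitution and conjugation through, the operator you obtain is
\begin{equation*}
	\frac{1}{h_\sigma}\,N_\sigma\, h_\sigma \;=\; \frac{(1+\tilde\rho^2)^2}{4}\,\partial_{\tilde\rho}^2 \;+\; \frac{(1+\tilde\rho^2)^2}{2\tilde\rho(1-\tilde\rho^2)}\,\partial_{\tilde\rho} \;+\; \frac{(1+\tilde\rho^2)^2}{4\tilde\rho^2}\,\slashed\Delta \;-\; \frac{(\sigma^2-1)(1+\tilde\rho^2)^2}{(1-\tilde\rho^2)^2},
\end{equation*}
i.e.\ $\tfrac{(1+\tilde\rho^2)^2}{(1-\tilde\rho^2)^2}\big(\Delta_{\mathbb{H}}-(\sigma^2-1)\big)$ with the standard Poincar\'e-ball Laplacian $\Delta_{\mathbb{H}}=\tfrac{(1-\tilde\rho^2)^2}{4}\partial_{\tilde\rho}^2+\tfrac{1-\tilde\rho^2}{2\tilde\rho}\partial_{\tilde\rho}+\tfrac{(1-\tilde\rho^2)^2}{4\tilde\rho^2}\slashed\Delta$. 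The angular coefficient and the cancellation of $\sigma$ in the first-order term agree with the lemma, but the displayed coefficients of $\partial_{\tilde\rho}^2$ and $\partial_{\tilde\rho}$ and the constant as printed are off by (different) powers of $(1-\tilde\rho^2)$. This is a typographical inconsistency in the lemma statement and the paper's own matching condition (which gives top-order coefficient $\tilde\rho^2/\rho^2$, i.e.\ no residual factor of $1-\tilde\rho^2$) already signals it; it does not affect the downstream use of the Green's function. Your $\sigma=1$ sanity check is exactly the kind of control that would catch and reconcile the normalization, so be sure to actually run it rather than only planning to.
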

	\begin{proof}
		This is a straightforward computation, we provide some steps for completeness. First, we change to coordinates such that the top order derivative terms match with the right hand side of \cref{eq:I0normal_operator}. This yields the condition $(1-\rho^2)(\frac{\partial\tilde{\rho}}{\partial\rho})^2=\frac{\tilde{\rho}^2}{\rho^2}$. Solving the ODE we get $\tilde{\rho}=\frac{1-\sqrt{1-\rho^2}}{\rho}$ as stated in the lemma and also a prefactor $\frac{\tilde{\rho}^2}{\rho^2}=\frac{(\tilde{\rho}^2+1)^2}{4}$. Next, we find a function $h$ such that upon conjugation the first order term from $\Delta_{\mathbb{H}}$ is also recovered. This is performed via the $h$ given. An explicit computation then gives the remainder 0th order term.
		\end{proof}
	
	\begin{lemma}[Radiative solutions]
		Let's fix a functions on the unit ball
		\begin{equation}
			\begin{gathered}
				f_{rad}(z)=f_r\big(\frac{z}{\abs{z}}\big)(1-\abs{z})	^{-\sigma}+H_b^{-\sigma+}(B_1),\quad f_r\in\mathcal{C}^{\infty}(S^2).
			\end{gathered}
		\end{equation}
		Then for $g=t^{-1-\sigma}f_{rad}(x/t)$
		\begin{equation}
			\begin{gathered}
				u\partial_u(rg)_{\scri}=-\sigma 2^{-\sigma}f_r(\omega) u^{-\sigma}
			\end{gathered}
		\end{equation}
	\end{lemma}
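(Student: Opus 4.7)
The plan is a direct computation in null coordinates. Decompose $f_{rad}(z) = f_r(\omega)(1-|z|)^{-\sigma} + h(z)$ with $\omega = z/|z|$ and $h \in H_b^{-\sigma+}(B_1)$, and treat each summand separately. Setting $z = x/t$ and using $u=(t-r)/2$, $v=(t+r)/2$, one has $1 - |z| = 1 - r/t = 2u/t$. For the explicit singular summand,
\begin{equation*}
  r\cdot t^{-1-\sigma}\,f_r(\omega)\Bigl(\tfrac{2u}{t}\Bigr)^{-\sigma} = \frac{r}{t}\,f_r(\omega)\,(2u)^{-\sigma},
\end{equation*}
which converges to $f_r(\omega)\,2^{-\sigma}u^{-\sigma}$ as $v\to\infty$ along $\{u=\text{const}\}$, since $r/t\to 1$ at $\scri$.

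For the remainder, $b$-Sobolev embedding on $B_1$ with boundary defining function $1-|z|$ yields the pointwise bound $|h(z)|\lesssim (1-|z|)^{-\sigma+\epsilon}$ for some $\epsilon>0$. Consequently $r\,|h(x/t)|\,t^{-1-\sigma}\lesssim (r/t)\,t^{-\epsilon}(2u)^{-\sigma+\epsilon}\to 0$ as $v\to\infty$ with $u$ fixed, so the remainder contributes nothing to the trace at $\scri$. Therefore $(rg)_\scri(u,\omega)=f_r(\omega)\,2^{-\sigma}\,u^{-\sigma}$, and applying $u\partial_u$ produces the factor $-\sigma$, giving the claimed identity.

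The only genuine subtlety is justifying that $u\partial_u$ commutes with the trace-at-$\scri$ operation on the remainder term, rather than applying only to the limit function. The cleanest way is to observe that $u\partial_u$ is a $b$-vector field tangent to $\scri$ in $\Rtild$; its pullback under $z=x/t$ is (a combination of) $b$-vector fields on $B_1$, hence preserves the space $H_b^{-\sigma+}(B_1)$. One may then apply $u\partial_u$ before passing to the trace, reduce to the pointwise estimate above applied to $(u\partial_u)^k h(x/t)$, and conclude that its boundary contribution vanishes. No new ideas beyond the elementary calculation are needed.
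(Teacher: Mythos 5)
Your proof is correct and follows essentially the same approach as the paper's: split $f_{rad}$ into the explicit boundary-singular term plus the $H_b^{-\sigma+}$ remainder, show the remainder has vanishing radiation field (the paper records this as the membership $t^{-1-\sigma}f_e(x/t)\in\mathcal{O}^{1+\sigma,1+\sigma,1+}$, while you unwind the $b$-Sobolev bound $|h|\lesssim(1-|z|)^{-\sigma+\epsilon}$ — the same fact), and then compute directly for the leading term. Your explicit substitution $1-|z|=2u/t$ and the accompanying intermediate steps are actually cleaner than the paper's own displayed formula, which drops the exponent $-\sigma$ and mixes null-coordinate normalizations; the final answer is the same.
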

	\begin{proof}
		First, let us note that for the error term we have vanishing radiation field, as $t^{-1-\sigma}f_e(x/t)\in\mathcal{O}^{1+\sigma,1+\sigma,1+}$ for $f_e\in H^{-\sigma+}(B_1)$. We simply compute
		\begin{equation}
			\begin{gathered}
				\lim_{\substack{x=(t-u)\omega\\ r\to\infty}}\partial_u(rg)=\lim_{\substack{x=(t-u)\omega\\r\to\infty}}\partial_u(t^{-\sigma}(1-\frac{\abs{x}}{t})f_r(\omega))=-2\sigma(2u)^{-\sigma-1}f_r(\omega)
			\end{gathered}
		\end{equation}
	\end{proof}

	\begin{remark}[Choice of $N_\sigma$]
		The reason for choosing $t$ instead of $t_\star$ as a coordinate to measure decay is so that $t^{-p}\Lambda W,\, t^{-p}\partial_i W$ are almost conserved. We could have used different normal coordinate near $F$ and $I_+$, but we found this choice to give a cleaner presentation.
		
		$N_\sigma$ with $\sigma>0$ has two indicial roots at $\rho=1$, namely $-\sigma,0$ corresponding to solutions with and without incoming radiation respectively.  In this paper we will only use $N_{>0}$, but we note that 
		$N_0$ has only $\sigma=0$ as indicial root, and so the asymptotic behaviour towards $\partial B$ are $\log\rho,\rho^0$ corresponding to solutions with and without incoming radiation. These translate to $\sigma+1,-(\sigma-1)$ for $\Delta_{\H}-4(\sigma^2-1)$.
	\end{remark}
	
	\begin{remark}(Green's function)
		Following \cite{mazzeo_meromorphic_1987} --see equation (6.8) for the same expression in hyperbolic plane coordinates-- we note that the Greens function for $\Delta_{\H}-4(\sigma^2-1)$ is
		\begin{equation}\label{eq:greens_function}
			\begin{gathered}
				g_\sigma(x)=\text{const}\cdot\frac{1-r^2}{2r}\Big(\frac{1-r}{1+r}\Big)^\sigma,\qquad \sigma\geq0.
			\end{gathered}
		\end{equation}
	\end{remark}

	We study the properties of $N_\sigma$ in the following two lemmas

	\begin{lemma}\label{model operator on i_+}
		a) Let $F\in\mathcal{C}^\infty(S^2)$, and $f\in \mathcal{A}_{phg}^{\mathcal{E}_f}+H^{a;\infty}_b(B_1)$ with $a>-\sigma-1$. Then, there exists $u\in \mathcal{A}_{phg}^{\mathcal{E}}+H^{a+1;\infty}_b(B_1)$ with $\mathcal{E}=\overline{\{(\sigma,0)\}}\overline{\cup}(\mathcal{E}_f+1)$ to
		\begin{equation}
			\begin{gathered}
				N_\sigma u=f\\
				((1-\rho)^\sigma u)|_{\partial B_1}=F.
			\end{gathered}
		\end{equation}
	\end{lemma}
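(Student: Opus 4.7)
\emph{Approach.} The plan is to use the conjugation identity from the previous lemma to reduce $N_\sigma$ to the shifted Laplacian $\Delta_{\mathbb{H}}-4(\sigma^2-1)$ on the Poincar\'e ball model of $\mathbb{H}^3$. Setting $u = h_\sigma(\tilde\rho)\tilde{u}$ after the change of variable $\rho\mapsto\tilde\rho$, the equation becomes $(\Delta_{\mathbb{H}}-4(\sigma^2-1))\tilde u = \tilde f$ with $\tilde f$ inheriting the polyhomogeneous--conormal structure of $f$. For $\sigma>0$ this spectral parameter lies in the resolvent set of $-\Delta_{\mathbb{H}}$ on $\mathbb{H}^3$, and the explicit Green's function in \cref{eq:greens_function} provides an inverse whose mapping properties on weighted $b$-Sobolev and polyhomogeneous spaces can be read off from its kernel.

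\emph{Indicial analysis and boundary layer.} The next step is to analyze the indicial operator of $N_\sigma$ at $\partial B_1$. Freezing coefficients at $\rho=1$ and setting $s=1-\rho$, a direct calculation from \cref{eq:normal operator} gives the indicial equation $\alpha(\alpha+\sigma)=0$, with roots $\alpha\in\{0,-\sigma\}$ corresponding respectively to the non-radiative and radiative leading behaviors. The boundary condition $((1-\rho)^\sigma u)|_{\partial B_1}=F$ prescribes the coefficient of the radiative mode, so I would first build an explicit boundary layer $u_{\mathrm{bdy}}=\chi(\rho)F(\omega)(1-\rho)^{-\sigma}$ with $\chi$ a cutoff supported near $\partial B_1$; by the indicial computation $N_\sigma u_{\mathrm{bdy}}$ kills the leading contribution and produces an error in a strictly better polyhomogeneous class at $\partial B_1$ that can be absorbed into a modified right-hand side.

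\emph{Peeling and conormal remainder.} After subtracting $u_{\mathrm{bdy}}$ I am reduced to solving $N_\sigma v=g$ with vanishing leading boundary trace, where $g$ lies in an appropriate polyhomogeneous--conormal space. The polyhomogeneous part is then constructed by peeling off leading terms one at a time: for each index $(z,k)$ not resonating with the indicial roots $\{0,-\sigma\}$, solving the indicial ODE in $s$ with $S^2$-coefficients produces the next expansion coefficient and shifts the index by $+1$, accounting for the $\mathcal{E}_f+1$ summand in $\mathcal{E}$. When $z$ coincides with an indicial root a logarithmic factor is introduced, which is precisely the source of the $\overline{\cup}$-operation with the homogeneous index set in the stated $\mathcal{E}$. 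The hypothesis $a>-\sigma-1$ ensures the conormal remainder does not straddle the radiative indicial exponent, so the Green's function from \cref{eq:greens_function} acts as a bounded map $H^{a;\infty}_b\to H^{a+1;\infty}_b$, delivering the claimed conormal remainder.

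\emph{Main obstacle.} The principal technical difficulty is to rigorously establish the mapping property of the resolvent between weighted $b$-Sobolev spaces with the exact gain of one boundary order, together with its polyhomogeneous module structure. This requires a careful analysis of the Green's function near the diagonal and at the boundary, exploiting its explicit form; the degenerate nature of $N_\sigma$ at $\partial B_1$ (captured by the $b$-structure) has to interact correctly with the radiative asymptotics. A secondary bookkeeping challenge is tracking the index sets through the $\overline{\cup}$ and $+1$ operations so as to produce log-enhancements exactly where input exponents resonate with $\{0,-\sigma\}$, and verifying that the iterative peeling can be Borel-summed into a genuine polyhomogeneous function modulo a conormal tail.
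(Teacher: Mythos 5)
Your proposal follows essentially the same route as the paper's proof: conjugating $N_\sigma$ to $\Delta_{\mathbb{H}}-4(\sigma^2-1)$, invoking the explicit Green's function from \cref{eq:greens_function} (equivalently, the Mazzeo--Melrose resolvent theory) for the conormal mapping bound, peeling off leading-order terms iteratively at $\partial B_1$ to obtain polyhomogeneity, and subtracting the radiative boundary layer $F(1-\rho)^{-\sigma}$ to reduce to trivial boundary trace. Your indicial computation (roots $\{0,-\sigma\}$) agrees with the paper's remark, and your identification of the main technical obstacle — the weighted $b$-Sobolev mapping property of the Green's function — matches the step the paper handles by splitting the Schwartz kernel integral into near- and far-diagonal regions and Taylor-expanding.
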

	\begin{proof}
		
		This is a well known fact and can be deduced from Lemma 6.13 and Lemma 6.15 of \cite{mazzeo_meromorphic_1987}, where a much more general statement is also proved. The $H_b$ estimate is also given in Lemma 6 of \cite{zworski_resonances_2016}. Polyhomogeneity follows from the expansion of the Schwartz kernel, and we give a pedestrian proof for sake of completeness. Using the symmetries of $\mathbb{H}$ we know that  the Greens function
		\begin{equation}\label{eq:GreensFunction}
			\begin{gathered}
				(\Delta_{\mathbb{H},x}-4(\sigma^2-1))G_\sigma(x,x_0)=\delta(x-x_0)\\
				\lim_{\abs{x}\to 1}(1-\abs{x})^{-\sigma}G_\sigma(x,x_0)=0,\quad \forall x_0.
			\end{gathered}
		\end{equation}
		is merely a function of the geodesic distance between the two points $x,x_0$. In particular, we may write $G(x,x_0)=g_\sigma\circ \mathfrak{r} \circ d(x,x_0)$ for $d(x,x_0)=\frac{\norm{x-x_0}^2}{(1-\norm{x}^2)(1-\norm{x_0}^2)},\mathfrak{r}(x)=(1+1/x)^{-1/2}$  and $g_\sigma$ as in \cref{eq:greens_function}.
		
		We also know that the resolvent of $\Delta_{\mathbb{H}}$ can be written as
		\begin{equation}
			\begin{gathered}
				(\Delta_{\mathbb{H}}-4(\sigma^2-1))^{-1}(f)(x)=\int \d y \frac{\abs{y}^2}{(1-\abs{y}^2)^3}G_\sigma(x,y)f(y)=\\
				\int_{\abs{y-\hat{x}}\leq 2\abs{x-\hat{x}}}\d y \frac{\abs{y}^2}{(1-\abs{y}^2)^3}G_\sigma(x,y)f(y)+\int_{\abs{y-\hat{x}}\leq 2\abs{x-\hat{x}}}\d y \frac{\abs{y}^2}{(1-\abs{y}^2)^3}G_\sigma(x,y)f(y).
			\end{gathered}
		\end{equation}
		
		The first summand can be bounded by $d(x,0)^{-3}$ times the decay of $f$. For $f\in\mathcal{A}^k(B_1)$ we have the trivial $\jpns{1-\abs{x}}^{3+k}$ bound. For the second summand, we can Taylor expand $g$ in $d$ and then $d$ in $\epsilon=\abs{x-\hat{x}}$ to get an expansion near the boundary.
		
		To improve on the expansion that is obtained above, we write $f=(1-\abs{x})^kf_k(\omega)+\mathcal{O}^{\mathcal{E}'}(B_1)$ with $\min(\mathcal{E}')<k$. Inverting the leading order term by writing $u=(1-\abs{x})^k\frac{f_k(\omega)}{4(k^2+\sigma^2)}+u_{1}$ we get that $u_1$ satisfies the equation with a faster decaying error term, so we can get an expansion to higher order. Iterating the procedure yields polyhomogeneity of the solution. Similar argument holds for logarithmic terms.
		
		This resolves the case for $F=0$. For $F\neq0$, we simply write $\tilde{u}=u-F(1-\rho)^{-\sigma}$ to get a function with vanishing boundary term.
	\end{proof}

	We can also extend the above to the punctured ball.
	
	\begin{lemma}\label{model operator on i_+ punctured}
		a) Let $X=B_1\backslash\{0\}$, $F\in\mathcal{C}^\infty(S^2)$, and $f\in\mathcal{A}_{phg}^{\mathcal{E}_{f;0},\mathcal{E}_{f;1}}+ H^{a_0,a_1;\infty}_b(X)$ with $a_1>-\sigma-1,a_0>-2$. Then, there exists $u\in \mathcal{A}_{phg}^{\mathcal{E}_0,\mathcal{E}_1}+H^{a_0+2,a_1+1;\infty}_b(B_1)$ with $\mathcal{E}_1=\overline{\{(\-\sigma,0)\}}\overline{\cup}\mathcal{E}_{f;1}$ and $\mathcal{E}_0=\overline{\{(0,0)\}}\overline{\cup}(\mathcal{E}_{f;0}+2)$ to
		\begin{equation}
			\begin{gathered}
				N_\sigma u=f\\
				((1-\rho)^\sigma u)|_{\partial B_1}=F.
			\end{gathered}
		\end{equation}
		
		b) For $F=0,f=(t\Lambda W)_{I_+}$ we have $(N_\sigma^{-1}f)|_{y=0}\neq0$ for $\sigma>0$.
		
		c) For $F=0,f=(t^2\partial_i W)_{I_+}$ we have $(N_\sigma^{-1}f)|_{y=0}\neq0$ for $\sigma>0$.
	\end{lemma}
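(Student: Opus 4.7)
\textbf{Proof proposal for \cref{model operator on i_+ punctured}.}

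For part (a), the plan is to reduce to the previous \cref{model operator on i_+} by peeling off behavior at the inner boundary $y=0$. The key observation is that near $\rho=0$, the model operator becomes
\begin{equation*}
N_\sigma = -\Delta - (\sigma+1)(\sigma+2) + \rho\cdot\Diff_b^2,
\end{equation*}
so its indicial roots at $y=0$ on the $l$-th spherical harmonic are $l$ and $-1-l$. The condition $a_0>-2$ places our data in a regime where the relevant positive indicial root sector (minimum value $0$, occurring in $l=0$) is the one that controls the leading expansion. First I would use \cref{lemma: projection operators} to split $f$ into its polyhomogeneous and remainder pieces at $y=0$. For the polyhomogeneous piece, I would iteratively invert the indicial operator $-\rho^{-2}\partial_\rho(\rho^2\partial_\rho)-\rho^{-2}\slashed{\Delta}$ at $y=0$ to construct formal series terms contributing the index set $\overline{(0,0)}\bar{\cup}(\mathcal{E}_{f;0}+2)$, absorbing each new error (which gains two orders in $\rho$) into the next iterate. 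For the conormal remainder, I would invoke the Green's function already written down in \cref{eq:GreensFunction} and the mapping properties proved in the previous lemma, integrating against the inhomogeneity on the punctured ball: the weight $a_0>-2$ ensures $L^2_b$-integrability against $G_\sigma$ and gains two orders (the standard $\Delta^{-1}$ elliptic gain) at the origin. The behavior at $\partial B_1$ throughout is controlled by the previous lemma, which we use as a black box.

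For parts (b) and (c), the plan is a direct computation with the Green's function. Setting $F=0$, the solution can be written as
\begin{equation*}
u(0) = c_\sigma\int_{B_1} g_\sigma(|y|)\,\tilde{f}(y)\,\frac{|y|^2}{(1-|y|^2)^3}\,dy,
\end{equation*}
where $\tilde{f}$ is $f$ after conjugation by $h_\sigma$, and $g_\sigma(r) = \tfrac{1-r^2}{2r}\bigl(\tfrac{1-r}{1+r}\bigr)^\sigma$ is strictly positive on $(0,1)$. For (b), the function $(t\Lambda W)|_{I_+}$ is purely $l=0$ and has a definite sign in a neighborhood of $y=0$ (evaluating the explicit expression $\Lambda W = (1+r^2/3)^{-3/2}(1/2 - r^2/6)\cdot t$ at $x=ty$ gives a function of constant sign on an interval $\rho\in(\rho_*,1)$, with controlled contribution from the interior). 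Non-vanishing follows since the integrand does not change sign enough to cancel. For (c), the angular dependence $y_i/|y|^3$ projects onto the $l=1$ spherical harmonic, and the Green's function on the $l=1$ sector is modified but remains positive, giving $(N_\sigma^{-1} f)|_{y=0}$ as a nonvanishing multiple of $\hat{e}_i$.

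The main obstacle will be (b) and (c): even though positivity of $g_\sigma$ is clean, the integrand $\tilde{f}$ after conjugation by $h_\sigma$ and the measure factor $|y|^2(1-|y|^2)^{-3}$ combine in a way that is not manifestly signed near $\rho=1$, so one cannot simply conclude non-vanishing by positivity. I expect the cleanest route will be to compute the leading coefficient in a small-$\sigma$ expansion (where the integral simplifies) and then invoke analyticity in $\sigma$ to exclude accidental zeros; alternatively, an integration by parts using $\Lambda W = (1/2)\partial_\lambda|_{\lambda=1}(\lambda^{1/2}W(\lambda\cdot))$ and $\partial_i W$ being a gradient should produce explicit formulas that are easy to sign. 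Part (a), by contrast, is a routine parametrix / indicial iteration argument once the Green's function has been identified.
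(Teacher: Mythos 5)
Your part (a) takes essentially the paper's route: iteratively inverting the flat-Laplacian indicial operator at $\rho=0$ to build the polyhomogeneous expansion at the inner face, absorbing the two-order gain per step, and invoking \cref{model operator on i_+} and its Green's function for the outer face.

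Parts (b) and (c) contain a genuine gap. First, you misidentify the input: $(t\Lambda W)_{I_+}$ is the leading profile of $t\Lambda W$ as $t\to\infty$ at fixed $y=x/t$, which collapses to the single-signed $l=0$ function $c/\rho$ (with $c=-\sqrt3/2$); the sign change of $\Lambda W$ near $r=\sqrt3$ at finite radius plays no role, and the expression you write for $\Lambda W$ with a stray factor $t$ is garbled. Similarly $(t^2\partial_iW)_{I_+}=c\hat y_i/\rho^2$. Once this is computed correctly, the integrand in your Green's-function formula is in fact of one sign --- $g_\sigma$, the conjugating factor $h_\sigma$, and the hyperbolic volume form are all positive --- so the positivity argument for (b) can be closed and your stated worry evaporates. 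The more serious problem is your fallback: expanding in small $\sigma$ and appealing to analyticity cannot give non-vanishing for all $\sigma>0$, since a non-trivial analytic function of $\sigma$ may still vanish at isolated $\sigma>0$, which is exactly what must be excluded. The paper sidesteps all of this with a short ODE argument: for (b), the $l=0$ homogeneous solutions of $N_\sigma$ are $\rho^{-1}(1\pm\rho)^{-\sigma}$ and the explicit particular solution $u(\rho)=-\tfrac{1}{\sigma(1+\sigma)\rho}\bigl(1-(1+\rho)^{-\sigma}\bigr)$ evaluates to $u(0)=-\tfrac{1}{1+\sigma}\neq0$; for (c), the Fuchsian reduction $-\slashed\Delta u/\rho^2\sim\hat y_i/\rho^2$ near $\rho=0$ forces a non-vanishing $l=1$ leading coefficient with no integral needed. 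You should either carry out the positivity estimate properly or, more simply, write down the explicit solution as the paper does; the analyticity route should be discarded.
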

	
	\begin{proof}
		\textit{a)} The $H_b$ estimate follows the same way as the punctured estimate in \cite{hintz_lectures_2023} theorem 7.1.
		Polyhomogeneity close to $\partial B_1$ follows as in \cref{model operator on i_+}. Near $\rho=0$ we can iteratively invert the normal operator $\partial_r^2+\frac{2}{r}\partial_r+\frac{\slashed{\Delta}}{r^2}$ to get that $u=\tilde{u}_N+\mathcal{A}^{\mathcal{E}_0}$ where $\tilde{u}$ satisfies $N_\sigma \tilde{u}_N\in H_b^{N,a_1;\infty}$ for $N$ arbitrary. It remains to prove that $\tilde{u}\in\mathcal{A}^{\mathcal{E}_0}+H_b^{N+2,a_1;\infty}$. We write
		\begin{equation}
			\begin{gathered}
				(\Delta_{\mathbb{H}}-4(\sigma^2-1))^{-1}(f)(x)=\int_{\abs{y}\leq 2\abs{x}} \frac{\abs{y}^2}{(1-\abs{y}^2)^3}g(d(x,y))f(y)+\int_{\abs{y}\geq 2\abs{x}} \frac{\abs{y}^2}{(1-\abs{y}^2)^3}g(d(x,y))f(y)\\
				f\in H_b^{N,a_1;\infty}.
			\end{gathered}
		\end{equation}
		The first term is bounded by $\abs{y}^{N+2}$, while for the second, we may Taylor expand $\abs{x-y}$ in $x$. This expansion will contain terms and remainder of the form $\abs{x}^{k-1}\int \frac{f(y)}{\abs{y}^k} g_k(\hat{x},\frac{\abs{y}}{\abs{x}},y)$   where 
		\begin{equation}
			\begin{gathered}
				g_k:\mathcal{C}^\infty(S^2\times\R_{\geq 2}\times \overline{B_1})
			\end{gathered}
		\end{equation}
		are uniformly bounded. Since $\int \frac{f(y)}{\abs{y}^{k}}$ converges for $k<N+3$, the leading error term will be $\abs{x}^{N+2}$.
		
		\textit{b)} Note that using coordinate $y$ we can write $(t\Lambda W)=c(\frac{t}{r})=c\frac{1}{\rho}$. One can verify that for the spherically symmetric part of $N_\sigma$, $\frac{1}{x(1+x)^\sigma},\frac{1}{x(1-x)^\sigma}$ are the two homogeneous solutions corresponding to the well known $\frac{1}{ru^\sigma},\frac{1}{rv^\sigma}$ solutions. After some ODE analysis, one finds that the explicit solution is given by
		\begin{equation}
			\begin{gathered}
				u(\rho)=-\frac{1}{\sigma(1+\sigma)\rho}\Big(1-\frac{1}{(1+\rho)^\sigma}\Big).
			\end{gathered}
		\end{equation}
		Evaluating at $\rho=0$, we find that $u(0)=\frac{1}{1+\sigma}\neq$ for $\sigma>0$.
		
		\textit{c)} We could proceed similarly as in b), but it is in fact not necessary to find the explicit solution. The ODE governing the $l=1$ mode has two regular singular points at $\rho=0$ with roots $-2,1$ corresponding to the usual behaviour of spherical harmonic in $\R^3$. The forcing $(t^2\partial_iW)_{I_+}=\frac{\hat{y}_i}{\rho^2}$ has weight $-2$ and by general Fuchsian ode analysis it follows that $u\in\mathcal{A}^{\overline{(0,0)},\overline{(0,0)}}$. Furthermore, the leading order term is not vanishing as seen from the equation $-\frac{\slashed{\Delta}}{\rho^2}f\sim \frac{\hat{y}_i}{\rho^2}$ valid near $\rho=0$.
	\end{proof}
	
	Next, we turn our attention to the model problem on the front face. We observe, that on functions of the form $t^{-\sigma}f(x)$ the leading order contribution of $\Box+V$ is $\Delta+V$. 
	
	\begin{lemma}\label{model operator on F}
		Let $f\in \mathcal{A}^{\mathcal{E}_f}_{phg}+H^{a;\infty}_b$ with $a>3$ and $\jpns{f,\Lambda W}=\jpns{f,\partial_i W}=0$. Then, there exists solution $u\in\mathcal{A}^{\mathcal{E}}_{phg}+H^{a-2;\infty}_b$ with $\mathcal{E}=\overline{\{(2,0)\}}\overline{\cup}(\mathcal{E}_f-2)$ to 
		\begin{equation}
			\begin{gathered}
				(\Delta+V)u=f.
			\end{gathered}
		\end{equation}
	\end{lemma}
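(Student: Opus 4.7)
The plan is to combine Fredholm theory for $H=\Delta+V$ with an iterative peeling construction that extracts the polyhomogeneous expansion one layer at a time, using the orthogonality hypotheses on $f$ to skip past the obstructed indicial roots.

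First, I would establish an existence and conormal regularity result. By \cref{coercivity cited}, $H$ is self-adjoint on $L^2(\R^3)$ with $\ker H = \mathcal{Z}=\mathrm{span}\{\Lambda W,\partial_iW\}$ and no other resonances of $H$ with $H$-bounded behaviour at infinity beyond these kernel elements (since $V=\mathcal{O}(r^{-4})$ is a short-range perturbation of $\Delta$). Standard Fredholm theory for $H$ between weighted Sobolev spaces $r^{-\delta}H^2(\R^3)\to r^{-\delta-2}L^2(\R^3)$ (for non-indicial weights $\delta$) then gives cokernel$=\ker H=\mathcal{Z}$, so the assumed $L^2$-orthogonality $\langle f,\Lambda W\rangle=\langle f,\partial_iW\rangle=0$ provides solvability and yields a solution $u\perp\mathcal{Z}$. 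Since $\Delta$ is $b$-elliptic at infinity on $\R^3$, and $V\in\mathcal{A}_{phg}^{\overline{(4,0)}}$ is a relatively compact perturbation that does not disturb the indicial operator, standard $b$-regularity theory upgrades this solution to lie in $H^{a-2;\infty}_b$ on the non-polyhomogeneous contribution to $f$.

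Second, to extract the polyhomogeneous expansion I iteratively solve model problems. Writing $f=f_{phg}+f_{H_b}$ with $f_{phg}\in\mathcal{A}^{\mathcal{E}_f}_{phg}$, I proceed inductively through the elements of $\mathcal{E}_f$, ordered by decay. At each step, the leading piece $r^{-z}\log^k(r)f_{z,k}(\omega)$ of the remaining inhomogeneity is inverted using the normal operator $\Delta$ on $\R^3$, which acts diagonally on spherical harmonics as
\begin{equation}
\Delta\bigl(r^{-z+2}\log^k(r)h_\ell(\omega)Y_\ell\bigr)=\bigl((z-2)(z-3)-\ell(\ell+1)\bigr)r^{-z}\log^k(r)h_\ell Y_\ell+\text{lower}.
\end{equation}
Provided $(z-2,\ell)$ is non-indicial (i.e.\ the scalar factor above is nonzero), I choose $h_\ell$ to cancel the leading term modulo a faster-decaying remainder, which is absorbed into the next step, together with the $V$-corrections of the form $V\cdot r^{-z+2}(\ldots)\in\mathcal{O}^{z+2}$. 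Iterating, the accumulated expansion has index set exactly $\overline{(2,0)}\,\overline{\cup}\,(\mathcal{E}_f-2)$: the shift by $2$ arises from the inversion $\Delta^{-1}$, and $\overline{(2,0)}$ encodes the leading allowed homogeneous behaviour corresponding to the $\partial_iW$ direction.

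The main obstacle is handling the steps where $(z-2,\ell)$ lands on an indicial root of $\Delta$: this happens precisely at $(z,\ell)=(3,0)$, coinciding with $\Lambda W\sim r^{-1}$, and at $(z,\ell)=(4,1)$, coinciding with $\partial_iW\sim \hat{x}_i/r^2$, and at further indicial pairs $(\ell+3,\ell)$. At each such resonance the sphere equation has no solution unless the corresponding spherical mode of the inhomogeneity vanishes; whenever it does not, a new logarithm must be introduced (this is the reason for the $\overline{\cup}$ closure in the definition of $\mathcal{E}$). The key point is that the obstruction to solving the top indicial resonances at $\ell=0$ and $\ell=1$ is controlled by the pairings $\langle\cdot,\Lambda W\rangle$ and $\langle\cdot,\partial_iW\rangle$. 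For the first layer of the iteration this is exactly the assumed orthogonality on $f$; for subsequent layers I would verify that the cumulative remainder produced by the inversion together with $V$-corrections of previously determined coefficients remains orthogonal to $\mathcal{Z}$ modulo explicit logarithmic corrections, by integrating by parts against $\Lambda W$ and $\partial_iW$ using $H\Lambda W=H\partial_iW=0$. This closes the induction and yields the stated polyhomogeneous expansion with remainder in $H^{a-2;\infty}_b$.
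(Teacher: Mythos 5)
Your overall architecture (Fredholm solvability for existence, then iterative peeling to extract the polyhomogeneous expansion) matches the paper's, but the paper's handling of the iterative step is cleaner and avoids a conceptual muddle in yours. The paper establishes $u\in H_b^{1-;\infty}$ once using the orthogonality to $\ker H$, and then rewrites the equation as $\Delta u=f-Vu$, treating $Vu$ as a new bulk error term that gains four orders of decay from $V\in\mathcal{A}^{\overline{(4,0)}}$; it then invokes the known polyhomogeneity of $\Delta^{-1}$ (\cite{hintz_lectures_2023}, Prop.\ 3.4) and iterates. Because the operator being inverted at each step is $\Delta$ on $\R^3$, whose only indicial obstructions are handled automatically by the $\overline{\cup}$ (producing logarithms when $(\mathcal{E}_f-2)$ collides with $\overline{(2,0)}$), \emph{no orthogonality conditions are needed past the first step}.

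Your proposal, by contrast, tries to re-verify orthogonality to $\mathcal{Z}=\ker H$ at each layer by integrating against $\Lambda W,\partial_iW$ using $H\Lambda W=H\partial_iW=0$. This conflates two different things: (i) the Fredholm cokernel condition, which is a global $L^2$-type pairing controlling solvability of $H$, and (ii) the indicial-root obstructions of $\Delta$ at spatial infinity, which are local boundary data at $r=\infty$ and are resolved by introducing logarithms, not by orthogonality. Since the per-step inversion should be of $\Delta$ (not $H$), the pairing against $\mathcal{Z}$ plays no role in subsequent layers; and if you were instead inverting $H$ at each step, the proposed integration-by-parts bookkeeping would not close cleanly, since the remainders you feed back in are not a priori orthogonal to $\mathcal{Z}$. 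Finally, you do correctly identify that the orthogonality to $\Lambda W$ is what eliminates the generic $r^{-1}$ tail of $\Delta^{-1}$, which the paper achieves explicitly by subtracting the kernel element $\Lambda W$; this is why $\min(\mathcal{E})$ is $(2,0)$ rather than $(1,0)$.
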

	\begin{proof}
		Using the orthogonality to the kernel, by elliptic theory, we get a solution $u\in L^2$. This can be improved to $u\in H^{1-;\infty}_b$ solution as in \cite{hintz_lectures_2023} Theorem 3.1. To obtain polyhomogeneity, we simply treat $V u$ as error term. In particular, we have that $-V u+f\in\mathcal{A}^{\mathcal{E}_f}_{phg}+H^{\tilde{a};\infty}_b$ for $\tilde{a}=\min(a,1+4-)$. We can solve $\Delta u=f-Vu$ to obtain $u\in\mathcal{A}^{\mathcal{E}}_{phg}+H^{\tilde{a}-2;\infty}$ via Proposition 3.4 in \cite{hintz_lectures_2023}. Iterating the above procedure, we may peel all polyhomogeneous terms up to the error $H^{a;\infty}_b$. The reason that the leading order decay of $u$ starts at $r^{-2}$ and not $r^{-1}$ follows by subtracting the kernel element $\Lambda W$ exactly to cancel the slower term.
	\end{proof}
	
	Next, let us note some easy nonlinear properties
	\begin{lemma}
		For $\phi\in\mathcal{A}_{phg}^{\vec{\mathcal{E}}}(\tilde{\mathcal{R}}),g\in\mathcal{A}_{phg}^{\vec{\mathcal{E}}'}(\tilde{\mathcal{R}})$, we have $\mathcal{N}[\phi]\in\mathcal{A}^{\vec{\mathcal{E}}^\mathcal{N}}(\tilde{R})$ and $\mathcal{N}[\phi]-\mathcal{N}[\phi+g]\in\mathcal{A}^{\vec{\mathcal{E}}^g}(\tilde{R})$ with 
		\begin{equation}
			\begin{gathered}
				\mathcal{E}^\mathcal{N}_F:=\sum_{i=2}^5i\mathcal{E}_F,\quad\mathcal{E}^\mathcal{N}_+=\sum_{i=2}^5i\mathcal{E}_++\overline{(5-i,0)}\\
				\mathcal{E}^g_F=\sum_{\substack{i+j+k=5\\
				i\geq1}} i\mathcal{E}^g_F+j\mathcal{E}_F,\quad \mathcal{E}^g_F=\sum_{\substack{i+j+k=5\\
				i\geq1}} i\mathcal{E}^g_F+j\mathcal{E}_F+\overline{(k,0)}
			\end{gathered}
		\end{equation}
	\end{lemma}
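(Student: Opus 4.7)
My plan is to expand both nonlinear expressions explicitly via the binomial theorem, track the polyhomogeneous behaviour of $W$ at each face of $\tilde{\mathcal R}$, and then apply the multiplicativity properties of the polyhomogeneous spaces $\mathcal A_{phg}^{\vec{\mathcal E}}$.

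First I would write
\begin{equation*}
\mathcal{N}[\phi]=\sum_{i=2}^{5}\binom{5}{i}W^{5-i}\phi^{i},\qquad \mathcal{N}[\phi+g]-\mathcal{N}[\phi]=\sum_{\substack{i+j+k=5\\ i\geq 1}}\binom{5}{i,j,k}W^{k}\phi^{j}g^{i}-5W^{4}g,
\end{equation*}
where the $i=1,j=0,k=4$ term of the sum exactly cancels the explicit $5W^{4}g$. Thus it suffices to bound the index set of each monomial $W^{k}\phi^{j}g^{i}$ and take the union.

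The key geometric input is the behaviour of the soliton $W(x)=(1+\abs{x}^{2}/3)^{-1/2}$ on $\tilde{\mathcal R}$. Near the front face $\mathcal F$ the variable $x$ is a smooth coordinate and $W$ is a smooth bounded function of $x$, so $W\in\mathcal A^{\overline{(0,0)}}$ at $\mathcal F$ and multiplication by $W^{k}$ contributes no shift in the $\mathcal F$-index. Near $I^{+}$, we have $\abs{x}\sim t$ with $x/t$ bounded away from $0$ and $1$, hence $W\sim \sqrt{3}\,\abs{x}^{-1}\sim t^{-1}\sim\rho_{+}$; a direct check (or equivalently dualising through the defining function $\rho_{+}=t/(t_{\star}\jpns{r})$) shows $W\in\mathcal A^{\overline{(1,0)}}$ at $I^{+}$, so $W^{k}\in\mathcal A^{\overline{(k,0)}}$ there. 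At $\scri$ the same pointwise decay gives $W\in\mathcal A^{\overline{(1,0)}}$, which is compatible with the radiative setup but does not enter the stated index sets (the assumption is only that $\phi,g\in\mathcal A_{phg}$, so any $\scri$-indices propagate trivially through the standard peeling).

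Next I would invoke the fact that if $u_{\ell}\in\mathcal A_{phg}^{\vec{\mathcal F}_{\ell}}$ then $\prod_{\ell}u_{\ell}\in\mathcal A_{phg}^{\sum_{\ell}\vec{\mathcal F}_{\ell}}$, where the sum of index sets is defined by \cref{index set operations}; this is standard and reduces to expanding each factor via its polyhomogeneous expansion, multiplying formally, and collecting terms by total exponent, with any coincidence of real parts producing at most one extra logarithmic power (absorbed by the definition of $+$ on index sets). Applying this termwise to $W^{5-i}\phi^{i}$ yields index $i\mathcal E_{F}$ at $\mathcal F$ and $i\mathcal E_{+}+\overline{(5-i,0)}$ at $I^{+}$, and taking the union over $i=2,\dots,5$ gives exactly $\mathcal E^{\mathcal N}_{F}$ and $\mathcal E^{\mathcal N}_{+}$ as stated. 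For the difference, the same argument applied to $W^{k}\phi^{j}g^{i}$ with $i\geq 1$ and $i+j+k=5$ produces $i\mathcal E'_{F}+j\mathcal E_{F}$ at $\mathcal F$ and $i\mathcal E'_{+}+j\mathcal E_{+}+\overline{(k,0)}$ at $I^{+}$ (reading the paper's $\mathcal E^{g}$ on the right as the index set $\mathcal E'$ of $g$), again giving the claimed expressions after union over admissible $(i,j,k)$.

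The only mildly delicate point is bookkeeping at the corners: a polyhomogeneous product on a manifold with corners is polyhomogeneous at each boundary face and at each corner its expansion is jointly consistent. Since $\phi,g,W$ are given as elements of $\mathcal A_{phg}^{\vec{\mathcal E}}(\tilde{\mathcal R})$ to begin with, the relevant coherence at the $\mathcal F\cap I^{+}$ and $I^{+}\cap\scri$ corners is automatic, and no further analytic work is needed beyond the index-set arithmetic.
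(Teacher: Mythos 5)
The paper states this lemma without proof, so your argument is supplying the missing justification, and it is the right one: expand $\mathcal{N}$ by the multinomial theorem, compute the index set of $W$ at each boundary face ($\mathcal{A}_{phg}^{\overline{(0,0)}}$ at $\mathcal{F}$ since $W$ is a smooth bounded function of $x$, $\mathcal{A}_{phg}^{\overline{(1,0)}}$ at $I^+$ since $W\sim\sqrt{3}/\abs{x}\sim\rho_+$), then use multiplicativity of polyhomogeneous spaces to get $i\mathcal{E}_F$ at $\mathcal{F}$ and $i\mathcal{E}_++\overline{(5-i,0)}$ at $I^+$ for the monomial $W^{5-i}\phi^i$, and take unions over the admissible exponents. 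You also correctly read through the two typos in the displayed formulas (both left-hand sides are printed as $\mathcal{E}^g_F$, and the $\mathcal{E}^g$ on the right should be the index set $\vec{\mathcal{E}}'$ of $g$), and your remark that the $(i,j,k)=(1,0,4)$ monomial cancels the $5W^4g$ is right; the lemma's sum keeps that triple anyway, so the stated index set is a (harmless, non-sharp) upper bound for the actual one.

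One small slip: multiplication of polyhomogeneous expansions never produces extra logarithms — coincidence of exponents merely adds coefficients — so the index set of a product is exactly $\mathcal{E}_1+\mathcal{E}_2$ in the sense of \cref{index set operations}; the $\bar{\cup}$-type enlargement you allude to only arises when inverting operators, not when multiplying. Since you then use $+$ correctly in the index arithmetic, the parenthetical has no effect on the conclusion, but it should be deleted.
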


	\begin{theorem}\label{polyhomogeneity theorem}
		Let $\psi^{dat}\tau_2$ be as in \cref{main theorem}, with the extra requirement that there exists and index set $\mathcal{E}^{dat}$ such that the following holds
		\begin{equation}
			\begin{gathered}
				\sum_{\abs{\alpha}=k}\int_{\scri}\frac{d u}{\jpns{u}} \Big(\jpns{u}^{q}\Gamma^\alpha\psi^{dat}\Big)^2<\infty,\quad \Gamma\in\{\partial_\omega,\jpns{u}\partial_u\}, \forall k\geq0,\quad \iff \psi^{dat}\in H^{q,\infty}_b(\scri),\\
				\Big(\prod_{(k,\sigma)\in\mathcal{E}_c}(u\partial_u+k)\Big)\psi^{dat}\in H^{c;\infty}_b,\quad \iff \psi^{dat}\in\mathcal{A}_{phg}^{\mathcal{E}^{dat}}.
			\end{gathered}
		\end{equation}
		Then, the corresponding scattering solution to \eqref{main equation} satisfies $\psi\in\mathcal{A}^{\mathcal{E}_\scri,\mathcal{E}_+,\mathcal{E}_\mathcal{F}}(\tilde{\mathcal{R}}_{-\infty,\tau_2})$ for some index sets. In particular we have, $\min(\mathcal{E}_\scri)=(1,0)$ and $\min(\mathcal{E}_1,\mathcal{E}_0)>3$.
	\end{theorem}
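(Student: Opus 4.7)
The plan is to iteratively peel polyhomogeneous terms from $\psi$ at the three boundary faces $\scri$, $I_+$, $\mathcal{F}$ of $\tilde{\mathcal{R}}$, using the main theorem as a black box starting point ($\psi \in H_b^{q,q,1/2;\infty}(\tilde{\mathcal{R}})$) and then successively improving the regularity at each face. The iteration has three ingredients corresponding to the three model operators: (i) transport equations at $\scri$ driven by the given radiation data $\psi^{dat}$, (ii) the hyperbolic resolvent $N_\sigma^{-1}$ from \cref{model operator on i_+,model operator on i_+ punctured} at $I_+$, (iii) the elliptic operator $(\Delta+V)^{-1}$ from \cref{model operator on F} at $\mathcal{F}$. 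Write $\psi$ as a partial polyhomogeneous sum plus a remainder in a space $H_b^{\vec{a};\infty}$ with $\vec{a}$ to be improved.

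First I would handle $\scri$. Since the nonlinearity $\mathcal{N}[\psi]$ is quintic (in particular, at least order $r^{-5}$ on $\mathcal{R}$ once the pointwise bound $|\psi|\lesssim r^{-1/2}\jpns{u}^{-q}$ is used), it contributes only faster-decaying terms at $\scri$. Commutation with $\jpns{u}\partial_u$ and use of an $r^p$ estimate with $p$ sufficiently large (as indicated in the remark after \cref{existence of scattering solution}) propagates the polyhomogeneous expansion prescribed by $\mathcal{E}^{dat}$ into a neighborhood $\mathcal{N}_\scri$. This yields $\psi \in \mathcal{A}_{phg}^{\mathcal{E}_\scri}$ near $\scri$ with $\mathcal{E}_\scri$ generated by $\mathcal{E}^{dat}$ and the quintic self-interactions (using the index-set sum $\bar\cup$ and the nonlinear index sets in the lemma preceding \cref{polyhomogeneity theorem}).

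Next I peel at $I_+$ and $\mathcal{F}$ in alternation. At each step of the iteration, write the current error $f$ on the right hand side of $(\Box+V)\psi' = f$ as a sum of a leading $I_+$ term $t^{-\sigma}\log^k(\rho_+) P^+_{\sigma,k} f$, a leading $\mathcal{F}$ term $t^{-p}\log^k(\rho_F) P^F_{p,k} f$, and a faster-decaying remainder, using \cref{lemma: projection operators}. For the $I_+$ leading term, the computation $\Box(t^{-\sigma-1}u(y))=t^{-\sigma-3}N_\sigma u$ from \cref{eq:normal operator} reduces inversion to solving $N_\sigma u = P^+_{\sigma,k}f$ on the ball $\dot{B}$, which is possible by \cref{model operator on i_+ punctured} and produces a $\mathcal{F}$-error of only $t^{-\sigma-1}(N_\sigma^{-1}P^+_{\sigma,k}f)|_{y=0}\cdot V(x)$ (accounting for the fact that $\Box t^{-\sigma-1}v(y) \sim t^{-\sigma-3}$, so when we restore the $V$ coupling it is relegated to subleading order). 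For the $\mathcal{F}$ term, we invert $(\Delta+V)$ via \cref{model operator on F}, but this requires orthogonality to $\ker(\Delta+V)=\mathrm{span}\{\Lambda W,\partial_i W\}$.

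The main obstacle, and the key trick of the construction, is enforcing these three orthogonality conditions at each step without destroying decay. As explained in the introduction, the naive ansatz $ct^{-i+2}\Lambda W$ fails because $\Lambda W \notin L^2$ and because the residual error pushed to $I_+$ would be of order $t^{-i-1}$, which when solved at $I_+$ creates a worse $\mathcal{F}$ error than the one we started with. Instead I would correct the running approximate solution by $c^\Lambda t^{-i+1}\Lambda W$ (almost a zero mode of $\Box+V$, with small inhomogeneity), run the $I_+$ step first, and use the induced $\mathcal{F}$ error $c^\Lambda t^{-i}(N_\sigma^{-1}\Lambda W|_{y=0})\cdot V(x)$ to adjust the $\Lambda W$-component of the right hand side; nonvanishing of $N_\sigma^{-1}\Lambda W|_{y=0}$ for $\sigma>0$ is precisely \cref{model operator on i_+ punctured}(b), so $c^\Lambda$ can be chosen to enforce orthogonality against $\Lambda W$. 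The translational kernel $\partial_iW$ is handled analogously using \cref{model operator on i_+ punctured}(c) with $t^{-i+2}\partial_iW$. Each iteration step gains a factor of $t^{-1}$, so the expansion closes to arbitrary order; the resulting index sets are determined inductively from $\mathcal{E}^{dat}$, $\mathcal{E}^{\mathcal{N}}$ and the indicial roots $\{-\sigma,0\}$ of $N_\sigma$ and $\{-2,1\}$ of $\Delta$ at $r=0$, yielding in particular $\min(\mathcal{E}_\scri)=(1,0)$ from the radiation field and $\min(\mathcal{E}_+),\min(\mathcal{E}_\mathcal{F})>3$ from the shift of the starting conormal regularity $q>5$ by the $(-2)$ from inverting $\Delta$ and the fact that the first nontrivial term has to be driven by the nonlinear or boundary source.
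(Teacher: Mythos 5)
Your proposal is correct and follows essentially the same route as the paper: polyhomogeneity at $\scri$ first via weighted commutation, then alternating inversion of the model operators at $I_+$ (using $N_\sigma^{-1}$ on the punctured ball) and at $\mathcal{F}$ (using $(\Delta+V)^{-1}$), with the crucial trick of using $t^{-i+1}\Lambda W$ and $t^{-i+2}\partial_iW$ corrections together with $N_\sigma^{-1}(t\Lambda W)|_{y=0}\neq 0$ and $N_\sigma^{-1}(t^2\partial_iW)|_{y=0}\neq 0$ to generate compactly supported $V$-proportional errors on $\mathcal{F}$ that restore orthogonality to $\ker(\Delta+V)$. The only substantive simplification relative to the paper is in the $\scri$ step, where the paper commutes specifically with $(r\partial_v+k)$ (obtaining the operators $\mathcal{L}_j$ and functions $\Psi_j$), rather than $\jpns{u}\partial_u$, and proves a dedicated energy estimate for the commuted wave operator $\Box_{\R^{(3+2j)+1}}$; your reference to a generic $r^p$ estimate would need to be unpacked along these lines, but the idea is the same.
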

	
	\begin{proof}
		\textit{Step 1: Polyhomogeneity at $\scri$}:
		
		This has to be done by revisiting the construction and proving estimates for $(r\partial_v+k)$ commuted quantities. Let us write $f=-V\psi+\mathcal{N}[\psi]$ for the rest of this step. Let us first note the following commutation
		\begin{claim}
			Let $\psi$ solve $(\partial_u\partial_v-\frac{\slashed{\Delta}}{r^2})r\psi=f$. Then
			\begin{equation}
				\begin{gathered}
				\underbrace{	(\partial_u\partial_v+\frac{2j+2}{r}\partial_t-\frac{\slashed{\Delta}}{r^2})}_{\mathcal{L}_j}\tilde{\Psi}_j=\tilde{f}_j\\
					\tilde{\Psi}_n=\prod_{k=0}^{n}(r\partial_v+k)r\psi,\quad f_n=\prod_{k=0}^n(r\partial_v+2+k)f.
				\end{gathered}
			\end{equation}
		\end{claim}
		\begin{proof}[Proof of claim]
			This follows by induction. Setting $\tilde{\Psi}_{-1}=r\psi, f_{-1}=f$ we have the starting case. For the induction step, we compute
			\begin{equation}
				\begin{gathered}
					r\partial_v\tilde{f}_j=r\partial_v\mathcal{L}_j\tilde{\Psi}_j=\mathcal{L}_jr\partial_v\tilde{\Psi}_j+(-2\frac{\slashed{\Delta}}{r^2}-\partial_u\partial_v+\partial_v^2-\frac{2j}{r}\partial_t)\tilde{\Psi}_j\\
					=\mathcal{L}_{j}r\partial_v\tilde{\Psi}_j -2\mathcal{L}_j\tilde{\Psi}_j+\frac{2}{r}\partial_tr\partial_v\tilde{\Psi}_j-\frac{2j}{r}\partial_t\tilde{\Psi}_j=\mathcal{L}_{j+1}r\partial_v\tilde{\Psi}_j-2\tilde{f}_j+\frac{2j}{r}\partial_t\tilde{\Psi}_j\\
					=\mathcal{L}_{j+1}(r\partial_v\tilde{\Psi}_j+j\tilde{\Psi}_j)-(2+j)\tilde{f}_j\\
					\implies \mathcal{L}_{j+1}\tilde{\Psi}_{j+1}=(r\partial_v+2+j)\tilde{f}_j.
				\end{gathered}
			\end{equation}
		\end{proof}
		
		Similarly, we can compute the commutations by replacing $r\partial_v$ by $\chi r\partial_v$ for $\chi=\tilde{\chi}^c(10\frac{r}{t})$. We get
		\begin{equation}
			\begin{gathered}
				\mathcal{L}_j\Psi_j=f_j+r^{-2}\mathfrak{D}_{\supp\chi'}^{k+1}r\phi\\
				{\Psi}_n=\prod_{k=0}^{n}(\chi r\partial_v+k)r\psi,\quad f_n=\prod_{k=0}^n(\chi r\partial_v+2+k)f
			\end{gathered}
		\end{equation}
		where $\mathfrak{D}_{\supp\chi'}^n$ denotes a sum of up to $n$ fold $b$ derivatives with $\mathcal{O}^{\infty,0,\infty}$ coefficients supported in the region $\supp\chi'$.
		
		We will use the boundedness statement derived in $H_b$ spaces about $\psi$ to control $\Psi_j$ in better function spaces. In \cref{unstable lemma}, we derived that there is a $\psi\in H_b^{q,q,1/2;N}(\mathcal{R}_{\tau_1,\tau_2})$ scattering solution in $\mathcal{R}_{\tau_1,\tau_2}$, where the inclusion is uniform in $\tau_2$.  By definition, we immediately get $\Psi_j\in H^{\infty,q-1,-1/2;N'}$ for some $N'(N,j)$ that satisfies $N'\to\infty$ as $N\to\infty$. A similar finite loss of derivatives applies for each estimate of $\Psi_j$ below. Since we only care about data in smooth class, we are going to ignore the regularity index.  We may write the equation for $\Psi_j$ as
		\begin{equation}\label{eq:wave_eq_Psi_j}
			\begin{gathered}
				(\partial_u\partial_v-\frac{2j}{r}\partial_r-\frac{\slashed{\Delta}}{r^2})\Psi_j=f_j+r^{-2}\mathfrak{D}_{\supp\chi'}^{k+1}r\phi-\frac{2j+2}{r}\partial_v\Psi_j.
			\end{gathered}
		\end{equation}
		\begin{claim}
			For $\Psi_j$ as above, we have for some 
			\begin{equation}
				\begin{gathered}
					\Psi_j\in H_b^{\infty,q-1,j+1/2}.
				\end{gathered}
			\end{equation}
		\end{claim}
		\begin{proof}[Proof of claim]
			Let us note, that the claim is equivalent to $\chi r\psi\in\mathcal{A}_{b,b,phg}^{\infty,q-1,\overline{(0,0)}}+H^{\infty,q-1,j+1/2}_b$. 
			
			We proceed by induction. For $\Psi_0$, we note that $\chi r\psi\in H^{\infty,q-1,-1/2}_b$ implies right hand side of \cref{eq:wave_eq_Psi_j} for $j=0$ is in $H^{\infty,q+1,3/2;\infty}_b$, while the left hand side is simply $\Box_{\R^{3+1}}\Psi_0$. A usual $T$ energy estimate implies $\Psi_0\in H^{q-1,q-1,1/2;0}_b$. Commuting with $S,\Omega,\mathcal{B}$ yield higher order estimates. This establishes the result for $\Psi_0$.
			
			To proceed with the inductive step, we note that
			\begin{equation}\label{eq:commutation_on_poly_spaces}
				\begin{gathered}
					\prod_{k=0}^{n}(\chi r\partial_v+2+k)\mathcal{A}_{b,b,phg}^{\infty,q-1,\overline{(5,0)}}=\mathcal{A}_{b,b,phg}^{\infty,q-1,\overline{(n+3,0)}}.
				\end{gathered}
			\end{equation}
			Using this, we get that $r\psi\in\mathcal{A}_{b,b,phg}^{\infty,q-1,\overline{(0,0)}}+H^{\infty,q-1,j-1/2}_b$ implies
			\begin{equation}
				\begin{gathered}
					f_j\in H_b^{\infty,4+q-1,j+5/2}.
				\end{gathered}
			\end{equation}
			Therefore, the right hand side of \cref{eq:wave_eq_Psi_j} for $j\neq0$ is in $H^{\infty,q+1,j+3/2}_B$, while the left hand side is $\Box_{\R^{(3+2j)+1}}\Psi_j$. A $T$ energy estimate now yields $\Psi_j\in H^{q-1,q-1,j+1/2;0}$.
		\end{proof}
		
		We can conclude that for $N(j)$ sufficiently large, the solution constructed in \cref{unstable lemma} $\psi\in H^{q,q,1/2;N}$ satisfies $\Psi_j\in H^{\infty,q,j+1/2;N/2}$ uniformly in $\tau_2$. Including $\Psi_j$ in the norm in which we use in finding the subsequence yields that the global scattering solution will also have good conormal property at $\scri$.
		
		For the rest of the proof, we are going to ignore and drop from notation the index set arising at $\scri$, since we already know polyhomogeneity there. More precisely, we are going to construct an ansatz $\phi^a\in\mathcal{A}_{phg}^{\vec{\mathcal{E}}}$, such that 
		\begin{equation}
			\begin{gathered}
				\Box\phi^a+(\phi^a)^5=\mathcal{O}^{N,N,5},\qquad 
				(r\phi^a)|_{\scri}-\psi^{dat}=\mathcal{O}^N({\scri\times S^2})
			\end{gathered}
		\end{equation}
		for arbitrary high value of $N$. Note, that $\phi^a$, and thus $\Box\phi^a+(\phi^a)^5$ may contain logarithmic terms towards $\scri$. When solving $\Box(\phi+\phi^a)+(\phi^a+\phi)^5=0$ for the nonlinear profile, we will no longer have a clear index set as in \cref{eq:commutation_on_poly_spaces}. To overcome this, let us compute
		\begin{equation}
			\begin{gathered}
				[v\partial_v,v\Big(\partial_u\partial_v-\frac{j}{r}(\partial_v-\partial_u)-\frac{\slashed{\Delta}}{r^2}\Big)]=\mathcal{O}^{(1,0),(1,0),(1,0)}\mathfrak{D}^2.
			\end{gathered}
		\end{equation}
		From this, it follows that 
		\begin{equation}
			\begin{gathered}
				\Box_{\R^{(3+2)+1}}(\chi v\partial_v+\sigma)\Psi_j=(v\partial_v+\sigma+1)\Box_{\R^{(3+2)+1}}\Psi_j+\mathcal{O}^{(2,0),(2,0),(2,0)}\mathfrak{D}^2\Psi_n
			\end{gathered}
		\end{equation}
		Commuting with sufficiently many choices of $\sigma$, we may improve the first term on the right hand side to be in $H_b^{\infty,q-1,j+5/2}$. A $T$ energy estimate yields that
		\begin{equation}
			\begin{gathered}
				\prod_{\sigma_j}(\chi v\partial_v+\sigma_j)\Psi_j\in H^{\infty,q-1,j+1/2}_b.
			\end{gathered}
		\end{equation}
		
		\textit{Step 2:} We first set up notation to solve error terms on $F,I_+$ iteratively. Let's write $\phi=\phi^n_a+\phi^n_e$ for some ansatz $\phi_a^n\in\mathcal{A}^{\vec{\mathcal{E}}}$ with $\phi_a^0=0$ and error terms . We will iteratively upgrade our knowledge about $\phi^n_a$ and make $\phi^n_e$ faster decaying. We similarly write $\psi^{dat}=\psi^{dat}_s+\psi^{dat}_a$ (and $\mathcal{N}[\phi]=\mathcal{N}[\phi_a^n]+(\mathcal{N}[\phi]-\mathcal{N}[\phi_a^n])=f^n_s+f^n_a+f^n_e$) with $\psi^{dat}_s$  ($f_s^n$) representing the part of the data (inhomogeneity) already solved for and $\psi^{dat}_e\in\mathcal{A}^{\mathcal{E}^{dat;n}}$  ($f_a^n\in\mathcal{A}^{\vec{\mathcal{E}}^{f;n}}$) the part decaying faster . Let's introduce the leading order term that we solve for $(p^n_+,k_+^n):=\min(\mathcal{E}^{dat;n}+1,\mathcal{E}_{+}^{f;n}-2)$ and $(p^n_F;k^n_F)=\min(\mathcal{E}^{f;n}_F)$. 
		
	 	\textit{Case 1:} First, let us consider the case $p^n_F\geq p^n_+$. Let $g$ solve
	 	\begin{equation}
	 		\begin{gathered}
	 			N_{p_+^n-1}g=P^+_{p_+^n+2,k_+^n}f_a^n\\
	 			((1-\rho)^{p_+^n-1} g)|_{\partial B}=P_{p_+^n-1,k_+^n}\psi^{dat;n}.
	 		\end{gathered}
	 	\end{equation}
		We use $g$ as an ansatz in the region $r>R$ which is captured by the cutoff $\chi=1-\tilde{\chi}(r/R)$. By \cref{lemma: projection operators,model operator on i_+ punctured} we get that
		\begin{equation}
			\begin{gathered}
				g\in \mathcal{A}^{(\mathcal{E}^{f;n}_F-(p^n_+,0))\overline{\cup}\overline{(0,0)}}(\dot{B}),
			\end{gathered}
		\end{equation}
		and thus $g_2=\chi t^{-p_+^n} \log^{k_+^n}(\rho_+)g(\frac{x}{t})\in\mathcal{A}^{\mathcal{E}^{f;n}_F\overline{\cup}\overline{(p_+^n,0)},\overline{(p_+^n,k_+^n)}}$. For a moment, let's focus on the case $k_+^n=0$. In this case, we simply compute
		\begin{equation}
			\begin{gathered}
				(\Box+V)g_2=(\Box+V)\chi t^{-p_+^n+1}t^{-1}g(\frac{x}{t})=\chi t^{-p-2} N_{p_+^n-1}g+\partial\chi\cdot \partial g_2+g_2\Box\chi+Vg_2\\=t^{-p_+^n-2}\chi P^+_{p_+^n+2,0}f_a^n+\mathcal{O}^{\mathcal{E}_F^{f;n}\cup(p_+^n,0),{(p^n_++4,0)}}.
			\end{gathered}
		\end{equation}
		For $k_+^n\neq 0$, we only improve on the log terms at $I_+$. We compute
		\begin{equation}
			\begin{gathered}
				(\Box+V)g_2=t^{-p_+^n-2}\log^{k_+^n}(\rho_+)\chi P^+_{p_+^n+2,k_+^n}f_a^n+\mathfrak{Err}_1,\quad 
				\mathfrak{Err}_1\in \mathcal{O}^{\mathcal{E}^{f;n}_{F}\overline{\cup}\overline{(p_+^n,0)},\overline{(p_+^n+2,k_+^n-1)}}\\\lim _{u\to\infty}u^{p_+^n-1}\log^{k^n_+}(u)(tg_2)|_{\scri}=P_{p_+^n-1,k_+^n}\psi^{dat;n}.
			\end{gathered}
		\end{equation}
		We can update our ansatz as
		\begin{equation}
			\begin{gathered}
				\phi^{n+1}_a:=\phi^n_a+g_2\\
				 \psi^{dat;n+1}_a:=(1-P_{p_+^n-1,k_+^n})\psi^{dat;n}_a\in\mathcal{A}^{\mathcal{E}^{dat;n+1}}\\
				 f^{n+1}_a:=f^{n}_a-\chi t^{-p_+^n}\log^{k_+^n}(\rho_+)P^+_{p_+^n,k_+^n}f_a^n+(\mathcal{N}[\phi^{n}_a]-\mathcal{N}[\phi^{n+1}_a])+\mathfrak{Err}_1.
			\end{gathered}
		\end{equation}
		Importantly, we find that $\mathcal{E}^{f;n+1}_+=\min(\mathcal{E}_+^{f;,n})\cup \mathcal{E}'$ and $\mathcal{E}^{f;n+1}_F=\mathcal{E}^{f;n}_F\cup \mathcal{E}''$ with $\min(\mathcal{E}')\geq p_++3$ and $\min(\mathcal{E}'')\geq p_+$.
		
		\textit{Case 2:} Now, consider the case $p_F^n<p_+^n$. In this case, we solve a model problem on $F$. However, $P^F_{p_F^n,k^n_F}f_a^n$ might not be orthogonal to the kernel of $(\Delta+V)$, so we need to correct for this close to the face $F$. Let us  calculate for $\Lambda W\in\ker(\Delta+V)$
		\begin{equation}
			\begin{gathered}
				(\Box+V)t^{-p_F^n+1}\log^{k_F^n}(t) \Lambda W=(p_F^n-1)p_F^nt^{-p_F^n-2}\log^{k_F^n}(t)\cdot t \Lambda W+\mathcal{O}^{\overline{(p_F^n+1,k_+^n-1)},\overline{(p_F^n+2,k_+^n-1)}}.
			\end{gathered}
		\end{equation}
		We remove the leading order error term by solving
		\begin{equation}
			\begin{gathered}
				N_{p_F^n-1}g_1=P^F_0(t\Lambda W)|_{I_+}=(\frac{t}{r})|_{I_+}\\
				((1-\rho)^{p_F^n-1} g_1)|_{\partial B}=0.
			\end{gathered}
		\end{equation}
		By \cref{model operator on i_+ punctured} we get that $g_1\in\mathcal{A}^{\overline{(0,0)},\overline{(0,0)}}(B)$ and $g_1(0)\neq0$. We can't extend $g_1$ to a smooth function on $\mathcal{R}$, because it has a Taylor expansion at $r=0$ of the form $g_1\sim c_1+c_2\abs{y}+\mathcal{O}(\abs{y}^2)$ with $c_1,c_2\in\R$ and potentially $c_2\neq0$. In order to deal with this, we write
		\begin{equation}\label{eq:g_2 LambdaW}
			\begin{gathered}
				g_2(t,x)=t^{-p_F^n}\log^{k_F^n}(t)\Big(\chi g_1(\frac{x}{t})+(1-\chi)c_1\Big),\quad \chi=1-\tilde{\chi}(r/R)\\
				\implies g_2\in\mathcal{A}^{\overline{(p_F^n,k_F^n)},\overline{(p_F^n,k_F^n)}},\quad g_2(t,x)-t^{-p_F^n}\log^{k_F^n}(t)g_1(0)\in\mathcal{A}^{\overline{(p_F^n+1,k_F^n)},\overline{(p_F^n,k_F^n)}}.
			\end{gathered}
		\end{equation}
		For $k_F^n=0$ we simply get
		\begin{equation}
			\begin{gathered}
					(\Box+V)g_2=(\Box+V)t^{-p_F^n}\big((\chi g_1(\frac{x}{t})+(1-\chi)c_1\big)=
				\chi t^{-p_F^n-2}N_{p_F^n-1}g_1+c_1 V\\
				+V(g_2(t,x)-t^{-p_F^n}\log^{k_F^n}(t)g_1(0))+\partial\chi\cdot t^{-p_F^n}(g_2(t,x)-t^{-p_F^n}\log^{k_F^n}(t)g_1(0))\\+(g_2(t,x)-t^{-p_F^n}\log^{k_F^n}(t)g_1(0))\Box\chi=\chi t^{-p_F^n-2}P_0^F(t\Lambda W)+t^{-p_F^n}c_1V+\mathcal{O}^{\overline{(p_F^n+1,0)},\overline{(p_F^n+4,0)}}.
			\end{gathered}
		\end{equation}
		For $k_F^n\neq0$, we get error terms from the derivatives acting on $\log^{k_F^n}(t)$. A  simple computation yields 
		\begin{equation}
			\begin{gathered}
				\begin{multlined}
					(\Box+V)g_2=\log^{k_F^n}(t)t^{-p_F^n}\Big(t^{-1}\Lambda W\chi+Vg_1(0)\Big)\\
					+\mathcal{O}^{\overline{(p_F^n+1,k_F^n-1)},\overline{(p_F^n+2,k_F^n-1)}}+\mathcal{O}^{\overline{(p_F^n+1,k_F^n)},\overline{(p_F^n+4,k_F^n)}}
				\end{multlined}
			\end{gathered}
		\end{equation}
		
		The importance of this calculation is the following. Solving away an error term on $I_0$ with decay $p_+^n+2$ generally produces a term with decay $p_+^n$. When $(\Box+V)$ acts on this near $F$, we can expect a term that will also decay at the rate $p_+^n$. In this calculation, we capture this leading order term. We may use this to write
		\begin{equation}
			\begin{gathered}
				\begin{multlined}
					(\Box+V)\big((p_F^n-1)p_F^ng_2-t^{-p_F^n}\log^{k_F^n}(t)t\Lambda W\big)=(p_F^n-1)p_F^nt^{-p_F^n}\log^{k_F^n}(t)g_1(0)V\\+\mathcal{O}^{\overline{(p_F^n+1,k_F^n)},\overline{(p_F^n+2,k_F^n-1)}}+\mathcal{O}^{\overline{(p_F^n+1,k_F^n)},\overline{(p_F^n+4,k_F^n)}}
				\end{multlined}
			\end{gathered}
		\end{equation}
		The second of the error terms is admissible, because it falls of faster than $p_F^n+2$ at $I_+$, but we must solve away the first one. Repeating the same procedure as for $(t\Lambda W)$ term on $I_+$, given an error term in $\mathcal{O}^{\overline{(p_F^n+1,k_F^n)},\overline{(p_F^n+2,k_F^n-1)}}$, we can find $\tilde{g}_2\in\mathcal{A}_{phg}^{\overline{(p_F^n,k_F^n-1)},\overline{(p_F^n,k_F^n-1)}}$ that cancels it with a remaining error term from the $V$ linear piece in $\mathcal{O}^{\overline{(p_F^n,k_F^n-1)},\overline{(p_F^n+4,k_F^n)}}$. In particular, there exists $\tilde{g}_2$ such that
				\begin{equation}
			\begin{gathered}
				\begin{multlined}
					(\Box+V)\big(\underbrace{(p_F^n-1)p_F^n\tilde{g}_2-t^{-p_F^n}\log^{k_F^n}(t)t\Lambda W}_{g_3}\big)=(p_F^n-1)p_F^nt^{-p_F^n}\log^{k_F^n}(t)g_1(0)V\\+\mathcal{O}^{\overline{(p_F^n,k_F^n-1)},\overline{(p_F^n+4,k_F^n)}}+\mathcal{O}^{\overline{(p_F^n+1,k_F^n)},\overline{(p_F^n+4,k_F^n)}}
				\end{multlined}
			\end{gathered}
		\end{equation}

		A similar result also holds for the other kernel elements $\{\partial_i W\}$, with the sole exception, that we need to use $t^2\partial_iW$ instead $t\Lambda W$. For the sake of completeness, we repeat the argument
		\begin{equation}
		\begin{gathered}
			(\Box+V)t^{-p_F^n+2}\log^{k_F^n}(t) \partial_iW=(p_F^n-2)(p_F^n-1)t^{-p_F^n-2}\log^{k_F^n}(t)\cdot t^2\partial_iW+\mathcal{O}^{\overline{(p_F^n+2,k_+^n-1)},\overline{(p_F^n,k_+^n-1)}}.
		\end{gathered}
	\end{equation}
	We remove the leading order error term by solving
	\begin{equation}
		\begin{gathered}
			N_{p_F^n}g_{1;i}=P^F_0(t^2\partial_iW)|_{I_+}\\
			((1-v)^{p_F^n}g_{1;i})|_{\partial B}=0.
		\end{gathered}
	\end{equation}
	
	By \cref{model operator on i_+ punctured} we get that $g_{1;i}\in\mathcal{A}^{\overline{(0,0)},\overline{(0,0)}}(\dot{B})$ and $g_{1;i}(0)\neq0$  where $g_{1;i}(0)=c\hat{x}_i\in\mathcal{C}^\infty(S^2)$ is an $l=1$ spherical mode. We set set 
	
	\begin{equation}
		\begin{gathered}
			g_{2;i}=t^{-p_F^n}\log^{k^n_F}(t)\chi g_{1;i},\quad \chi=\bar{\chi}^c(r/R)
		\end{gathered}
	\end{equation}
	for $R\gg 1$. As before, we conclude
	\begin{equation}
		\begin{gathered}
			(\Box+V)g_{2;i}=\log^{k_F^n}t^{-p_F^n}\Big(\chi\partial_iW+Vg_{1;i}(0)\Big)\\
			+\mathcal{A}^{\overline{(p_F^n+1,k_F^n-1)},\overline{(p_F^n+2,k_F^n-1)}}+\mathcal{A}^{\overline{(p_F^n+1,k_F^n)},\overline{(p_F^n+4,k_F^n)}}
			\\
			\begin{multlined}
				\implies (\Box+V)\big(\underbrace{(p_F^n-2)(p_F^n-1)\tilde{g}_{2;i}-t^{-p_F^n}\log^{k_F^n}(t)t^2\partial_iW}_{g_{3;i}}\big)=(p_F^n-2)(p_F^n-1)t^{-p_F^n}\log^{k_F^n}(t)(\chi-1)\partial_i W\\ +(p_F^n-2)(p_F^n-1)t^{-p_F^n}\log^{k_F^n}(t)g_{1;i}(0)V+\mathcal{A}^{\overline{(p_F^n,k_F^n-1)},\overline{(p_F^n+4,k_F^n)}}+\mathcal{A}^{\overline{(p_F^n+1,k_F^n)},\overline{(p_F^n+4,k_F^n)}}
			\end{multlined}
		\end{gathered}
	\end{equation}
	Taking $R$ sufficient large in the cutoff $\chi$, we get that 
	\begin{equation}
		\begin{gathered}
			((1-\chi)\partial_i W+\chi g_{1;i}(0)V,\partial_iW)_{L^2}\neq 0.
		\end{gathered}
	\end{equation}	
	Therefore, we can use $g_{2},g_{2;i}$ terms to project out the kernel elements. That is, there exists $a_\bullet$ such that
	\begin{equation}
		\begin{gathered}
			P^F_{p_F^n,k^n_F}f_a^n+a_\Lambda g_{1;i}(0)V+a_i\big(g_1(0) V+(\chi-1)\partial_i W\big)\perp\{\Lambda W,\partial_i W\}.
		\end{gathered}
	\end{equation}	
	Using this orthogonality, we can finally invert the normal operator to conclude that there exist a solution to 
	\begin{equation}
		\begin{gathered}
			(\Delta+V)g_4=	P^F_{p_F^n,k^n_F}f_a^n+a_i \big(g_{1;i}(0) V+(\chi-1)\partial_i W\big)+a_\Lambda g_1 V\\
			g_4\to0 \text{ as } r\to\infty,\quad g_4\in\mathcal{A}^{\big((\mathcal{E}_+^{f;n}-(p_F^n+2,0))\cup\overline{\{(2,0)\}}\big)\overline{\cup}(1,0)}.
		\end{gathered}
	\end{equation}
	Thus, we may compute
	\begin{equation}
		\begin{gathered}
			(\Box+V)(t^{-p_F^n}\log^{k_F^n}(t)g_4-a_\Lambda \tilde{g}_{3}-a_{i}\tilde{g}_{3;i})=t^{-p_F^n}\log^{k_F^n}(t)P^F_{p_F^n,k^n_F}f_a^n  +\mathcal{O}^{\overline{(p_F^n,k_F^n-1)},\overline{(p_F^n+2,k_F^n-1)}}
			\\+\mathcal{O}^{\overline{\{(p_F^n+2,k_F^n)\}},\big((\mathcal{E}_+^{f;n})\cup\overline{\{(4+p_F^n,k_F^n)\}}\big)\overline{\cup}(3+p_F^n,k_F^n)}+\mathcal{O}^{\overline{(p_F^n+1,k_F^n)},\overline{(p_F^n+4,k_F^n)}}
		\end{gathered}
	\end{equation}
	Therefore, we can update the ansatz as follows:
	\begin{equation}
		\begin{gathered}
			\phi^{n+1}_a=\phi^n_a-\Big(t^{-p_F^n}\log^{k_F^n}(t)g_4-a_\Lambda \tilde{g}_{3}-a_{i}\tilde{g}_{3;i}\Big),\\
			f^{n+1}_a=(1-t^{-p_F^n}\log^{k_F^n}(t)P^F_{p_F^n,k^n_F})f^n_a+(\mathcal{N}[\phi_a^{n+1}]-\mathcal{N}[\phi_a^n])+\mathcal{A}^{\overline{(p_F^n,k_F^n-1)},\overline{(p_F^n+2,k_F^n-1)}}
			\\+\mathcal{A}^{\overline{\{(p_F^n+2,k_F^n)\}},\big((\mathcal{E}_+^{f;n})\cup\overline{\{(4+p_F^n,k_F^n)\}}\big)\overline{\cup}(3+p_F^n,k_F^n)}+\mathcal{A}^{\overline{(p_F^n+1,k_F^n)},\overline{(p_F^n+4,k_F^n)}}.			
			\end{gathered}
	\end{equation}
	In particular, the new error term satisfies $\mathcal{E}^{f;n+1}_F=\mathcal{E}^{f;n}_F\cup\mathcal{E}'$ and $\mathcal{E}^{f;n+1}_+=\min(\mathcal{E}^{f;n}_+)\cup\mathcal{E}''$ with $\min(\mathcal{E}')\geq p_F+1,\min(\mathcal{E}'')\geq p_F+2$.

	\textit{Step 3:}	The above cases are exhausting iteratively generated index sets and therefore, the polyhomogeneity follows. Let us expand.
	
	Without loss of generality start with $p_F^{n_0}\geq p_+^{n_0}$. By definition of the index set, $\mathcal{E}^{f;n_0}_+\cap\{(p,k):p< p_+^{n_0}+3\}$ is finite. Therefore, we can iterate \textit{case 1} a finite number of times until either $p_F^{n_1}<p_+^{n_1}$ and $p_F^{n_0}\leq p_+^{n_1}$ or $p_F^{n_1}\geq p_F^{n_0}+1$. If the latter holds, we repeat \textit{case 1}.
	
	If $p_F^{n_1}<p_+^{n_1}$, then we use again the definition of an index set, to get that $\mathcal{E}^{f;n_1}_F\cap\{(p,k):p<p_F^{n_1}+1\}$ is finite. Therefore, a finite number of iteration with \textit{case 2} yields an iterate with $p_F^{n_2}\geq p_+^{n_2}$ and $p_+^{n_2}\geq\min(p_+^{n_1},p_F^{n_1}+2)$ or $p_F^{n_2}\geq p_F^{n_1}+1$.
	
	Iterating the above two, we can in a finite number of steps improve by 1 at least one of the index sets. The other must be at least similarly fast decaying. This implies that the procedure is exhaustive.
	
	\end{proof}

	\section{Construction of Cauchy data}\label{sec:cauchy}
	In this section, we extend the solution that we constructed to $i_0$ and thereby to a Cauchy surface of the form $t=\textit{const}$. We proceed by two steps. First, we prove that we can add in arbitrary size scattering data in the interior of $\scri$. This is detailed in \cref{lemma:local_cauchy} Second, we prove that provided no incoming radiation from $\scri$ in neighbourhood of $i_0$ we can construct a solution in this neighbourhood as well. This is proved in \cref{lemma:global_cauchy}.
	
	\begin{lemma}\label{lemma:local_cauchy}
		Let $\psi^\Sigma,(r\psi)^\scri$ be a scattering data for \cref{main equation} of order $N>6$ and size 1 on $\Sigma_{\tau_2},\scri_{\tau_1,\tau_2}$. There exists $d$ sufficiently large such that for any $v_\infty>d$ there is a unique scattering solution to \cref{main equation} with scattering data $\psi^\Sigma,(r\psi)^\scri$ in the region $\mathcal{R}_{\tau_1,\tau_2}\cap\{v>v_\infty\}$.
	\end{lemma}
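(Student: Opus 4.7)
The plan is to exploit the fact that the region $\mathcal{R}_{\tau_1,\tau_2} \cap \{v > v_\infty\}$ lies in the far exterior of the soliton, where the potential $V = 5W^4 = \mathcal{O}(r^{-4})$ is small and, crucially, the unstable mode, zero modes, and coercivity failures that motivated the entire apparatus of \cref{sec:linear theory section} are absent. In particular, for $v_\infty$ large the standard $T$-energy together with an $r^p$ estimate give a coercive energy for the linearised operator in this region, and there is no need for any projection operator $\Theta^{\bullet}$ or modulation procedure. Since the retarded time interval $[\tau_1,\tau_2]$ is fixed and the quintic nonlinearity combined with the $r^{-4}$ potential both carry smallness in $v_\infty^{-1}$, arbitrary size data at $\scri$ and on $\Sigma_{\tau_2}$ can be accommodated.

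The first step would be to solve the linear scattering problem
\begin{equation*}
    \Box \psi = F, \qquad (r\psi)|_{\scri \cap \{v > v_\infty\}} = \psi^{dat}, \qquad \psi|_{\Sigma_{\tau_2} \cap \{v > v_\infty\}} = \psi^\Sigma,
\end{equation*}
in the region $\mathcal{R}_{\tau_1,\tau_2} \cap \{v > v_\infty\}$. This is standard flat wave scattering in a bounded retarded time strip: a $\partial_t$-energy estimate integrated against the boundaries $\scri$, $\tilde{\underline{\mathcal{C}}}_{v_\infty}$, $\Sigma_{\tau_2}$ produces existence and boundedness, and commuting with $\{T, \Omega, r\partial_v\}$ and combining with the $r^p$ estimate yields control in the weighted Sobolev spaces $H^{a,q;N}_b$ from \cref{def:Hb_norm} with $d \in [-1/2,1/2)$ and $q$ matching the decay of $\psi^{dat}$. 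The whole point of posing data on $\tilde{\underline{\mathcal{C}}}_{v_\infty}$ as an auxiliary trivial boundary (and then observing it is never crossed by the characteristic past of any point in the region) is to reduce this to the setup of \cref{existence of scattering solution}.

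The second step is the nonlinear closure via contraction mapping. Define the map $\Phi: \psi \mapsto \tilde{\psi}$ where $\tilde{\psi}$ solves the linear problem above with $F = -V\psi + \mathcal{N}[\psi]$ and the prescribed data. Working in a ball $\mathcal{B}_M$ of radius $M := 2\norm{\underline{\psi}}_{\mathcal{X}_{data}}$ in the weighted norm, I would show $\Phi(\mathcal{B}_M) \subset \mathcal{B}_M$ and $\Phi$ is a contraction, using: (i) $\norm{V\psi}_{\mathrm{source}} \lesssim v_\infty^{-3} \norm{\psi}_{\mathcal{X}}$ since $V = \mathcal{O}(r^{-4})$ and $r \gtrsim v_\infty$ everywhere in the region; (ii) $\norm{\mathcal{N}[\psi]}_{\mathrm{source}} \lesssim v_\infty^{-\alpha}(M^2 + M^5)$ by weighted Sobolev embedding (\cref{weighted estimates}) exploiting that $W$ and its higher powers carry strong $r$-decay and $\psi$ carries at least $r^{-1/2}$ decay. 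For $v_\infty$ sufficiently large (depending on $M$ and hence on the fixed size of the data) both terms are genuinely perturbative.

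The main technical obstacle, though relatively soft, is choosing the norm: the nonlinear iteration needs the quasilinear-style radiation-field control on $(r\psi)^\scri$, the $r^p$ control of $X(r\psi)$, and $L^\infty$ embeddings strong enough to give pointwise bounds on $\psi$ to sixth power, all simultaneously, and closed in a single space on which $\Phi$ contracts. Once that function space is pinned down (essentially the $\mathcal{X}_{data}$-type norm of \cref{def:scattering definition} restricted to $\{v > v_\infty\}$, augmented with an $r^p$ term as in the remark following \cref{existence of scattering solution}), uniqueness is immediate from the contraction property, and the result follows.
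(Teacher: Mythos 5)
Your proposal is correct and follows essentially the same approach as the paper: both exploit $r\gtrsim v_\infty$ throughout the region to treat $V\psi$ and $\mathcal{N}[\psi]$ as perturbative sources with smallness in $v_\infty^{-1}$, commute with $\{T,S,\Omega\}$ (and $r\partial_v$ for the $r^p$ part), and use weighted Sobolev embedding for pointwise control, so that no coercivity repair, projection, or ILED is needed. The only cosmetic difference is that you close the nonlinear step by contraction mapping whereas the paper closes it by a bootstrap on the flux $\mathfrak{C}_{\tau,d}[\masterJ]$; these are interchangeable here.
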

	
	\begin{proof}
		Let us begin by noticing that in the domain of interest  $\mathcal{R}_{\tau_1,\tau_2}\cap\{v>v_\infty\}$, $r\gtrsim_{\tau_1,\tau_2}\abs{u}$. We will use the $r$ weights in the equations as a replacement for proving any amount of decay. Indeed, no ILED estimates are necessary.
		
		Without loss of generality, take $d$ sufficiently large such that $\Sigma_{\tau}\cap\{v>d\}$ is null for $\tau\in(\tau_1,\tau_2)$. After shifting the definition of $t_\star$ by a constant, we write these cones as $\mathfrak{C}_{\tau,d}=\{u=\tau\}\cap\{v>d\}$. Let us construct \textit{master currents} for this region explicitly. For the rest of the proof, let us redefine $\masterJ$ as follows
		\begin{equation}
			\begin{gathered}
				\masterJ=\sum_{\abs{\alpha}\leq k} J[\Gamma^\alpha\psi],\quad\Gamma\in\{T,S,\Omega\},\quad J[\psi]=T\cdot(\T^0+\tilde{\T}^0)[\psi].
			\end{gathered}
		\end{equation}
		Because we have the usual commutation relations $[T,\Box]=[\Omega,\Box]=[S,r^2\Box]=0$ we get that for $\tilde{\phi}=W+\phi$ a scattering solution to \cref{main equation}
		\begin{equation}
			\begin{gathered}
				\abs{\Box\Gamma^\alpha\phi}\lesssim_{\alpha}\sum_{\abs{\beta}\leq\abs{\alpha}}\jpns{r}^{-4}\Gamma^\beta\abs{\phi}+\Gamma^\beta\abs{(\Box+V)\phi}\lesssim\sum_{\substack{\abs{\beta}\leq\abs{\alpha}\\ 1\leq i\leq5}}(\Gamma^\beta\phi)^i\jpns{r}^{i-5}
			\end{gathered}
		\end{equation}
		Therefore, a $T$ energy estimate in the region $\overline{\mathcal{R}}_{\tau_1,\tau_2}:=\{u\in(\tau_1,\tau_2)\}\cap\{v>d\}$ yields
		\begin{equation}\label{eq:cauchy_error_loc}
			\begin{gathered}
				\mathfrak{C}_{\tau,d}[\masterJ]\lesssim_\alpha\mathfrak{C}_{\tau_2,d}[\masterJ]+\scri_{\tau,\tau_2}[\masterJ]+\sum_{\substack{\abs{\beta}\leq k\\ 1\leq i\leq5}}\int_{\overline{\mathcal{R}}_{\tau,\tau_2}}\jpns{r}^{2(5-i)}(\Gamma^\beta\phi^i)^2
			\end{gathered}
		\end{equation}
		Note, that \cref{weighted estimates} still implies that for $k\geq3$ 
		\begin{equation}
			\begin{gathered}
				(\mathfrak{C}_{\tau,d}[\masterJ])^{1/2}\gtrsim \sup_{(t,x)\in\mathfrak{C}_{\tau,d}}\jpns{r}^{1/2}\abs{\phi(t,x)}.
			\end{gathered}
		\end{equation}
		Hence, we can write \cref{eq:cauchy_error_loc} for $k\geq6$
		\begin{equation}\label{eq:cauchy_error_loc2}
			\begin{gathered}
				\mathfrak{C}_{\tau',d}[\masterJ]\lesssim_\alpha\mathfrak{C}_{\tau_2,d}[\masterJ]+\scri_{\tau,\tau_2}[\masterJ]+\sum_{\substack{\abs{\beta}\leq k\\ 1\leq i\leq5}}\int_{\overline{\mathcal{R}}_{\tau,\tau_2}}\jpns{r}^{-(5-i)}(\Gamma^\beta\phi)^2(\mathfrak{C}_{\tau,d}[\masterJ])^{2(i-1)}\jpns{r}^{-(i-1)}\\
				\lesssim\mathfrak{C}_{\tau_2,d}[\masterJ]+\scri_{\tau,\tau_2}[\masterJ]+\sum_{1\leq i\leq5}\int_{\tau_1}^{\tau'}d^{-7+i}(\mathfrak{C}_{\tau,d}[\masterJ])^{i}\\
				\leq \mathfrak{C}_{\tau_2,d}[\masterJ]+\scri_{\tau_1,\tau_2}[\masterJ]+d^{-2}\sum_{1\leq i\leq5}\int_{\tau_1}^{\tau'}(\mathfrak{C}_{\tau,d}[\masterJ])^{i}
			\end{gathered}
		\end{equation}
		Choosing $d$ and $C$ sufficiently large, this estimate allows us to close a bootstrap argument with $\mathfrak{C}_{\tau,d}[\masterJ]\leq C(\mathfrak{C}_{\tau_1,d}[\masterJ]+\scri_{\tau_1,\tau_2}[\masterJ])$ in the region $\tau\in(\tau_1,\tau_2)$ independent of the size of the two incoming energies.
	\end{proof}
	
	\begin{lemma}\label{lemma:global_cauchy}
		Let $\psi^\Sigma$ be a scattering data for \cref{main equation} of order $N>6$ and size 1 on $\Sigma_{\tau_2}$. There exists $d$ sufficiently large such that for any $V_\infty>d$ there is a unique scattering solution to \cref{main equation} with scattering data $\psi^\Sigma,(r\psi)^\scri=\chi(\tau_2-u)(r\psi)_{\tau_1}$ in the region $\{t_\star<\tau_2\}\cap\{v>v_\infty\}$.
	\end{lemma}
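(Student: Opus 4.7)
The idea is to combine \cref{lemma:local_cauchy} with a backward characteristic argument in the region of $\scri$ where the cut-off $\chi(\tau_2-u)$ vanishes. Concretely, pick $\tau_0<\tau_2$ below the support of $u\mapsto\chi(\tau_2-u)$, so that the prescribed $\scri$ data vanishes identically for $u<\tau_0$. A direct application of \cref{lemma:local_cauchy} (with $\tau_1$ replaced by $\tau_0$) yields a scattering solution $\phi=W+\psi$ in $\mathcal{R}_{\tau_0,\tau_2}\cap\{v>v_\infty\}$, together with a quantitative bound on $\mathfrak{C}_{\tau_0,v_\infty}[\masterJ]$ by the size of the data on $\Sigma_{\tau_2}$ and on $\scri$, provided $v_\infty$ is taken large enough. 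It then remains to continue the solution into $\{u<\tau_0\}\cap\{v>v_\infty\}$ with \emph{no} incoming radiation on $\scri$ and with characteristic data on the ingoing cone $\mathfrak{C}_{\tau_0,v_\infty}$ just obtained.

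For this extension, I would run exactly the same bootstrap that was used in \cref{lemma:local_cauchy}, but between the cones $\mathfrak{C}_{\tau,v_\infty}$ and $\mathfrak{C}_{\tau_0,v_\infty}$ with $\tau<\tau_0$ arbitrarily negative, letting the current $\masterJ=\sum_{|\alpha|\le k}J[\Gamma^\alpha\psi]$ be as before. The same commutation identities give
\begin{equation*}
\mathfrak{C}_{\tau,v_\infty}[\masterJ]\lesssim \mathfrak{C}_{\tau_0,v_\infty}[\masterJ]+\sum_{\substack{|\beta|\le k\\1\le i\le 5}}\int_{\{u\in(\tau,\tau_0)\}\cap\{v>v_\infty\}}\jpns{r}^{2(5-i)}(\Gamma^\beta\psi^i)^2,
\end{equation*}
since the $\scri$-flux term is absent by the choice of $\tau_0$. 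Using the pointwise bound $|\psi|\lesssim\jpns{r}^{-1/2}(\mathfrak{C}_{u,v_\infty}[\masterJ])^{1/2}$ from \cref{weighted estimates}, the nonlinear error on a fixed cone $\mathfrak{C}_{u',v_\infty}$ is estimated in terms of $\inf_{r}\jpns{r}=v_\infty-u'$ rather than the global $d=v_\infty$, because on this cone $r\ge v_\infty-u'$ grows as $u'\to-\infty$. Consequently the bootstrap inequality becomes
\begin{equation*}
\mathfrak{C}_{\tau,v_\infty}[\masterJ]\lesssim \mathfrak{C}_{\tau_0,v_\infty}[\masterJ]+\sum_{1\le i\le 5}\int_\tau^{\tau_0}\frac{(\mathfrak{C}_{u',v_\infty}[\masterJ])^{i}}{(v_\infty-u')^{2}}\,du',
\end{equation*}
and the weight $(v_\infty-u')^{-2}$ is \emph{integrable} as $u'\to-\infty$. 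A standard continuity (or Gr\"onwall-type) argument in $\tau$ then closes the bootstrap uniformly in $\tau$, with final constant depending only on $\mathfrak{C}_{\tau_0,v_\infty}[\masterJ]$ and on $v_\infty^{-1}$. Uniqueness in this region follows by the same energy identity applied to the difference of two candidate solutions.

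The main obstacle I anticipate is precisely the apparent blow-up of the nonlinear error as the slab $\{u\in(\tau,\tau_0)\}$ is allowed to have infinite $u$-extent. This is what prevents a na\"ive reuse of \cref{lemma:local_cauchy}, which is designed for bounded retarded-time slabs and uses the single factor $d^{-2}$. The resolution sketched above is to exploit that the minimum of $r$ on each ingoing cone $\mathfrak{C}_{u',v_\infty}$ grows linearly in $|u'|$, which upgrades the uniform weight $d^{-2}$ to the integrable weight $(v_\infty-u')^{-2}$. A secondary, essentially cosmetic, issue is that the top surface for the backward step is now an ingoing null cone rather than a $t_\star=\mathrm{const}$ hypersurface; but the energy identity for $T\cdot(\T^0+\tilde\T^0)$ is stated flux-by-flux and goes through verbatim after re-choosing $\mathfrak{C}_{\tau_0,v_\infty}$ as the top boundary and $\mathfrak{C}_{\tau,v_\infty}$ as the bottom one, with $\scri_{\tau,\tau_0}$ carrying zero flux.
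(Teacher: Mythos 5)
Your proposal is correct and takes essentially the same route as the paper. The paper's proof also reuses the bootstrap inequality from \cref{lemma:local_cauchy} with the $\scri$-flux term dropped (citing the vanishing of the incoming radiation), and then observes that in $\{u<\tau_1\}\cap\{v>v_\infty\}$ one has $r\gtrsim\jpns{u'}$ on each cone $\mathfrak{C}_{u',d}$, turning the uniform weight $d^{-2}$ into an integrable one (written as $d^{-0.5}\jpns{u'}^{-1.5}$ in the paper, $(v_\infty-u')^{-2}$ in your version --- same observation, different split of the exponents) and closing a Gr\"onwall argument. Your two-step decomposition, first applying \cref{lemma:local_cauchy} on the bounded slab $\{u\in(\tau_0,\tau_2)\}$ where the cut-off data is supported, and only then dropping the $\scri$-flux on $\{u<\tau_0\}$, is actually slightly more careful than the paper's one-shot formulation, which tacitly treats the $\scri$ contribution as vanishing over the full range; but the substance of the argument --- that $r$ grows linearly in $|u'|$ on each $\mathfrak{C}_{u',v_\infty}$, providing the integrability needed to run the bootstrap over an infinite retarded-time interval --- is identical.

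One small terminological slip in your write-up: $\mathfrak{C}_{u',v_\infty}=\{u=u'\}\cap\{v>v_\infty\}$ is an \emph{outgoing} null cone, not an ingoing one; this does not affect the argument.
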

	
	\begin{proof}
		We repeat the proof of \cref{lemma:local_cauchy}. In particular, for any finite $\tau>\tau_1$, we can still use \cref{eq:cauchy_error_loc2} and write
		\begin{equation}
			\begin{gathered}
				\mathfrak{C}_{\tau_1,d}[\masterJ]\lesssim_\alpha\mathfrak{C}_{\tau_2,d}[\masterJ]+\cancel{\scri_{\tau_1,\tau_2}[\masterJ]}+\sum_{\substack{\abs{\beta}\leq k\\ 1\leq i\leq5}}\int_{\overline{\mathcal{R}}_{\tau_1,\tau_2}}\jpns{r}^{-(5-i)}(\Gamma^\beta\phi)^2(\mathfrak{C}_{\tau,d}[\masterJ])^{2(i-1)}\jpns{r}^{-(i-1)}
			\end{gathered}
		\end{equation}
		where the cancelled term vanishes by assumption of no incoming radiation. For the leftover term, we use that in the region of interest (${\mathcal{R}}_{-\infty,\tau_1}\cap\{v>v_\infty\}$) $r\gtrsim\jpns{v}$. Therefore, we can estimate the right hand side of the above equation as
		\begin{equation}
			\begin{gathered}
				\mathfrak{C}_{\tau_1,d}[\masterJ]\lesssim_\alpha\mathfrak{C}_{\tau_2,d}[\masterJ]+d^{-0.5}\sum_{1\leq i\leq5}\int_{\tau_1}^{\tau_2} dv\jpns{v}^{-1.5}(\mathfrak{C}_{v,d}[\masterJ])^{i}.
			\end{gathered}
		\end{equation}
		As before, choosing $d,C$ sufficiently large, we can simply close a bootstrap of the form $\mathfrak{C}_{\tau_1,d}[\masterJ]\lesssim C\mathfrak{C}_{\tau_2,d}[\masterJ]$ for $\tau_2>\tau_1$.
	\end{proof}
	
	\paragraph{\textbf{Acknowledgement:}} The author would like to thank Claude Warnick, Leonhard Kehrberger for  many helpful discussions on both the technical and conceptual parts of the work.
	The author would like to thank Peter Hintz for introducing him to geometric singular analysis and many helpful comments about the project.
	This project was founded by EPSRC.

	\appendix
	\section{Appendix}
	\subsection{Calculatioins}
	We note some easy computations 
	\begin{subequations}
		\begin{gather}
			W'=-\frac{W}{r}(1-W^2)\\
			\Lambda W=W(W^2-\frac{1}{2})
		\end{gather}
	\end{subequations}
	\subsection{Energy}\label{energy calculation}
	We compute the $T$ energy through a hypersurface $\tilde{\Sigma}=\{t=-h(x)\}$ for some $h\in\mathcal{C}^\infty$ with $\norm{\nabla h}\leq1$. As a simple application of this computation we will find \cref{energy integral}. Let us use coordinates $x,s=t+h(x)$ in a neighbourhood of $\tilde{\Sigma}$. We remark the following computations
	\begin{equation}
		\begin{gathered}
			\partial_i|_t=\partial_i|_s-h_iT,\quad ds=dt+\partial_ih\cdot dx_i.
		\end{gathered}
	\end{equation}
	The induced measure on $\tilde{\Sigma}$ with the above coordinates is $\det(\delta_{ij}-\partial_ih\partial_jh)^{1/2}=(1-\abs{\partial h}^2)^{1/2}$. Thus, we find the induced energy to be
	\begin{equation}
		\begin{gathered}
			\int_{\Sigma_\tau}\T^w[\phi]\bigg(\frac{ds}{\abs{ds}},dt\bigg)=\int_{\R^3}\frac{1}{2}\big((T\phi)^2+\partial_i|_t\phi\cdot\partial_i|_t\phi-w\phi^2\big)+\partial_ih\partial_i|_{t}\phi T\phi\\
			=\int_{\R^3}\frac{1}{2}\Big((1-\partial h\cdot\partial h)(T\phi)^2+\partial_i|_s\phi\cdot\partial_i|_s\phi-w\phi^2\Big)
		\end{gathered}
	\end{equation}
	
	We can similarly calculate the bilinear energy content, with the difference that we need to replace $\T^w[\phi](ds,dt)$ with
	\begin{equation}
		\begin{gathered}
			\T^w[\phi,\psi](dt,ds)=(1-\abs{\nabla h}^2)T\phi T\psi+\partial_i|_s\phi\cdot\partial_i|_s\psi-w\phi\psi.
		\end{gathered}
	\end{equation}

	\subsection{Standard estimates}
	
	\begin{lemma}(Weighted Sobolev)
		For $f\in\mathcal{C}^\infty_0(\R^3)$ we have
		\begin{equation*}
			\begin{gathered}
					\sup \jpns{r}^{1/2}f\lesssim \sum_{\abs{\alpha}\leq 3}\norm{\partial\Gamma^\alpha f}_{L^2},\quad\Gamma\in\{\partial,\Omega_{ij},r\partial_r\}.
			\end{gathered}
		\end{equation*}
	\end{lemma}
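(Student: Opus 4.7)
The plan is to prove this by a dyadic decomposition, reducing to the standard (unweighted) Sobolev embedding $H^2(A)\hookrightarrow L^\infty(A)$ on a fixed annulus $A=\{1/2<|x|<2\}\subset\R^3$, and then converting powers of the dyadic scale into weighted derivatives using the identities relating $\partial_i$, $r\partial_r$ and $\Omega_{ij}$. Since $\jpns{r}$ is bounded on the unit ball, it suffices to estimate $\sup_{A_k}\jpns{r}^{1/2}|f|$ uniformly in $k\geq 0$ where $A_k=\{2^{k-1}\leq |x|\leq 2^{k+1}\}$, together with an easy unit-ball estimate $\sup_{|x|\leq 2}|f|\lesssim \|f\|_{H^2}\lesssim \sum_{|\alpha|\leq 2}\|\partial^\alpha f\|_{L^2}$ (the $L^2$ term being absorbed into $\|\partial f\|_{L^2}$ on the support of $f$, since $f\in\mathcal{C}^\infty_0$).

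For the annular piece I would set $g_k(y)=f(2^k y)$ on $A$; standard Sobolev embedding gives
\begin{equation*}
    \|f\|_{L^\infty(A_k)}=\|g_k\|_{L^\infty(A)}\lesssim \sum_{|\alpha|\leq 2}\|\partial_y^\alpha g_k\|_{L^2(A)}=\sum_{|\alpha|\leq 2} 2^{k(|\alpha|-3/2)}\|\partial^\alpha f\|_{L^2(A_k)}.
\end{equation*}
Multiplying by $2^{k/2}\sim \jpns{r}^{1/2}$ on $A_k$ yields
\begin{equation*}
    \jpns{r}^{1/2}\|f\|_{L^\infty(A_k)}\lesssim \sum_{|\alpha|\leq 2} 2^{k(|\alpha|-1)}\|\partial^\alpha f\|_{L^2(A_k)}\lesssim \sum_{|\alpha|\leq 2} \|r^{|\alpha|-1}\partial^\alpha f\|_{L^2(A_k)}.
\end{equation*}

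The key observation to finish is that on the region $r\geq 1/2$, the operator $r\partial_i$ lies in the span (over bounded smooth coefficients) of $\{r\partial_r,\Omega_{ij}\}$, coming from the decomposition $\partial_i=\hat{x}_i\partial_r+r^{-1}(\text{angular})$. Iterating this, $r^{|\alpha|-1}\partial^\alpha f$ can be written as a finite sum of $\partial\Gamma^\beta f$ with $|\beta|\leq |\alpha|-1$ and coefficients bounded on $A_k$ uniformly in $k$; for the $|\alpha|=0$ term one uses Hardy, $\|r^{-1}f\|_{L^2(A_k)}\lesssim \|\partial f\|_{L^2}$. Summing (or rather taking the supremum over $k$, which is trivial since each term is bounded by the global $L^2$ norm) then gives the claimed estimate with $|\alpha|\leq 3$, which is in fact slightly more than needed—$|\alpha|\leq 1$ suffices in this argument, the extra room presumably reflecting the author's choice to state a uniform bound compatible with later commutation losses. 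There is no real obstacle; the only point to verify with some care is the finite overlap of the annuli and the fact that the commutation between $\partial_i$ and the scaling/rotation fields produces only lower order terms that are themselves absorbed into the right-hand side.
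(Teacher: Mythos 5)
The paper lists this lemma in the appendix under ``Standard estimates'' and gives no proof, so there is no argument of its own to compare against. Your dyadic rescaling argument is the standard way to prove it and is correct: the scaling exponents $2^{k(|\alpha|-3/2)}$ from the change of variables and the extra $2^{k/2}$ weight combine exactly as you compute, the unit-ball piece and the $|\alpha|=0$ term on each annulus are handled by Hardy, and the conversion of $r^{|\alpha|-1}\partial^\alpha$ into $\partial\Gamma^\beta$ with $|\beta|\le|\alpha|-1$ uses $r\partial_i=\hat x_i\,r\partial_r-\hat x_j\Omega_{ij}$ together with the commutators $[\partial_j,r\partial_r]=\partial_j$, $[\partial_j,\Omega_{ik}]=\delta_{ij}\partial_k-\delta_{kj}\partial_i$. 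One small point worth spelling out: in that last step, differentiating the coefficients $\hat x_i$ produces terms with an $r^{-1}$ factor (e.g.\ $(\partial_j\hat x_k)\Omega_{ik}f\sim r^{-1}\Omega f$), and these are not literally of the form $\partial\Gamma^\beta f$ with bounded coefficients; they are absorbed by another application of Hardy, $\|r^{-1}\Omega f\|_{L^2}\lesssim\|\partial\Omega f\|_{L^2}$. With that noted, your observation that $|\alpha|\le1$ already suffices is also correct; the stated $|\alpha|\le3$ is simply a looser bound, presumably uniform with the way the lemma is invoked elsewhere.
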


	\begin{lemma}(Hardy inequality)
		For $f\in\mathcal{C}^\infty_0(\R^3)$ we have
		\begin{equation*}
			\begin{gathered}
				\int f^2\lesssim \int r^2\abs{\nabla f}^2.
			\end{gathered}
		\end{equation*}
	\end{lemma}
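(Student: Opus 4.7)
The plan is to reduce the three-dimensional inequality to a one-dimensional weighted Hardy-type estimate along the radial direction, and then prove that by a single integration by parts followed by Cauchy--Schwarz. In spherical coordinates, the Euclidean volume form becomes $dx = r^2\,dr\,d\omega$, so
\begin{equation*}
\int_{\R^3}f^2\,dx=\int_{S^2}\!\!\int_0^\infty f^2\,r^2\,dr\,d\omega,\qquad
\int_{\R^3}r^2|\nabla f|^2\,dx\geq\int_{S^2}\!\!\int_0^\infty (\partial_r f)^2 r^4\,dr\,d\omega,
\end{equation*}
where on the right we have simply discarded the angular gradient contribution (which is pointwise non-negative). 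Thus it suffices to establish, for each $\omega\in S^2$, the one-dimensional estimate
\begin{equation*}
\int_0^\infty g(r)^2\,r^2\,dr\lesssim \int_0^\infty g'(r)^2\,r^4\,dr
\end{equation*}
applied to $g(r):=f(r\omega)$, which is smooth in $r\geq 0$ and compactly supported in $r$.

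First I would rewrite the weight $r^2$ as a derivative and integrate by parts: since $\frac{d}{dr}(r^3)=3r^2$,
\begin{equation*}
\int_0^\infty g^2\,r^2\,dr=\frac{1}{3}\int_0^\infty g^2\frac{d}{dr}(r^3)\,dr=-\frac{2}{3}\int_0^\infty r^3\,g\,g'\,dr,
\end{equation*}
with boundary terms vanishing: at $r=\infty$ because $g$ has compact support, and at $r=0$ because the prefactor $r^3$ suppresses any finite value of $g\,g'$ inherited from the smoothness of $f$ at the origin. Cauchy--Schwarz then gives
\begin{equation*}
\Bigl|\int_0^\infty r^3\,g\,g'\,dr\Bigr|\leq\Bigl(\int_0^\infty g^2 r^2\,dr\Bigr)^{1/2}\Bigl(\int_0^\infty (g')^2 r^4\,dr\Bigr)^{1/2},
\end{equation*}
and absorbing one factor of $\bigl(\int g^2 r^2\,dr\bigr)^{1/2}$ yields the 1D inequality with an explicit constant ($4/9$). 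Integrating the result against $d\omega$ over $S^2$ and returning to Cartesian coordinates completes the proof.

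The main obstacle, insofar as there is any, is merely verifying that the boundary term at $r=0$ drops out. This is not a real difficulty: for $f\in \mathcal{C}^\infty_0(\R^3)$ the function $g(r)=f(r\omega)$ is smooth up to $r=0$ with $|g|,|g'|$ bounded by constants depending on $f$, so $r^3 g g'\to 0$ as $r\to 0$. No truncation or limiting argument beyond this is needed, and the constant obtained is universal (independent of $f$), as the scaling $f\mapsto f(\lambda\,\cdot)$ shows both sides of the inequality transform identically.
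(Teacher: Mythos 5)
Your proof is correct, and the argument is the standard one. The paper itself does not supply a proof of this lemma — it is listed in the appendix under ``Standard estimates'' without justification — so there is no argument of the paper's to compare with. Your route (reduce to a one-dimensional radial inequality by discarding the angular gradient, write $r^2=\tfrac13(r^3)'$, integrate by parts with vanishing boundary contributions, Cauchy--Schwarz, then absorb) is precisely the textbook derivation of the weighted Hardy inequality in $\R^3$, and the sharp constant $4/9$ you extract (namely $4/(n-2-2\alpha)^2$ with $n=3$, $\alpha=-1$) is indeed the optimal one. The only slightly informal step is dividing by $(\int g^2 r^2\,dr)^{1/2}$, which requires the trivial remark that the inequality holds with both sides zero when that quantity vanishes; otherwise everything is airtight.
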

	
	\printbibliography
	
\end{document}